\documentclass[11pt]{article}

\usepackage[a4paper,margin=27mm]{geometry}
\usepackage[T1]{fontenc}
\usepackage{lmodern}
\usepackage[english]{babel}
\usepackage{amsmath,amssymb,amsthm,mathtools,mathrsfs,bm}
\usepackage{aliascnt}
\usepackage{enumitem,booktabs,array}
\usepackage{microtype}
\usepackage{xcolor}
\usepackage[colorlinks=true,
 linkcolor=blue!55!black,
 citecolor=blue!55!black,
 urlcolor=blue!55!black,
 pdftitle={Replica--Tannaka Geometry: One Universal Scalar Process Reconstructs Heat Geometry},
 pdfauthor={Obayda Julien Assaad},
 pdfsubject={Universal scalar reconstruction of tensor packets, fully faithful heat geometry, and positive orthogonal Tannaka duality}]{hyperref}
\usepackage[nameinlink,capitalise,noabbrev]{cleveref}

\allowdisplaybreaks
\setlist{itemsep=0.35em,topsep=0.55em}

\newtheorem{theorem}{Theorem}[section]

\newaliascnt{proposition}{theorem}
\newtheorem{proposition}[proposition]{Proposition}
\aliascntresetthe{proposition}

\newaliascnt{lemma}{theorem}
\newtheorem{lemma}[lemma]{Lemma}
\aliascntresetthe{lemma}

\newaliascnt{corollary}{theorem}
\newtheorem{corollary}[corollary]{Corollary}
\aliascntresetthe{corollary}

\newaliascnt{definition}{theorem}
\newtheorem{definition}[definition]{Definition}
\aliascntresetthe{definition}

\newaliascnt{remark}{theorem}
\newtheorem{remark}[remark]{Remark}
\aliascntresetthe{remark}

\newcommand{\E}{\mathbb E}
\newcommand{\R}{\mathbb R}

\newcommand{\Corr}{\operatorname{Corr}}
\newcommand{\Tr}{\operatorname{Tr}}
\newcommand{\Ran}{\operatorname{Ran}}
\newcommand{\Span}{\operatorname{span}}

\newcommand{\Sym}{\operatorname{Sym}}
\newcommand{\Id}{\operatorname{Id}}
\newcommand{\norm}[1]{\left\lVert #1\right\rVert}
\newcommand{\ip}[2]{\left\langle #1,#2\right\rangle}
\newcommand{\Rep}{\operatorname{Rep}}
\newcommand{\Law}{\operatorname{Law}}

\title{Heat Geometry from a Universal Scalar Process}
    \author{Obayda Julien Assaad}
\date{}

\begin{document}
\maketitle

\begin{abstract}
We prove that the law of one smooth scalar process in a finite Wiener
chaos completely determines finite families of symmetric tensors on
arbitrary real separable Hilbert spaces, up to simultaneous orthogonal
equivalence. Gaussian graph characters recover all contractions, while an
intrinsic trace class Gram operator reduces the problem to finite
dimensional invariant theory.
Applied at a single fixed positive time to the canonical quadratic and
quartic heat packet, this principle reconstructs the heat generator and
forces every reconstructed unitary to be spatial. Consequently, one
universal scalar process determines closed Riemannian manifolds, compact
$\operatorname{RCD}$ spaces of finite dimension, and Euclidean bundles
with metric connection and self adjoint potential. The heat packet functor
is fully faithful, and its tensor automorphisms are precisely the
geometric ones.
\end{abstract}

\noindent\textbf{Keywords.}
Replica--Tannaka reconstruction; Gaussian canvas; Wiener chaos;
orthogonal tensor invariants; heat kernel; inverse spectral geometry;
RCD spaces; connection Laplacian.

\medskip
\noindent\textbf{2020 Mathematics Subject Classification.}
Primary 60G15, 58J50; Secondary 13A50, 18M05, 31C25, 35R30, 46E30,
53C07.

\section{Introduction}
\label{sec:introduction}

The spectrum of the Laplace operator is a powerful but incomplete
geometric invariant.  Kac's question, Milnor's flat tori, Vign\'eras'
arithmetic examples, Sunada's construction, and the planar examples of
Gordon-Webb-Wolpert established the fundamental gap between unitary
spectral equivalence and geometric equivalence
\cite{Kac1966,Milnor1964,Vigneras1980,Sunada1985,GordonWebbWolpert1992}.
The missing datum is spatial: a unitary intertwiner of Laplacians need
not preserve positivity or pointwise multiplication.  Once an
intertwiner is an order isomorphism, diffusion recovers the manifold and,
more generally, the intrinsic geometry of a Dirichlet space
\cite{ArendtBiegertTerElst2012,LenzSchmidtWirth2019}.

This paper reconstructs that spatial structure from a single scalar
probabilistic law.  The observation architecture is fixed before the
unknown object is chosen.  It consists of one stationary Gaussian
canvas on the real line with covariance
\begin{equation}
 K(u,v)=e^{-(u-v)^2}.
 \label{eq:intro-canvas-kernel}
\end{equation}
Every finite symmetric tensor packet on every real separable Hilbert
space is placed on this same canvas and becomes one smooth scalar
finite-chaos process.  Its law determines the entire tensor packet up
to one common orthogonal transport.  When the packet is the canonical
quadratic--quartic heat packet, that transport is automatically a
geometric isometry--gauge arrow.

The result has three levels.  The first is an infinite-dimensional
orthogonal reconstruction theorem.  The second identifies the canonical
geometric packet and proves its full faithfulness.  The third organizes
its finite spectral cuts into a positive orthogonal Tannakian inverse
system. 

\subsection{Universal scalar Replica--Tannaka reconstruction}

Let $H$ be a real separable Hilbert space and let
\begin{equation}
 \mathbf f=(f_0,f_1,\ldots,f_m),
 \qquad f_0\in\mathbb R,\qquad f_q\in H^{\odot q},
 \label{eq:intro-tensor-packet}
\end{equation}
be a finite symmetric tensor packet.  Over $H$ take a jointly Gaussian
family $(\mathbb W_u)_{u\in\mathbb R}$ satisfying
\begin{equation}
 \mathbb E[\mathbb W_u(h)\mathbb W_v(k)]
 =e^{-(u-v)^2}\langle h,k\rangle_H.
 \label{eq:intro-canvas-covariance}
\end{equation}
Define
\begin{equation}
 \mathcal Z_{\mathbf f}(u)
 =f_0+\sum_{q=1}^m I_q^{\mathbb W_u}(f_q),
 \qquad u\in\mathbb R.
 \label{eq:intro-canvas-process}
\end{equation}
The process has a canonical almost surely smooth realization.  Write
$S(\mathbf f)$ for the smallest closed subspace $S\subset H$ with
$f_q\in S^{\odot q}$ for every $q\ge1$.

\begin{theorem}[Universal scalar Replica--Tannaka reconstruction]
\label{thm:intro-replica-tannaka}
Let $\mathbf f$ on $H$ and $\mathbf f'$ on $H'$ be finite symmetric
tensor packets.  Complete the shorter packet by zero tensors.  Then
\begin{equation}
 \Law_{C(\mathbb R)}(\mathcal Z_{\mathbf f})
 =\Law_{C(\mathbb R)}(\mathcal Z_{\mathbf f'})
 \quad\Longleftrightarrow\quad
 \left\{
 \begin{array}{l}
 f_0=f'_0,\\[1mm]
 \exists\,U:S(\mathbf f)\to S(\mathbf f')\text{ orthogonal and onto},\\[1mm]
 f'_q=U^{\odot q}f_q\quad\text{for every }q\ge1.
 \end{array}\right.
 \label{eq:intro-universal-orbit-equivalence}
\end{equation}
The same orthogonal map transports every tensor grade.
\end{theorem}

The proof exposes the complete invariant rather than invoking an
abstract encoding.  Correlated copies of one Wiener polynomial produce
mixed moments whose coefficients are closed loop-free tensor networks.
On the fixed canvas, a multigraph with adjacency multiplicities $N$
contributes the Gaussian graph character
\begin{equation}
 \chi_N(x)
 =\exp\!\left(-\sum_{a<b}N_{ab}(x_a-x_b)^2\right).
 \label{eq:intro-graph-character}
\end{equation}
The edge forms $(x_a-x_b)^2$ form a basis of
$\operatorname{Sym}^2(\mathbf1^\perp)^*$, hence the characters
$\chi_N$ are linearly independent.  Their Gaussian Gram matrix has the
closed form
\begin{equation}
 G_{N,N'}
 =\det_{\mathbf1^\perp}(I+L_N+L_{N'})^{-1/2},
 \label{eq:intro-character-Gram}
\end{equation}
and $G^{-1}$ recovers every network coefficient.  This is a genuine
compression: the configuration space has dimension $r-1$, while the
unrestricted correlation register has $r(r-1)/2$ coordinates.

Loop-free networks do not yet contain self-contractions.  The decisive
infinite-dimensional device is the intrinsic trace-class support Gram
\begin{equation}
 D_{\mathbf f}=\sum_{q=1}^m L_{f_q}L_{f_q}^*.
 \label{eq:intro-support-Gram}
\end{equation}
Its positive spectral cuts are finite-dimensional and recover all
missing contractions by trace-norm polynomial approximation.  The
orthogonal first fundamental theorem then separates each finite cut,
and the marker $P_\delta D_{\mathbf f}P_\delta$ makes the finite
transports coherent.  Compactness of the inverse system yields one
orthogonal map on the dense union of the cuts.  This combines classical
Wick--Isserlis diagrammatics and orthogonal invariant theory
\cite{Isserlis1918,JansonGaussian,PeccatiTaqqu2011,WeylClassicalGroups,Brauer1937,Schrijver2008,YoungOrthogonalHolant}
with a new scalar decoder and a new separable-Hilbert inverse-limit
argument.

The construction is functorial.  Orthogonal direct sums become
convolution of process laws, orthogonal transports leave the law
unchanged, and Hilbert tensor convergence gives quantitative convergence
in the compact-open topology.  The covariance
\eqref{eq:intro-canvas-kernel} is itself canonical in the
replica-stable stationary class: stationarity, nondegeneracy at the
origin, and the identity $K(\sqrt n u,\sqrt n v)=K(u,v)^n$ force a
Gaussian covariance up to length scale.  At the feature-space level,
tensoring replicas is exactly dilation of the canvas coordinate.

\subsection{The quadratic--quartic master packet}

Let
\begin{equation}
 \mathfrak X=(M,g,E,\nabla^E,V_E),\qquad
 \mathcal L_{\mathfrak X}=(\nabla^E)^*\nabla^E+V_E,
 \label{eq:intro-heat-geometry}
\end{equation}
where $M$ is closed and connected, $E\to M$ is a positive-rank real
Euclidean bundle, $\nabla^E$ is metric, and $V_E$ is smooth and
self-adjoint.  At a fixed $t>0$, heat evaluation defines tensors
\begin{equation}
 Q_{\mathfrak X,t}\in L^2(M,E)^{\odot2},\qquad
 C_{\mathfrak X,t}\in L^2(M,E)^{\odot4}.
 \label{eq:intro-master-packet}
\end{equation}
They satisfy the two identities
\begin{align}
 \operatorname{Flat}(Q_{\mathfrak X,t})
 &=e^{-t\mathcal L_{\mathfrak X}},
 \label{eq:intro-quadratic-role}\\
 \langle C_{\mathfrak X,t},s^{\otimes4}\rangle
 &=\|e^{-t\mathcal L_{\mathfrak X}/2}s\|_{L^4(M,E)}^4.
 \label{eq:intro-quartic-role}
\end{align}
The quadratic tensor recovers the complete generator by the unbounded
logarithm.  The quartic tensor supplies the first non-Hilbertian radial
norm and turns the same reconstructed $L^2$-unitary into an $L^4$
isometry.  Vector-valued Banach--Lamperti rigidity makes that unitary
spatial; elliptic regularity and comparison of the principal,
first-order, and zeroth-order symbols recover respectively the metric,
connection, and potential.

\begin{proposition}[Minimality of the positive radial packet]
\label{prop:intro-radial-minimality}
For every nonzero finite-dimensional Euclidean space $V$,
\begin{equation}
 \mathbb R[V]^{O(V)}=\mathbb R[|v|^2].
 \label{eq:intro-radial-invariant-ring}
\end{equation}
Hence odd radial tensor degrees vanish, degree two records only the
Hilbert norm, and degree four is the first homogeneous radial invariant
that supplies an $L^p$ geometry with $p\ne2$.  The packet
$(Q_t,C_t)$ is therefore the minimal positive radial heat packet that is
intrinsic under arbitrary orthogonal changes of fiber frame and works
uniformly in every finite rank.
\end{proposition}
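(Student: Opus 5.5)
The core identity $\mathbb R[V]^{O(V)}=\mathbb R[|v|^2]$ will be proved by restricting an invariant polynomial to a line and using transitivity of $O(V)$ on spheres. Let $n=\dim V\ge1$, fix a unit vector $e\in V$, and take $p\in\mathbb R[V]^{O(V)}$. Define the one-variable polynomial $\phi(s)=p(se)$. For any $v\neq0$ there is $g\in O(V)$ with $gv=|v|e$, since $O(V)$ acts transitively on each sphere. Hence $p(v)=\phi(|v|)$. Because $-\Id\in O(V)$ sends $e$ to $-e$ (or, when $n\ge2$, a reflection fixing $e^\perp$ pointwise does the same), we get $\phi(s)=\phi(-s)$. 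So $\phi$ is even, $\phi(s)=\psi(s^2)$ for a polynomial $\psi$, and $p(v)=\psi(|v|^2)$ on $V\setminus\{0\}$. By continuity this holds on all of $V$, so $p=\psi(|v|^2)$ as a polynomial. The reverse inclusion is immediate, because $|v|^2$ is $O(V)$-invariant.

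The consequences follow by passing to homogeneous components, since the $O(V)$-action preserves degree. A homogeneous invariant of degree $d$ has the form $c\,|v|^{d}$ when $d$ is even and is zero when $d$ is odd. So odd radial degrees vanish. Degree two gives only $c|v|^2$, and once integrated against a heat-smoothed section $e^{-t\mathcal L/2}s$ this records only $\|e^{-t\mathcal L/2}s\|_{L^2}^2=\langle Q_t,s^{\otimes2}\rangle$, which is Hilbertian data. Degree four gives $c|v|^4$, whose integral is the $L^4$ functional in \eqref{eq:intro-quartic-role}. It is therefore the first homogeneous radial invariant producing an $L^p$ norm with $p\ne2$.

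For the minimality claim, I will read ``positive radial heat packet intrinsic under orthogonal fiber changes, uniform in rank'' as follows: a packet of tensors $T_d$ whose diagonal values are $\int_M F_d(e^{-t\mathcal L/2}s(x))\,dx$ for a homogeneous polynomial $F_d$ on the fiber, invariant under $O(E_x)$ and defined for every rank. The identity above forces $F_d=c_d|\cdot|^{d}$ with $d$ even. The packet must be positive, and it must contain a non-Hilbertian degree so that the $L^2$-unitary can be upgraded to something spatial (the role of the quartic tensor described before the proposition). Hence it must contain degree $2$, for the generator via \eqref{eq:intro-quadratic-role}, and some even degree $\ge4$, of which $4$ is the least. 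The packet $(Q_t,C_t)$ achieves exactly this.

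The main obstacle is interpretive rather than technical: ``minimal'' has to be stated as the precise assertion that no smaller set of degrees carries a non-Hilbertian radial norm. I will formulate it that way and not attempt any stronger optimality claim.
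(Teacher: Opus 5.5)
Your proof is correct and is essentially the paper's argument: both use only the transitivity of $O(V)$ on spheres and invariance under $-\Id$. The only difference is that you restrict a general invariant polynomial to a line and obtain the full ring identity at once, while the paper classifies homogeneous invariants degree by degree via homogeneity; your reading of ``minimal'' (no smaller set of even radial degrees carries a non-Hilbertian $L^p$ norm) matches the informal level at which the paper treats that claim.
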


In the scalar line-bundle sector there is also a native cubic
multiplication tensor.  Its preservation together with $Q_t$ forces a
unital algebra isomorphism on smooth functions.  The stronger status of
$(Q_t,C_t)$ is its positivity and its uniform orthogonal-radial meaning
for scalar functions, singular diffusions, and bundles of arbitrary
rank.

\begin{theorem}[Full reconstruction and arrow-level rigidity]
\label{thm:intro-full-faithfulness}
Fix $t>0$ and let $\mathfrak X,\mathfrak Y$ be smooth heat geometries of
the form \eqref{eq:intro-heat-geometry}.  Form the scalar canvas
processes
\begin{equation}
 \mathcal Z_{\mathfrak X,t}(u)
 =I_2^{\mathbb W_u}(Q_{\mathfrak X,t})
  +I_4^{\mathbb W_u}(C_{\mathfrak X,t}).
 \label{eq:intro-geometric-canvas-process}
\end{equation}
Then the following are equivalent:
\begin{enumerate}[label=\textup{(\roman*)}]
\item the $C(\mathbb R)$-valued laws of
$\mathcal Z_{\mathfrak X,t}$ and $\mathcal Z_{\mathfrak Y,t}$ agree;
\item one orthogonal isomorphism transports both $Q_t$ and $C_t$;
\item there exist a Riemannian isometry of the bases and a smooth
fiberwise orthogonal bundle isomorphism transporting the connection and
potential.
\end{enumerate}
Every tensor-transporting orthogonal isomorphism is induced by one unique
geometric arrow.  Thus the master heat-packet functor is fully faithful,
and
\begin{equation}
 \operatorname{Aut}(\mathfrak X)
 \simeq
 \operatorname{Stab}_{O(L^2(M,E))}(Q_{\mathfrak X,t},C_{\mathfrak X,t}).
 \label{eq:intro-automorphism-identification}
\end{equation}
\end{theorem}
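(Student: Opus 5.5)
The proof runs around the cycle (i)$\Leftrightarrow$(ii)$\Rightarrow$(iii)$\Rightarrow$(ii), followed by the arrow-level statement.

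\textbf{(i)$\Leftrightarrow$(ii).} This is \cref{thm:intro-replica-tannaka} applied to the packet $(0,0,Q_{\mathfrak X,t},0,C_{\mathfrak X,t})$. Equality of laws gives an orthogonal $U:S(\mathbf f)\to S(\mathbf f')$ transporting both tensors. Since $\operatorname{Flat}(Q_t)=e^{-t\mathcal L}$ is injective with dense range, $S(\mathbf f)$ is all of $L^2(M,E)$, and likewise on the other side. So $U$ is a unitary $L^2(M,E)\to L^2(N,F)$ with $U^{\odot2}Q_{\mathfrak X,t}=Q_{\mathfrak Y,t}$ and $U^{\odot4}C_{\mathfrak X,t}=C_{\mathfrak Y,t}$. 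The converse direction is the easy half of the same theorem.

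\textbf{(ii)$\Rightarrow$(iii), analytic part.} Transport of $Q_t$ means $Ue^{-t\mathcal L_{\mathfrak X}}U^*=e^{-t\mathcal L_{\mathfrak Y}}$. Both heat operators are positive and injective, so the Borel functional calculus gives $U\mathcal L_{\mathfrak X}U^*=\mathcal L_{\mathfrak Y}$ as self-adjoint operators, domains included. In particular $U$ intertwines $e^{-s\mathcal L}$ for every $s>0$, and hence $e^{-t\mathcal L/2}$.

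Transport of $C_t$ together with the quartic identity \eqref{eq:intro-quartic-role} gives
\[
\|e^{-t\mathcal L_{\mathfrak Y}/2}Us\|_{L^4}=\|e^{-t\mathcal L_{\mathfrak X}/2}s\|_{L^4}.
\]
The left side equals $\|U e^{-t\mathcal L_{\mathfrak X}/2}s\|_{L^4}$. Therefore $U$ is an $L^4$-isometry on the range of $e^{-t\mathcal L/2}$. That range contains all smooth sections and is dense in $L^4$, so $U$ extends to a surjective linear isometry $L^4(M,E)\to L^4(N,F)$ which agrees with the $L^2$ unitary.

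\textbf{(ii)$\Rightarrow$(iii), spatial part.} This is the step I expect to be the main obstacle. For scalar $L^p$ with $p\ne2$, Banach--Lamperti gives $Us=h\cdot(s\circ\varphi)$ for a measurable measure-class isomorphism $\varphi$. For vector-valued $L^4(M;E)$ with Euclidean fibers one needs the Greim/Cambern-type form: a weighted composition with a measurable field of fiberwise linear maps. I would derive it via disjointness preservation. Equality in the $L^4$ Clarkson-type identity characterizes pairs with disjoint pointwise supports, so $U$ preserves disjointness of supports and induces a Boolean $\sigma$-isomorphism of measure algebras. The rank-$r$ fiber structure then identifies each fiber map as a similarity.

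Combining the $L^2$ and $L^4$ isometry conditions pins the weight down as $|h|^2=d\varphi_*\mathrm{vol}/d\mathrm{vol}$ in both exponents. Since $4\neq2$, the fiber maps are orthogonal and the Jacobian is $1$, so $\varphi$ is measure preserving and $U$ is spatial. Next I use elliptic regularity. $U$ maps $C^\infty=\bigcap_k\mathrm{Dom}(\mathcal L^k)$ onto $C^\infty$ and intertwines the generators. Testing on local frames and coordinate functions shows that $\varphi$ and the bundle map are smooth.

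Comparing symbols of $\Phi^{*}\mathcal L_{\mathfrak Y}=\mathcal L_{\mathfrak X}$ then recovers the data in order:
\begin{itemize}
\item the principal symbol $|\xi|_g^2\otimes\mathrm{id}$ gives $\varphi^*g_N=g_M$, so $\varphi$ is a Riemannian isometry;
\item the first-order part of the Bochner Laplacian, once $\varphi$ is an isometry, gives that the connection is transported;
\item the zeroth-order part gives that the potential is transported.
\end{itemize}

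\textbf{(iii)$\Rightarrow$(ii) and the arrow-level claims.} A geometric arrow acts on $L^2$ by pullback, intertwines $\mathcal L$ and hence the heat kernel, and preserves pointwise fiber norms. It therefore transports $Q_t$ and $C_t$ by \eqref{eq:intro-quadratic-role}--\eqref{eq:intro-quartic-role}.

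For full faithfulness, every stabilizing $U$ has just been shown to equal the pullback by some arrow $(\varphi,\Phi)$. Uniqueness holds because $\varphi$ is determined by the induced measure-algebra isomorphism, which is continuous by smoothness, and $\Phi$ is then determined fiberwise. Composition is respected, so arrows correspond bijectively to stabilizing unitaries. Specializing to $\mathfrak Y=\mathfrak X$ gives \eqref{eq:intro-automorphism-identification}.
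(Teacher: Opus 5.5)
Your proposal follows the paper's own proof essentially step for step. The shared steps are: Replica--Tannaka with full active supports from injectivity of $e^{-t\mathcal L}$; the unbounded logarithm to recover the generator; the quartic identity to get an $L^4$-isometry on the heat range; the vector-valued Lamperti argument via the quartic disjointness identity and the comparison $|T_yc|^4=\rho|c|^4$, $|T_yc|^2=\rho|c|^2$; elliptic regularity for smoothness; and successive principal, first-order and zeroth-order symbol comparison.

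One intermediate claim is false and should be replaced: the range of $e^{-t\mathcal L/2}$ does \emph{not} contain all smooth sections. A section $\sum_j c_j\psi_j$ lies in it only when $\sum_j e^{t\lambda_j}|c_j|^2<\infty$, so $c_j=e^{-\sqrt{|\lambda_j|}}$ gives a smooth section outside it. The $L^4$-density you need should instead come, as in the paper, from the finite eigensection sums $\sum_{j\le m}c_j\psi_j=e^{-t\mathcal L/2}\bigl(\sum_{j\le m}e^{t\lambda_j/2}c_j\psi_j\bigr)$. These lie in the range and are dense in $C^\infty$ by elliptic regularity, hence dense in $L^4$.
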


For the trivial real line bundle, this proves that one universal scalar
process determines the unpointed Riemannian isometry class of a closed
connected manifold.  For general Euclidean bundles it reconstructs the
bundle topology, rank, connection, curvature, holonomy, and smooth
self-adjoint potential up to the sharp isometry--orthogonal-gauge
action.  No dimension, rank, or spectral simplicity is supplied in
advance; these are outputs of the reconstruction.

\subsection{Singular heat geometry}

The same mechanism is intrinsic to symmetric diffusion.  If a finite
Dirichlet space has a heat kernel and a dense heat range in $L^4$, the
quadratic--quartic packet reconstructs the measure algebra and the
entire Dirichlet form.  Compact finite-dimensional
$\operatorname{RCD}$ spaces satisfy these hypotheses through heat-kernel
bounds, analytic semigroup theory, and the Sobolev-to-Lipschitz
identification of the intrinsic metric
\cite{AmbrosioGigliSavare2014Invent,AmbrosioGigliSavare2014Duke,ErbarKuwadaSturm2015,JiangLiZhang2016,Gigli2018}.

\begin{theorem}[One scalar process reconstructs compact RCD spaces]
\label{thm:intro-rcd-reconstruction}
Fix $t>0$.  Let $(X_i,d_i,\mu_i)$, $i=1,2$, be compact connected
$\operatorname{RCD}(K_i,N_i)$ spaces with full support and finite
synthetic dimension.  The scalar canvas processes formed from their
Cheeger heat flows at time $t$ have the same law if and only if there is
a measure-preserving isometry
\begin{equation}
 (X_2,d_2,\mu_2)\simeq(X_1,d_1,\mu_1).
 \label{eq:intro-RCD-equivalence}
\end{equation}
\end{theorem}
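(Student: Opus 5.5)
The plan is to reduce the statement to \cref{thm:intro-replica-tannaka} and then turn a tensor-transporting orthogonal map into a spatial isomorphism of Dirichlet spaces.

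\textbf{Reduction to a transport.} For $(X_i,d_i,\mu_i)$ let $P^i_t=e^{-t\Delta_i}$ be the Cheeger heat semigroup on $L^2(\mu_i)$, and let $p^i_t$ be its heat kernel. The packet is $Q_i=\operatorname{Flat}^{-1}(P^i_t)$ together with the quartic tensor $C_i$, which satisfies $\langle C_i,s^{\otimes4}\rangle=\|P^i_{t/2}s\|_{L^4}^4$. The kernel is continuous and has Gaussian bounds, and $X_i$ is compact, so $P^i_{t/2}$ maps $L^2$ boundedly into $L^4$ and $C_i$ is a genuine element of $L^2(\mu_i)^{\odot4}$. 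The direction ``$\Leftarrow$'' is immediate: a measure-preserving isometry $\Phi$ induces the unitary $U s=s\circ\Phi^{-1}$, which intertwines the Cheeger energies and hence the heat flows. By naturality of the packet it transports $(Q_1,C_1)$ to $(Q_2,C_2)$, and then \cref{thm:intro-replica-tannaka} gives equality of laws. For ``$\Rightarrow$'', \cref{thm:intro-replica-tannaka} yields an orthogonal $U:S(Q_1,C_1)\to S(Q_2,C_2)$ transporting both tensors. The range of $P_t^i$ is dense, since $P_t^i$ is injective and self-adjoint, so $S=L^2(\mu_i)$ and $U$ is a unitary of $L^2$.

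\textbf{Recovering the generator and the $L^4$ structure.} From $UQ_1=Q_2$ we get $UP^1_tU^{-1}=P^2_t$. Since $P_t^i$ is a positive injective self-adjoint operator, the unbounded logarithm gives $U\Delta_1U^{-1}=\Delta_2$, hence $UP^1_sU^{-1}=P^2_s$ for all $s>0$ by functional calculus. In particular $U$ intertwines the Dirichlet forms $\mathsf{Ch}_1,\mathsf{Ch}_2$. The quartic identity gives $\|P^2_{t/2}Us\|_4=\|UP^1_{t/2}s\|_4=\|P^1_{t/2}s\|_4$. Thus $U$ restricted to $\Ran P^1_{t/2}$ is isometric for the $L^4$ norms. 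Because $\Ran P^i_{t/2}$ is dense in $L^4$, which I would show via $P_s f\to f$ in $L^4$ for $f\in C(X)$ together with the kernel bounds, $U$ extends to a surjective linear isometry $L^4(\mu_1)\to L^4(\mu_2)$.

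\textbf{Spatiality and the metric.} By Banach--Lamperti for $p=4\neq2$, $U f=h\cdot(f\circ T)$, where $T$ is a measurable measure-class isomorphism and $|h|^4=d(T_*^{-1}\mu_1)/d\mu_2$. Since the same $U$ is also an $L^2$-isometry, $|h|^2$ is that Radon--Nikodym derivative as well, which forces $|h|=1$ and makes $T$ measure preserving. Next, $U$ preserves the Markov semigroup. Connectedness of $X_i$ implies $P_s1=1$ and irreducibility, so $U1$ is a unit eigenvector for the bottom eigenvalue $0$ with $|U1|=1$, giving $U1=\pm1$. Hence $h=\pm1$ is constant, and after a sign change, which is also an automorphism, $U$ is composition with $T$, an order- and algebra-preserving map. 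Then $T$ intertwines the Cheeger energies. By Sobolev-to-Lipschitz, $d_i(x,y)=\sup\{f(x)-f(y):|\nabla f|\le1\}$, and the minimal weak upper gradient is determined by the carr\'e du champ $\Gamma(f)=\tfrac12\Delta f^2-f\Delta f$, which composition with $T$ preserves. Therefore $T$ preserves the intrinsic distance a.e. It admits a continuous representative, since Lipschitz functions go to Lipschitz functions and full support allows a.e.\ distance preservation to be upgraded to a pointwise isometry. This gives $(X_2,d_2,\mu_2)\simeq(X_1,d_1,\mu_1)$.

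\textbf{Main obstacle.} I expect the hardest part to be the analytic inputs in the singular setting: the density of the heat range in $L^4$, and the passage from a measurable measure-preserving $T$ preserving $\Gamma$ to a genuine metric isometry. For the latter I would first try the Sobolev-to-Lipschitz property combined with the result of \cite{LenzSchmidtWirth2019} on order isomorphisms of Dirichlet forms, which should deliver the intrinsic metric directly.
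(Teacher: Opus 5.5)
Your route is the paper's route. Replica--Tannaka gives one unitary on all of $L^2$, using the dense range of $P_t$. The unbounded logarithm upgrades the one-time intertwining to the generators and Cheeger forms. The quartic tensor makes $U$ an $L^4$-isometry on the heat range. Lamperti together with the $L^2$ identity on indicators gives $|h|=1$ and measure preservation. Irreducibility forces $U\mathbf 1=\pm1$. Finally, Sobolev-to-Lipschitz with Lenz--Schmidt--Wirth turns the order isomorphism into a measure-preserving isometry.

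The one step that does not close as you wrote it is the $L^4$-density of $\Ran P_{t/2}$. You need it to extend $U$ to a surjective $L^4$-isometry before Lamperti applies. Knowing $P_sf\to f$ in $L^4$ only shows that $\bigcup_{s>0}\Ran P_s$ is dense. For $s<t/2$ the vector $P_sf$ lies in the larger space $\Ran P_s\supset\Ran P_{t/2}$. Writing it as $P_{t/2}g$ would mean running the heat flow backwards, and kernel bounds alone do not give that. So the approximation of $P_sf$ by elements of $\Ran P_{t/2}$ is the actual content of the step, and your sketch omits it.

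The paper closes this step by duality. If $g\in L^{4/3}$ annihilates $P_{t/2}(L^2)$, symmetry gives $P_{t/2}g=0$. Analyticity of the heat semigroup on $L^{4/3}$ propagates this to all $s>0$, and strong continuity gives $g=0$.

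Your kernel bounds can also be made to work, more elementarily, through spectral truncation:
\begin{itemize}
\item On the compact space, $P_s$ is trace class and $\|P_s\|_{2\to\infty}=\sup_x p_{2s}(x,x)^{1/2}=:c_s<\infty$. Hence eigenfunctions satisfy $\|\phi_j\|_\infty\le c_s e^{s\lambda_j}$.
\item For every $\sigma>0$ the expansion of $P_\sigma f$ therefore converges uniformly, with the bound $\sum_j e^{-\sigma\lambda_j}|\langle f,\phi_j\rangle|\,\|\phi_j\|_\infty\le c_{\sigma/2}\|f\|_2\Tr P_{\sigma/2}$.
\item Finite eigenfunction sums lie in $\Ran P_{t/2}$.
\item For $f\in L^\infty$, $\|P_\sigma f-f\|_4\le\|P_\sigma f-f\|_2^{1/2}(2\|f\|_\infty)^{1/2}\to0$.
\end{itemize}

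Two minor points. First, with $P_t=e^{-t\Delta}$ and $\Delta\ge0$, the carr\'e du champ is $\Gamma(f)=f\Delta f-\tfrac12\Delta(f^2)$, so your sign is reversed. In the RCD setting it is safer to define $\Gamma$ weakly through the form, since $f^2\in D(\Delta)$ is not automatic. Second, conservativity comes from $\mathsf{Ch}(\mathbf 1)=0$ on the compact space, or from RCD stochastic completeness, not from connectedness. Connectedness, or positivity of the heat kernel, is what gives irreducibility.
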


Thus the same scalar architecture reconstructs smooth manifolds,
singular metric-measure spaces, and orthogonal gauge geometry.  The
common principle is not a coordinate formula; it is the recovery of
spatial multiplication from one common orthogonal transport of the
quadratic and quartic heat tensors.

\subsection{Positive orthogonal Tannaka duality}

Every regular spectral cut of the support Gram is a finite-dimensional
orthogonal tensor object generated by the compressed marker and the
compressed tensors of degrees two and four.  Its closed diagrams define
a positive character, and the corresponding tensor automorphism group
is the compact stabilizer of that finite cut.  The cut characters form a
cofinal inverse system.  Their projective-limit automorphism group is the
full Hilbert-space stabilizer, which is the geometric automorphism group
by \cref{thm:intro-full-faithfulness}.  This gives a concrete positive
orthogonal Tannaka duality, in the finite-cut sense natural to a
separable Hilbert object, and links the scalar law, the closed-network
character, and the geometric groupoid.  The categorical background is
provided by classical Tannaka theory and compact-group reconstruction
\cite{Saavedra1972,DeligneTensorCategories,DoplicherRoberts1989,EtingofTensorCategories2015}.

The canonical invariant is mixed of degrees two and four, but its orbit
can be carried by any fixed homogeneous chaos.  Anchor padding gives
faithful degree-$q$ presentations for $q\ge4$; a minimal positive
factorization of the $2+2$ flattening of $C_t$ gives a fully faithful
cubic dilation.  A typed version of the universal canvas theorem then
recovers each marked homogeneous packet from one process lying entirely
in the pure $q$th Wiener chaos.

\begin{theorem}[Homogeneous carriers of the master packet]
\label{thm:intro-homogeneous-presentations}
For every integer $q\ge3$, there is an orthogonally natural finite marked
family of degree-$q$ tensors whose common orthogonal orbit is equivalent
to the orbit of the master packet $(Q_t,C_t)$.  The law of one typed
scalar process in the pure $q$th Wiener chaos determines this family and
therefore the complete heat geometry.  The cubic construction is fully
faithful; for even $q\ge4$ the only extra symmetry is the canonical
reflection of the auxiliary anchor line.
\end{theorem}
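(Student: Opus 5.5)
The argument has three layers. First we build the degree-$q$ families and show that their orbits match the orbit of $(Q_t,C_t)$. Next we run a typed version of \cref{thm:intro-replica-tannaka} inside a single pure chaos. Finally we compute the extra symmetry.

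\textbf{Even $q\ge4$ (anchor padding).} Write $H=L^2(M,E)$ and enlarge it to $\widetilde H=H\oplus\mathbb R e$ with a unit anchor $e$. The marked family is
\[
A_q=e^{\otimes q},\qquad
\widetilde Q=\Sym(Q_t\otimes e^{\otimes(q-2)}),\qquad
\widetilde C=\Sym(C_t\otimes e^{\otimes(q-4)}).
\]
Typing by separate labels on the canvas, for instance independent canvas copies indexed by the mark, keeps the three tensors apart.

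The key step is to show that any orthogonal $\widetilde U$ with $\widetilde U^{\odot q}A_q=A'_q$ satisfies $\widetilde Ue=\pm e'$. This holds because a rank-one symmetric power $v^{\otimes q}$ determines $v$ up to a $q$th root of unity; over $\mathbb R$ that means up to sign when $q$ is even and exactly when $q$ is odd.

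Once the anchor is fixed, $\widetilde U$ preserves $e^\perp=H$. Contracting $\widetilde Q$ against $e'$ a total of $q-2$ times returns $Q_t$ up to a nonzero combinatorial constant, and similarly $\widetilde C$ returns $C_t$. So $U=\widetilde U|_H$ transports $(Q_t,C_t)$. Conversely, any $U$ transporting the master packet extends by $e\mapsto e'$. The only additional stabilizer element is $e\mapsto-e$, and it acts trivially on the tensors precisely because $q$ is even. That yields the stated ``canonical reflection'' clause.

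\textbf{Odd $q\ge5$.} Anchor padding works in the same way, and the root-of-unity ambiguity disappears: $e\mapsto-e$ would send $e^{\otimes q}$ to $-e^{\otimes q}$, so it does not fix $A_q$. Hence no extra symmetry arises, which is consistent with the statement, since the reflection is claimed only for even $q$.

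\textbf{$q=3$.} Degree four cannot be padded into degree three, so we need a different construction. Flatten $C_t$ along a $2+2$ split to get a positive operator $\Phi$ on $H^{\odot2}$. Positivity comes from \eqref{eq:intro-quartic-role}, since $C_t$ is built from $\int (k_x\otimes k_x)^{\otimes2}$ for a heat-kernel map $k$. Take the minimal positive factorization $\Phi=R^*R$ with $R=\Phi^{1/2}$, and let $K=\overline{\Ran}\,\Phi$ be the auxiliary space. The cubic tensor is
\[
T\in H\otimes H\otimes K,
\]
recording $R$. This is then symmetrized into $(H\oplus K)^{\odot3}$ with a type mark (a marker tensor on $K$, for example $\Id_K$-type padding $\Sym(Q_t\otimes a)$ or an anchor) that separates the $K$ summand from the $H$ summand.

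Full faithfulness follows from the uniqueness of the minimal factorization up to a unitary on $K$. That unitary is forced by $U$, because it is the polar-part of the map $U^{\otimes2}$ restricted to $\Ran\Phi$. So stabilizers coincide, and no reflection appears.

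\textbf{Typed process.} Each marked family is placed on the canvas as a vector of pure degree-$q$ chaos processes, or as one process with mark-coded canvas shifts. We then rerun the graph-character decoding: the character linear independence and Gram inversion \eqref{eq:intro-character-Gram}, the support Gram $D$, and the inverse-limit step. Running these with mark labels carried on the vertices recovers all typed contractions. Applying the orthogonal first fundamental theorem to marked families gives a single common $\widetilde U$. Composing this with the orbit equivalence above and with \cref{thm:intro-full-faithfulness} then gives the full heat geometry.

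\textbf{Expected obstacle.} The main difficulty is the cubic case. We must ensure that the symmetrization into $(H\oplus K)^{\odot3}$ loses nothing: the reconstructed orthogonal map has to respect the decomposition $H\oplus K$. We also have to check that the symmetrized cubic data determine $\Phi$, and hence $C_t$, not merely up to mixing of its $2+2$ channels. The first thing to try is to make the $H$ versus $K$ split recoverable from the spectral type of the contraction $D$ together with the mark tensor.
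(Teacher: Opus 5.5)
Your anchor-padding argument for $q\ge4$ is sound and is essentially the paper's. The paper packs $Q$ and $C$ into one tensor $\operatorname{Pad}^a_{q,2}(Q)+\operatorname{Pad}^a_{q,4}(C)$ and separates them by $H$-occupation number, but three separately marked tensors work equally well. The gaps are in the typed scalar process and in the cubic case.

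\textbf{The typed scalar process.} Suppose you type by independent canvas copies indexed by the mark. Then every Wick pairing between vertices of different marks has covariance zero, so the joint law factorizes. It determines each marked tensor only up to its \emph{own} orthogonal map. The common transport of $Q_t$ and $C_t$ is therefore lost, and it is exactly what the heat reconstruction needs: the $L^4$ identity gives an isometry only because the same $U$ intertwines the heat operators. A vector of processes on one common canvas would retain the common transport, but its law is not the law of one scalar process.

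Mark-dependent translations $u\mapsto u+c_\alpha$ of the stationary canvas fail as well. A network whose vertices all carry the same mark gets the character $\chi_N$ whatever that mark is, so those coefficients merge. Rerunning the decoder with marks on the vertices therefore does not return all typed contractions.

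The missing idea is a tag that breaks translation invariance. The paper uses $\sum_\alpha e^{s_\alpha u}I_q^{\mathbb W_u}(f_\alpha)$ on one canvas, so a typed pairing contributes $\exp\bigl(\sum_i s_{\tau(i)}u_i-\sum_{i<j}m_{ij}(u_i-u_j)^2\bigr)$. Along a generic ray $u=tv$, distinct (graph, typing) pairs give distinct functions $e^{-at^2+bt}$, which are linearly independent by \cref{lem:quadratic-exponential-independence}. You must also check that the marked family is jointly active on the whole enlarged space before running the support-Gram and inverse-limit steps.

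\textbf{The cubic case.} You leave this open, and it is where the construction needs its key idea.
\begin{itemize}
\item Your family encodes only $C_t$, since $T$ records $\widehat C^{1/2}$; $Q_t$ is not built in.
\item An $\Id_K$-type marker is unavailable: $K$ is infinite dimensional, so $\Id_K$ is not a Hilbert--Schmidt tensor.
\item Nothing yet forces a cubic arrow $W$ to preserve $a$, $H$ and $K$. So the half of full faithfulness saying every cubic arrow comes from a master arrow is unproved.
\end{itemize}

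The paper takes $K_3=\mathbb Ra\oplus H\oplus L_C$ and $P_3=a^{\otimes3}$; odd degree gives $Wa=a'$ exactly. It sets $S_3=3\Sym(a\otimes R)+3\Sym(\beta_{B_C})$ with the \emph{signed} tensor $R=Q\oplus(-B_CB_C^*)$.
\begin{enumerate}
\item Contracting $S_3$ with $a$ returns $R$.
\item Since $A_Q$ is positive and injective and $\ker B_CB_C^*=0$, $H$ and $L_C$ are the positive and negative spectral subspaces of $A_R$. Hence $W=\Id\oplus U\oplus V$.
\item The positive block gives $Q'=U^{\odot2}Q$.
\item The $\beta$-component gives $B_{C'}U^{\odot2}=VB_C$, hence $\widehat{C'}=U^{\odot2}\widehat C(U^{\odot2})^*$.
\item Injectivity of $C\mapsto\widehat C$ then gives $C'=U^{\odot4}C$.
\end{enumerate}

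Two further points on your positivity step. Positivity of $\widehat C$ on all of $H^{\odot2}$ does not follow from \eqref{eq:intro-quartic-role}, which only tests $\xi=s\odot s$. In rank one, $\langle\widehat C\xi,\xi\rangle=\int\langle k_x^{\otimes2},\xi\rangle^2$ gives it directly; in rank $>1$ you need the three-pairing formula of \cref{lem:radial-quartic-positive-flattening}. You also need $\widehat C$ to be trace class. Without that, $\widehat C^{1/2}$ is not Hilbert--Schmidt and $T$ is not a Hilbert tensor at all.
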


\subsection{Relation to previous work}

The finite-dimensional Gaussian orbit problem is solved in
\cite{AssaadFinite2026}: a prescribed finite family of rational,
uniformly nondegenerate correlated Gaussian experiments recovers a
polynomial orbit and is bi-H\"older equivalent to orbit distance on
coefficient balls.  The present theorem crosses two new boundaries.  It
reconstructs arbitrary separable-Hilbert tensor packets, and it replaces
the entire freely chosen covariance register by one fixed scalar process
on a one-dimensional canvas.  The support-Gram spectral flag is the
mechanism that converts loop-free scalar data into complete
infinite-dimensional orbit reconstruction.

The minimal graded-support and homogeneous-realization viewpoint is
closely related to the Primitive--Fock classification of weak chaos
limits in \cite{AssaadPrimitiveFock2026}; recent structural results on
limits of Gaussian polynomials provide a complementary asymptotic
perspective \cite{HerryMalicetPoly2024}.  Here the problem is exact
rather than asymptotic: a fixed process law reconstructs every tensor
grade and every transport between geometric realizations.

Classical orthogonal tensor invariants are generated by contractions
\cite{WeylClassicalGroups,Brauer1937,Schrijver2008}; graph-algebra and
reflection-positive reconstruction provide a related deterministic
language for partition functions
\cite{FreedmanLovaszSchrijver2007,LovaszSzegedy2009}.  The scalar
replica law differs in that neither a labelled network oracle nor a
finite active dimension is given.  The graph-character decoder and
spectral-cut opening recover both from the law itself.

Several enrichments of ordinary spectral data reconstruct geometry.
Order-intertwining diffusion determines smooth and intrinsic Dirichlet
geometry \cite{ArendtBiegertTerElst2012,LenzSchmidtWirth2019}; Wick
squares of Gaussian free fields yield Riemannian rigidity
\cite{Dang2022}; eigenfunction triple products and local Weyl data give
other complete enhancements
\cite{SchaeferTripleProducts,WangWymanXi2024}.  The datum here is a
single fixed-architecture scalar process law.  Its quartic component
reconstructs the order structure that those deterministic rigidity
theorems require.

For connection Laplacians, local hyperbolic boundary data reconstruct
manifolds and compatible connections
\cite{KurylevOksanenPaternain2018}, while Bochner spectral rigidity is
known in geometric regimes such as negatively curved low-rank settings
\cite{CekicLefeuvre2025}.  The present theorem is global, fixed-time,
and arrow-level: it identifies every simultaneous transport of the two
heat tensors with a unique isometry--gauge transformation.

\subsection{Organization}

\Cref{sec:replica-register} extracts loop-free contractions from scalar
replica moments.  \Cref{sec:tensor-reconstruction} reconstructs the
complete tensor orbit in a separable Hilbert space through the support
Gram, finite spectral cuts, orthogonal invariant theory, and a coherent
inverse limit; it then replaces free correlations by the fixed Gaussian
canvas.  \Cref{sec:hidden-Hermite-rank} exhibits the information gained
by replicas beyond one-point marginals.  The smooth manifold,
Dirichlet--$\operatorname{RCD}$, and gauge reconstructions are proved in
\cref{sec:gaussian-observable-hears-manifold,sec:dirichlet-rcd,sec:gauge-field}.
\Cref{sec:radial-heat-networks} identifies the master packet and its
closed networks, while \cref{sec:positive-protannakian-geometry} proves
positive scalar Tannaka duality and full faithfulness.  The homogeneous carriers, sharp inverse-stability results, and finite
statistical identification theorems are collected in the appendices.
\section{Correlated replicas and the scalar register}
\label{sec:replica-register}

Throughout the reconstruction part of the paper, $H$ denotes a real
separable Hilbert space.  An \emph{isonormal Gaussian process} over $H$
is a centered Gaussian linear map
\[
 W:H\longrightarrow L^2(\Omega)
 \quad\text{such that}\quad
 \E[W(h)W(k)]=\ip hk_H.
\]
For $q\ge1$, we write $I_q^W(f)$ for the $q$th multiple Wiener integral
of $f\in H^{\odot q}$, with the normalization
\begin{equation}
 \E[I_q^W(f)I_p^W(g)]
 =\mathbf1_{\{p=q\}}q!\ip fg_{H^{\otimes q}}.
 \label{eq:wiener-normalization}
\end{equation}

Fix $m\ge1$ and a finite symmetric tensor packet
\begin{equation}
 \mathbf f=(f_0,f_1,\ldots,f_m),
 \qquad f_0\in\R,\quad f_q\in H^{\odot q},
 \label{eq:finite-tensor-packet}
\end{equation}
and set
\begin{equation}
 F_{\mathbf f}(W)=f_0+\sum_{q=1}^m I_q^W(f_q).
 \label{eq:finite-wiener-polynomial}
\end{equation}

For $r\ge1$, let
\begin{equation}
 \Corr_r^\circ
 =\left\{\Sigma\in\R^{r\times r}:
   \Sigma=\Sigma^{\mathsf T}>0,\quad
   \Sigma_{aa}=1\ (1\le a\le r)\right\}.
 \label{eq:open-correlation-elliptope}
\end{equation}
Given $\Sigma\in\Corr_r^\circ$, a $\Sigma$-correlated isonormal family
$(W_1,\ldots,W_r)$ over $H$ is defined by
\begin{equation}
 \E[W_a(h)W_b(k)]=\Sigma_{ab}\ip hk_H.
 \label{eq:correlated-isonormal-family}
\end{equation}
Such a family exists: if $A A^{\mathsf T}=\Sigma$ and
$\widetilde W_1,\ldots,\widetilde W_r$ are independent isonormal
processes, then $W_a=\sum_jA_{aj}\widetilde W_j$ has
\eqref{eq:correlated-isonormal-family}.

\begin{definition}[Scalar replica signature]
The \emph{correlated-replica signature} of $\mathbf f$ is the indexed
family of characteristic functions
\begin{equation}
 \Rep(\mathbf f)
 =\left(
   \Phi_{\mathbf f}(r,\Sigma,z)
  \right)_{r\ge1,\ \Sigma\in\Corr_r^\circ,\ z\in\R^r},
 \qquad
 \Phi_{\mathbf f}(r,\Sigma,z)
 =\E\exp\left(i\sum_{a=1}^rz_aF_{\mathbf f}(W_a)\right).
 \label{eq:scalar-replica-signature}
\end{equation}
Equality of signatures means pointwise equality for every displayed
index.  In particular, the datum consists only of joint laws of copies of
the one scalar random variable $F_{\mathbf f}$; no chaos projection,
tensor leg, or open network is supplied to the observer.
\end{definition}

The following elementary lemma makes the passage from laws to moments
explicit.

\begin{lemma}[Replica laws determine joint moments]
\label{lem:laws-determine-moments}
Let $Z=(Z_1,\ldots,Z_s)$ be a finite family contained in the sum of the
first $m$ Wiener chaoses over an arbitrary Gaussian Hilbert space.  Every
polynomial moment of $Z$ is finite and is determined by its joint law.
Consequently, \eqref{eq:scalar-replica-signature} determines
\[
 M_{\mathbf f,r}(\Sigma)
 :=\E\prod_{a=1}^rF_{\mathbf f}(W_a)
\]
for every $r$ and every $\Sigma\in\Corr_r^\circ$.
\end{lemma}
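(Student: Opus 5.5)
The proof has two parts: integrability from hypercontractivity, then moment identification from the joint law. The polynomial $\prod_a F_{\mathbf f}(W_a)$ itself needs no separate expansion, since it is one polynomial in the vector $(F_{\mathbf f}(W_1),\ldots,F_{\mathbf f}(W_r))$.

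\textbf{Finiteness.} Each $Z_j$ lies in the sum of the first $m$ Wiener chaoses, hence in $L^2$. By Nelson's hypercontractivity, on the sum of the first $m$ chaoses all $L^p$ norms are equivalent to the $L^2$ norm: for $p\ge2$,
\[
\|Y\|_{L^p}\le (p-1)^{m/2}\|Y\|_{L^2}.
\]
So every $Z_j$ lies in $\bigcap_{p<\infty}L^p$. Generalized H\"older then makes every monomial $Z_1^{k_1}\cdots Z_s^{k_s}$ integrable.

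\textbf{Determination by the law.} A moment $\E[Z^k]$ is the integral of the polynomial $x^k$ against the pushforward law of $Z$ on $\R^s$. It is therefore a functional of that law alone, and nothing more is required. To connect directly with the characteristic-function data, note that finiteness of all moments lets us differentiate under the integral. This gives
\[
\E[Z^k]=i^{-|k|}\,\partial_z^k\,\E e^{i\langle z,Z\rangle}\big|_{z=0},
\]
with differentiation justified by dominated convergence, using the integrable bound $|Z^k|$. So the moments are read off from the characteristic function near the origin.

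\textbf{Application to the replica signature.} Apply this with $s=r$ and $Z_a=F_{\mathbf f}(W_a)$. Each $Z_a$ lies in the first $m$ chaoses of the Gaussian Hilbert space generated by the correlated family $(W_1,\ldots,W_r)$. That family is realized inside the chaos of the independent $\widetilde W_j$ via $W_a=\sum_j A_{aj}\widetilde W_j$, and multiple integrals of $W_a$ are chaos elements of the same order in the enlarged space. The signature $\Phi_{\mathbf f}(r,\Sigma,\cdot)$ is exactly the characteristic function of $Z$. Taking $k=(1,\ldots,1)$ recovers
\[
M_{\mathbf f,r}(\Sigma)=\E\prod_{a=1}^r Z_a.
\]

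The only mildly delicate point is that last chaos-order claim. It holds because $I_q^{W_a}(f)$ is a polynomial of degree $q$ in Gaussian elements of the joint space, such as Hermite polynomials for elementary tensors, followed by $L^2$-closure, and the closure stays within the finite-order chaos sum. Hypercontractivity then applies on the enlarged space.
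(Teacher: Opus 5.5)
Your proof is correct and matches the paper on finiteness (Nelson hypercontractivity on the finite chaos sum, then H\"older). In the identification step you note directly that $\E[Z^k]$ is the integral of an integrable function against the law, or alternatively differentiate the characteristic function at the origin, whereas the paper passes to the Borel law and recovers each moment by truncating the monomial to cubes $\max_j|z_j|\le R$ and letting $R\to\infty$. Your differentiation is legitimate: the dominating functions for the successive difference quotients are the lower monomials $|Z^{k'}|$, $k'\le k$, not only $|Z^k|$, but all of them are integrable. Your explicit check that the replicas $F_{\mathbf f}(W_a)$ lie jointly in a finite chaos sum of one Gaussian Hilbert space also fills in a point the paper leaves implicit.
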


\begin{proof}
Let $J_q$ denote orthogonal projection onto the $q$th chaos.  Nelson's
hypercontractive estimate \cite{JansonGaussian} gives, for $p\ge2$,
\[
 \norm{J_qZ_j}_{L^p}
 \le(p-1)^{q/2}\norm{J_qZ_j}_{L^2}.
\]
Hence every $Z_j$ belongs to every finite $L^p$, and H\"older's
inequality gives all mixed polynomial moments.  The characteristic
function determines the Borel law.  If $z^\alpha$ is a monomial, truncate
it to the cube $\max_j|z_j|\le R$ and let $R\to\infty$; dominated
convergence recovers its expectation from the law.  This avoids any
unjustified differentiation under the expectation.
\end{proof}

\subsection{Extraction of loop-free contractions}

Let $N=(N_{ab})_{1\le a,b\le r}$ be a symmetric matrix of nonnegative
integers such that
\begin{equation}
 N_{aa}=0,
 \qquad
 q_a:=\sum_{b\ne a}N_{ab}\in\{1,\ldots,m\}.
 \label{eq:loop-free-adjacency}
\end{equation}
At vertex $a$ place the symmetric tensor $f_{q_a}$ and join exactly
$N_{ab}$ tensor legs between vertices $a$ and $b$.  Since the tensors are
symmetric, the resulting scalar depends only on $N$; we denote it by
\begin{equation}
 \operatorname{Contr}_N
 (f_{q_1},\ldots,f_{q_r}).
 \label{eq:raw-loop-free-contraction}
\end{equation}
Parallel edges are allowed, whereas an edge from a vertex to itself is
not.  Put $\Sigma^N=\prod_{a<b}\Sigma_{ab}^{N_{ab}}$.

\begin{proposition}[Exact scalar coefficient formula]
\label{prop:exact-coefficient-formula}
For every $N$ satisfying \eqref{eq:loop-free-adjacency},
\begin{equation}
 \boxed{
 [\Sigma^N]M_{\mathbf f,r}(\Sigma)
 =\frac{\prod_{a=1}^r q_a!}
        {\prod_{1\le a<b\le r}N_{ab}!}
  \operatorname{Contr}_N
  (f_{q_1},\ldots,f_{q_r}).}
 \label{eq:exact-scalar-coefficient}
\end{equation}
Consequently, $\Rep(\mathbf f)$ determines every closed loop-free tensor
network made of the kernels $f_1,\ldots,f_m$, with arbitrary repetitions.
\end{proposition}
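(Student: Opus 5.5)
The approach is to expand the product of Wiener polynomials into chaos components and compute each term with the Gaussian diagram (Wick/Isserlis) formula for multiple Wiener integrals on correlated isonormal families. By multilinearity,
\[
 M_{\mathbf f,r}(\Sigma)=\sum_{p_1,\ldots,p_r\in\{0,\ldots,m\}}\E\prod_{a=1}^r I_{p_a}^{W_a}(f_{p_a}),
\]
with the convention $I_0(f_0)=f_0$. We then invoke the standard diagram formula for products of multiple integrals, which is justified by the hypercontractive integrability of \cref{lem:laws-determine-moments}. It says that $\E\prod_a I_{p_a}^{W_a}(f_{p_a})$ is a sum over all complete pairings of the $\sum_a p_a$ legs in which no two legs of the same vertex are paired. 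Each pairing contributes the contraction along the pairing, weighted by $\prod_{\text{pairs }(a,b)}\Sigma_{ab}$. The covariance $\Sigma_{ab}$ enters through \eqref{eq:correlated-isonormal-family}, and the ban on same-vertex pairings is exactly Wick renormalization of $I_p$.

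To establish this for correlated families, I would write $W_a=\sum_j A_{aj}\widetilde W_j$, or more simply realize all $W_a$ as $W(h\otimes e_a)$ for a single isonormal $W$ on $H\otimes\R^r$ with the degenerate-free inner product $\Sigma$. Then $I_p^{W_a}(f)=I_p^{W}(f\otimes e_a^{\otimes p})$, and the usual diagram formula on one Gaussian Hilbert space applies. The rest is bookkeeping.

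Group the pairings by their adjacency matrix $N$, where $N_{ab}$ is the number of pairs joining vertices $a$ and $b$. Such an $N$ is symmetric with zero diagonal and row sums $p_a$. The monomial attached to any such pairing is exactly $\Sigma^N$. Because the $f_{p}$ are symmetric, every pairing with a given $N$ contributes the same scalar $\operatorname{Contr}_N(f_{p_1},\ldots,f_{p_r})$.

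For a fixed loop-free $N$ with all $q_a\in\{1,\ldots,m\}$, only the index choice $p_a=q_a$ produces the monomial $\Sigma^N$. A vertex with $p_a=0$ would have row sum zero, so the exponent pattern pins down every $p_a$. The off-diagonal monomials $\prod_{a<b}\Sigma_{ab}^{N_{ab}}$ are distinct for distinct $N$ and can be read as polynomial coefficients in the free coordinates $(\Sigma_{ab})_{a<b}$. These coordinates range over an open set of $\R^{r(r-1)/2}$ (a neighbourhood of $I$ lies in $\Corr_r^\circ$), so $M_{\mathbf f,r}$ is a polynomial there and its coefficients are determined by its values.

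The step requiring genuine care is counting the pairings with a prescribed $N$. For each vertex $a$, distribute its $q_a$ labelled legs into groups of sizes $(N_{ab})_{b\ne a}$; there are $q_a!/\prod_{b\ne a}N_{ab}!$ ways. For each unordered pair $\{a,b\}$, match the $N_{ab}$ chosen legs at $a$ bijectively with those at $b$; there are $N_{ab}!$ ways. Since each $N_{ab}!$ appears twice in the denominators and once in the numerator, the total is
\[
 \frac{\prod_a q_a!}{\prod_{a<b}N_{ab}!\,\prod_{a<b}N_{ab}!}\prod_{a<b}N_{ab}!
 =\frac{\prod_a q_a!}{\prod_{a<b}N_{ab}!},
\]
which gives \eqref{eq:exact-scalar-coefficient}.

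The consequence then follows. By \cref{lem:laws-determine-moments} the signature determines $M_{\mathbf f,r}$ on $\Corr_r^\circ$, hence every coefficient, hence every $\operatorname{Contr}_N$. Any closed loop-free network with repeated kernels is obtained by taking $r$ equal to its number of vertices.
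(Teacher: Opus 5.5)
Your proposal is correct and follows the paper's proof essentially step by step: Wick expansion into pairings that never join two legs of the same vertex, grouping by the adjacency matrix $N$ (which forces $p_a=q_a$), the same block-partition-then-biject count giving $\prod_a q_a!/\prod_{a<b}N_{ab}!$, and coefficient extraction on an open neighbourhood of $I_r$ inside $\Corr_r^\circ$. Your realization of the correlated replicas as $W(h\otimes e_a)$ for one isonormal process on $H\otimes\R^r$ with inner product $\Sigma$ makes the correlated Wick rule explicit, a step the paper simply invokes; the paper instead makes the neighbourhood of $I_r$ explicit through the bound $\max_{a<b}|\Sigma_{ab}|<1/(r-1)$.
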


\begin{proof}
Expand each factor in $M_{\mathbf f,r}$ into homogeneous chaoses.  Wick's
formula pairs microscopic tensor legs belonging to distinct Wick-ordered
vertices.  A pairing therefore has a symmetric zero-diagonal adjacency
matrix $N$, and each edge between replicas $a$ and $b$ contributes the
factor $\Sigma_{ab}$.  Thus $M_{\mathbf f,r}$ is a polynomial in the
$\binom r2$ off-diagonal entries of $\Sigma$.

For the monomial $\Sigma^N$, the degree condition at vertex $a$ forces
the unique homogeneous kernel $f_{q_a}$.  To count the pairings, first
partition the $q_a$ labelled leg positions at vertex $a$ into blocks of
sizes $(N_{ab})_{b\ne a}$, then biject the two blocks assigned to each
unordered pair $\{a,b\}$.  The number of pairings is
\[
 \prod_a\frac{q_a!}{\prod_{b\ne a}N_{ab}!}
 \prod_{a<b}N_{ab}!
 =\frac{\prod_aq_a!}{\prod_{a<b}N_{ab}!}.
\]
All these pairings have the same value because every $f_q$ is symmetric,
which proves \eqref{eq:exact-scalar-coefficient}.

It remains only to justify coefficient extraction from correlation
matrices with fixed diagonal.  If $A(z)$ is symmetric with zero diagonal
and off-diagonal entries $z_{ab}$, then
\[
 \max_{a<b}|z_{ab}|<\frac1{r-1}
 \quad\Longrightarrow\quad
 \norm{A(z)}_{\mathrm{op}}<1
 \quad\Longrightarrow\quad I_r+A(z)>0.
\]
Thus $\Corr_r^\circ$ contains a genuine open neighbourhood of $I_r$ in
all off-diagonal coordinates.  A polynomial is determined by its values
on that open set, so \cref{lem:laws-determine-moments} determines every
coefficient in \eqref{eq:exact-scalar-coefficient}.  The constant
$f_0=\E F_{\mathbf f}$ is determined separately by the one-replica law.
\end{proof}

The restriction to unit diagonal is important: it keeps the marginal
Wick ordering fixed while leaving all cross-covariances free.  General
correlation matrices, rather than a one-parameter equicorrelation curve,
are what separate the different loop-free adjacency patterns.
\section{Infinite-dimensional Replica--Tannaka reconstruction}
\label{sec:tensor-reconstruction}

We now prove that the loop-free scalar register extracted in
\cref{sec:replica-register} determines the tensor packet up to one common
orthogonal transformation, even when the Gaussian Hilbert space has
infinite dimension.

For $q\ge1$ define the one-leg flattening
\begin{equation}
 L_{f_q}:H^{\otimes(q-1)}\longrightarrow H,
 \qquad
 \ip{L_{f_q}u}{v}_H
 =\ip{f_q}{v\otimes u}_{H^{\otimes q}}.
 \label{eq:one-leg-flattening}
\end{equation}
For $q=1$, the domain is $\R$ and $L_{f_1}(c)=cf_1$.  Each
$L_{f_q}$ is Hilbert--Schmidt and
\begin{equation}
 \norm{L_{f_q}}_{\mathcal S_2}^2=\norm{f_q}_{H^{\otimes q}}^2.
 \label{eq:flattening-HS-norm}
\end{equation}

\begin{definition}[Minimal active support]
The minimal active support of $\mathbf f$ is
\begin{equation}
 S(\mathbf f)
 =\overline{\Span}\left\{
   \Ran L_{f_q}:1\le q\le m
  \right\}\subset H.
 \label{eq:minimal-active-support}
\end{equation}
It is the smallest closed subspace $S\subset H$ such that
$f_q\in S^{\odot q}$ for all $q\ge1$.
\end{definition}

Indeed, if $v\perp S(\mathbf f)$, then $L_{f_q}^*v=0$, so contracting
$f_q$ against $v$ in one slot gives zero.  Symmetry gives the same in
every slot, and hence $f_q\in S(\mathbf f)^{\odot q}$.  The converse is
immediate from \eqref{eq:one-leg-flattening}.  Directions orthogonal to
$S(\mathbf f)$ are therefore invisible to the polynomial and cannot be
reconstructed.

\begin{theorem}[Scalar Replica--Tannaka reconstruction]
\label{thm:scalar-replica-tannaka}
Let $H,H'$ be real separable Hilbert spaces and let
$\mathbf f=(f_0,\ldots,f_m)$ and
$\mathbf f'=(f'_0,\ldots,f'_m)$ be finite symmetric tensor packets on
$H$ and $H'$, respectively.  Then
\begin{equation}
 \Rep(\mathbf f)=\Rep(\mathbf f')
 \quad\Longleftrightarrow\quad
 \begin{cases}
 f'_0=f_0,\\[2mm]
 \text{there is a surjective real unitary }
 U:S(\mathbf f)\to S(\mathbf f')\text{ such that}\\[1mm]
 \displaystyle f'_q=U^{\otimes q}f_q,
 \quad 1\le q\le m.
 \end{cases}
 \label{eq:replica-tannaka-equivalence}
\end{equation}
The unitary need not be unique: the set of all choices is a torsor under
the orthogonal stabilizer of the packet.
\end{theorem}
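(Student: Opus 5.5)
The easy direction ($\Leftarrow$) goes first. Given $U$ and $f'_0=f_0$, I extend $U$ to an isonormal identification: the law of $(W_a(h))_{a,h\in S(\mathbf f)}$ under $\Sigma$-correlation is determined by $\Sigma$ and the inner products on $S(\mathbf f)$. Since $F_{\mathbf f}(W)$ depends only on $W|_{S(\mathbf f)}$, because $f_q\in S(\mathbf f)^{\odot q}$, the pushforward $W'_a(Uh):=W_a(h)$ is again a $\Sigma$-correlated family on $S(\mathbf f')$ and satisfies $I_q^{W'_a}(U^{\otimes q}f_q)=I_q^{W_a}(f_q)$. Hence all joint laws coincide and $\Rep(\mathbf f)=\Rep(\mathbf f')$. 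The torsor statement is formal: two choices of $U$ differ by an element of the stabilizer.

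For ($\Rightarrow$), \cref{prop:exact-coefficient-formula} gives equality of $f_0$ and of every closed loop-free network. The first step recovers the support Gram $D_{\mathbf f}=\sum_q L_{f_q}L_{f_q}^*$, a positive trace-class operator on $H$ with closed range $S(\mathbf f)$. Traces of words in the operators $L_{f_q}L_{f_p}^*$-type contractions that avoid self-loops are loop-free networks. I will show that $\Tr(D_{\mathbf f}^k)$ and, more generally, traces $\Tr(D^k\,T)$ against loop-free insertions are loop-free for $k\ge1$. The reason is that $D^k$ is a chain of flattenings joined along single legs between distinct vertices, and the remaining $q-1$ legs at each pair of adjacent vertices are joined to each other, which are edges between distinct vertices. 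So $\Tr D^k$ is matched for all $k\ge1$, the nonzero spectra with multiplicities agree, and $D_{\mathbf f}$ and $D_{\mathbf f'}$ are unitarily equivalent on the supports.

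The second step takes a spectral gap $\delta$ not in the spectrum and lets $P_\delta$ be the finite-rank spectral projection onto eigenvalues $>\delta$. Since the spectrum is discrete with finite multiplicities away from $0$, I approximate $\mathbf 1_{(\delta,\infty)}(x)$ on the spectrum by polynomials $p$ with $p(0)=0$, which can be done uniformly because the spectrum is compact and $\delta$ is a gap. The compressed tensors $(P_\delta^{\otimes q}f_q)$ together with the marker $P_\delta D P_\delta$ live on a finite-dimensional space $V_\delta$. Their full orthogonal invariants are arbitrary closed contractions, including self-loops. The key trick is that a self-loop at a vertex carrying $P_\delta$ insertions can be rerouted through a $p(D)$ factor, which is a sum of loop-free chains, so every contraction of the compressed packet becomes a trace-norm limit of loop-free networks. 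The estimate uses $\|p(D)-P_\delta\|_{\mathcal S_1}\to0$ and Hilbert–Schmidt bounds on the $L_{f_q}$. This step is the main obstacle: I must verify carefully that every closed diagram of the compressed packet can be written with each self-contraction passing through at least one $D$ factor, so that no genuine self-loop is needed.

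The final step applies the first fundamental theorem for $O(n)$ (Weyl–Brauer) together with the compactness of orbits of real orthogonal group actions. Equal invariants give some orthogonal $U_\delta:V_\delta\to V'_\delta$ carrying the compressed tensors and the marker to each other. The set $\mathcal U_\delta$ of such maps is a nonempty compact set. Restriction from a smaller $\delta'<\delta$ to $V_\delta$ is well defined, because the transported marker forces $U_{\delta'}$ to map the spectral subspace $V_\delta$ onto $V'_\delta$ and to commute with the compressions. The sets $\mathcal U_{\delta_n}$ with $\delta_n\downarrow0$ therefore form an inverse system of nonempty compact sets, which has a nonempty limit. An element of the limit defines an isometry on $\bigcup V_{\delta_n}$, which is dense in $S(\mathbf f)$; it extends to a surjective unitary onto $S(\mathbf f')$. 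Since $P_{\delta_n}^{\otimes q}f_q\to f_q$ in norm, the extension satisfies $f'_q=U^{\otimes q}f_q$.
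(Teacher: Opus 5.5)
Your architecture is the paper's. You use the support Gram, equality of $\Tr(D^k)$ and hence of the nonzero spectra, finite spectral caps opened by polynomials in $D$, the orthogonal first fundamental theorem applied to the compressed tensors together with the marker $P_\delta DP_\delta$, and a compact inverse limit. Your converse and the torsor remark are fine.

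The genuine gap is in the opening step. Uniform approximation of $\mathbf 1_{(\delta,\infty)}$ on $\sigma(D)$ by polynomials with $p(0)=0$ gives only $\|p(D)-P_\delta\|_{\mathrm{op}}\to0$. It does not give the trace-norm convergence you then invoke. The error $\sum_{\lambda<\delta}m(\lambda)|p(\lambda)|$ runs over infinitely many eigenvalues and is not controlled by a sup bound: add to a good approximant $\frac1n$ times a nonnegative polynomial that vanishes at $0$ and is close to $1$ on very many small eigenvalues.

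Operator-norm convergence is genuinely too weak here. Already for one vertex $f_2$ with one self-pairing, the approximant is $\Tr(L_{f_2}\,p(D))$. If $L_{f_2}$ is Hilbert--Schmidt but not trace class, these values need not converge to $\Tr(L_{f_2}P_\delta)$.

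The device you are missing is the factorization $P_\delta=D\,h(D)$. Here $h$ is continuous, $h=0$ on $[0,a]$, and $h(x)=x^{-1}$ on $[b,M]$, where $[a,b]\ni\delta$ is a gap in the common spectrum. Take polynomials $p_n\to h$ uniformly and set $Q_n=Dp_n(D)$; then $\|Q_n-P_\delta\|_1\le\Tr(D)\,\|p_n-h\|_\infty$. Since the nonzero spectra coincide, the same $p_n$ serve for both packets.

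You also need the continuity estimate that makes the limit legitimate in infinite dimension. This is $|Z_G|\le\prod_v\|\tau_v\|_2\prod_e\|T_e\|_1$ for networks with trace-class edge weights, self-pairings allowed, proved via nuclear decompositions, together with its telescoping form. Hilbert--Schmidt bounds on the $L_{f_q}$ alone do not supply this.

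The obstacle you flag is then resolved without any case analysis. Write an arbitrary contraction of the compressed packet as a trace-weighted network. Its vertices are the uncompressed $f_q$ and, for each marker, a two-valent vertex $D$. Put the weight $P_\delta$ on every contracted pair of slots; this is legitimate by $P_\delta^2=P_\delta$, including for two slots of the same vertex. After every weight is replaced by $Q_n$, each edge carries at least one factor $D$. The dipole expansion of $D$ then turns every former self-pairing into a path through fresh vertices. So each approximant is a finite combination of loop-free networks, determined by \cref{prop:exact-coefficient-formula}.

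A small slip: $D$ does not have closed range $S(\mathbf f)$ unless it has finite rank. Only the closure of its range is $S(\mathbf f)$, and that is all your density argument uses.
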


The proof occupies the rest of this section.  We may replace $H$ and
$H'$ by their minimal active supports, and therefore assume from now on
that $S(\mathbf f)=H$ and $S(\mathbf f')=H'$.
If this support is zero, all nonconstant kernels vanish; the two-replica
moments force the same conclusion for the primed packet, and the theorem
is immediate.  We henceforth treat the nonzero case.

\subsection{The support Gram}

Define the positive operator
\begin{equation}
 D_{\mathbf f}
 =\sum_{q=1}^m L_{f_q}L_{f_q}^*\quad\text{on }H.
 \label{eq:support-Gram}
\end{equation}
It is trace class, with
\begin{equation}
 \Tr D_{\mathbf f}
 =\sum_{q=1}^m\norm{f_q}^2<\infty.
 \label{eq:support-Gram-trace}
\end{equation}
Moreover,
\begin{equation}
 \ker D_{\mathbf f}
 =\bigcap_{q=1}^m\ker L_{f_q}^*
 =S(\mathbf f)^\perp.
 \label{eq:support-Gram-kernel}
\end{equation}
Thus $D_{\mathbf f}$ is injective in the minimal realization.

Graphically, every summand $L_{f_q}L_{f_q}^*$ is a dipole: two fresh
copies of $f_q$ joined along $q-1$ legs, with one leg left at each end.
Consequently, every complete contraction containing powers of
$D_{\mathbf f}$ expands into a finite sum of closed loop-free networks
made from the original $f_q$'s.  By
\cref{prop:exact-coefficient-formula}, the scalar replica signature
determines all these values.

In particular, equality of replica signatures gives
\begin{equation}
 \Tr(D_{\mathbf f}^k)=\Tr(D_{\mathbf f'}^k),
 \qquad k\ge1.
 \label{eq:equal-support-Gram-moments}
\end{equation}
These moments determine the nonzero spectrum with multiplicity.  To see
this without invoking a moment theorem with hidden hypotheses, define
\[
 \mu_{\mathbf f}
 =\sum_{\lambda>0}m_{\mathbf f}(\lambda)\lambda\,\delta_\lambda,
\]
where $m_{\mathbf f}(\lambda)$ is the multiplicity of the nonzero
eigenvalue $\lambda$ of $D_{\mathbf f}$.  This is a finite measure.
Regard both the primed and unprimed measures as measures on the common
compact interval $[0,M]$, where
$M=\max(\norm{D_{\mathbf f}}_{\mathrm{op}},
\norm{D_{\mathbf f'}}_{\mathrm{op}})$; then
\begin{equation}
 \int x^j\,d\mu_{\mathbf f}(x)
 =\Tr(D_{\mathbf f}^{j+1}),\qquad j\ge0.
 \label{eq:support-spectral-moments}
\end{equation}
Equality of all moments and polynomial density in $C([0,M])$ give
$\mu_{\mathbf f}=\mu_{\mathbf f'}$.  The atom at $\lambda>0$ has mass
$m_{\mathbf f}(\lambda)\lambda$, so the multiplicities agree.
In particular, the two nonzero spectra have the same maximum and
\begin{equation}
 \norm{D_{\mathbf f}}_{\mathrm{op}}
 =\norm{D_{\mathbf f'}}_{\mathrm{op}}.
 \label{eq:common-support-Gram-operator-norm}
\end{equation}

\subsection{Finite spectral caps recover the missing contractions}

We first record the continuity statement which will be used to open
self-pairings.  It is formulated with enough precision to avoid using
an unjustified cut of an infinite-dimensional tensor network.

\begin{lemma}[Trace-weighted network bound]
\label{lem:trace-weighted-network-bound}
Let $G=(V,E)$ be a finite multigraph; loops and parallel edges are
allowed.  At a vertex $v$ of degree $d_v$, place a tensor
$\tau_v\in H^{\otimes d_v}$.  Orient the two ends of every edge $e$ and
place a trace-class operator $T_e\in\mathcal S_1(H)$ on that edge.  The
resulting edge-weighted contraction $Z_G((\tau_v),(T_e))$ is absolutely
defined and satisfies
\begin{equation}
 \left|Z_G((\tau_v),(T_e))\right|
 \le \prod_{v\in V}\norm{\tau_v}_2
      \prod_{e\in E}\norm{T_e}_1.
 \label{eq:trace-weighted-network-bound}
\end{equation}
It is jointly continuous in the trace norms of the edge operators.  More
precisely, after fixing an ordering $E=\{e_1,\ldots,e_\ell\}$,
\begin{align}
 &\left|Z_G((\tau_v),(T_e))-Z_G((\tau_v),(S_e))\right|\notag\\
 &\quad\le
 \prod_{v\in V}\norm{\tau_v}_2
 \sum_{j=1}^{\ell}\norm{T_{e_j}-S_{e_j}}_1
 \prod_{i<j}\norm{T_{e_i}}_1
 \prod_{i>j}\norm{S_{e_i}}_1.
 \label{eq:trace-weighted-network-telescoping}
\end{align}
\end{lemma}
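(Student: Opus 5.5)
The strategy is to reduce the network to finite-dimensional pieces. For those pieces the bound follows from a Cauchy--Schwarz contraction estimate, and we then pass to the limit.

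\emph{Step 1: finite-rank edge operators.} Assume first that every $T_e$ has finite rank. Write its singular value decomposition $T_e=\sum_{k}s_k^{(e)}\,x_k^{(e)}\otimes y_k^{(e)}$ with orthonormal families $(x_k^{(e)})$, $(y_k^{(e)})$, so that $\sum_k s_k^{(e)}=\norm{T_e}_1$. Substituting these decompositions into the contraction expands $Z_G$ as a finite multilinear sum. Each term has the form
\[
 \prod_e s_{k_e}^{(e)}\prod_{v}\ip{\tau_v}{\textstyle\bigotimes_{\text{half-edges at }v}w}.
\]
Here each half-edge at $v$ carries either $x_{k_e}^{(e)}$ or $y_{k_e}^{(e)}$, according to the orientation of $e$. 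For a loop at $v$, both vectors sit at $v$. Every such vector has unit norm, so by Cauchy--Schwarz
\[
 \bigl|\ip{\tau_v}{\otimes w}\bigr|\le\norm{\tau_v}_2\prod\norm{w}=\norm{\tau_v}_2 .
\]
Summing over all indices then gives
\[
 |Z_G|\le\prod_v\norm{\tau_v}_2\prod_e\sum_k s^{(e)}_k,
\]
which is \eqref{eq:trace-weighted-network-bound}. This term-by-term absolute bound is exactly what makes the contraction absolutely defined.

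\emph{Step 2: general trace-class operators.} For general $T_e\in\mathcal S_1(H)$, the same singular value expansion is now an infinite series. The estimate in Step 1 shows that this series converges absolutely, with the sum of absolute values bounded by the right-hand side of \eqref{eq:trace-weighted-network-bound}. We therefore take the absolutely convergent series as the definition of $Z_G$. To see that this definition is intrinsic, note that $Z_G$ is multilinear in the edge operators. Finite-rank operators are dense in $\mathcal S_1$, and the bound from Step 1 shows that $Z_G$ extends uniquely by continuity from the finite-rank case. Hence the value of $Z_G$ does not depend on the particular singular value decomposition chosen.

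\emph{Step 3: telescoping estimate.} The bound \eqref{eq:trace-weighted-network-telescoping} follows by a standard telescoping argument. Write
\[
 Z(T_{e_1},\ldots,T_{e_\ell})-Z(S_{e_1},\ldots,S_{e_\ell})
 =\sum_{j=1}^{\ell}Z(T_{e_1},\ldots,T_{e_{j-1}},T_{e_j}-S_{e_j},S_{e_{j+1}},\ldots,S_{e_\ell}).
\]
Apply \eqref{eq:trace-weighted-network-bound} to each summand, using multilinearity in the edge slots. This gives the stated inequality, and joint continuity in the trace norms follows immediately.

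The main obstacle is the handling of loops and of a trace-class operator sitting on a self-edge. The point of the argument is that the singular value expansion converts every edge, loops included, into a sum of unit rank-one pieces. Cauchy--Schwarz then applies vertex by vertex, without cutting an infinite-dimensional network. The only care needed is to check that the Step 1 bound holds uniformly, so that the density extension in Step 2 is legitimate and independent of the chosen decomposition.
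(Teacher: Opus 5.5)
Your proposal is correct and follows essentially the same route as the paper. Both arguments expand each edge operator in a nuclear (singular value) decomposition, bound each vertex evaluation by $\norm{\tau_v}_2$ using unit inserted vectors (loops included), appeal to absolute convergence and density of finite-rank operators for independence of the decomposition, and replace the edge operators one at a time to obtain the telescoping estimate. Your separate finite-rank step only makes the paper's density remark slightly more explicit.
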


\begin{proof}
Choose a nuclear decomposition
\[
 T_e=\sum_{j\ge1}s_{e,j}\,u_{e,j}\otimes v_{e,j},
 \qquad
 s_{e,j}\ge0,\quad
 \sum_js_{e,j}=\norm{T_e}_1,
\]
where the two vector families are orthonormal after zero terms have been
discarded.  For every choice of one index $j_e$ per edge, insert
$u_{e,j_e}$ and $v_{e,j_e}$ into the two endpoint slots.  The absolute
value of the resulting evaluation at $v$ is at most
$\norm{\tau_v}_2$, because all inserted endpoint vectors have norm one.
The sum of the absolute values of all terms is therefore bounded by
\[
 \prod_v\norm{\tau_v}_2
 \sum_{(j_e)_{e\in E}}\prod_{e\in E}s_{e,j_e}
 =\prod_v\norm{\tau_v}_2\prod_e\norm{T_e}_1.
\]
This also covers a loop: its two singular vectors are simply inserted
in two different slots of the same vertex tensor.  Absolute convergence
and density of finite-rank operators show that the value is independent
of the chosen nuclear decompositions.  Finally, replace the edge
operators one at a time and apply \eqref{eq:trace-weighted-network-bound}
to each difference.  The resulting multilinear telescoping sum is
exactly \eqref{eq:trace-weighted-network-telescoping}.
\end{proof}

Fix $\delta>0$ outside the common nonzero spectrum and set
\begin{equation}
 P_\delta=\mathbf1_{[\delta,\infty)}(D_{\mathbf f}),
 \qquad
 P'_\delta=\mathbf1_{[\delta,\infty)}(D_{\mathbf f'}).
 \label{eq:finite-spectral-caps}
\end{equation}
These projections have the same finite rank.  Although the scalar
register directly contains only loop-free graphs, the finite spectral
caps allow us to reconstruct contractions with self-pairings.

\begin{lemma}[Spectral-cap opening]
\label{lem:spectral-cap-opening}
The common loop-free register determines every complete contraction,
with repetitions, of the finite-dimensional packets
\begin{equation}
 \left(
  P_\delta^{\otimes q}f_q\ (1\le q\le m),
  \quad B_\delta:=P_\delta D_{\mathbf f}P_\delta
 \right)
 \label{eq:finite-compressed-packet}
\end{equation}
and the analogous primed packet.  Corresponding contractions have equal
values.
\end{lemma}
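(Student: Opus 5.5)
Every closed network of the compressed packet \eqref{eq:finite-compressed-packet} is a closed network $G$ of the original tensors $f_q$ in which each edge carries the projection $P_\delta$, and each marker vertex $B_\delta$ becomes a $D_{\mathbf f}$-dipole flanked by $P_\delta$'s. Indeed, $P_\delta^{\otimes q}f_q$ is $f_q$ with $P_\delta$ applied on every leg, and $P_\delta^2=P_\delta$. So any such contraction equals $Z_G((f_{q_v}),(T_e))$ in the notation of \cref{lem:trace-weighted-network-bound}, with every edge operator equal to $P_\delta$ (possibly several composed into one). A marker vertex can be expanded by $D_{\mathbf f}=\sum_q L_{f_q}L_{f_q}^*$ into two new $f_q$-vertices joined along $q-1$ bare legs. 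The plan is to replace $P_\delta$ on every edge by an operator that the loop-free register sees, and then pass to a limit.

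First I write $P_\delta=\mathbf 1_{[\delta,\infty)}(D_{\mathbf f})$ as $D_{\mathbf f}\,g(D_{\mathbf f})$ with $g(x)=x^{-1}\mathbf 1_{[\delta,\infty)}(x)$. Because $\delta$ lies outside the spectrum, which accumulates only at $0$, there is a gap $(\delta_-,\delta_+)\ni\delta$ free of spectrum. On $[0,\delta_-]\cup[\delta_+,M]$ the function $\mathbf 1_{[\delta,\infty)}(x)/x$ (set to $0$ on $[0,\delta_-]$) is continuous. Weierstrass then gives polynomials $p_n$ with $\sup|p_n(x)-g(x)|\to0$ on $\operatorname{spec}(D_{\mathbf f})\cup\operatorname{spec}(D_{\mathbf f'})$. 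This set is common by the spectral equality shown above, so the same $p_n$ serve both packets, and since $M$ is common the sup norms agree. Setting $T_n=D_{\mathbf f}p_n(D_{\mathbf f})$, we get
\[
\norm{T_n-P_\delta}_1\le \sup_{x\in\operatorname{spec}}|p_n(x)-g(x)|\cdot\Tr D_{\mathbf f}\to0,
\]
because $x(p_n(x)-g(x))$ is controlled by $x\cdot\sup|p_n-g|$, and $\sum_\lambda m(\lambda)\lambda=\Tr D_{\mathbf f}$. The same estimate holds in the primed picture with the same $n$-rate, since the traces agree by \eqref{eq:support-Gram-trace} and $\mu_{\mathbf f}=\mu_{\mathbf f'}$. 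This is the key reason the approximation uses the factor $D$: only then is it trace-norm, and not just operator-norm, convergent.

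Next I insert $T_n$ on every edge of $G$. Since $T_n$ is a polynomial in $D_{\mathbf f}$ with no constant term, each edge becomes a finite linear combination of chains of dipoles $L_{f_q}L_{f_q}^*$. Every such chain is a sequence of fresh $f_q$-vertices joined by bare edges, and the two ends of the chain attach to the original endpoints. The resulting networks have no self-pairings. This is true even when the original edge was a loop at one vertex, because the chain contains at least one dipole, and that dipole separates the two ends with two distinct new vertices. So $Z_G((f_{q_v}),(T_n))$ is a finite linear combination, with coefficients depending only on $p_n$ and $G$, of closed loop-free networks of the $f_q$. \Cref{prop:exact-coefficient-formula} identifies these networks with the primed ones, so corresponding values are equal for each $n$.

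Finally I let $n\to\infty$ using the telescoping estimate \eqref{eq:trace-weighted-network-telescoping}, with $\norm{\tau_v}_2=\norm{f_{q_v}}$ and $\sup_n\norm{T_n}_1<\infty$. This gives convergence to the compressed contraction on both sides, hence equality of the limits. Marker vertices $B_\delta$ are already covered by the dipole expansion, with $P_\delta$'s on their outer legs.

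The main obstacle is the loop issue: checking that every self-pairing, including self-pairings at a marker vertex, is genuinely opened. The no-constant-term structure of $T_n$ handles this. A second point to watch is the trace-norm approximation: I must confirm it holds uniformly on the common spectrum, which \eqref{eq:support-spectral-moments} guarantees.
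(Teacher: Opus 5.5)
Your proposal is correct and follows the paper's proof essentially step for step: you write $P_\delta=D_{\mathbf f}h(D_{\mathbf f})$ with $h$ vanishing below the spectral gap and equal to $x^{-1}$ above it, approximate in trace norm by $D_{\mathbf f}p_n(D_{\mathbf f})$ using one polynomial sequence common to both packets, open every self-pairing into a loop-free dipole chain, and pass to the limit with \eqref{eq:trace-weighted-network-telescoping}. One bookkeeping fix is needed: if you expand each marker $B_\delta$ into dipoles before taking the limit, the internal dipole legs carry the identity, which is not trace class, so \cref{lem:trace-weighted-network-bound} does not apply with $\norm{\tau_v}_2=\norm{f_{q_v}}$ as you state it; keep each marker as a single two-valent vertex with Hilbert--Schmidt tensor $D_{\mathbf f}$, as the paper does (or group each dipole into the vertex tensor $L_{f_q}L_{f_q}^*$), and expand it into dipoles only after the approximation step.
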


\begin{proof}
If $P_\delta=0$, the assertion is immediate, so assume otherwise.
Choose $0<a<\delta<b$ such that
$\sigma(D_{\mathbf f})\cap[a,b]=\varnothing$.  There is a continuous
function $h$ on the common interval $[0,M]$ introduced above which is
zero on $[0,a]$ and equals $x^{-1}$ on $[b,M]$.
Functional calculus gives
\begin{equation}
 P_\delta=D_{\mathbf f}h(D_{\mathbf f}).
 \label{eq:spectral-cap-factorization}
\end{equation}
If real polynomials $p_n\to h$ uniformly and
$Q_n=D_{\mathbf f}p_n(D_{\mathbf f})$, then
\begin{equation}
 \norm{Q_n-P_\delta}_1
 \le\Tr(D_{\mathbf f})\norm{p_n-h}_\infty
 \longrightarrow0.
 \label{eq:spectral-cap-trace-approximation}
\end{equation}

Take any complete contraction of the tensors in
\eqref{eq:finite-compressed-packet}.  Replace every occurrence of a
compressed tensor $P_\delta^{\otimes q}f_q$ by a vertex labelled $f_q$,
and replace every occurrence of $B_\delta=P_\delta D_{\mathbf f}P_\delta$
by a two-valent vertex labelled $D_{\mathbf f}$.  Put one edge weight
$P_\delta$ on each contracted pair of slots.  The projection identity
$P_\delta^2=P_\delta$ shows that the resulting trace-weighted network is
exactly the original finite-dimensional contraction, including when an
edge pairs two slots of the same vertex.

Replace all of its finitely many edge weights $P_\delta$ by $Q_n$.
The trace norms of $Q_n$ are uniformly bounded, and
\cref{lem:trace-weighted-network-bound},
\eqref{eq:spectral-cap-trace-approximation}, and
\eqref{eq:trace-weighted-network-telescoping} show that the new network
values converge to the desired contraction.  This proves continuity
without ever applying an unweighted cap to an infinite-dimensional
Hilbert tensor.

After expanding the polynomial, every inserted operator is a positive
power of $D_{\mathbf f}$.  Replace each occurrence of $D_{\mathbf f}$ by
its dipole expansion following \eqref{eq:support-Gram}.  A former
self-pairing is now a path through fresh tensor vertices; the copies of
$D_{\mathbf f}$ used as marker vertices are expanded in the same way.
No edge joins a vertex to itself.  Every approximant is therefore a
finite linear combination of closed loop-free networks and is determined by
\cref{prop:exact-coefficient-formula}.  The same approximation is
available in the primed system.  Equality of the raw registers and the
common spectral data imply equality of the limiting contractions.
\end{proof}

\subsection{Finite invariant theory and a coherent inverse limit}

The finite-dimensional orbit-separation step will be used in the
following explicit form.

\begin{proposition}[Complete contractions separate finite orthogonal tensor orbits]
\label{prop:finite-orthogonal-orbit-separation}
Let $V$ be a finite-dimensional real inner-product space, let
$d_1,\ldots,d_s\ge0$, and put
\[
 \mathcal W=\bigoplus_{\alpha=1}^s V^{\otimes d_\alpha}.
\]
For two points
$\boldsymbol\tau=(\tau_1,\ldots,\tau_s)$ and
$\boldsymbol\sigma=(\sigma_1,\ldots,\sigma_s)$ in $\mathcal W$, the
following are equivalent:
\begin{enumerate}[label=\textup{(\roman*)}]
\item $\boldsymbol\sigma=U\boldsymbol\tau$ for some $U\in O(V)$;
\item every complete pairwise contraction of every tensor word in the
$\tau_\alpha$, with arbitrary repetitions, has the same value for the
corresponding word in the $\sigma_\alpha$.
\end{enumerate}
\end{proposition}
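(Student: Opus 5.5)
The implication (i)$\Rightarrow$(ii) is immediate. A complete pairwise contraction of a tensor word is built only from the inner product of $V$. Replacing every $\tau_\alpha$ by $U^{\otimes d_\alpha}\tau_\alpha$ with $U\in O(V)$ therefore leaves each such value unchanged, since $\ip{Ux}{Uy}=\ip xy$ in every contracted pair of slots.

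For (ii)$\Rightarrow$(i) I will use the classical route: orthogonal invariants separate orbits of a compact group, and the first fundamental theorem says these invariants are generated by contractions.

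\textbf{Step 1 (compact orbits are separated by invariant polynomials).} The group $O(V)$ is compact and acts linearly on the finite-dimensional real space $\mathcal W$, so the orbits $O(V)\boldsymbol\tau$ and $O(V)\boldsymbol\sigma$ are compact. Suppose they are disjoint. By Urysohn and Stone--Weierstrass there is a real polynomial $p$ on $\mathcal W$ with $p\le 1/3$ on one orbit and $p\ge 2/3$ on the other. Averaging with respect to Haar measure gives $\bar p(w)=\int_{O(V)}p(gw)\,dg$. This $\bar p$ is still a polynomial, since the action preserves each space of homogeneous polynomials of bounded degree. It is $O(V)$-invariant and still separates the two orbits. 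Hence if every invariant polynomial takes the same value at $\boldsymbol\tau$ and $\boldsymbol\sigma$, the orbits coincide, which is (i).

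\textbf{Step 2 (invariants are spanned by contractions).} It remains to show that every invariant polynomial on $\mathcal W$ is a linear combination of complete contractions of tensor words. Decompose the polynomial into multihomogeneous parts of multidegree $(k_1,\ldots,k_s)$; each part is again invariant. Such a part is the pairing of a fixed tensor $T\in (V^{\otimes n})^{O(V)}$, where $n=\sum_\alpha k_\alpha d_\alpha$, with the tensor word $\tau_1^{\otimes k_1}\otimes\cdots\otimes\tau_s^{\otimes k_s}$. By the tensor form of the first fundamental theorem for $O(V)$ (Weyl, Brauer), $(V^{\otimes n})^{O(V)}$ is spanned by the products of metric tensors $\delta$ attached to perfect matchings of the $n$ slots, and is zero when $n$ is odd. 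Pairing a matching tensor with the word gives exactly a complete pairwise contraction of that word, including self-pairings within one $\tau_\alpha$. So hypothesis (ii) forces every invariant polynomial to agree at $\boldsymbol\tau$ and $\boldsymbol\sigma$. Degree-zero factors ($d_\alpha=0$) are scalars that appear directly as words, and the case $V=0$ is trivial.

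\textbf{Main obstacle.} The substantive input is the first fundamental theorem in its tensor form. I will cite it (Weyl; Brauer; Schrijver's graph-invariant formulation) rather than reprove it. The only care needed is the bookkeeping in Step 2: arbitrary invariant polynomials in several tensor arguments must reduce, via multihomogeneous components and polarization, to invariant tensors in $V^{\otimes n}$.

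An alternative to Step 1 avoids Haar averaging. Recover the Gram-type data and apply a direct compactness argument; that is more work, so I would use the averaging argument.
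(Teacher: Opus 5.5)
Your proposal is correct and follows essentially the same route as the paper: the first fundamental theorem for $O(V)$ identifies the invariant polynomials with combinations of complete contractions, and a Haar-averaged polynomial approximation of a function separating the two compact orbits shows that invariant polynomials separate distinct orbits. You present the two steps in the opposite order, and your Step 2 spells out the multihomogeneous, invariant-tensor reduction that the paper covers with a single citation of the theorem.
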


\begin{proof}
The implication \textup{(i)}$\Rightarrow$\textup{(ii)} is immediate.
For the converse, the first fundamental theorem for the orthogonal group
states that the invariant polynomial algebra
$\mathbb R[\mathcal W]^{O(V)}$ is generated by complete pairwise
contractions \cite{WeylClassicalGroups,Brauer1937,Schrijver2008}.
Thus \textup{(ii)} gives equality of every invariant polynomial at
$\boldsymbol\tau$ and $\boldsymbol\sigma$.

The two $O(V)$-orbits are compact.  If they were distinct, a continuous
function on their disjoint union taking the values $0$ and $1$ on the
two components would extend to a continuous function on a sufficiently
large closed ball.  Polynomial approximation on that ball, followed by
averaging over the compact group $O(V)$, would produce an invariant
polynomial separating the two orbits.  This contradicts equality of all
invariant polynomials.  Hence the two points lie in one orthogonal
orbit.
\end{proof}

Choose any isometry from $P'_\delta H'$ onto $P_\delta H$ and transport
the primed compressed packet to the latter space.  By
\cref{lem:spectral-cap-opening}, all complete contractions of the two
finite packets agree.  Applying
\cref{prop:finite-orthogonal-orbit-separation} and transporting back
gives the isometry below.

It follows from \cref{lem:spectral-cap-opening} that, for each admissible
$\delta$, there exists an isometry
\begin{equation}
 U_\delta:P_\delta H\longrightarrow P'_\delta H'
 \label{eq:finite-spectral-unitary}
\end{equation}
such that
\begin{align}
 U_\delta^{\otimes q}P_\delta^{\otimes q}f_q
 &= (P'_\delta)^{\otimes q}f'_q,
 &&1\le q\le m,
 \label{eq:finite-tensor-transport}\\
 U_\delta B_\delta U_\delta^*&=B'_\delta.
 \label{eq:finite-marker-transport}
\end{align}
The marker identity \eqref{eq:finite-marker-transport} is what makes the
finite reconstructions coherent.

Choose strictly decreasing thresholds
$\delta_n\downarrow0$ outside the common nonzero spectrum.  Put
$H_n=P_{\delta_n}H$ and $H'_n=P'_{\delta_n}H'$.  Let $\mathcal O_n$ be
the compact set of all isometries $V:H_n\to H'_n$ satisfying
\eqref{eq:finite-tensor-transport}--\eqref{eq:finite-marker-transport}
at level $n$.  It is nonempty by the preceding paragraph.  If
$V\in\mathcal O_N$ and $n<N$, then the marker identity and spectral
functional calculus imply
\[
 V(H_n)=H'_n,
 \qquad V|_{H_n}\in\mathcal O_n.
\]
Indeed, $P_{\delta_n}$ is the spectral projection of $B_{\delta_N}$
for $[\delta_n,\infty)$, so the marker identity transports it to
$P'_{\delta_n}$.  Applying these projections in every tensor leg of
\eqref{eq:finite-tensor-transport} gives precisely the level-$n$
transport identity.

Consider the compact product $\prod_{n\ge1}\mathcal O_n$.  For $n\ge1$,
let $C_n$ be the closed condition
\[
 V_{n+1}|_{H_n}=V_n.
\]
Every finite family $C_1,\ldots,C_N$ is satisfiable: choose an element of
$\mathcal O_{N+1}$, take all its lower restrictions, and choose arbitrary
elements at the remaining levels.  Compactness and the finite-intersection
property give a compatible sequence $(U_n)_{n\ge1}$.

Because $D_{\mathbf f}$ and $D_{\mathbf f'}$ are injective,
$\bigcup_nH_n$ and $\bigcup_nH'_n$ are dense.  The compatible $U_n$
therefore define a surjective unitary $U:H\to H'$.  Finally,
\[
 P_{\delta_n}^{\otimes q}f_q\longrightarrow f_q,
 \qquad
 (P'_{\delta_n})^{\otimes q}f'_q\longrightarrow f'_q
\]
in Hilbert tensor norm.  Passing to the limit in
\eqref{eq:finite-tensor-transport} yields
\begin{equation}
 U^{\otimes q}f_q=f'_q,
 \qquad1\le q\le m.
 \label{eq:global-tensor-transport}
\end{equation}
This proves the forward implication of
\cref{thm:scalar-replica-tannaka}.

Conversely, suppose \eqref{eq:global-tensor-transport} holds on the
minimal supports and $f'_0=f_0$.  Transport a $\Sigma$-correlated
isonormal family by $U$.  Equivariance of multiple Wiener integrals gives
\[
 I_q^{W'}(U^{\otimes q}f_q)=I_q^W(f_q)
\]
simultaneously for all replicas.  Hence every joint characteristic
function in \eqref{eq:scalar-replica-signature} agrees, proving the
reverse implication and completing the theorem.

\begin{remark}[Relation with closed tensor networks]
In finite dimension, converse orthogonal network theorems are closely
related to the real Holant theorem; see
\cite{YoungOrthogonalHolant}.  Two additional points are essential here.
First, the observer is given only joint laws of one unlabelled scalar
Wiener polynomial, and \cref{prop:exact-coefficient-formula} decompresses
those laws into the loop-free network register.  Second, the support Gram
and its finite spectral flags make orbit separation exact in a separable
infinite-dimensional Hilbert space.
\end{remark}
\subsection{A fixed universal Gaussian canvas}
\label{subsec:universal-gaussian-canvas}

The definition of $\Rep(\mathbf f)$ permits every correlation matrix.
This freedom is convenient for coefficient extraction, but it is far
from necessary.  We now replace the whole correlation elliptope by the
law of one scalar finite-chaos process driven by a fixed Gaussian
covariance kernel.  In particular, the covariance intervention is no
longer part of the datum.

Put
\begin{equation}
 K(x,y)=\exp\bigl(-(x-y)^2\bigr),
 \qquad x,y\in\mathbb R.
 \label{eq:universal-gaussian-kernel}
\end{equation}
This is a positive definite kernel with $K(x,x)=1$
\cite{Schoenberg1938}.  One direct
factorization is obtained in the symmetric Fock space over $\mathbb R$:
\begin{equation}
 \Psi(x)
 =e^{-x^2}
   \bigoplus_{j\ge0}\frac{2^{j/2}}{\sqrt{j!}}x^{\odot j},
 \qquad
 \ip{\Psi(x)}{\Psi(y)}=K(x,y).
 \label{eq:gaussian-kernel-fock-factorization}
\end{equation}
It is in fact strictly positive definite.  If
$x_1,\ldots,x_r$ are distinct, the first $r$ homogeneous coordinates
of the vectors $\Psi(x_i)$ form, up to nonzero row and column factors,
the Vandermonde matrix $(x_i^j)_{1\le i\le r,\,0\le j<r}$.
Thus the $\Psi(x_i)$ are linearly independent and their Gram matrix is
positive definite.

The kernel is not an arbitrary radial-basis choice.  In a Wick network,
$n$ parallel edges between two replica vertices replace a correlation
$K(x,y)$ by its tensor power $K(x,y)^n$.  If one fixed one-dimensional
geometry is to internalize this operation, tensor powers should be
realized by dilations of the same parameter line.  Stationarity,
normalization, and smooth nondegeneracy then force both the dilation law
and the covariance kernel, up to a change of length scale.

\begin{proposition}[Rigidity of a replica-stable stationary canvas]
\label{prop:canonical-gaussian-canvas}
Let $\widetilde K$ be a continuous real covariance kernel on $\mathbb R$
which is stationary and normalized:
\[
 \widetilde K(x+a,y+a)=\widetilde K(x,y),
 \qquad \widetilde K(x,x)=1.
\]
Assume that its stationary profile
$\kappa(t)=\widetilde K(t,0)$ is $C^2$ at zero with
$\kappa''(0)<0$, and that one nontrivial tensor power is represented by
some dilation: for an integer $n_0\ge2$ and a number $a>1$,
\begin{equation}
 \widetilde K(ax,ay)=\widetilde K(x,y)^{n_0}
 \qquad(x,y\in\mathbb R).
 \label{eq:replica-dilation-coherence}
\end{equation}
Then the dilation and the kernel are both forced:
\begin{equation}
 a=\sqrt{n_0},
 \qquad
 \widetilde K(x,y)=e^{-c(x-y)^2},
 \qquad
 c=-\frac12\kappa''(0)>0.
 \label{eq:canonical-Gaussian-conclusion}
\end{equation}
Thus rescaling the parameter line reduces the kernel to
\eqref{eq:universal-gaussian-kernel}; the curvature normalization
$-\kappa''(0)=2$ fixes it exactly.
\end{proposition}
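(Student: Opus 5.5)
Work with the stationary profile $\kappa(t)=\widetilde K(t,0)$. Stationarity gives $\widetilde K(x,y)=\kappa(x-y)$, and symmetry of a real covariance kernel makes $\kappa$ even. Normalization gives $\kappa(0)=1$, and positive definiteness gives $|\kappa(t)|\le1$. In terms of $\kappa$, the coherence relation \eqref{eq:replica-dilation-coherence} reads
\[
 \kappa(at)=\kappa(t)^{n_0}\qquad(t\in\mathbb R).
\]
The plan is to first pin down $a$ from the second-order Taylor expansion at zero, and then to propagate the relation to all of $\mathbb R$ by iteration.

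\textbf{Step 1: fixing $a$.} Since $\kappa$ is $C^2$ at zero and even, $\kappa'(0)=0$ and
\[
 \kappa(t)=1-ct^2+o(t^2),\qquad c=-\tfrac12\kappa''(0)>0.
\]
Substituting into both sides of the functional equation gives $1-ca^2t^2+o(t^2)=1-n_0ct^2+o(t^2)$. Since $c\neq0$, this forces $a^2=n_0$, so $a=\sqrt{n_0}$.

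\textbf{Step 2: positivity near zero.} Because $\kappa$ is continuous with $\kappa(0)=1$, we have $\kappa>0$ on some interval $(-\varepsilon,\varepsilon)$. Iterating the relation gives $\kappa(a^kt)=\kappa(t)^{n_0^k}$. Every $t\in\mathbb R$ can be written as $a^ks$ with $|s|<\varepsilon$ for some $k\ge0$. Hence $\kappa(t)>0$ for all $t$, and $\psi:=-\log\kappa$ is well defined and continuous everywhere. It satisfies
\[
 \psi(at)=n_0\,\psi(t)=a^2\psi(t).
\]

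\textbf{Step 3: solving the equation for $\psi$.} The function $g(t)=\psi(t)/t^2$ for $t\neq0$ satisfies $g(at)=g(t)$. Iterating backwards gives $g(t)=g(a^{-k}t)$. From the Taylor expansion, $\psi(s)=cs^2+o(s^2)$, so $g(s)\to c$ as $s\to0$. Letting $k\to\infty$ therefore yields $g(t)=c$ for every $t\neq0$. Consequently $\kappa(t)=e^{-ct^2}$, and
\[
 \widetilde K(x,y)=e^{-c(x-y)^2}.
\]
Rescaling the parameter line by $x\mapsto x/\sqrt c$ turns this into \eqref{eq:universal-gaussian-kernel}, and the normalization $-\kappa''(0)=2$ corresponds to $c=1$.

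\textbf{Main obstacle.} The only delicate point is ruling out zeros or sign changes of $\kappa$ before taking logarithms. This is handled by the iteration from a positivity neighbourhood of $0$ in Step 2, which relies on $a>1$. No regularity beyond $C^2$ at zero is needed, because the limit in Step 3 uses only the second-order expansion at the origin.
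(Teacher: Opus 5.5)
Your proof is correct and follows essentially the same route as the paper. Both arguments fix $a=\sqrt{n_0}$ from the second-order behaviour of $\kappa$ at the origin; you do this by Taylor expansion, the paper by differentiating twice. Both then propagate positivity from a neighbourhood of zero by iterating the dilation relation (using $a>1$), and finally iterate $\psi(t)=n_0^k\psi(a^{-k}t)$ backward against $\psi(s)=cs^2+o(s^2)$ to obtain $\psi(t)=ct^2$.
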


\begin{proof}
Write $\widetilde K(x,y)=\kappa(x-y)$.  Symmetry makes $\kappa$ even.
Differentiating \eqref{eq:replica-dilation-coherence} twice on the
diagonal, and using $\kappa(0)=1$ and $\kappa'(0)=0$, gives
\[
 a^2\kappa''(0)=n_0\kappa''(0).
\]
The nondegeneracy assumption therefore forces $a=\sqrt{n_0}$.

For every fixed $t$, the points $a^{-j}t$ eventually lie in a
neighborhood on which $\kappa$ is positive.  Iterating
\eqref{eq:replica-dilation-coherence} backward shows that
$\kappa(t)=\kappa(a^{-j}t)^{n_0^j}>0$, so
$\phi=-\log\kappa$ is defined on all of $\mathbb R$.  Since $\phi$ is
even and $C^2$ at zero,
\(
 \phi(u)=cu^2+o(u^2)
\)
with $c=-\kappa''(0)/2>0$.  Iteration now gives
\[
 \phi(t)=n_0^j\phi(a^{-j}t)
        =n_0^j\phi(n_0^{-j/2}t)\longrightarrow ct^2,
\]
which proves \eqref{eq:canonical-Gaussian-conclusion}.
\end{proof}

Let
$\widetilde\Psi:\mathbb R\to\mathscr G_{\widetilde K}$ be a minimal
feature map of $\widetilde K$.  At the feature-space level, the
conclusion of the proposition gives, for every integer $n\ge1$, that
the assignment
\begin{equation}
 \widetilde\Psi(\sqrt n\,x)
 \longmapsto \widetilde\Psi(x)^{\otimes n}
 \label{eq:feature-tensor-dilation-map}
\end{equation}
is an isometry between the corresponding cyclic spans: the two families
have the same Gram matrices.  For \eqref{eq:universal-gaussian-kernel},
$\Psi(x)$ in \eqref{eq:gaussian-kernel-fock-factorization} is the
normalized exponential vector at $\sqrt2x$.  Thus
\eqref{eq:feature-tensor-dilation-map} is the exponential law of
symmetric Fock space: tensoring replicas becomes dilation of the single
canvas coordinate.

Consequently, for every real Hilbert space $H$ there is a jointly
Gaussian family
\(
 \{\mathbb W_x(h):x\in\mathbb R,\ h\in H\}
\)
such that
\begin{equation}
 \E[\mathbb W_x(h)\mathbb W_y(k)]
 =K(x,y)\ip hk_H.
 \label{eq:canvas-isonormal-covariance}
\end{equation}
For instance, take an isonormal process on
$\overline{\Span}\{\Psi(x):x\in\mathbb R\}\otimes H$ and evaluate it
at $\Psi(x)\otimes h$.  For every fixed $x$, the map
$h\mapsto\mathbb W_x(h)$ is isonormal over $H$.

For a tensor packet $\mathbf f$ as in
\eqref{eq:finite-tensor-packet}, define the scalar process
\begin{equation}
 \mathcal Z_{\mathbf f}(x)
 =f_0+\sum_{q=1}^{m}I_q^{\mathbb W_x}(f_q),
 \qquad x\in\mathbb R.
 \label{eq:universal-canvas-field}
\end{equation}

The process in \eqref{eq:universal-canvas-field} has a canonical smooth
realization.  We record a quantitative form which will also
be used in the stability discussion.  The space $C(\mathbb R)$ is
always equipped with the topology of locally uniform convergence.

\begin{lemma}[Smooth realization of the universal canvas]
\label{lem:continuous-universal-canvas}
Let $\mathbf f$ and $\mathbf g$ be tensor packets of degree at most $m$
on the same real Hilbert space, realized on the same Gaussian canvas.
For every $p>2$ and $R>0$, their scalar processes admit smooth
modifications such that
\begin{equation}
 \E\norm{\mathcal Z_{\mathbf f}-\mathcal Z_{\mathbf g}}_{C([-R,R])}^{p}
 \le C_{m,p,R}
 \left(
  |f_0-g_0|^2+
  \sum_{q=1}^{m}q!\norm{f_q-g_q}^2
 \right)^{p/2}.
 \label{eq:canvas-quantitative-continuity}
\end{equation}
In particular, each process has almost surely $C^\infty$ sample paths.
Its law as a $C(\mathbb R)$-valued random variable is
independent of the chosen continuous modification.
\end{lemma}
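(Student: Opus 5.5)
The plan is to apply Kolmogorov--Chentsov continuity to the difference process $\mathcal Z_{\mathbf f}-\mathcal Z_{\mathbf g}=\mathcal Z_{\mathbf f-\mathbf g}$. That difference is again a canvas process of the packet $\mathbf h=\mathbf f-\mathbf g$, by linearity of $f\mapsto I_q^{\mathbb W_x}(f)$. Smoothness will then follow by running the same estimate on formal derivatives in $x$. So it suffices to prove the bound for a single packet $\mathbf h$ with right side $\Lambda(\mathbf h)^{p/2}$, where $\Lambda(\mathbf h)=|h_0|^2+\sum_q q!\norm{h_q}^2$.

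\textbf{Second-moment increments.} For fixed $x,y$, the variables $\mathbb W_x,\mathbb W_y$ are $K(x,y)$-correlated isonormal processes. By the Wick/chaos isometry underlying \cref{prop:exact-coefficient-formula},
\[
\E[I_q^{\mathbb W_x}(h_q)I_p^{\mathbb W_y}(h_p)]=\mathbf 1_{p=q}\,q!\,K(x,y)^q\norm{h_q}^2 .
\]
Hence
\[
\E|\mathcal Z_{\mathbf h}(x)-\mathcal Z_{\mathbf h}(y)|^2=\sum_q 2q!\norm{h_q}^2\bigl(1-K(x,y)^q\bigr)\le 2m(x-y)^2\Lambda(\mathbf h),
\]
using $1-e^{-q(x-y)^2}\le q(x-y)^2$. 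The increment lies in the first $m$ chaoses of the Gaussian space spanned by the canvas, so Nelson hypercontractivity (as in \cref{lem:laws-determine-moments}) upgrades this to
\[
\E|\mathcal Z_{\mathbf h}(x)-\mathcal Z_{\mathbf h}(y)|^p\le C_{m,p}|x-y|^p\Lambda(\mathbf h)^{p/2}.
\]
The same argument at a single point gives $\E|\mathcal Z_{\mathbf h}(0)|^p\le C\Lambda^{p/2}$. Since $p>2>1$, the quantitative Kolmogorov--Chentsov (or Garsia--Rodemich--Rumsey) lemma on $[-R,R]$ yields a continuous modification together with the sup-norm bound in the statement. Applying this to $\mathbf f$ and to $\mathbf g$ separately, their continuous modifications agree a.s.\ with the modification of the difference on a dense countable set, hence everywhere.

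\textbf{Smoothness.} This is the main technical point. Realize $\mathbb W_x(h)=\mathbb W(\Psi(x)\otimes h)$ with $\Psi$ from \eqref{eq:gaussian-kernel-fock-factorization}. The map $\Psi$ is real-analytic into the Fock space, since its series converges with all derivatives bounded locally uniformly. Then $I_q^{\mathbb W_x}(h_q)=I_q^{\mathbb W}((\Psi(x)\otimes\cdot)^{\otimes q}h_q)$, where we identify $(\mathscr G\otimes H)^{\odot q}$ appropriately. The $k$th $L^2$-derivative in $x$ is therefore a Wiener integral of a kernel built from $\partial^j\Psi$, whose norm is bounded by $C_{k,q,R}\norm{h_q}$. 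Equivalently, the covariance $K(x,y)^q$ is smooth, so mean-square derivatives of every order exist, and each derivative process satisfies the same Lipschitz-in-$L^2$ bound with constants depending on $k$. Kolmogorov applied to each derivative process gives continuous modifications $Z^{(k)}$. The identity $Z^{(k)}(x)=Z^{(k)}(0)+\int_0^xZ^{(k+1)}$ holds a.s.\ for each rational $x$, and hence for all $x$ by continuity, so the continuous modification is $C^\infty$.

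\textbf{Uniqueness of the law.} Any two continuous modifications agree on the rationals a.s., hence everywhere. Since the Borel $\sigma$-field of $C(\mathbb R)$ is generated by point evaluations, the law does not depend on the chosen modification.
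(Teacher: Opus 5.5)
Your proposal is correct and follows essentially the paper's route. The shared steps are: the chaos covariance $q!\,K(x,y)^q\|h_q\|^2$, hypercontractivity to obtain $L^p$-Lipschitz increments, the single-isonormal realization $I_q^{\mathbb X}(T_x^{\otimes q}h_q)$ with $\Psi$ Hilbert-smooth to produce the $L^2$-derivative processes, Kolmogorov together with the almost-sure fundamental theorem of calculus on rationals for $C^\infty$ paths, and indistinguishability through rational evaluations for the law. The only cosmetic difference is the sup-norm bound: you invoke the quantitative Kolmogorov--Chentsov (Garsia--Rodemich--Rumsey) lemma, whereas the paper integrates the increment estimate into a fractional $W^{s,p}$ seminorm with $1/p<s<1$ and applies the embedding into $C^\gamma$, which is the same mechanism.
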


\begin{proof}
Realize the full canvas as one isonormal process over the tensor product
of $H$ with the feature space in
\eqref{eq:gaussian-kernel-fock-factorization}.  If
$h_q=f_q-g_q$, the multiple-integral covariance identity gives
\begin{equation}
 \E\!\left[
  I_q^{\mathbb W_x}(h_q)I_q^{\mathbb W_y}(h_q)
 \right]
 =q!K(x,y)^q\norm{h_q}^2.
 \label{eq:canvas-chaos-cross-covariance}
\end{equation}
Distinct chaos orders are orthogonal.  Consequently, with
$D(x)=\mathcal Z_{\mathbf f}(x)-\mathcal Z_{\mathbf g}(x)$,
\begin{align}
 \norm{D(x)-D(y)}_2^2
 &=2\sum_{q=1}^{m}q!\norm{h_q}^2
      \bigl(1-e^{-q(x-y)^2}\bigr)\notag\\
 &\le 2\sum_{q=1}^{m}q\,q!\norm{h_q}^2|x-y|^2.
 \label{eq:canvas-L2-increment}
\end{align}
The increment belongs to the sum of the first $m$ Wiener chaoses.
Gaussian hypercontractivity therefore yields, for every $p>2$,
\begin{equation}
 \norm{D(x)-D(y)}_p
 \le C_{m,p}
 \left(\sum_{q=1}^{m}q!\norm{h_q}^2\right)^{1/2}|x-y|,
 \label{eq:canvas-Lp-increment}
\end{equation}
If
\[
 S^2=|f_0-g_0|^2+
     \sum_{q=1}^{m}q!\norm{h_q}^2,
\]
the same argument gives the correctly scaled pointwise estimate
\begin{equation}
 \norm{D(x)}_p\le C_{m,p}S.
 \label{eq:canvas-pointwise-Lp-bound}
\end{equation}

We next construct one smooth version, rather than unrelated continuous
versions for different exponents.  Let $\mathscr G$ be the feature
space generated by the vectors $\Psi(x)$ in
\eqref{eq:gaussian-kernel-fock-factorization}, let $\mathbb X$ be the
single isonormal process over $\mathscr G\otimes H$ used to realize the
canvas, and put $T_xh=\Psi(x)\otimes h$.  Then
$T_x:H\to\mathscr G\otimes H$ is an isometry, and
$T_x^{\otimes q}$ maps $H^{\odot q}$ continuously into
$(\mathscr G\otimes H)^{\odot q}$.  Moreover,
\begin{equation}
 I_q^{\mathbb W_x}(h_q)
 =I_q^{\mathbb X}\!\left(T_x^{\otimes q}h_q\right).
 \label{eq:canvas-single-isonormal-realization}
\end{equation}
The explicit Fock expansion in
\eqref{eq:gaussian-kernel-fock-factorization} shows, by termwise
differentiation on compact intervals, that
$x\mapsto\Psi(x)$ is $C^\infty$ in the Hilbert norm.  Equivalently,
\begin{equation}
 \norm{\Psi^{(r)}(x)}^2
 =\left.
   \partial_x^r\partial_y^r K(x,y)
  \right|_{y=x}<\infty
 \qquad(r\ge0).
 \label{eq:canvas-feature-derivative-norm}
\end{equation}
It follows that
$F_q(x):=T_x^{\otimes q}h_q$ is a $C^\infty$ curve in
$(\mathscr G\otimes H)^{\odot q}$, with all derivatives bounded on
compact intervals.  Since the multiple-integral map is continuous from
that Hilbert tensor space to $L^2(\Omega)$, the $L^2$ derivatives of
$D$ are
\begin{equation}
 D_r(x)=\mathbf 1_{\{r=0\}}(f_0-g_0)
       +\sum_{q=1}^{m}I_q^{\mathbb X}\!\left(F_q^{(r)}(x)\right),
 \qquad r\ge0.
 \label{eq:canvas-L2-derivatives}
\end{equation}
The mean-value theorem in Hilbert space and hypercontractivity give, on
each compact interval,
$\norm{D_r(x)-D_r(y)}_p\le C_{m,p,r,R}|x-y|$.
Kolmogorov's continuity criterion therefore supplies a continuous
modification $\widetilde D_r$ of every $D_r$.  Choose these modifications
simultaneously for the countable family $(r,N)\in\mathbb N^2$ on the
intervals $[-N,N]$, identifying them on overlaps.  Continuity of the
multiple-integral map and the fundamental theorem of calculus give in
$L^2(\Omega)$
\begin{equation}
 D_r(x)-D_r(a)=\int_a^xD_{r+1}(u)\,du.
 \label{eq:canvas-derivative-fundamental-theorem}
\end{equation}
where the right-hand side is a Bochner integral.  Fubini's theorem
shows that, for rational $a,x$, the same identity holds almost surely
with $D_r,D_{r+1}$ replaced by
$\widetilde D_r,\widetilde D_{r+1}$.  Intersecting the
corresponding countable family of probability-one events and then using
continuity extends it to all real $a,x$.  Hence $\widetilde D_0$ has
pathwise derivative $\widetilde D_1$, and induction proves that it is
$C^\infty$ almost surely.

Apply the same construction separately to $\mathcal Z_{\mathbf f}$
and $\mathcal Z_{\mathbf g}$.  Their difference and the
$\widetilde D_0$ just
constructed are continuous modifications of the same process; equality
at rational points makes them indistinguishable.  We may therefore use
these smooth versions in the quantitative estimate below.

Choose $s$ with $1/p<s<1$.  Integrating
\eqref{eq:canvas-Lp-increment} over $[-R,R]^2$ shows that the expected
$p$th power of the fractional $W^{s,p}$ seminorm of $D$ is bounded by
the right-hand side of \eqref{eq:canvas-quantitative-continuity};
\eqref{eq:canvas-pointwise-Lp-bound} controls the $L^p$ part.  The
integral near the diagonal is finite precisely because $s<1$.
The smooth version is jointly measurable; by Fubini it may replace the
original process in these fractional Sobolev integrals.
The one-dimensional fractional Sobolev embedding
$W^{s,p}([-R,R])\hookrightarrow C^\gamma([-R,R])$ for every
$\gamma<s-1/p$ gives \eqref{eq:canvas-quantitative-continuity} for the
smooth version just constructed.  Finally, the metric
\begin{equation}
 d_C(z,z')=\sum_{j\ge1}2^{-j}
   \min\bigl(1,\norm{z-z'}_{C([-j,j])}\bigr)
 \label{eq:canvas-compact-open-metric}
\end{equation}
makes $C(\mathbb R)$ a Polish space, and its Borel $\sigma$-field is
generated by evaluation maps at rational points.  The smooth version
is consequently a measurable $C(\mathbb R)$-valued random variable.
Two continuous modifications agree almost surely at every rational
point and hence everywhere, proving the last assertion.
\end{proof}

From now on, $\mathcal Z_{\mathbf f}$ denotes this smooth
modification.

We next identify the structural reason why restriction to the
one-dimensional canvas loses none of the loop-free register.  Fix
$r\ge2$ and let
\[
 E_r=\left\{x\in\mathbb R^r:\sum_{a=1}^r x_a=0\right\},
\]
the Euclidean configuration space modulo common translations.  For a
symmetric zero-diagonal array $N=(N_{ab})$ define its weighted graph
Laplacian by
\begin{equation}
 (L_N)_{aa}=\sum_{b\ne a}N_{ab},
 \qquad
 (L_N)_{ab}=-N_{ab}\quad(a\ne b),
 \label{eq:canvas-graph-Laplacian}
\end{equation}
and put
\begin{equation}
 Q_N(x)=x^{\mathsf T}L_Nx
       =\sum_{a<b}N_{ab}(x_a-x_b)^2,
 \qquad
 \chi_N(x)=e^{-Q_N(x)}.
 \label{eq:Gaussian-graph-character}
\end{equation}
The multiplicative identity $\chi_{N+N'}=\chi_N\chi_{N'}$ makes
$\chi_N$ the Gaussian character of the multigraph $N$.  The passage
$N\mapsto L_N$ is permutation-equivariant and identifies edge weights
with translation-invariant quadratic forms on configuration space; it
is therefore intrinsic to the replica labels rather than an auxiliary
enumeration of graphs.

\begin{lemma}[Faithfulness of Gaussian graph characters]
\label{lem:Gaussian-graph-character-faithfulness}
For every finite family of distinct nonnegative integral edge arrays
$N$, the functions $\chi_N$ are linearly independent on $E_r$.
Equivalently, the homomorphism
\begin{equation}
 \mathbb R[T_{ab}:a<b]\longrightarrow C^\omega(E_r),
 \qquad
 T_{ab}\longmapsto e^{-(x_a-x_b)^2},
 \label{eq:Gaussian-character-monoid-embedding}
\end{equation}
is injective.
\end{lemma}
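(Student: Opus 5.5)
Since $\chi_N=\prod_{a<b}T_{ab}^{N_{ab}}$ under the substitution $T_{ab}\mapsto e^{-(x_a-x_b)^2}$, the two formulations are the same statement: linear independence of the $\chi_N$ for distinct $N$ is exactly injectivity of \eqref{eq:Gaussian-character-monoid-embedding} on monomials. We therefore prove that the distinct quadratic forms $Q_N$ give linearly independent exponentials $e^{-Q_N}$ on $E_r$. The argument has two steps: first show that $N\mapsto Q_N|_{E_r}$ is injective, then use the standard independence of exponentials of distinct polynomial exponents.

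\textbf{Step 1 (injectivity of the exponents).} The edge forms $\ell_{ab}(x)=(x_a-x_b)^2$, $a<b$, restricted to $E_r$, are linearly independent in $\Sym^2(E_r)^*$. Their number is $\binom r2=\dim\Sym^2(E_r)^*$, since $\dim E_r=r-1$. The associated matrices are $(e_a-e_b)(e_a-e_b)^{\mathsf T}$, and the map $L\mapsto x^{\mathsf T}Lx$ identifies symmetric matrices with zero row sums with quadratic forms on $E_r$, because such an $L$ kills $\mathbf 1$. A zero-row-sum symmetric matrix is determined by its off-diagonal entries, so $\sum c_{ab}\ell_{ab}=0$ on $E_r$ forces $L=0$, hence all $c_{ab}=0$. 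Consequently distinct arrays $N$ give distinct forms $Q_N|_{E_r}$.

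\textbf{Step 2 (independence of the exponentials).} Suppose $\sum_{i=1}^k c_i e^{-Q_{N_i}}=0$ on $E_r$ with the $N_i$ distinct. By Step 1 the forms $Q_{N_i}$ are pairwise distinct, so we can choose $v\in E_r$ with the values $Q_{N_i}(v)$ pairwise distinct. This is possible because each difference $Q_{N_i}-Q_{N_j}$ is a nonzero quadratic form, its zero set is a proper algebraic subset, and a finite union of such subsets cannot cover $E_r$. Restrict to the line $x=tv$; this gives
\[
 \sum_i c_i e^{-t^2Q_{N_i}(v)}=0,\qquad t\in\mathbb R .
\]
Substituting $s=t^2$ turns this into $\sum_i c_ie^{-\lambda_i s}=0$ for $s\ge0$, with distinct real $\lambda_i=Q_{N_i}(v)$. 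Order the $\lambda_i$ increasingly. Multiplying by $e^{\lambda_1 s}$ and letting $s\to\infty$ gives $c_1=0$, and induction on $k$ gives $c_i=0$ for all $i$. Alternatively, independence follows from a Vandermonde argument on derivatives at $s=0$.

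The only point that needs care is Step 1: it matters that we restrict to $E_r$ and not to $\mathbb R^r$, and the zero-row-sum identification handles this. Everything else is routine.
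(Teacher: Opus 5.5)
Your proof is correct and follows the paper's argument essentially step by step. The paper likewise shows that the edge forms $(x_a-x_b)^2$ are a basis of the quadratic forms on $E_r$, using the zero-row-sum Laplacian (polarization together with $A\mathbf 1=0$); it then picks $v\in E_r$ outside the zero sets of the differences $Q_N-Q_{N'}$ and restricts to the line $x=tv$. The only cosmetic difference is that the paper concludes with the Vandermonde argument at $s=0$, which you mention as an alternative to your asymptotic $s\to\infty$ induction.
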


\begin{proof}
The edge forms $(x_a-x_b)^2$, $a<b$, form a basis of
$\operatorname{Sym}^2(E_r^*)$.  Indeed, their number is
$r(r-1)/2=\dim\operatorname{Sym}^2(E_r^*)$; if a linear combination
vanishes on $E_r$, polarization shows that its symmetric Laplacian
$A$ maps $E_r$ into $E_r^\perp$.  Since $A\mathbf1=0$, symmetry also
gives $A(E_r)\subset E_r$, hence $A$ vanishes on $E_r$.  It vanishes on
$\mathbb R\mathbf1$ as well and is therefore the zero matrix.  Its
off-diagonal entries then force every coefficient to vanish.

In particular, $N\ne N'$ implies that $Q_N-Q_{N'}$ is a nonzero
quadratic form on $E_r$.  Given finitely many arrays, choose
$v\in E_r$ outside the zero sets of all these pairwise differences.
Then the numbers $\lambda_N=Q_N(v)$ are distinct.  A relation
$\sum_Nc_N\chi_N=0$, restricted to $x=tv$, becomes
\[
 \sum_Nc_Ne^{-\lambda_Nt^2}=0.
\]
The exponentials with distinct exponents are linearly independent, as
is seen by differentiating in $s=t^2$ at $s=0$ and using the resulting
Vandermonde matrix.  Thus every $c_N$ is zero.
\end{proof}

For the one-point statement below, set $E_1=\{0\}$.

Let $\mathcal A_{r,m}$ be the finite set of symmetric zero-diagonal
nonnegative integral arrays whose row sums do not exceed $m$.  Equip
$E_r$ with its normalized Euclidean Gaussian measure
\begin{equation}
 d\gamma_r^\circ(x)
 =\pi^{-(r-1)/2}e^{-|x|^2}\,dx.
 \label{eq:centered-configuration-Gaussian}
\end{equation}
Here $dx$ is the Lebesgue measure induced by the Euclidean structure on
$E_r$.  This choice is invariant under permutations of the replica
coordinates.  Throughout, $\det_{E_r}$ denotes the determinant of the
restriction to $E_r$.

\begin{proposition}[Canonical Gaussian duality recovers the register]
\label{prop:canvas-recovers-loop-free-register}
For a packet $\mathbf f$ of degree at most $m$, define its $r$-point
canvas moment on $E_r$ by
\begin{equation}
 M_{\mathbf f,r}(x)
 =\E\prod_{a=1}^r\mathcal Z_{\mathbf f}(x_a).
 \label{eq:canvas-r-point-moment}
\end{equation}
Then
\begin{equation}
 M_{\mathbf f,r}(x)
 =\sum_{N\in\mathcal A_{r,m}}C_{\mathbf f}(N)\chi_N(x),
 \label{eq:canvas-character-expansion}
\end{equation}
where, writing $q_a=\sum_{b\ne a}N_{ab}$,
\begin{equation}
 C_{\mathbf f}(N)
 =\frac{\prod_{a=1}^{r}q_a!}
        {\prod_{a<b}N_{ab}!}
   \operatorname{Contr}_N(f_{q_1},\ldots,f_{q_r}),
 \label{eq:canvas-network-coefficient}
\end{equation}
with an isolated vertex interpreted as the factor $f_0$.

More precisely, define the universal matrix and the observed vector
\begin{align}
 G_{N,N'}
 &=\int_{E_r}\chi_N(x)\chi_{N'}(x)\,d\gamma_r^\circ(x)
   =\det_{E_r}(I+L_N+L_{N'})^{-1/2},
 \label{eq:Gaussian-character-Gram}\\
 b_{\mathbf f,r}(N')
 &=\int_{E_r}M_{\mathbf f,r}(x)\chi_{N'}(x)
   \,d\gamma_r^\circ(x).
 \label{eq:Gaussian-character-observed-vector}
\end{align}
The matrix $G$ is positive definite and
\begin{equation}
 \bigl(C_{\mathbf f}(N)\bigr)_{N\in\mathcal A_{r,m}}
 =G^{-1}\bigl(b_{\mathbf f,r}(N)\bigr)_{N\in\mathcal A_{r,m}}.
 \label{eq:canonical-Gaussian-network-decoder}
\end{equation}
Consequently, the $C(\mathbb R)$-valued law of the single canvas
process determines $f_0$ and the complete loop-free scalar register.
\end{proposition}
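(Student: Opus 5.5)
I will proceed in four steps: compute the canvas moment by Wick's formula, evaluate the Gram integrals, invert the Gram matrix, and read off the register from the law.

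\emph{Step 1: character expansion.} I first derive \eqref{eq:canvas-character-expansion} by repeating the Wick computation of \cref{prop:exact-coefficient-formula} with $\Sigma$ replaced by the canvas covariance.
\begin{itemize}
\item For each fixed $x\in\mathbb R^r$, the family $(\mathbb W_{x_1},\ldots,\mathbb W_{x_r})$ is a correlated isonormal family with $\Sigma_{ab}=K(x_a,x_b)$ and unit diagonal.
\item This matrix is only positive semidefinite when points coincide. The Wick expansion is purely algebraic in the cross-covariances, so it holds anyway. Concretely, I expand $\prod_a\mathcal Z_{\mathbf f}(x_a)$ into products of homogeneous chaoses and apply the diagram formula for products of multiple integrals with no self-contractions.
\item Each pairing pattern $N$ contributes $\prod_{a<b}K(x_a,x_b)^{N_{ab}}=\chi_N(x)$, since $K(x_a,x_b)^{N_{ab}}=e^{-N_{ab}(x_a-x_b)^2}$.
\item The combinatorial count of pairings is exactly the one already done there, which gives $C_{\mathbf f}(N)$.
\item Vertices with $q_a=0$ pick out the constant $f_0$.
\item Only arrays with row sums at most $m$ occur, so the sum runs over the finite set $\mathcal A_{r,m}$.
\end{itemize}
Translation invariance of $\chi_N$ means nothing is lost by restricting to $E_r$.

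\emph{Step 2: Gram matrix.} Using $\chi_N\chi_{N'}=\chi_{N+N'}$ and $L_{N+N'}=L_N+L_{N'}$, the Gram entry becomes the Gaussian integral
\[
\pi^{-(r-1)/2}\int_{E_r}e^{-x^{\mathsf T}(I+L_N+L_{N'})x}\,dx .
\]
Both $L_N$ and $L_{N'}$ annihilate $\mathbf 1$ and map $E_r$ into itself, so the quadratic form restricts to $E_r$ with operator $(I+L_N+L_{N'})|_{E_r}$. The standard formula then gives $\det_{E_r}(I+L_N+L_{N'})^{-1/2}$. The restricted operator is at least $I$, so the integral converges.

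\emph{Step 3: positive definiteness and decoding.} $G$ is the Gram matrix of the finitely many functions $\chi_N$ in $L^2(E_r,\gamma_r^\circ)$. These functions are bounded, hence in $L^2$, and by \cref{lem:Gaussian-graph-character-faithfulness} they are linearly independent on $E_r$, even as continuous functions. Since $\gamma_r^\circ$ has full support, a continuous function vanishing $\gamma_r^\circ$-almost everywhere vanishes identically. So $\sum c_N\chi_N=0$ in $L^2$ forces $c=0$, and $G>0$. Pairing \eqref{eq:canvas-character-expansion} with $\chi_{N'}$ gives $b_{\mathbf f,r}=GC_{\mathbf f}$, hence \eqref{eq:canonical-Gaussian-network-decoder}. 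The case $r=1$, with $E_1=\{0\}$, just returns $f_0=\E\mathcal Z_{\mathbf f}(0)$.

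\emph{Step 4: law to register.} The $C(\mathbb R)$-valued law determines all finite-dimensional marginals $(\mathcal Z_{\mathbf f}(x_a))_a$. By \cref{lem:laws-determine-moments}, applied to a finite family in the first $m$ chaoses, it therefore determines $M_{\mathbf f,r}(x)$ pointwise, and hence $b_{\mathbf f,r}$. The integrand is bounded by a polynomial-moment bound that is uniform in $x$, via \eqref{eq:canvas-pointwise-Lp-bound} and H\"older, so $b_{\mathbf f,r}$ is well defined. Every loop-free array with row sums in $\{1,\ldots,m\}$ lies in some $\mathcal A_{r,m}$, so every coefficient $C_{\mathbf f}(N)$ is recovered. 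Dividing by the nonzero combinatorial prefactor yields every $\operatorname{Contr}_N$.

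The main obstacle is Step 1 at the level of rigor. One must justify the Wick/diagram formula for products of multiple integrals under a possibly degenerate covariance, namely coinciding $x_a$. I would handle this by working with the single isonormal process $\mathbb X$ and the realization \eqref{eq:canvas-single-isonormal-realization}, $I_q^{\mathbb W_x}=I_q^{\mathbb X}(T_x^{\otimes q}\cdot)$. Then each factor is a multiple integral over one Hilbert space, and the standard product formula applies with $\langle T_{x_a}h,T_{x_b}k\rangle=K(x_a,x_b)\langle h,k\rangle$. This avoids any invertibility of the covariance matrix.
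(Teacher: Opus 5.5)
Your proposal is correct and follows essentially the same route as the paper. It expands the moment by Wick's formula with the canvas cross-correlations $e^{-(x_a-x_b)^2}$, evaluates the Gram entries by the Gaussian integral on $E_r$, obtains positive definiteness of $G$ from \cref{lem:Gaussian-graph-character-faithfulness}, and combines $b=GC$ with \cref{lem:laws-determine-moments}; you also make explicit two details the paper leaves implicit, namely the single isonormal realization \eqref{eq:canvas-single-isonormal-realization} for the diagram formula at coinciding points and the full support of $\gamma_r^\circ$ when passing from pointwise to $L^2$ linear independence.
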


\begin{proof}
For $x\in E_r$, the cross-correlation between vertices $a$ and $b$ is
$e^{-(x_a-x_b)^2}$.  Wick's formula and
\cref{prop:exact-coefficient-formula} therefore give
\eqref{eq:canvas-character-expansion} and
\eqref{eq:canvas-network-coefficient} when all $q_a$ are positive.
If $q_a=0$, vertex $a$ is isolated, contributes the factor $f_0$, and
may be deleted before applying that proposition.  Thus the same formula
holds in all cases.  The process law determines
every joint moment by \cref{lem:laws-determine-moments}.  Moreover,
H\"older's inequality gives the uniform bound
\[
 |M_{\mathbf f,r}(x)|
 \le \norm{\mathcal Z_{\mathbf f}(0)}_r^r,
\]
so the integrals in
\eqref{eq:Gaussian-character-observed-vector} are finite.

The Gaussian integral on $E_r$ gives the determinant in
\eqref{eq:Gaussian-character-Gram}.  By
\cref{lem:Gaussian-graph-character-faithfulness}, for every nonzero
vector $(c_N)$,
\[
 \sum_{N,N'}c_NG_{N,N'}c_{N'}
 =\int_{E_r}\left|\sum_Nc_N\chi_N(x)\right|^2
   d\gamma_r^\circ(x)>0.
\]
Thus $G$ is invertible.  Taking the inner products of
\eqref{eq:canvas-character-expansion} with all the $\chi_{N'}$ gives
$b=GC$, which proves \eqref{eq:canonical-Gaussian-network-decoder}.
The one-point mean separately gives $f_0$.
\end{proof}

The decoder is equivariant under every permutation of the replica
coordinates.  Its faithfulness does not depend on the Gaussian
integration measure: any probability measure on $E_r$ with full support
would give a positive definite Gram matrix.  The Euclidean Gaussian is
distinguished here because it is permutation-invariant and yields the
closed determinant formula \eqref{eq:Gaussian-character-Gram}.  No
conditioning estimate for $G^{-1}$ is asserted; the reconstruction is
exact rather than numerically stable.

\begin{theorem}[Smooth universal-canvas Replica--Tannaka theorem]
\label{thm:universal-canvas-replica-tannaka}
There is one fixed covariance kernel
$K(x,y)=e^{-(x-y)^2}$ on $\mathbb R$ and one functorial canvas
construction $H\mapsto\mathscr G_K\otimes H$, independent of the
maximal chaos degree and the tensor packet, with the following property.  For a
finite symmetric tensor packet $\mathbf f$ of degree $m$ on $H$ and a
packet $\mathbf f'$ of degree $m'$ on $H'$, put $M=\max(m,m')$ and
complete the shorter packet by zero kernels.  Then their smooth scalar
processes have the same law on $C(\mathbb R)$ if and only if $f'_0=f_0$
and there is a surjective real unitary
\begin{equation}
 U:S(\mathbf f)\longrightarrow S(\mathbf f')
 \quad\text{such that}\quad
 f'_q=U^{\otimes q}f_q,
 \qquad 1\le q\le M.
 \label{eq:canvas-orbit-equivalence}
\end{equation}
Equivalently, the law of one smooth scalar process on one fixed
Gaussian canvas is a complete invariant of finite Wiener polynomials
modulo their natural orthogonal gauge.
\end{theorem}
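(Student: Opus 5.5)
The proof reduces to \cref{thm:scalar-replica-tannaka}. The route is: process law $\Rightarrow$ loop-free register $\Rightarrow$ orbit. The one point needing care is that the replica-tannaka theorem is stated in terms of the full signature $\Rep(\mathbf f)$, whereas the canvas only supplies the loop-free register. So I will check that its forward proof uses nothing beyond that register.

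\emph{Forward direction.} Suppose the two smooth processes have the same law on $C(\mathbb R)$. Then for every $r$ and every $x\in\mathbb R^r$ the finite-dimensional marginals $(\mathcal Z_{\mathbf f}(x_a))_a$ agree. By \cref{lem:laws-determine-moments} the moment functions $M_{\mathbf f,r}$ and $M_{\mathbf f',r}$ coincide on $E_r$.

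Both packets are padded to the common degree $M$, and the zero kernels merely give zero coefficients. So \cref{prop:canvas-recovers-loop-free-register}, applied with $\mathcal A_{r,M}$, gives two things: $f_0=f'_0$, and $C_{\mathbf f}(N)=C_{\mathbf f'}(N)$ for every $N$, via the universal inverse $G^{-1}$. By \eqref{eq:canvas-network-coefficient}, every closed loop-free contraction $\operatorname{Contr}_N$ then agrees for the two packets. This includes arrays with isolated vertices, and for arrays with all $q_a\ge1$ it is exactly the register of \cref{prop:exact-coefficient-formula}.

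I then rerun the proof of the forward implication of \cref{thm:scalar-replica-tannaka}. Reading it through, every step consumed only these loop-free values: the trace moments \eqref{eq:equal-support-Gram-moments}, the spectral-cap opening of \cref{lem:spectral-cap-opening}, the finite orbit separation, and the coherent inverse limit. Equality of spectra, the thresholds $\delta_n$, and the compactness argument therefore carry over verbatim. The result is a surjective unitary $U:S(\mathbf f)\to S(\mathbf f')$ with $f'_q=U^{\otimes q}f_q$ for $1\le q\le M$.

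The degenerate case $S(\mathbf f)=0$ also needs a check. There the two-point coefficient at $N_{12}=q$ equals $q!\|f'_q\|^2$, which must vanish, so every nonconstant kernel of the primed packet is zero as well.

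\emph{Reverse direction.} Suppose $f'_0=f_0$ and $U$ transports every grade. Extend $U$ to a map from $\mathscr G_K\otimes S(\mathbf f)$ to $\mathscr G_K\otimes S(\mathbf f')$ by $\Id\otimes U$. The canvas is built functorially from one isonormal process over $\mathscr G_K\otimes H$, and restricting to the supports costs nothing because the kernels live there. Equivariance of multiple Wiener integrals, through \eqref{eq:canvas-single-isonormal-realization}, gives
\[
I_q^{\mathbb X'}\bigl(T'^{\otimes q}_xU^{\otimes q}f_q\bigr)\overset{d}{=}I_q^{\mathbb X}\bigl(T_x^{\otimes q}f_q\bigr)
\]
jointly in $x$. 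Hence all finite-dimensional distributions of the two processes agree. Since the Borel $\sigma$-field of $C(\mathbb R)$ is generated by evaluations at rational points (\cref{lem:continuous-universal-canvas}), the laws on $C(\mathbb R)$ coincide.

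The main obstacle is the bookkeeping in the forward direction. One has to confirm that \cref{lem:spectral-cap-opening} and the trace identities invoke only \cref{prop:exact-coefficient-formula}'s loop-free values, never the full characteristic-function signature, and that padding with zero kernels is compatible with the finite index sets $\mathcal A_{r,M}$.
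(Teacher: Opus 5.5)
Your proposal is correct and follows the paper's own proof essentially step for step. The forward direction recovers the loop-free register from the law via \cref{prop:canvas-recovers-loop-free-register} and then reruns the forward half of \cref{thm:scalar-replica-tannaka} from the support Gram onward, using only that register. The converse transports the canvas by $U$, matches all finite-dimensional distributions, and concludes because evaluations at rational points generate the Borel $\sigma$-field of $C(\mathbb R)$. Your explicit treatment of the zero-support case and of padding by zero kernels adds detail but does not change the route.
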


\begin{proof}
If the process laws agree, then by
\cref{prop:canvas-recovers-loop-free-register}, the two packets have the
same complete loop-free scalar register.  The proof of
\cref{thm:scalar-replica-tannaka} from the support Gram onward uses only
this register: it reconstructs all finite spectral compressions, applies
finite-dimensional orthogonal orbit separation, and passes to a single
unitary by the compact inverse-limit argument.  It therefore yields
\eqref{eq:canvas-orbit-equivalence}.

Conversely, assume \eqref{eq:canvas-orbit-equivalence}.  For every finite set
$x_1,\ldots,x_r\in\mathbb R$, first restrict the jointly
Gaussian isonormal family to the minimal active support, which contains
all the kernels, and then transport it by $U$.  Its covariance
$K(x_a,x_b)\ip hk$ is unchanged, and equivariance of multiple Wiener
integrals gives equality of the corresponding scalar vectors.
Therefore all finite-dimensional distributions agree.  By
\cref{lem:continuous-universal-canvas}, both processes have continuous
versions, and the Borel sigma-field of $C(\mathbb R)$ with the
compact-open topology is generated by evaluations on any countable
dense subset.  Their $C(\mathbb R)$-valued laws agree.
\end{proof}

\begin{proposition}[Naturality, monoidality, and orbit faithfulness]
\label{prop:canvas-scalarization-naturality}
The assignment
\(
 \mathbf f\mapsto\Law_{C(\mathbb R)}(\mathcal Z_{\mathbf f})
\)
has the following properties.
\begin{enumerate}[label=\textnormal{(\alph*)}]
\item It is orthogonally natural: transporting every tensor by one
unitary does not change the law.
\item It takes orthogonal direct sums of packets to convolution of
probability laws on the additive group $C(\mathbb R)$.
\item It is continuous, quantitatively on every compact interval, in
the Hilbert tensor norms of \eqref{eq:canvas-quantitative-continuity}.
\item On minimal active supports it is orbit-faithful: equal laws
imply one common orthogonal transport of every tensor grade.
\end{enumerate}
Thus it is an orthogonally natural, monoidal, continuous, and
orbit-faithful scalarization rather than an abstract measurable
encoding.  Since the target here is a space of probability laws, this
statement does not label distinct transport unitaries; the arrow-level
reconstruction for heat packets is the full-faithfulness theorem
\cref{thm:positive-scalar-tannaka-duality}.
\end{proposition}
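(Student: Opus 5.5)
Items (a), (c), (d) follow from results already proved; (b) needs one independence argument.

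For (a), let $U:H\to H'$ be a surjective unitary. Transport the canvas isonormal family $\{\mathbb W_x(h)\}$ over $H$ to $\{\mathbb W'_x(Uh)\}$. This family is again jointly Gaussian with covariance $K(x,y)\ip{Uh}{Uk}=K(x,y)\ip hk$. By equivariance of multiple Wiener integrals,
\[
I_q^{\mathbb W'_x}(U^{\otimes q}f_q)=I_q^{\mathbb W_x}(f_q)
\]
simultaneously for all $x$. So the finite-dimensional distributions coincide. Since the Borel $\sigma$-field of $C(\mathbb R)$ is generated by rational evaluations (\cref{lem:continuous-universal-canvas}), the $C(\mathbb R)$-valued laws coincide. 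This is the converse half of \cref{thm:universal-canvas-replica-tannaka}.

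For (b), take $\mathbf f$ on $H_1$ and $\mathbf g$ on $H_2$, and set $\mathbf f\oplus\mathbf g$ on $H_1\oplus H_2$ with constant $f_0+g_0$ and kernels $\iota_1^{\otimes q}f_q+\iota_2^{\otimes q}g_q$. By linearity of multiple integrals in the kernel,
\[
\mathcal Z_{\mathbf f\oplus\mathbf g}(x)=\mathcal Z_{\mathbf f}(x)+\mathcal Z_{\mathbf g}(x),
\]
where each summand is computed from the restriction of $\mathbb W_x$ to $H_1$ or to $H_2$. The restricted families $\{\mathbb W_x(h_1)\}$ and $\{\mathbb W_x(h_2)\}$ are jointly Gaussian with zero cross-covariance $K(x,y)\ip{h_1}{h_2}=0$. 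They are therefore independent, and so are the $\sigma$-fields they generate, which contain the chaos variables. The main point to check is that this identity holds pathwise for the smooth modifications and not only pointwise in $x$. Both sides are continuous modifications that agree almost surely at rational points, hence everywhere, by the last part of \cref{lem:continuous-universal-canvas}. The law of a sum of independent $C(\mathbb R)$-valued variables is the convolution under the continuous addition map. If the padded packets are of unequal degree, extend by zero kernels first.

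For (c), this is exactly \eqref{eq:canvas-quantitative-continuity}. To pass to laws, note that $\E\, d_C(\mathcal Z_{\mathbf f},\mathcal Z_{\mathbf g})$, with $d_C$ the metric \eqref{eq:canvas-compact-open-metric}, is bounded by $\sum_j 2^{-j}\min\bigl(1,C_{m,p,j}^{1/p}S\bigr)$, where $S$ is the Hilbert distance in \eqref{eq:canvas-quantitative-continuity}. By dominated convergence this tends to $0$ as $S\to0$. Hence the map is continuous into the Wasserstein-type topology, and in particular into the weak topology on laws.

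For (d), this is the forward half of \cref{thm:universal-canvas-replica-tannaka}: \cref{prop:canvas-recovers-loop-free-register} supplies the loop-free register, and the argument of \cref{thm:scalar-replica-tannaka} produces one unitary $U:S(\mathbf f)\to S(\mathbf f')$ transporting all grades.
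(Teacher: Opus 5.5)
Your proof is correct and follows the paper's own argument: (a) from equivariance of multiple Wiener integrals, (b) by splitting the canvas over $H_1\oplus H_2$ into independent Gaussian pieces so that $\mathcal Z_{\mathbf f\oplus\mathbf g}=\mathcal Z_{\mathbf f}+\mathcal Z_{\mathbf g}$, (c) from \cref{lem:continuous-universal-canvas}, and (d) from \cref{thm:universal-canvas-replica-tannaka}. You add details the paper leaves implicit, namely the pathwise identification of the smooth modifications at rational points and the passage from the $L^p$ bound to weak convergence of laws via the metric $d_C$.
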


\begin{proof}
Part \textnormal{(a)} is the equivariance of multiple Wiener integrals,
and part \textnormal{(c)} is
\cref{lem:continuous-universal-canvas}.  For packets $\mathbf f$ on
$H$ and $\mathbf g$ on $G$, define $\mathbf f\boxplus\mathbf g$ on
$H\oplus G$ by adding the constants and, in every positive grade,
adding the tensors embedded from the two orthogonal summands.  The
canvas isonormal process splits into independent processes on $H$ and
$G$, and
\[
 \mathcal Z_{\mathbf f\boxplus\mathbf g}
 =\mathcal Z_{\mathbf f}+\mathcal Z_{\mathbf g}.
\]
This proves \textnormal{(b)}.  Part \textnormal{(d)} is the preceding
theorem.
\end{proof}

\begin{remark}[The scalar compression]
\label{rem:canvas-data-reduction}
The complete datum is the $C(\mathbb R)$-valued law of one smooth scalar
process.  It replaces the adaptive elliptope of covariance experiments
by one stationary, functorial observation architecture: no correlation
matrix, tensor label, chaos projection, contraction network, or
sampling configuration is supplied to the observer.  At replica order
$r$, the unrestricted register has $r(r-1)/2$ off-diagonal coordinates,
whereas an $r$-point canvas configuration modulo translation has only
$r-1$ parameters.  The Gaussian graph characters remain linearly
independent on this lower-dimensional locus, and the determinant Gram
decoder recovers every network coefficient exactly.  Thus the fixed
canvas is a genuine scalar compression of the full orthogonal tensor
character.
\end{remark}

\begin{corollary}[Generic finite configurations are witnesses]
\label{cor:finite-canvas-witness}
If two finite tensor packets are not equivalent in the sense of
\cref{thm:universal-canvas-replica-tannaka}---that is, their constants
differ or their nonconstant kernels are not related by one common
orthogonal transformation on their minimal active supports---then there
is an $r\ge1$ such that, for Lebesgue-almost every $x\in E_r$, the two
random vectors
\begin{equation}
 \bigl(
   \mathcal Z_{\mathbf f}(x_1),\ldots,
   \mathcal Z_{\mathbf f}(x_r)
 \bigr)
 \quad\text{and}\quad
 \bigl(
   \mathcal Z_{\mathbf f'}(x_1),\ldots,
   \mathcal Z_{\mathbf f'}(x_r)
 \bigr)
 \label{eq:finite-canvas-witness-vectors}
\end{equation}
have different laws.  The separating configurations form an open dense
set and include a rational configuration.  Thus every inequivalent pair
has a finite rational witness on the universal canvas.
\end{corollary}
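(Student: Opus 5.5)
By \cref{thm:universal-canvas-replica-tannaka}, inequivalence means the two $C(\mathbb R)$-valued laws differ. I will turn this into a failure of the loop-free register at a single replica order $r$, and then into a pointwise failure of one moment function $M_{\cdot,r}$ on almost every configuration.

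\emph{Step 1: one order $r$ already separates.} Suppose every $r$-point moment function agrees on $E_r$. Then \cref{prop:canvas-recovers-loop-free-register}, applied at every $r$, gives equality of $f_0$ and of all network coefficients $C_{\mathbf f}(N)$. The proof of \cref{thm:scalar-replica-tannaka}, from the support Gram onward, uses only this register, so the packets would be equivalent. Hence there is an $r$ and an array $N\in\mathcal A_{r,M}$ with $C_{\mathbf f}(N)\ne C_{\mathbf f'}(N)$. If the constants differ, take $r=1$. Otherwise the offending network has some $r\ge2$ vertices; isolated vertices are allowed, with factor $f_0$.

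\emph{Step 2: analyticity on $E_r$.} The expansion \eqref{eq:canvas-character-expansion} gives
\[
 \Delta(x):=M_{\mathbf f,r}(x)-M_{\mathbf f',r}(x)=\sum_{N}\bigl(C_{\mathbf f}(N)-C_{\mathbf f'}(N)\bigr)\chi_N(x).
\]
By \cref{lem:Gaussian-graph-character-faithfulness} this is a nonzero real-analytic function on $E_r$; in fact it is entire. The zero set of a nonzero real-analytic function on a connected open set is closed, nowhere dense, and of Lebesgue measure zero. So $\{\Delta\neq0\}$ is open, dense, and of full measure in $E_r$.

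\emph{Step 3: passing to laws.} At any $x$ with $\Delta(x)\ne0$, the mixed moment $\E\prod_a\mathcal Z(x_a)$ of the two random vectors in \eqref{eq:finite-canvas-witness-vectors} differs. By \cref{lem:laws-determine-moments}, this moment is determined by the joint law, so the laws differ. Openness and density of the separating set give a rational configuration $x\in E_r\cap\mathbb Q^r$ in it, since rational points are dense in $E_r$.

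\emph{Interpreting ``almost every $x$''.} The moment depends only on the differences $x_a-x_b$, by stationarity. So the same conclusion holds on all of $\mathbb R^r$: there the separating set is the preimage under the projection $\mathbb R^r\to E_r$, which is again open, dense, and conull. If the statement is meant on $\mathbb R^r$ rather than $E_r$, this is the version to use, and rational points there are plainly dense.

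\emph{Expected obstacle.} The only delicate point is Step 1. It needs that equality of the whole loop-free register at all orders forces equivalence, together with a reduction to one finite order. This is immediate because register equality is a conjunction over countably many orders: its failure must occur at some finite $r$.
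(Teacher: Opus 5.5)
Your proposal is correct and is essentially the paper's own proof. The paper likewise takes $r=1$ when $f_0\ne f'_0$. Otherwise it uses the contrapositive of \cref{thm:scalar-replica-tannaka} to find an order $r$ at which the loop-free coefficient vectors differ. It then applies \cref{lem:Gaussian-graph-character-faithfulness} to make $M_{\mathbf f,r}-M_{\mathbf f',r}$ a nonzero real-analytic function on $E_r$, whose nonvanishing set is open, dense, conull, and therefore contains a rational point.

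Your observation that the proof of \cref{thm:scalar-replica-tannaka} uses only the loop-free register is what makes that contrapositive step rigorous; the paper leaves it implicit. Your stationarity remark extending the conclusion to $\mathbb R^r$ is a harmless addition.
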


\begin{proof}
If $f_0\ne f'_0$, the one-point means differ.  Otherwise, after padding
the shorter packet by zero kernels, the contrapositive of
\cref{thm:scalar-replica-tannaka} gives an $r$ for which
the two loop-free coefficient vectors differ.  By
\cref{lem:Gaussian-graph-character-faithfulness},
\[
 M_{\mathbf f,r}-M_{\mathbf f',r}
\]
is then a nonzero real-analytic function on $E_r$.  Its zero set has
Lebesgue measure zero and empty interior.  Outside that set the mixed
moment differs, so the joint laws in
\eqref{eq:finite-canvas-witness-vectors} differ.  The nonzero set is
open and dense, hence contains a rational point.
\end{proof}

\section{Replicas detect structure invisible to the marginal law}
\label{sec:hidden-Hermite-rank}

Before turning to geometry, we give an elementary example which shows
why the word \emph{correlated} in the replica signature is essential.
The example also displays a probabilistic consequence of
\cref{thm:scalar-replica-tannaka}: a one-point law can hide the Hermite
rank which controls dependence after a nonlinear Gaussian observation.

Let
\begin{equation}
 P(x,y)=x(x^2+y^2),\qquad
 Q(x,y)=x^3-3xy^2.
 \label{eq:hidden-rank-polynomials}
\end{equation}
These cubics are not related by an orthogonal change of variables.  On
the real projective line, the zero set of $P$ consists of the single
line $x=0$, whereas the zero set of $Q=x(x-\sqrt3y)(x+\sqrt3y)$ consists
of three distinct lines.  Nevertheless, their ordinary Gaussian laws
coincide.

\begin{proposition}[Same marginal, different replica response]
\label{prop:same-marginal-different-replica}
Let $G=(X,Y)$ be a standard Gaussian vector in $\mathbb R^2$.  Then
\begin{equation}
 P(G)\ \stackrel{\mathrm{law}}=\ Q(G).
 \label{eq:hidden-rank-same-marginal}
\end{equation}
Let $G_1=(X_1,Y_1)$ and $G_2=(X_2,Y_2)$ be jointly Gaussian such that
the two coordinates are independent, while
\begin{equation}
 \E[X_1X_2]=\E[Y_1Y_2]=\rho,
 \qquad
 \E[X_aY_b]=0.
 \label{eq:hidden-rank-coordinate-coupling}
\end{equation}
For $|\rho|<1$,
\begin{equation}
 \E[P(G_1)P(G_2)]=16\rho+8\rho^3.
 \label{eq:hidden-rank-P-covariance}
\end{equation}
\begin{equation}
 \E[Q(G_1)Q(G_2)]=24\rho^3.
 \label{eq:hidden-rank-Q-covariance}
\end{equation}
Thus the two-replica laws differ for every $0<|\rho|<1$, even though the
one-replica marginal laws are identical.
\end{proposition}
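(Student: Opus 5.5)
The plan has two independent parts: a symmetry argument for the equality of marginal laws, and a Hermite (Wiener chaos) expansion for the two-replica covariances.

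\emph{Equality of marginals.} I would pass to polar coordinates $X=R\cos\Theta$, $Y=R\sin\Theta$. For a standard Gaussian vector in $\mathbb R^2$, $R$ and $\Theta$ are independent and $\Theta$ is uniform on $[0,2\pi)$. In these coordinates
\[
 P(G)=R^3\cos\Theta,\qquad Q(G)=\operatorname{Re}(X+iY)^3=R^3\cos 3\Theta .
\]
Since $3\Theta$ reduced modulo $2\pi$ is again uniform and independent of $R$, the pairs $(R,\cos3\Theta)$ and $(R,\cos\Theta)$ have the same law. This gives \eqref{eq:hidden-rank-same-marginal}. No moment computation is needed here.

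\emph{Chaos expansions.} I would use the probabilists' Hermite polynomials $H_1(x)=x$, $H_2(x)=x^2-1$, $H_3(x)=x^3-3x$. Then $x^3=H_3(x)+3H_1(x)$ and $xy^2=H_1(x)H_2(y)+H_1(x)$, so
\[
 P=H_3(x)+H_1(x)H_2(y)+4H_1(x),\qquad
 Q=H_3(x)-3H_1(x)H_2(y).
\]
The first-order terms cancel in $Q$, so $Q$ is purely third chaos, whereas $P$ has a nonzero first-chaos component. This is the hidden Hermite rank.

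\emph{Covariances.} The pairs $(X_1,X_2)$ and $(Y_1,Y_2)$ are independent of each other and each has correlation $\rho$. By the Mehler identity $\E[H_n(X_1)H_k(X_2)]=\mathbf 1_{\{n=k\}}\,n!\,\rho^n$ and factorization across the two coordinates, distinct product Hermite terms are orthogonal. The diagonal contributions are then:
\begin{itemize}
\item $H_3\otimes H_3$ gives $6\rho^3$;
\item $H_1(x)H_2(y)$ gives $\rho\cdot 2\rho^2=2\rho^3$;
\item $4H_1$ gives $16\rho$.
\end{itemize}
Summing, $\E[P(G_1)P(G_2)]=6\rho^3+2\rho^3+16\rho=16\rho+8\rho^3$, and $\E[Q(G_1)Q(G_2)]=6\rho^3+9\cdot2\rho^3=24\rho^3$.

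\emph{Conclusion.} The two covariances are equal exactly when $16\rho=16\rho^3$, that is, when $\rho\in\{0,\pm1\}$. Hence for $0<|\rho|<1$ the two-replica mixed moments differ. All moments are finite (both variables lie in a finite chaos, cf.\ \cref{lem:laws-determine-moments}), so the two-replica joint laws differ.

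The only step requiring care is verifying the orthogonality of the cross terms, in particular between $H_3(x)$ and $H_1(x)H_2(y)$. It follows from independence of the coordinates together with $\E H_2(Y)=0$ and the Mehler identity, so I expect no real obstacle.
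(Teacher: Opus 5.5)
Your proposal is correct and follows essentially the same route as the paper. It uses the polar form $R^3\cos\Theta$ versus $R^3\cos3\Theta$ with invariance of Haar measure under $\Theta\mapsto3\Theta$, then the same Hermite expansions $P=H_3(x)+H_1(x)H_2(y)+4H_1(x)$ and $Q=H_3(x)-3H_1(x)H_2(y)$, and finally the Mehler identity to obtain $16\rho+8\rho^3$ and $24\rho^3$.
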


\begin{proof}
Write a standard planar Gaussian in polar form
$G=(R\cos\Theta,R\sin\Theta)$, where $R$ and $\Theta$ are independent
and $\Theta$ is uniform modulo $2\pi$.  Then
\[
 P(G)=R^3\cos\Theta,
 \qquad
 Q(G)=R^3\cos(3\Theta).
\]
Multiplication by $3$ preserves Haar measure on the circle, so
$\cos\Theta$ and $\cos(3\Theta)$ have the same distribution and are
equally independent of $R$.  This proves
\eqref{eq:hidden-rank-same-marginal}.

For the replica calculation, let $H_j$ denote the probabilists' Hermite
polynomial of degree $j$.  Since $x^3=H_3(x)+3H_1(x)$ and
$y^2=H_2(y)+1$, one has the orthogonal chaos expansions
\begin{equation}
 P(x,y)=H_3(x)+H_1(x)H_2(y)+4H_1(x).
 \label{eq:hidden-rank-P-Hermite}
\end{equation}
\begin{equation}
 Q(x,y)=H_3(x)-3H_1(x)H_2(y).
 \label{eq:hidden-rank-Q-Hermite}
\end{equation}
For standard correlated Gaussians $Z_1,Z_2$ with correlation $\rho$,
\[
 \E[H_j(Z_1)H_k(Z_2)]
 =\mathbf1_{\{j=k\}}j!\rho^j.
\]
Coordinate independence and orthogonality of distinct multi-indices
therefore give
\[
 \E[P(G_1)P(G_2)]
 =4^2\rho+(3!+1!2!)\rho^3
 =16\rho+8\rho^3,
\]
and
\[
 \E[Q(G_1)Q(G_2)]
 =(3!+3^2\,1!2!)\rho^3
 =24\rho^3.
\]
These identities prove the claim.
\end{proof}

The example has an immediate dependence-theoretic consequence.  Recall
that the Hermite rank of a centered Gaussian polynomial is the smallest
positive chaos order in its Hermite expansion.  Equations
\eqref{eq:hidden-rank-P-Hermite}--\eqref{eq:hidden-rank-Q-Hermite} show
that $P$ has rank $1$ and $Q$ has rank $3$, although
\eqref{eq:hidden-rank-same-marginal} makes that distinction invisible to
every statistic based on a single time.

\begin{corollary}[The same one-time law can hide opposite memory regimes]
\label{cor:hidden-rank-memory}
Let $(G_n)_{n\in\mathbb Z}$ be a stationary centered Gaussian process in
$\mathbb R^2$ whose coordinates are independent copies of a
unit-variance scalar Gaussian process with correlation
\[
 r(k)\sim c|k|^{-\alpha},
 \qquad c>0,\qquad \frac13<\alpha<1.
\]
Then the stationary sequences $(P(G_n))$ and $(Q(G_n))$ have the same
one-time marginal distribution, but
\begin{equation}
 \sum_{k\in\mathbb Z}|\operatorname{Cov}(P(G_0),P(G_k))|=\infty,
 \qquad
 \sum_{k\in\mathbb Z}|\operatorname{Cov}(Q(G_0),Q(G_k))|<\infty.
 \label{eq:hidden-rank-memory-dichotomy}
\end{equation}
\end{corollary}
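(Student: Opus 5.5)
The plan is to compute both covariance sequences with the stationary version of \cref{prop:same-marginal-different-replica}, and then to compare their tails with the power law $|k|^{-\alpha}$. Equality of the one-time marginals is already \eqref{eq:hidden-rank-same-marginal}, since each $G_n$ is a standard Gaussian vector in $\mathbb R^2$.

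For the covariances, fix $k$ and put $\rho=r(k)$. The pair $(G_0,G_k)$ then has exactly the coupling \eqref{eq:hidden-rank-coordinate-coupling}: the two coordinate processes are independent copies, and the cross-correlation within each coordinate is $\rho$. Both $P$ and $Q$ are odd, so $P(G_0)$ and $Q(G_0)$ are centered, and covariances equal mixed second moments. The Hermite computation in the proof of \cref{prop:same-marginal-different-replica} uses only $\E[H_j(Z_1)H_k(Z_2)]=\mathbf1_{\{j=k\}}j!\rho^j$. This identity holds for every $|\rho|\le1$, so the boundary case $k=0$ is covered as well. We therefore get
\[
 \operatorname{Cov}(P(G_0),P(G_k))=16r(k)+8r(k)^3,
 \qquad
 \operatorname{Cov}(Q(G_0),Q(G_k))=24r(k)^3 .
\]

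For the summation, note that $r(k)\to0$, so for $|k|$ large we have $r(k)>0$ and $r(k)^2<1$. In that range the $P$-covariance is at least $16r(k)\ge 8c|k|^{-\alpha}$. Since $\alpha<1$, the series $\sum|k|^{-\alpha}$ diverges, which gives the first assertion in \eqref{eq:hidden-rank-memory-dichotomy}. For $Q$ we have $|r(k)|^3\le 2c^3|k|^{-3\alpha}$ for large $|k|$. Since $3\alpha>1$, the series $\sum|k|^{-3\alpha}$ converges. The finitely many remaining terms are bounded by $24$, so the second sum is finite.

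I expect no real obstacle here. The only point needing care is that the stationary pair $(G_0,G_k)$ really satisfies the hypotheses of \cref{prop:same-marginal-different-replica}, including $\E[X_aY_b]=0$. This follows from the independence of the coordinate processes.
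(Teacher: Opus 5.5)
Your proof is correct and follows the paper's argument. You substitute $\rho=r(k)$ into the two-replica covariance formulas $16\rho+8\rho^3$ and $24\rho^3$, then compare the resulting tails of order $|k|^{-\alpha}$ and $|k|^{-3\alpha}$ with divergent and convergent power series. The only cosmetic difference is that you observe the Hermite correlation identity holds for all $|\rho|\le1$ and so covers every lag, whereas the paper applies the formulas only for large $|k|$ and discards the finitely many remaining lags.
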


\begin{proof}
For all sufficiently large $|k|$, one has $|r(k)|<1$, so apply
\eqref{eq:hidden-rank-P-covariance} and
\eqref{eq:hidden-rank-Q-covariance} with $\rho=r(k)$.  The finitely many
remaining lags do not affect summability.  The first
covariance is asymptotic to $16c|k|^{-\alpha}$ and is not absolutely
summable because $\alpha<1$.  The second is asymptotic to
$24c^3|k|^{-3\alpha}$ and is absolutely summable because
$3\alpha>1$.
\end{proof}

Thus equal marginals can conceal not merely a tensorial coordinate
choice but a qualitative change in temporal dependence.  The replica
parameter probes the covariance intervention which exposes the missing
Hermite grade.  In the manifold and gauge applications below, the same
principle exposes multiplication, support bands, and ultimately local
geometry which the quadratic spectrum alone cannot see.

\section{A scalar Gaussian observable hears the manifold}
\label{sec:gaussian-observable-hears-manifold}

We now show that a single scalar polynomial of a heat-regularized white
noise determines the full Riemannian isometry class, provided that one is
allowed to observe all its scalar correlated replicas.  The quadratic
chaos recovers the heat operator, whereas one further even chaos recovers
the order structure of the underlying function space.  Their combination
turns the abstract orthogonal operator supplied by Replica--Tannaka
rigidity into pullback by a Riemannian isometry.

Throughout this section, a closed Riemannian manifold means a compact
smooth manifold without boundary.  If $(M,g)$ is such a manifold, we write
$dv_g$ for its Riemannian measure and use the nonnegative convention
\[
   \Delta_g=-\operatorname{div}_g\nabla_g
\]
for the Laplace--Beltrami operator on
$H_M:=L^2(M,dv_g;\mathbb R)$.  Its heat kernel is denoted by
$p_s^M(x,y)$, so that
\[
  (e^{-s\Delta_g}h)(x)
  =\int_M p_s^M(x,y)h(y)\,dv_g(y),
  \qquad s>0.
\]

\subsection{The observable and its replica signature}

Fix $t>0$ and an even integer $q\geq4$, and set
\begin{equation}
  A_{M,t}:=e^{-t\Delta_g/2},
  \qquad
  k_{M,t,x}:=p_{t/2}^M(x,\mathord\cdot)\in H_M.
  \label{eq:heat-smoothing-and-feature-vector}
\end{equation}
Let $W_M$ be an isonormal Gaussian process over the real Hilbert space
$H_M$; thus
\[
  \mathbb E[W_M(h)W_M(\ell)]=\langle h,\ell\rangle_{H_M}.
\]
The heat-regularized white noise is the smooth Gaussian field
\begin{equation}
  X_{M,t}(x):=W_M(k_{M,t,x}).
  \label{eq:heat-regularized-white-noise}
\end{equation}
It has covariance
\begin{equation}
 \mathbb E[X_{M,t}(x)X_{M,t}(y)]
 =\langle k_{M,t,x},k_{M,t,y}\rangle_{H_M}
 =p_t^M(x,y).
 \label{eq:heat-field-covariance}
\end{equation}
The asserted smoothness holds in the samplewise sense.  Indeed, for
every integer $s\ge0$ the spectral theorem gives
\[
 \mathbb E\norm{X_{M,t}}_{H^s(M)}^2
 =\Tr\bigl((1+\Delta_g)^s e^{-t\Delta_g}\bigr)<\infty.
\]
Taking the countable intersection over $s$ and using Sobolev embedding
shows that $X_{M,t}\in C^\infty(M)$ almost surely.

We use the multiple-Wiener-integral normalization
\begin{equation}
 \mathbb E[I_r(f)I_s(h)]
 =\mathbf 1_{\{r=s\}}\,r!\,\langle f,h\rangle_{H_M^{\odot r}}.
 \label{eq:geometric-wiener-normalization}
\end{equation}
For $\sigma_{M,t}(x)^2:=p_t^M(x,x)$ and the probabilists' Hermite
polynomial $\operatorname{He}_r$, the pointwise Wick power is
\begin{equation}
 :X_{M,t}(x)^r:
 :=\sigma_{M,t}(x)^r
   \operatorname{He}_r\!\left(
      \frac{X_{M,t}(x)}{\sigma_{M,t}(x)}
   \right)
 =I_r(k_{M,t,x}^{\otimes r}).
 \label{eq:pointwise-wick-normalization}
\end{equation}
In particular,
\begin{align*}
 :X_{M,t}(x)^2:
 &=X_{M,t}(x)^2-\sigma_{M,t}(x)^2,\\
 :X_{M,t}(x)^6:
 &=X_{M,t}(x)^6
   -15\sigma_{M,t}(x)^2X_{M,t}(x)^4\\
 &\hspace{1.5em}
   +45\sigma_{M,t}(x)^4X_{M,t}(x)^2
   -15\sigma_{M,t}(x)^6.
\end{align*}

Define the scalar observable
\begin{equation}
  F_{M,t,q}
  :=\int_M\bigl(:X_{M,t}(x)^2:+:X_{M,t}(x)^q:\bigr)\,dv_g(x).
  \label{eq:manifold-scalar-observable}
\end{equation}
There is no renormalization hidden in this definition beyond
\eqref{eq:pointwise-wick-normalization}.  Indeed, the map
$x\mapsto k_{M,t,x}$ is continuous from $M$ to $H_M$, because
\[
 \|k_{M,t,x}-k_{M,t,y}\|_{H_M}^2
 =p_t^M(x,x)+p_t^M(y,y)-2p_t^M(x,y).
\]
It follows from compactness that, for every $r\geq1$, the Bochner integral
\begin{equation}
 f_{M,t}^{(r)}
 :=\int_M k_{M,t,x}^{\otimes r}\,dv_g(x)
 \in H_M^{\odot r}
 \label{eq:integrated-heat-feature-tensor}
\end{equation}
is well defined.  More explicitly,
\[
 \int_M\|k_{M,t,x}^{\otimes r}\|\,dv_g(x)
 =\int_M p_t^M(x,x)^{r/2}\,dv_g(x)<\infty.
\]
By \eqref{eq:geometric-wiener-normalization}, the map
$f\mapsto I_r(f)$ is continuous from $H_M^{\odot r}$ to $L^2(\Omega)$.
Consequently the integral in \eqref{eq:manifold-scalar-observable} is a
Bochner integral in $L^2(\Omega)$ and
\begin{equation}
 F_{M,t,q}=I_2(f_{M,t}^{(2)})+I_q(f_{M,t}^{(q)}).
 \label{eq:observable-chaos-decomposition}
\end{equation}

For $r\geq1$, let
\[
 \operatorname{Corr}_r^{\circ}
 :=\{\Sigma\in\mathbb R^{r\times r}:
       \Sigma=\Sigma^{\mathsf T}>0,\ \Sigma_{aa}=1\}.
\]
Given $\Sigma\in\operatorname{Corr}_r^{\circ}$, let
$(W_M^{(1)},\ldots,W_M^{(r)})$ be a replicated isonormal family satisfying
\begin{equation}
 \mathbb E[W_M^{(a)}(h)W_M^{(b)}(\ell)]
 =\Sigma_{ab}\langle h,\ell\rangle_{H_M}.
 \label{eq:ordinary-correlated-isonormal-replicas}
\end{equation}
Let $F_{M,t,q}^{(a)}$ be \eqref{eq:manifold-scalar-observable} constructed
from $W_M^{(a)}$.  The \emph{scalar replica signature} of $(M,g)$ at
$(t,q)$ is the indexed family
\begin{equation}
 \Phi_{M,t,q}(r,\Sigma,\theta)
 :=\mathbb E\exp\!\left(i\sum_{a=1}^r
       \theta_aF_{M,t,q}^{(a)}\right),
 \qquad
 \mathfrak R_{t,q}(M,g)
 :=\left(\Phi_{M,t,q}(r,\Sigma,\theta)\right)_{
   r\geq1,\ \Sigma\in\operatorname{Corr}_r^{\circ},\
   \theta\in\mathbb R^r},
 \label{eq:manifold-replica-signature}
\end{equation}
where the indexing by $(r,\Sigma,\theta)$ is part of the datum.

By \cref{thm:scalar-replica-tannaka}, equality of these signatures yields
one and the same real unitary on the minimal active support which
transports both homogeneous kernels.  The common-unitary conclusion is
essential: it would not follow from the two marginal chaos laws taken
separately.

\begin{theorem}[A scalar Gaussian observable hears the manifold]
\label{thm:scalar-observable-hears-manifold}
Let $(M,g)$ and $(N,h)$ be closed connected Riemannian manifolds, let
$t>0$, and let $q\geq4$ be even.  Then
\begin{equation}
 \begin{split}
  \mathfrak R_{t,q}(M,g)=\mathfrak R_{t,q}(N,h)
  \quad\Longleftrightarrow\quad
  (M,g)\simeq(N,h)\\[-2pt]
  \text{as Riemannian manifolds}.
 \end{split}
  \label{eq:replica-signature-isometry-equivalence}
\end{equation}
Thus the minimal even choice within this norm-rigidity mechanism is
$q=4$: the correlated scalar replicas of
\[
 \int_M\bigl(:X_{M,t}(x)^2:+:X_{M,t}(x)^4:\bigr)\,dv_g(x)
\]
determine the unpointed Riemannian isometry class of $(M,g)$.  The same
statement holds for every even $q>2$, including $q=6$.
\end{theorem}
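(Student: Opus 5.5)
The plan is to combine \cref{thm:scalar-replica-tannaka} with a rigidity argument that turns the abstract unitary into a spatial one. The backward implication is routine. If $\phi:N\to M$ is a Riemannian isometry, then pullback $U_\phi h=h\circ\phi$ is a unitary $H_M\to H_N$ that intertwines the heat semigroups, so $p^N_{s}(x,y)=p^M_s(\phi x,\phi y)$. Hence $U_\phi^{\otimes r}f^{(r)}_{M,t}=f^{(r)}_{N,t}$, since $dv_g$ pushes forward to $dv_h$. Equivariance of multiple Wiener integrals then gives equal signatures, exactly as in the converse part of \cref{thm:scalar-replica-tannaka}.

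For the forward implication, \cref{thm:scalar-replica-tannaka} applied to the packets $(0,0,f^{(2)},0,\dots,f^{(q)})$ gives one surjective unitary $U:S_M\to S_N$ with $U^{\otimes2}f^{(2)}_{M}=f^{(2)}_N$ and $U^{\otimes q}f^{(q)}_M=f^{(q)}_N$. Next I identify these tensors. First, $\operatorname{Flat}(f^{(2)}_{M,t})=\int k_x\otimes k_x\,dv=A_{M,t}^2=e^{-t\Delta_g}$, which is injective, so $S_M=H_M$ and likewise $S_N=H_N$. Second, for $s\in H_M$,
\[
\langle f^{(q)}_{M,t},s^{\otimes q}\rangle=\int_M\langle k_x,s\rangle^q\,dv_g=\|e^{-t\Delta_g/2}s\|_{L^q(M)}^q .
\]
The first identity gives $Ue^{-t\Delta_g}U^*=e^{-t\Delta_h}$. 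Taking the (unbounded) logarithm, or equivalently using functional calculus on the injective positive operator, yields $U\Delta_gU^*=\Delta_h$ and therefore $Ue^{-t\Delta_g/2}=e^{-t\Delta_h/2}U$. Polarization of the second identity, which is a homogeneous polynomial of even degree $q$, then gives $\|e^{-t\Delta_h/2}Us\|_{L^q}=\|e^{-t\Delta_g/2}s\|_{L^q}$. So $U$ is an $L^q$-isometry on the dense subspace $\Ran e^{-t\Delta_g/2}$, which contains $C^\infty(M)$. Because $M$ and $N$ are compact, $L^2\supset L^q$ and $U$ is also an $L^2$-isometry, so $U$ extends from $C^\infty$ to a surjective linear map $L^q(M)\to L^q(N)$ that is simultaneously an $L^2$- and an $L^q$-isometry.

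The main step is to invoke Lamperti's theorem for $q\neq2$: a surjective linear isometry of $L^q$ spaces of finite measure spaces has the form $Us=w\cdot(s\circ\Phi)$, where $\Phi$ is a measurable measure-class isomorphism and $|w|^q=d(\Phi_*)/d\mu$. Preservation of the $L^2$ norm as well forces $|w|^2$ to equal the same Radon--Nikodym derivative, so $|w|\equiv1$ and $\Phi$ is measure-preserving. This is where positivity of $q\ne2$ enters; I expect it to be the main obstacle, because the Lamperti step must be carried out carefully to obtain a genuine weighted composition on full $L^q$ spaces and to remove the sign $w=\pm1$. For the sign, apply $U$ to the constant function $1$, which spans the kernel of $\Delta_g$ because $M$ is connected. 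Its image spans the kernel of $\Delta_h$, so $w$ is constant and may be taken as $\pm1$. Replacing $U$ by $-U$ preserves both tensors, since $q$ is even, so we may take $w=1$.

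Now $U$ is composition with a measure-preserving $\Phi$, so it is an algebra isomorphism $L^\infty(N)\to L^\infty(M)$ in the appropriate direction, and it intertwines the Laplacians. I then recover the Riemannian geometry. $U$ maps $C^\infty=\bigcap_k\operatorname{Dom}(\Delta^k)$ onto $C^\infty$, and it is multiplicative. By the characterization of smooth maps through the algebra $C^\infty$ (Milnor's exercise), $\Phi$ agrees almost everywhere with a diffeomorphism. The intertwining of Laplacians preserves the carré du champ, $\Gamma(f)=\tfrac12(\Delta f^2-2f\Delta f)$ up to sign convention, which equals $|\nabla f|^2$. Hence $\Phi$ preserves $|df|_g$ for all smooth $f$, so it is a Riemannian isometry. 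The same argument applies verbatim to every even $q\ge4$.
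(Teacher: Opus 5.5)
Your proposal follows essentially the same route as the paper's proof. The steps match: a common Replica--Tannaka unitary on the full $L^2$ spaces (using injectivity of $\operatorname{Flat}(f^{(2)}_{M,t})=e^{-t\Delta_g}$), then the unbounded logarithm to get $U\Delta_g=\Delta_hU$. Next comes the diagonal identity $\langle f^{(q)}_{M,t},s^{\otimes q}\rangle=\|A_{M,t}s\|_{L^q}^q$; no polarization is needed, just evaluate against $(Us)^{\otimes q}$. Lamperti then gives $|w|^q=|w|^2$, forcing $|w|=1$ and measure preservation, and the sign is removed via $U\mathbf 1\in\ker\Delta_h$. Finally, smooth vectors and the carr\'e du champ recover the metric. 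Using Milnor's exercise in place of the paper's explicit embedding argument to regularize $\Phi$ is a harmless variation. You should still note that the resulting diffeomorphism agrees with $\Phi$ almost everywhere, for instance by testing on countably many separating smooth functions.

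One intermediate claim is false as stated: $\operatorname{Ran}e^{-t\Delta_g/2}$ does not contain $C^\infty(M)$. Its elements have spectral coefficients $e^{-t\lambda_j/2}\langle a,\varphi_j\rangle$ with $a\in L^2$, so they decay exponentially in $\lambda_j$. A general smooth function only has coefficients decaying faster than every power of $\lambda_j$. On a flat torus, for example, the heat range consists of real-analytic functions and contains no nonzero compactly supported bump.

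The density you need is nevertheless true, for the reason the paper gives. The range contains every finite linear combination of eigenfunctions, and these are dense in $C^\infty(M)$ for its Fr\'echet topology, hence in $L^q(M)$. Accordingly, you should extend $U$ by continuity from the heat range rather than from $C^\infty(M)$. Surjectivity of the extension then comes from $U\operatorname{Ran}A_{M,t}=\operatorname{Ran}A_{N,t}$, which follows from the heat intertwining, together with $L^q$-density of $\operatorname{Ran}A_{N,t}$. With this repair your argument is complete.
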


\begin{proof}
We divide the nontrivial implication into five explicit steps.

\smallskip
\noindent
\emph{Step 1: one common Replica--Tannaka unitary.}\par\noindent
Assume
$\mathfrak R_{t,q}(M,g)=\mathfrak R_{t,q}(N,h)$.  Apply
\cref{thm:scalar-replica-tannaka} to the chaos decompositions
\eqref{eq:observable-chaos-decomposition}.  It gives a real unitary $U$
between their minimal one-particle supports such that
\begin{equation}
 f_{N,t}^{(2)}=U^{\odot2}f_{M,t}^{(2)},
 \qquad
 f_{N,t}^{(q)}=U^{\odot q}f_{M,t}^{(q)}.
 \label{eq:RT-two-kernel-intertwining}
\end{equation}
These supports are in fact all of $H_M$ and $H_N$.  To verify this, define
the Hilbert--Schmidt operator $C_M:H_M\to H_M$ by
\begin{equation}
 \langle C_Ma,b\rangle_{H_M}
 :=\langle f_{M,t}^{(2)},b\otimes a\rangle_{H_M^{\otimes2}}.
 \label{eq:quadratic-tensor-flattening}
\end{equation}
For $a,b\in H_M$, Fubini's theorem and
\eqref{eq:heat-smoothing-and-feature-vector} give
\begin{align}
 \langle C_Ma,b\rangle_{H_M}
 &=\int_M\langle k_{M,t,x},a\rangle
               \langle k_{M,t,x},b\rangle\,dv_g(x)\notag\\
 &=\langle A_{M,t}a,A_{M,t}b\rangle_{H_M}
 =\langle e^{-t\Delta_g}a,b\rangle_{H_M}.
 \label{eq:quadratic-tensor-is-heat-operator}
\end{align}
Hence $C_M=e^{-t\Delta_g}$.  This operator is injective and has dense
range; the one-particle support of $f_{M,t}^{(2)}$, which is the closure of
the range of its flattening, is therefore $H_M$.  The same argument applies
to $N$.  Thus
\begin{equation}
  U:H_M\longrightarrow H_N
  \quad\text{is a surjective real unitary.}
  \label{eq:RT-unitary-on-full-L2}
\end{equation}

\smallskip
\noindent
\emph{Step 2: the quadratic chaos recovers the full heat semigroup.}
Flattening the first identity in \eqref{eq:RT-two-kernel-intertwining} and
using \eqref{eq:quadratic-tensor-is-heat-operator} yields
\begin{equation}
 Ue^{-t\Delta_g}U^{-1}=e^{-t\Delta_h}.
 \label{eq:one-time-heat-intertwining}
\end{equation}
There is no loss in passing from one heat time to the generator.  Indeed,
$e^{-t\Delta_g}$ and $e^{-t\Delta_h}$ are positive injective contractions,
and the Borel functional calculus gives
\begin{equation}
 \Delta_g=-\frac1t\log(e^{-t\Delta_g}),
 \qquad
 \Delta_h=-\frac1t\log(e^{-t\Delta_h}).
 \label{eq:generator-as-unbounded-logarithm}
\end{equation}
Here the value of $\log$ at $0$ is immaterial because the spectral
projection of either heat operator at $\{0\}$ is zero; the domains in
\eqref{eq:generator-as-unbounded-logarithm} are the corresponding
functional-calculus domains.  Therefore
\begin{equation}
 U\operatorname{Dom}(\Delta_g)=\operatorname{Dom}(\Delta_h),
 \qquad
 U\Delta_g=\Delta_hU,
 \label{eq:laplacian-intertwining}
\end{equation}
and, for every $s\geq0$,
\begin{equation}
 Ue^{-s\Delta_g}=e^{-s\Delta_h}U.
 \label{eq:all-time-heat-intertwining}
\end{equation}
In particular,
$UA_{M,t}=A_{N,t}U$.

\smallskip
\noindent
\emph{Step 3: the even $q$th chaos turns $U$ into an $L^q$-isometry.}
For every $a\in H_M$, the definition
\eqref{eq:integrated-heat-feature-tensor} gives the exact diagonal identity
\begin{equation}
 \left\langle f_{M,t}^{(q)},a^{\otimes q}\right\rangle
 =\int_M(A_{M,t}a)(x)^q\,dv_g(x)
 =\|A_{M,t}a\|_{L^q(M)}^q,
 \label{eq:q-tensor-is-Lq-norm}
\end{equation}
where the last equality uses that $q$ is even.  Evaluate the second identity
in \eqref{eq:RT-two-kernel-intertwining} against $(Ua)^{\otimes q}$ and
use $UA_{M,t}=A_{N,t}U$.  We obtain
\begin{equation}
 \|UA_{M,t}a\|_{L^q(N)}
 =\|A_{M,t}a\|_{L^q(M)}
 \qquad(a\in H_M).
 \label{eq:Lq-isometry-on-heat-range}
\end{equation}

The range of $A_{M,t}$ is dense in $L^q(M)$.  One direct verification is
that it contains every finite linear combination of Laplace eigenfunctions,
while such combinations are dense in $C^\infty(M)$ in its Fr\'echet
topology and hence dense in $L^q(M)$.  Similarly,
$\operatorname{Ran}(A_{N,t})$ is dense in $L^q(N)$, and
\eqref{eq:all-time-heat-intertwining} gives
\[
 U\operatorname{Ran}(A_{M,t})=\operatorname{Ran}(A_{N,t}).
\]
It follows from \eqref{eq:Lq-isometry-on-heat-range} that $U$ extends
uniquely to a surjective real linear isometry
\begin{equation}
 \widetilde U:L^q(M,dv_g)\longrightarrow L^q(N,dv_h).
 \label{eq:surjective-Lq-extension}
\end{equation}
Since both measures are finite and $q>2$, convergence in $L^q$ implies
convergence in $L^2$.  Thus the extension $\widetilde U$ agrees with the
original $L^2$-unitary $U$ on all of $L^q(M)$; below we use the same symbol
$U$ for both realizations.

\smallskip
\noindent
\emph{Step 4: Lamperti rigidity makes $U$ an order isomorphism, up to one
global sign.}
The Banach--Lamperti theorem \cite{Lamperti1958} for the surjective $L^q$-isometry
\eqref{eq:surjective-Lq-extension}, with $q\neq2$, supplies a measure-class
isomorphism $\phi:N\to M$ modulo null sets and a nonvanishing measurable
real function $a$ on $N$ such that
\begin{equation}
 (Uf)(y)=a(y)f(\phi(y))
 \quad\text{for almost every }y\in N,
 \qquad f\in L^q(M).
 \label{eq:lamperti-representation}
\end{equation}
Applying the $L^q$-isometry and the $L^2$-unitarity to indicator functions
gives, for every measurable $E\subseteq M$,
\begin{equation}
 \int_{\phi^{-1}(E)}|a|^q\,dv_h=dv_g(E),
 \qquad
 \int_{\phi^{-1}(E)}|a|^2\,dv_h=dv_g(E).
 \label{eq:two-lamperti-measure-identities}
\end{equation}
Because $\phi$ is an isomorphism of measure algebras,
\eqref{eq:two-lamperti-measure-identities} implies
$|a|^q=|a|^2$ almost everywhere.  Since $a$ is nonzero and $q>2$, we have
$|a|=1$ almost everywhere, and then
\begin{equation}
 dv_h(\phi^{-1}(E))=dv_g(E).
 \label{eq:lamperti-map-preserves-volume}
\end{equation}

Moreover, $a=U\mathbf1_M$.  By \eqref{eq:laplacian-intertwining},
$a\in\ker\Delta_h$.  Elements of this kernel are smooth and, because $N$
is connected, constant.  Therefore $a=c$ almost everywhere for one
$c\in\{-1,1\}$.  Replacing $U$ by $V:=cU$ gives
\begin{equation}
 (Vf)(y)=f(\phi(y)),
 \qquad V\mathbf1_M=\mathbf1_N.
 \label{eq:unital-order-isomorphism}
\end{equation}
The operator $V$ is a unitary order isomorphism and still intertwines the
Laplacians.  Since both tensor degrees $2$ and $q$ are even, this harmless
global sign change also preserves both identities in
\eqref{eq:RT-two-kernel-intertwining}.

\smallskip
\noindent
\emph{Step 5: the order isomorphism is pullback by a Riemannian isometry.}
On a closed manifold, elliptic regularity and the spectral theorem give
\begin{equation}
 C^\infty(M)=\bigcap_{m\geq1}\operatorname{Dom}(\Delta_g^m),
 \qquad
 C^\infty(N)=\bigcap_{m\geq1}\operatorname{Dom}(\Delta_h^m).
 \label{eq:smooth-vectors-of-laplacian}
\end{equation}
It follows from \eqref{eq:laplacian-intertwining} that $V$ restricts to a
bijection $C^\infty(M)\to C^\infty(N)$.  The representation
\eqref{eq:unital-order-isomorphism} shows that this restriction is a unital
algebra isomorphism.

For completeness, we now remove the ``almost everywhere'' qualifier from
$\phi$.  Choose a smooth embedding
$\iota=(\iota_1,\ldots,\iota_L):M\hookrightarrow\mathbb R^L$ and define
\[
 \Phi:N\longrightarrow\mathbb R^L,
 \qquad
 \Phi(y):=(V\iota_1(y),\ldots,V\iota_L(y)).
\]
The map $\Phi$ is smooth and equals $\iota\circ\phi$ almost everywhere.
Since $\iota(M)$ is closed, continuity of $\Phi$ and the fact that every
nonempty open subset of $N$ has positive Riemannian volume imply
$\Phi(N)\subseteq\iota(M)$.  Hence
\[
 \overline\phi:=\iota^{-1}\circ\Phi:N\longrightarrow M
\]
is smooth and agrees with $\phi$ almost everywhere.  Applying the same
construction to $V^{-1}$ yields a smooth map
$\overline\psi:M\to N$.  The two compositions agree almost everywhere
with the identity maps; by continuity they agree everywhere.  Thus
$\overline\phi$ is a diffeomorphism, with inverse $\overline\psi$, and
\begin{equation}
 Vf=f\circ\overline\phi
 \qquad(f\in C^\infty(M)).
 \label{eq:smooth-pullback-representation}
\end{equation}

It remains only to recover the metric.  For the nonnegative Laplacian set
\begin{equation}
 \Gamma_g(f_1,f_2)
 :=\frac12\bigl(f_1\Delta_gf_2+f_2\Delta_gf_1
                  -\Delta_g(f_1f_2)\bigr)
 =\langle\nabla f_1,\nabla f_2\rangle_g.
 \label{eq:carre-du-champ-sign-convention}
\end{equation}
Because $V$ is both multiplicative and Laplacian-intertwining,
\begin{equation}
 \Gamma_h(Vf_1,Vf_2)=V\Gamma_g(f_1,f_2)
 \qquad(f_1,f_2\in C^\infty(M)).
 \label{eq:carre-du-champ-intertwining}
\end{equation}
At $y\in N$, with $x=\overline\phi(y)$, this says
\begin{equation}
 h_y^{-1}\bigl((d\overline\phi_y)^*df_1(x),
                (d\overline\phi_y)^*df_2(x)\bigr)
 =g_x^{-1}\bigl(df_1(x),df_2(x)\bigr).
 \label{eq:cotangent-metric-intertwining}
\end{equation}
The differentials of smooth functions span $T_x^*M$, so
\eqref{eq:cotangent-metric-intertwining} says that
$(d\overline\phi_y)^*:T_x^*M\to T_y^*N$ is an isometry for the cometrics.
Equivalently, $d\overline\phi_y$ is an isometry for the tangent metrics.
Thus $\overline\phi:(N,h)\to(M,g)$ is a Riemannian isometry.

We finally prove the converse.  Let
$\overline\phi:(N,h)\to(M,g)$ be a Riemannian isometry and define
$V:H_M\to H_N$ by $Vf=f\circ\overline\phi$.  Then $V$ is a real unitary,
$V\Delta_g=\Delta_hV$, and $VA_{M,t}=A_{N,t}V$.  For
$r\in\{2,q\}$ and $a_1,\ldots,a_r\in H_M$, we have
\begin{align*}
 &\left\langle f_{N,t}^{(r)},
       Va_1\otimes\cdots\otimes Va_r\right\rangle\\
 &\quad=\int_N\prod_{j=1}^r(A_{N,t}Va_j)(y)\,dv_h(y)\\
 &\quad=\int_M\prod_{j=1}^r(A_{M,t}a_j)(x)\,dv_g(x)
 =\left\langle f_{M,t}^{(r)},
       a_1\otimes\cdots\otimes a_r\right\rangle.
\end{align*}
Hence
\begin{equation}
 f_{N,t}^{(r)}=V^{\odot r}f_{M,t}^{(r)},
 \qquad r\in\{2,q\}.
 \label{eq:isometry-transports-heat-feature-tensors}
\end{equation}
For every replicated family over $H_M$, define a replicated family over
$H_N$ by $W_N^{(a)}(b):=W_M^{(a)}(V^{-1}b)$.  It has exactly the covariance
\eqref{eq:ordinary-correlated-isonormal-replicas}.  Unitary equivariance of
multiple Wiener integrals and
\eqref{eq:isometry-transports-heat-feature-tensors} give
\[
 F_{N,t,q}^{(a)}=F_{M,t,q}^{(a)}
 \quad\text{almost surely for every replica }a.
\]
All the indexed joint characteristic functions
\eqref{eq:manifold-replica-signature} are therefore equal.  This proves the
reverse implication and completes the proof.
\end{proof}

The freely indexed family of correlation matrices in
\eqref{eq:manifold-replica-signature} can be replaced by the one fixed
Gaussian canvas constructed in
\cref{subsec:universal-gaussian-canvas}.  Indeed, let
$(\mathbb W_{M,x})_{x\in\mathbb R}$ be that canvas over $H_M$ and put
\begin{equation}
 \mathcal Z_{M,t,q}(x)
 :=I_2^{\mathbb W_{M,x}}(f_{M,t}^{(2)})
   +I_q^{\mathbb W_{M,x}}(f_{M,t}^{(q)}),
 \qquad x\in\mathbb R.
 \label{eq:manifold-universal-canvas-process}
\end{equation}

\begin{corollary}[One fixed Gaussian canvas hears the manifold]
\label{cor:universal-canvas-hears-manifold}
Let $(M,g)$ and $(N,h)$ be closed connected Riemannian manifolds, let
$t>0$, and let $q\ge4$ be even.  Then
\begin{equation}
 \Law_{C(\mathbb R)}\bigl(\mathcal Z_{M,t,q}\bigr)
 =
 \Law_{C(\mathbb R)}\bigl(\mathcal Z_{N,t,q}\bigr)
 \quad\Longleftrightarrow\quad
 (M,g)\simeq(N,h).
 \label{eq:manifold-canvas-isometry-equivalence}
\end{equation}
Here both smooth processes use the same covariance kernel
$K(x,y)=e^{-(x-y)^2}$ on the whole real line; it depends on neither the
manifolds, nor $t$, nor $q$.
\end{corollary}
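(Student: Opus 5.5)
The plan is to combine \cref{thm:universal-canvas-replica-tannaka} with the five-step argument in the proof of \cref{thm:scalar-observable-hears-manifold}. The canvas process \eqref{eq:manifold-universal-canvas-process} is exactly $\mathcal Z_{\mathbf f}$ for the packet
\[
 \mathbf f_M=\bigl(0,0,f^{(2)}_{M,t},0,\ldots,0,f^{(q)}_{M,t}\bigr)
\]
on $H_M$, and similarly for $N$. Both packets have the same maximal degree $q$, so no padding issue arises beyond the zero intermediate grades.

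\emph{Forward implication.} Assume the two $C(\mathbb R)$-valued laws agree. Then \cref{thm:universal-canvas-replica-tannaka} yields a surjective real unitary $U:S(\mathbf f_M)\to S(\mathbf f_N)$ with
\[
 f^{(2)}_{N,t}=U^{\otimes2}f^{(2)}_{M,t},\qquad
 f^{(q)}_{N,t}=U^{\otimes q}f^{(q)}_{M,t},
\]
and with all other grades trivially transported. This is precisely the intertwining \eqref{eq:RT-two-kernel-intertwining}. From here the remaining argument uses only this identity, never the replica signature itself:
\begin{itemize}
\item Step~1 identifies the supports with all of $H_M$ and $H_N$, via \eqref{eq:quadratic-tensor-is-heat-operator}. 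Injectivity of the heat operator gives $S(\mathbf f_M)\supseteq\overline{\Ran}\,e^{-t\Delta_g}=H_M$.
\item Steps~2--5 then rerun verbatim: the unbounded logarithm, the $L^q$-isometry on the dense heat range, Lamperti rigidity with the sign fixed by $\ker\Delta_h$, and recovery of the smooth isometry through the carr\'e du champ.
\end{itemize}
Together these give a Riemannian isometry $(N,h)\to(M,g)$.

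\emph{Reverse implication.} Given an isometry $\overline\phi$, the pullback $V$ satisfies \eqref{eq:isometry-transports-heat-feature-tensors}. The converse half of \cref{thm:universal-canvas-replica-tannaka}, equivalently part (a) of \cref{prop:canvas-scalarization-naturality}, then gives equality of the canvas laws.

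The only point needing care is confirming that the proof of \cref{thm:scalar-observable-hears-manifold} after Step~1 depends on nothing beyond the common-unitary intertwining. I expect this to be the main obstacle, though a mild one; inspection shows it holds. Finally, I will note that the kernel $K$ is the fixed one from \eqref{eq:universal-gaussian-kernel}, independent of $M$, $t$ and $q$, by construction.
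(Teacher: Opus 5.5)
Your proposal is correct and follows essentially the same route as the paper. The paper's proof likewise applies the universal-canvas theorem to obtain one common unitary, identifies the active supports with the full $L^2$ spaces via the heat-operator flattening, and then reruns Steps~2--5. For the converse, it transports the two kernels by the pullback isometry and invokes the reverse half of the canvas theorem.
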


\begin{proof}
Equality of the two process laws and
\cref{thm:universal-canvas-replica-tannaka} give a single surjective
real unitary $U:H_M\to H_N$ satisfying
\[
 f_{N,t}^{(2)}=U^{\odot2}f_{M,t}^{(2)},
 \qquad
 f_{N,t}^{(q)}=U^{\odot q}f_{M,t}^{(q)}.
\]
The active supports are the full $L^2$ spaces by
\eqref{eq:quadratic-tensor-is-heat-operator}.  Steps~2--5 in the proof
of \cref{thm:scalar-observable-hears-manifold} therefore produce a
Riemannian isometry.  Conversely, a Riemannian isometry transports the
two kernels by \eqref{eq:isometry-transports-heat-feature-tensors}; the
converse implication of
\cref{thm:universal-canvas-replica-tannaka} then identifies the canvas
laws.
\end{proof}

\begin{remark}[Why both chaos orders are essential]
The quadratic kernel alone recovers $e^{-t\Delta_g}$, hence the unitary
equivalence class of $\Delta_g$, but this is only spectral information and
does not distinguish isospectral nonisometric manifolds.  The even $q$th
kernel forces the \emph{same} intertwining unitary to preserve an $L^q$
norm.  Since $q\neq2$, Banach--Lamperti rigidity recovers the measure
algebra and its order; the Laplacian then recovers the smooth structure and
the metric through \eqref{eq:carre-du-champ-intertwining}.
\end{remark}

\begin{remark}[Coefficients and the choice of heat convention]
The theorem remains true for
\[
 \alpha\int_M:X_{M,t}(x)^2:\,dv_g(x)
 +\beta\int_M:X_{M,t}(x)^q:\,dv_g(x),
 \qquad \alpha\beta\neq0,
\]
provided that the known coefficients $\alpha$ and $\beta$ are kept fixed
across manifolds.  The convention
$X_{M,t}=W_M(p_{t/2}^M(x,\mathord\cdot))$ was chosen so that its covariance
is $p_t^M$ and the quadratic tensor flattens exactly to $e^{-t\Delta_g}$,
with no factor $2$ in \eqref{eq:quadratic-tensor-is-heat-operator}.
\end{remark}
\section{The same scalar process hears singular diffusions}
\label{sec:dirichlet-rcd}

The preceding argument is not tied to smooth coordinates.  Its analytic
core only uses a symmetric heat semigroup, an $L^4$ multiplication
tensor, and the fact that an order isomorphism intertwining diffusion
recovers the intrinsic geometry.  We formulate that core at the level of
Dirichlet spaces and then specialize it to compact $\operatorname{RCD}$
spaces.

\subsection{An abstract Dirichlet-space theorem}

For $i\in\{1,2\}$, let $(X_i,\mu_i)$ be a finite standard Borel measure
space and let $(P_s^{(i)})_{s\ge0}$ be a strongly continuous symmetric
Markov semigroup on $L^2(X_i,\mu_i)$, with nonnegative self-adjoint
generator $L_i$.  Fix a marked time $t>0$ and assume:

\begin{enumerate}[label=\textnormal{(D\arabic*)}]
\item $P_s^{(i)}$ is conservative and irreducible;
\item $P_{t/2}^{(i)}$ has a symmetric measurable kernel
      $p_{t/2}^{(i)}(x,y)$ whose row map
      $x\mapsto p_{t/2}^{(i)}(x,\mathord\cdot)\in L^2(X_i,\mu_i)$
      is strongly measurable and which represents $P_{t/2}^{(i)}$;
\item
\begin{equation}
 \int_{X_i}\left(
   \int_{X_i}|p_{t/2}^{(i)}(x,y)|^2\,d\mu_i(y)
 \right)^2d\mu_i(x)<\infty;
 \label{eq:dirichlet-diagonal-integrability}
\end{equation}
\item the bounded map
      $P_{t/2}^{(i)}:L^2(X_i,\mu_i)\to L^4(X_i,\mu_i)$
      has dense range in $L^4(X_i,\mu_i)$.
\end{enumerate}

Put $H_i=L^2(X_i,\mu_i)$ and
\begin{equation}
 k_{i,t,x}=p_{t/2}^{(i)}(x,\mathord\cdot)\in H_i.
 \label{eq:dirichlet-heat-feature}
\end{equation}
Put $\kappa_{i,t}(x)=\norm{k_{i,t,x}}_2^2$.  Assumption
\eqref{eq:dirichlet-diagonal-integrability} says precisely that
$\kappa_{i,t}\in L^2(X_i,\mu_i)$.  Together with finiteness of $\mu_i$,
this makes the following Bochner integrals well defined:
\begin{equation}
 Q_{i,t}=\int_{X_i}k_{i,t,x}^{\otimes2}\,d\mu_i(x)
 \in H_i^{\odot2},
 \qquad
 C_{i,t}=\int_{X_i}k_{i,t,x}^{\otimes4}\,d\mu_i(x)
 \in H_i^{\odot4}.
 \label{eq:dirichlet-two-four-tensors}
\end{equation}
Indeed,
\begin{align}
 \int_{X_i}\norm{k_{i,t,x}}_2^2\,d\mu_i(x)
 &\le \mu_i(X_i)^{1/2}
       \left(\int_{X_i}\kappa_{i,t}(x)^2\,d\mu_i(x)\right)^{1/2},
 \label{eq:dirichlet-Q-integrability}\\
 \int_{X_i}\norm{k_{i,t,x}}_2^4\,d\mu_i(x)
 &=\int_{X_i}\kappa_{i,t}(x)^2\,d\mu_i(x).
 \label{eq:dirichlet-C-integrability}
\end{align}

Let $\mathbb W_{i,u}$, $u\in\mathbb R$, be the universal Gaussian
canvas over $H_i$ from
\cref{subsec:universal-gaussian-canvas}; thus
\begin{equation}
 \E[\mathbb W_{i,u}(f)\mathbb W_{i,v}(g)]
 =e^{-(u-v)^2}\ip fg_{H_i}.
 \label{eq:dirichlet-canvas-covariance}
\end{equation}
Define the scalar process
\begin{equation}
 \mathcal Y_{i,t}(u)
 =I_2^{\mathbb W_{i,u}}(Q_{i,t})
  +I_4^{\mathbb W_{i,u}}(C_{i,t}),
 \qquad u\in\mathbb R.
 \label{eq:dirichlet-canvas-observable}
\end{equation}

\begin{theorem}[A scalar canvas process reconstructs a diffusion]
\label{thm:dirichlet-canvas-reconstruction}
Under assumptions \textnormal{(D1)}--\textnormal{(D4)}, the following
are equivalent.

\begin{enumerate}[label=\textnormal{(\roman*)}]
\item The two smooth processes in
      \eqref{eq:dirichlet-canvas-observable} have the same law on
      $C(\mathbb R)$.
\item There is a measure-preserving measure-algebra isomorphism
      $\phi:X_2\to X_1$ such that the unitary
      \begin{equation}
       Vf=f\circ\phi
       \label{eq:dirichlet-spatial-unitary}
      \end{equation}
      satisfies
      \begin{equation}
       VP_s^{(1)}=P_s^{(2)}V,
       \qquad s\ge0.
       \label{eq:dirichlet-all-time-intertwining}
      \end{equation}
\item The pullback $V$ in \eqref{eq:dirichlet-spatial-unitary}
      identifies the Dirichlet forms:
      \begin{equation}
       V\operatorname{Dom}(\mathcal E_1)
       =\operatorname{Dom}(\mathcal E_2),
       \qquad
       \mathcal E_2(Vf,Vg)=\mathcal E_1(f,g).
       \label{eq:dirichlet-form-equivalence}
      \end{equation}
\end{enumerate}
Here
$\mathcal E_i(f,g)=\ip{L_i^{1/2}f}{L_i^{1/2}g}$.
\end{theorem}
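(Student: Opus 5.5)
The plan is to prove (i)$\Rightarrow$(ii)$\Rightarrow$(iii)$\Rightarrow$(i), following the five-step proof of \cref{thm:scalar-observable-hears-manifold} with smooth arguments replaced by assumptions (D1)--(D4).

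\emph{(i)$\Rightarrow$(ii).} By \cref{thm:universal-canvas-replica-tannaka}, equal laws give one surjective real unitary $U$ between the minimal supports, transporting $Q_{1,t}$ to $Q_{2,t}$ and $C_{1,t}$ to $C_{2,t}$.

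First I check that the supports are all of $H_i$. By (D2) and Fubini, exactly as in \eqref{eq:quadratic-tensor-is-heat-operator}, the flattening of $Q_{i,t}$ is $P_{t/2}^{(i)}P_{t/2}^{(i)}=P_t^{(i)}$.
\begin{itemize}
\item Injectivity of $P_t^{(i)}$ holds because $P_t^{(i)}=e^{-tL_i}$ with $L_i$ self-adjoint, so $e^{-tL_i}$ has trivial kernel.
\item Hence $P_t^{(i)}$ has dense range, and the support is $H_i$.
\end{itemize}

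Next I pass from one time to the generator. We have $UP_t^{(1)}U^{-1}=P_t^{(2)}$, and taking the unbounded logarithm as in \eqref{eq:generator-as-unbounded-logarithm} gives $UL_1=L_2U$ with matching domains. Hence $UP_s^{(1)}=P_s^{(2)}U$ for all $s\ge0$.

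Third, I get an $L^4$ isometry. The diagonal identity $\langle C_{i,t},a^{\otimes4}\rangle=\|P_{t/2}^{(i)}a\|_{L^4}^4$ follows from (D2)--(D3) and Fubini; (D3) provides the absolute integrability. Combined with the intertwining, it makes $U$ isometric from $\Ran P_{t/2}^{(1)}$ onto $\Ran P_{t/2}^{(2)}$ in $L^4$ norms.

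By (D4) these ranges are dense in $L^4$, so $U$ extends to a surjective $L^4$ isometry. Because $\mu_i$ is finite, $L^4$ convergence implies $L^2$ convergence, so this extension agrees with $U$ on $L^4$.

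Fourth, I apply Lamperti. Lamperti's theorem on standard Borel spaces gives $Uf=a\cdot f\circ\phi$. The two measure identities, from $L^2$-unitarity and $L^4$-isometry applied to indicators, force $|a|=1$ and force $\phi$ to be measure preserving.

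To remove the sign I use (D1).
\begin{itemize}
\item Conservativeness gives $P_s\mathbf 1=\mathbf 1$, so $\mathbf 1\in\ker L_1$.
\item Therefore $a=U\mathbf 1\in\ker L_2$.
\item Irreducibility, together with finiteness of $\mu_2$, makes $\ker L_2$ the constants. I would justify this via the standard fact that for an irreducible Markov semigroup, fixed points are constant.
\end{itemize}
Thus $a=\pm1$. Replacing $U$ by $V=\pm U$, which is harmless since both degrees are even, gives (ii).

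\emph{(ii)$\Rightarrow$(iii).} A unitary intertwining the semigroups intertwines the generators, hence their square roots by functional calculus. The form domains and the forms therefore correspond.

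\emph{(iii)$\Rightarrow$(i).} Equality of the forms under the unitary $V$ means $VL_1V^{-1}=L_2$, so $V$ intertwines all $P_s$.

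I then verify $V^{\otimes r}Q_{1,t}=Q_{2,t}$ and $V^{\otimes r}C_{1,t}=C_{2,t}$ by pairing with elementary tensors:
\begin{itemize}
\item $\langle Q_{i,t},a_1\otimes a_2\rangle=\int\prod_j P^{(i)}_{t/2}a_j\,d\mu_i$, and similarly for $C_{i,t}$ with four factors.
\item $V$ commutes with $P_{t/2}$.
\item $V$ is composition with a measure-preserving map, so it preserves integrals of products.
\end{itemize}
The converse direction of \cref{thm:universal-canvas-replica-tannaka} then identifies the laws.

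\textbf{Main obstacle.} The delicate step is the Lamperti step in the abstract setting. Two points need care:
\begin{itemize}
\item justifying the identification of $U$ on $L^4$ with its $L^2$ action;
\item deducing constancy of $\ker L_2$ from irreducibility.
\end{itemize}
I expect this to reduce to invariance of the sets $\{a>0\}$ under the semigroup, which irreducibility then forces to be trivial.
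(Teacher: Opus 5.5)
Your proposal is correct and follows the paper's proof essentially step by step. The shared steps are: Replica--Tannaka on $(0,0,Q_{i,t},0,C_{i,t})$; identifying the quadratic flattening with $P_t^{(i)}$, which gives full support and, via the unbounded logarithm, intertwining at all times; the quartic diagonal together with (D4), which produces a surjective $L^4$-isometry; Lamperti with the $L^2$/$L^4$ indicator comparison, which gives $|a|=1$ and measure preservation; and conservativity plus irreducibility, which force $a=U\mathbf 1$ to be a constant sign. The only difference is the order in which you close the cycle: you go (ii)$\Rightarrow$(iii)$\Rightarrow$(i), whereas the paper proves (ii)$\Leftrightarrow$(iii) by the spectral theorem and then (ii)$\Rightarrow$(i) directly.
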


\begin{proof}
Assume first that the process laws agree.  Apply
\cref{thm:universal-canvas-replica-tannaka} to the graded packet
$(0,0,Q_{i,t},0,C_{i,t})$.  We obtain a common real unitary
$U:H_1\to H_2$ satisfying
\begin{equation}
 U^{\otimes2}Q_{1,t}=Q_{2,t},
 \qquad
 U^{\otimes4}C_{1,t}=C_{2,t}.
 \label{eq:dirichlet-two-four-unitary}
\end{equation}
The unitary acts on the whole $L^2$ space.  Indeed, the flattening of the
quadratic tensor is
\begin{align}
 \ip{\operatorname{Flat}(Q_{i,t})f}{g}_{H_i}
 &=\int_{X_i}(P_{t/2}^{(i)}f)(x)
                  (P_{t/2}^{(i)}g)(x)\,d\mu_i(x)\notag\\
 &=\ip{P_t^{(i)}f}{g}_{H_i},
 \label{eq:dirichlet-quadratic-is-heat}
\end{align}
so $\operatorname{Flat}(Q_{i,t})=P_t^{(i)}$.  The operator
$P_t^{(i)}=e^{-tL_i}$ is injective and has dense range.

Flattening the first equality in
\eqref{eq:dirichlet-two-four-unitary} gives
\begin{equation}
 UP_t^{(1)}=P_t^{(2)}U.
 \label{eq:dirichlet-one-time-intertwining}
\end{equation}
The unbounded Borel functional calculus applied to
$L_i=-t^{-1}\log P_t^{(i)}$ yields
$UL_1=L_2U$ on the generator domains, and consequently
\begin{equation}
 UP_s^{(1)}=P_s^{(2)}U,
 \qquad s\ge0.
 \label{eq:dirichlet-unitary-all-times}
\end{equation}

The quartic tensor identifies the $L^4$ norm.  For every $f\in H_i$,
\begin{equation}
 \ip{C_{i,t}}{f^{\otimes4}}
 =\int_{X_i}(P_{t/2}^{(i)}f)(x)^4\,d\mu_i(x)
 =\norm{P_{t/2}^{(i)}f}_{L^4(X_i)}^4.
 \label{eq:dirichlet-quartic-is-L4}
\end{equation}
Using \eqref{eq:dirichlet-two-four-unitary} and
\eqref{eq:dirichlet-unitary-all-times}, we obtain
\begin{equation}
 \norm{UP_{t/2}^{(1)}f}_{L^4(X_2)}
 =\norm{P_{t/2}^{(1)}f}_{L^4(X_1)}.
 \label{eq:dirichlet-L4-on-heat-range}
\end{equation}
Assumption \textnormal{(D4)} on both spaces therefore extends $U$ to a
surjective $L^4$-isometry.  Since the measures are finite,
$L^4\hookrightarrow L^2$, and the extension agrees with the original
$L^2$-unitary on $L^4$.

The real Banach--Lamperti theorem now gives a measure-class isomorphism
$\phi:X_2\to X_1$ and a nonvanishing measurable function $a$ such that
\begin{equation}
 Uf=a(f\circ\phi).
 \label{eq:dirichlet-lamperti-form}
\end{equation}
Apply both the $L^2$ and $L^4$ norm identities to indicator functions.
Exactly as in \eqref{eq:two-lamperti-measure-identities}, the resulting
measure identities imply $|a|^2=|a|^4$, hence $|a|=1$, and show that
$\phi$ preserves measure.  Moreover,
$a=U\mathbf1_{X_1}$ is fixed by every $P_s^{(2)}$ by conservativity and
\eqref{eq:dirichlet-unitary-all-times}.  Irreducibility implies that the
fixed space consists of constants.  Thus $a\equiv\varepsilon$ for one
$\varepsilon\in\{-1,1\}$.  The unitary $V=\varepsilon U$ is precisely
the pullback in \eqref{eq:dirichlet-spatial-unitary}, and it still
intertwines all heat times.  This proves \textnormal{(ii)}.

The equivalence between \eqref{eq:dirichlet-all-time-intertwining} and
\eqref{eq:dirichlet-form-equivalence} follows from the spectral theorem:
intertwining the generators is equivalent to intertwining their square
roots and domains.  Thus \textnormal{(ii)} and \textnormal{(iii)} are
equivalent.

Conversely, suppose \textnormal{(ii)}.  The identities
$VP_{t/2}^{(1)}=P_{t/2}^{(2)}V$ and preservation of measure give
$V^{\otimes2}Q_{1,t}=Q_{2,t}$ and
$V^{\otimes4}C_{1,t}=C_{2,t}$.  The reverse implication in
\cref{thm:universal-canvas-replica-tannaka} now gives equality of the
two process laws, proving \textnormal{(i)}.
\end{proof}

\subsection{Compact \texorpdfstring{$\operatorname{RCD}$}{RCD} spaces}

\begin{proposition}[RCD heat flows satisfy the reconstruction hypotheses]
\label{prop:RCD-reconstruction-hypotheses}
Let $(X,d,\mu)$ be a compact connected $\operatorname{RCD}(K,N)$ space
with full support and $N<\infty$.  For every marked time $t>0$, its
Cheeger heat semigroup satisfies \textnormal{(D1)}--\textnormal{(D4)}
of \cref{thm:dirichlet-canvas-reconstruction}.
\end{proposition}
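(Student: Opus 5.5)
We check the four hypotheses in turn, relying on the standard analytic theory of compact $\operatorname{RCD}(K,N)$ spaces with $N<\infty$: the heat semigroup of Cheeger energy is linear, symmetric, Markovian and strongly continuous, and its heat kernel is continuous with Gaussian-type bounds \cite{AmbrosioGigliSavare2014Invent,ErbarKuwadaSturm2015,JiangLiZhang2016}.

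\emph{(D1).} The space is compact, so $\mu(X)<\infty$. Hence $\mathbf 1\in\operatorname{Dom}(\mathcal E)$ with $\mathcal E(\mathbf 1)=0$, which gives $P_s\mathbf 1=\mathbf 1$; equivalently, stochastic completeness follows from the volume growth bound implied by $\operatorname{CD}(K,N)$. For irreducibility, let $f$ be invariant. Then $\mathcal E(f)=0$, so $|Df|=0$ $\mu$-a.e. The Sobolev-to-Lipschitz property (an RCD axiom, \cite{Gigli2018}) gives a $0$-Lipschitz representative, and connectedness together with full support makes it constant. Invariant sets are handled by applying this to indicators.

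\emph{(D2)--(D3).} The heat kernel $p_s(x,y)$ exists, is symmetric and jointly continuous on $X\times X$ for $s>0$, and satisfies the upper bound $p_s(x,y)\le C(s)/\mu(B(x,\sqrt s))$. On a compact doubling space $\mu(B(x,\sqrt s))$ is bounded below uniformly in $x$, so $p_{t/2}$ is bounded. Its continuity gives continuity, hence strong measurability, of the row map $x\mapsto p_{t/2}(x,\cdot)\in L^2$. Boundedness and finiteness of $\mu$ make the iterated integral in \eqref{eq:dirichlet-diagonal-integrability} finite. Alternatively, the semigroup law gives $\kappa_t(x)=p_t(x,x)\le\sup p_t$.

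\emph{(D4)} is the main point. Since $p_{t/2}$ is bounded, $P_{t/2}$ maps $L^1$ into $L^\infty$, so in particular it maps $L^2$ into $L^4$. To prove density we use Hahn--Banach. Suppose $g\in L^{4/3}$ annihilates $P_{t/2}(L^2)$. By symmetry of the kernel, $P_{t/2}g=0$ as an element of $L^\infty\subset L^2$. Then
\[
\|P_{t/4}g\|_2^2=\langle P_{t/2}g,g\rangle=0,
\]
so $P_{t/4}g=0$. Iterating gives $P_{t2^{-k}}g=0$ for all $k$, and each such $P_s g$ is in $L^2$ because the kernel is bounded at every positive time. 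Since $P_s g\to g$ in $L^{4/3}$ by strong continuity of the Markov semigroup on $L^p$, $1\le p<\infty$, we get $g=0$. The step needing the most care is this $L^{4/3}$ strong continuity and the extension of $P_s$ to $L^{4/3}$. We expect to obtain it by interpolating between $L^1$ and $L^2$ contractivity, with density of $L^2$ in $L^{4/3}$. An alternative route is density of the eigenfunction span: the spectrum is discrete by compactness of the resolvent, and Lipschitz functions are dense in $L^4$.
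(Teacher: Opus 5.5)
Your proof is correct, and on two points it takes a different route from the paper.

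For (D4), you make the same Hahn--Banach/Fubini reduction as the paper, to $P_{t/2}g=0$ for an annihilator $g\in L^{4/3}$. From there the paper invokes analyticity of the heat semigroup on $L^{4/3}$. The semigroup law gives $P_sg=0$ for $s>t/2$, analytic continuation in $s$ extends this to all $s>0$, and strong continuity at $0$ gives $g=0$. You instead use the halving identity $\norm{P_{s/2}g}_2^2=\langle P_sg,g\rangle$, which follows from Fubini and Chapman--Kolmogorov with bounded kernels. Equivalently, you use injectivity of $e^{-sL/2}$ on $L^2$ applied to $P_{s/2}g\in L^2$, iterated along $s=t2^{-k}$.

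\emph{What each route buys.} Your argument is elementary, but it needs ultracontractivity at a sequence of times tending to $0$. The RCD Gaussian bounds supply this at every positive time, so nothing is lost here. The paper's argument needs boundedness $P_{t/2}:L^2\to L^4$ only at the marked time, together with Stein-type $L^p$-analyticity of symmetric Markov semigroups. It therefore shows more generally that the density half of (D4) is automatic for any symmetric Markov semigroup once that single boundedness holds. Your halving trick does not give this from one-time boundedness alone.

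\emph{The step you flagged is routine.} $P_s$ is an $L^{4/3}$-contraction, and it agrees with the kernel operator on the dense subspace $L^2$. The estimate $\norm{h}_{4/3}\le\mu(X)^{1/4}\norm{h}_2$ then transfers strong continuity from $L^2$ to $L^{4/3}$ by an $\varepsilon/3$ argument.

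\emph{Irreducibility.} You apply the Sobolev-to-Lipschitz property to an invariant function. The paper instead uses strict positivity of the heat kernel on the connected space. Both are valid, and yours draws on the RCD axioms more directly. Note also that a $0$-Lipschitz function is constant even without connectedness.
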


\begin{proof}
The space is a compact standard Borel space with finite measure.  Its
Cheeger energy is a regular strongly local quadratic Dirichlet form,
and the associated heat semigroup $(P_s)_{s\ge0}$ is symmetric,
conservative, and Markovian
\cite{AmbrosioGigliSavare2014Invent,AmbrosioGigliSavare2014Duke,
ErbarKuwadaSturm2015,Gigli2018,FukushimaOshimaTakeda2011}.  Connectedness
and strict positivity of the heat kernel give irreducibility.

For $s>0$, the heat kernel $p_s(x,y)$ is symmetric and jointly
measurable.  The Gaussian estimates on finite-dimensional RCD spaces
imply that $p_s(x,x)$ is bounded on the compact space
\cite{JiangLiZhang2016}.  The semigroup identity gives
\begin{equation}
 \norm{p_{t/2}(x,\mathord\cdot)}_2^2=p_t(x,x).
 \label{eq:RCD-row-norm-diagonal}
\end{equation}
Consequently,
\begin{equation}
 \int_X\norm{p_{t/2}(x,\mathord\cdot)}_2^4\,d\mu(x)
 =\int_Xp_t(x,x)^2\,d\mu(x)<\infty,
 \label{eq:RCD-diagonal-L2}
\end{equation}
which is \textnormal{(D3)}.  Moreover, Cauchy--Schwarz and
\eqref{eq:RCD-row-norm-diagonal} yield
\begin{equation}
 \norm{P_{t/2}f}_{\infty}
 \le \sup_{x\in X}p_t(x,x)^{1/2}\norm f_2,
 \label{eq:RCD-ultracontractive-bound}
\end{equation}
so $P_{t/2}:L^2\to L^4$ is bounded.

It remains to prove density of its $L^4$ range.  Let
$g\in L^{4/3}(X,\mu)$ annihilate $P_{t/2}(L^2)$.  By symmetry and the
boundedness just proved, the Banach adjoint is the heat operator
$P_{t/2}:L^{4/3}\to L^2$, and the annihilation identity gives
$P_{t/2}g=0$.  The heat semigroup is analytic on $L^{4/3}$
\cite{BakryGentilLedoux2014}.  Hence $P_sg=0$ for every $s>t/2$ by the
semigroup law, analyticity propagates this identity to all $s>0$, and
strong continuity at the origin gives $g=0$.  The annihilator of
$P_{t/2}(L^2)$ in $L^{4/3}$ is therefore trivial, proving
\textnormal{(D4)}.  The remaining hypotheses are the symmetry,
conservativity, irreducibility, and measurable kernel properties already
verified.
\end{proof}

The carré du champ of the Cheeger form is the squared minimal weak upper
gradient.  The Sobolev-to-Lipschitz property identifies the intrinsic
Dirichlet metric with $d$.  Therefore an order isomorphism intertwining
the Cheeger heat semigroups preserves the intrinsic metric and is
represented by a measure-preserving isometry; equivalently, this is the
RCD specialization of the rigidity theorem of Lenz, Schmidt and Wirth
\cite{LenzSchmidtWirth2019}.

\begin{corollary}[One scalar process reconstructs every compact RCD space]
\label{cor:canvas-hears-compact-RCD}
Fix $t>0$.  Let $(X_i,d_i,\mu_i)$, $i\in\{1,2\}$, be compact connected
$\operatorname{RCD}(K_i,N_i)$ spaces with full support and
$N_i<\infty$.  Construct the universal canvas process
\eqref{eq:dirichlet-canvas-observable} from the Cheeger heat flow at
time $t$.  Then
\begin{equation}
 \Law_{C(\mathbb R)}\bigl(\mathcal Y_{1,t}\bigr)
 =\Law_{C(\mathbb R)}\bigl(\mathcal Y_{2,t}\bigr)
 \label{eq:RCD-canvas-law-equality}
\end{equation}
if and only if there is a measure-preserving isometry
\begin{equation}
 \phi:(X_2,d_2,\mu_2)\longrightarrow(X_1,d_1,\mu_1).
 \label{eq:RCD-measure-preserving-isometry}
\end{equation}
No common synthetic dimension bound, common curvature bound, or
regular-set hypothesis is required for this exact reconstruction.
\end{corollary}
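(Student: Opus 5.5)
The plan is to combine \cref{thm:dirichlet-canvas-reconstruction} with \cref{prop:RCD-reconstruction-hypotheses} and then run the Lenz--Schmidt--Wirth order-isomorphism rigidity described in the paragraph preceding the corollary. Since both spaces are compact connected $\operatorname{RCD}(K_i,N_i)$ spaces with full support and $N_i<\infty$, \cref{prop:RCD-reconstruction-hypotheses} shows that each Cheeger heat semigroup satisfies \textnormal{(D1)}--\textnormal{(D4)} at the marked time $t$. No common constants are needed, because the hypotheses are checked one space at a time. \Cref{thm:dirichlet-canvas-reconstruction} therefore applies to the pair, and equality \eqref{eq:RCD-canvas-law-equality} is equivalent to the existence of a measure-preserving measure-algebra isomorphism $\phi:X_2\to X_1$ whose pullback $V$ intertwines all heat times and identifies the Cheeger forms as in \eqref{eq:dirichlet-form-equivalence}.

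For the forward direction, I would upgrade $\phi$ to a genuine isometry. The carré du champ of the Cheeger energy is $|Df|^2$, the squared minimal weak upper gradient. Since $V$ is multiplicative and identifies the forms, polarization together with strong locality gives $\Gamma_2(Vf)=V\Gamma_1(f)$ $\mu_2$-a.e. for $f$ in the form domain intersected with $L^\infty$. This is the step where I would invoke the rigidity of \cite{LenzSchmidtWirth2019}. The intrinsic metric is
\[
 d_{\mathcal E}(x,y)=\sup\{f(x)-f(y): f\ \text{continuous},\ \Gamma(f)\le1\ \text{a.e.}\},
\]
and the Sobolev-to-Lipschitz property together with the trivial $1$-Lipschitz bound identifies $d_{\mathcal E}$ with $d_i$ on RCD spaces. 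It follows that $V$ maps the class of $1$-Lipschitz functions (modulo a.e.) onto itself.

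The main obstacle is removing the "a.e." so that $\phi$ becomes a pointwise isometry. I would argue as in Step~5 of \cref{thm:scalar-observable-hears-manifold}. Embed $X_1$ isometrically into $\ell^\infty$ by a countable family of $1$-Lipschitz functions $x\mapsto d_1(x,x_j)$, with $(x_j)$ dense. Each pulled-back function $V d_1(\cdot,x_j)$ has a $1$-Lipschitz representative by Sobolev-to-Lipschitz on $X_2$. This yields a $1$-Lipschitz map $\Phi:X_2\to\ell^\infty$ that agrees with $\iota\circ\phi$ a.e. Full support of $\mu_2$ and closedness of $\iota(X_1)$ (compactness) then give $\Phi(X_2)\subset\iota(X_1)$, so $\bar\phi=\iota^{-1}\circ\Phi$ is $1$-Lipschitz and equals $\phi$ a.e. The same construction applied to $V^{-1}$ gives a $1$-Lipschitz a.e.-inverse $\bar\psi$. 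The compositions are continuous and equal the identity a.e., so by full support they equal the identity everywhere. Hence $\bar\phi$ is a bijection with both it and its inverse $1$-Lipschitz, that is, an isometry. Since $\bar\phi=\phi$ a.e. and $\phi$ preserves measure, $\bar\phi_\#\mu_2=\mu_1$.

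For the converse, a measure-preserving isometry $\phi$ preserves minimal weak upper gradients, so the pullback $V$ identifies the Cheeger energies. This is \textnormal{(iii)} of \cref{thm:dirichlet-canvas-reconstruction}, and the implication \textnormal{(iii)}$\Rightarrow$\textnormal{(i)} gives equal laws. The absence of any common dimension or curvature bound is automatic, since every hypothesis is verified for each space separately.
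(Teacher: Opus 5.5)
Your proposal is correct and follows the paper's proof: \cref{prop:RCD-reconstruction-hypotheses} and \cref{thm:dirichlet-canvas-reconstruction} give a measure-preserving order isomorphism that identifies the Cheeger forms, the carr\'e du champ is transported, Sobolev-to-Lipschitz identifies the intrinsic metrics with $d_i$, and the converse is naturality of Cheeger energies under measure-preserving isometries. The only difference is that you spell out the passage to a genuine isometric representative, via the Kuratowski embedding into $\ell^\infty$ and full support of the measures, whereas the paper compresses that step into one sentence and a citation of Lenz--Schmidt--Wirth.
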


\begin{proof}
By \cref{prop:RCD-reconstruction-hypotheses}, equality of the two scalar
laws gives a measure-preserving order isomorphism $V$ intertwining the
Cheeger heat semigroups and identifying their Dirichlet forms.  The
order structure transports the carré du champ, while the
Sobolev-to-Lipschitz property identifies the induced intrinsic metrics
with $d_1$ and $d_2$.  The resulting representative of the measure
algebra map is therefore a measure-preserving isometry.  Conversely,
a measure-preserving isometry identifies the Cheeger energies, heat
semigroups, heat kernels, and hence the two tensors in
\eqref{eq:dirichlet-two-four-tensors}; the universal-canvas theorem then
identifies the process laws.
\end{proof}

\begin{remark}[A larger nonsmooth reconstruction class]
The abstract diffusion theorem applies to every finite irreducible
symmetric Dirichlet geometry satisfying \textnormal{(D2)}--\textnormal{(D4)}.
For quasi-regular forms the reconstructed order isomorphism is realized
by a quasi-homeomorphism, and for regular strongly local forms it
preserves the intrinsic metric whenever that metric generates the given
topology \cite{LenzSchmidtWirth2019}.  Compact finite-dimensional RCD
spaces provide the canonical metric-measure realization of this general
scalar reconstruction principle.
\end{remark}
\section{Scalar reconstruction of orthogonal gauge geometry}
\label{sec:gauge-field}

The preceding application used a scalar heat equation.  We now show
that the reconstruction mechanism is not confined to scalar geometry.
For a real Euclidean vector bundle equipped with a metric connection,
the flexible correlated-replica signature of one scalar
quadratic--quartic Wick observable reconstructs the base manifold, the
bundle, and the connection up to the unavoidable joint action of
isometries and orthogonal gauge transformations.  By
\cref{thm:universal-canvas-replica-tannaka}, the same information is
equivalently encoded by the law of one smooth scalar process on our
fixed Gaussian canvas; this is the final form of the result.

\subsection{Heat-smoothed Gaussian sections}

Let $(E,\langle\cdot,\cdot\rangle_E)\to(M,g)$ be a smooth real Euclidean
vector bundle of positive finite rank over a closed connected manifold,
and let $\nabla^E$ be a metric connection.  Put
\begin{equation}
 H_E=L^2(M,E;dv_g),\qquad
 L_E=(\nabla^E)^*\nabla^E,\qquad
 A_{E,t}=e^{-tL_E/2}.
 \label{eq:gauge-basic-operators}
\end{equation}
The operator $L_E$ is nonnegative, self-adjoint, and elliptic, whereas
$A_{E,t}$ is injective, smoothing, and has dense range.

Let $W_E$ be an isonormal process over $H_E$.  The random section
$X_{E,t}=A_{E,t}W_E$ is understood by duality, or pointwise through the
smooth kernel of $A_{E,t}$, and has a $C^\infty$ modification.  Let
\[
 K_{t,x}:H_E\longrightarrow E_x,\qquad K_{t,x}s=(A_{E,t}s)(x),
\]
and choose an orthonormal basis
$(\varepsilon_1(x),\ldots,\varepsilon_r(x))$ of $E_x$.  Define
\begin{equation}
 k_{x,\alpha}=K_{t,x}^*\varepsilon_\alpha(x),\qquad
 q_{t,x}=\sum_{\alpha=1}^r k_{x,\alpha}^{\otimes2}
       \in H_E^{\odot2}.
 \label{eq:gauge-pointwise-quadratic-tensor}
\end{equation}
Equivalently,
\[
 q_{t,x}=(K_{t,x}^*\otimes K_{t,x}^*)
          \bigl(\langle\cdot,\cdot\rangle_E^{-1}(x)\bigr).
\]
The tensor $q_{t,x}$ is independent of the chosen orthonormal basis:
an orthogonal change of basis cancels after summing the two matrix
indices.  Its symmetric square is likewise intrinsic.  Set
\begin{equation}
 Q_{E,t}=\int_Mq_{t,x}\,dv_g(x),\qquad
 C_{E,t}=\int_M(q_{t,x}\odot q_{t,x})\,dv_g(x).
 \label{eq:gauge-integrated-tensors}
\end{equation}
These Bochner integrals belong respectively to $H_E^{\odot2}$ and
$H_E^{\odot4}$: smoothness of the heat kernel gives continuity of the
integrands in Hilbert tensor norm, and $M$ is compact.

The invariant homogeneous Wick polynomials are defined tensorially by
\begin{equation}
 :|X_{E,t}(x)|^2:\ =I_2(q_{t,x}),\qquad
 :|X_{E,t}(x)|^4:\ =I_4(q_{t,x}\odot q_{t,x}).
 \label{eq:gauge-invariant-Wick-powers}
\end{equation}
In a local orthonormal frame, the second expression is the full joint
Wick ordering of
$\sum_{\alpha,\beta}X_\alpha^2X_\beta^2$.  This definition is therefore
frame-independent even when the pointwise covariance is not a scalar
endomorphism.  The scalar gauge observable is
\begin{equation}
 \mathcal G_{E,\nabla,t}
 :=\int_M\bigl(:|X_{E,t}(x)|^2:
                 +:|X_{E,t}(x)|^4:\bigr)\,dv_g(x)
 =I_2(Q_{E,t})+I_4(C_{E,t}).
 \label{eq:gauge-scalar-observable}
\end{equation}
Its complete correlated-replica signature, denoted
$\mathfrak R_t(M,g,E,\nabla^E)$, is defined as in
\eqref{eq:manifold-replica-signature}, by coupling the underlying
isonormal processes with every finite unit-diagonal positive definite
correlation matrix.

Two identities explain the choice.  For $s_1,s_2,s\in H_E$,
\begin{align}
 \langle Q_{E,t},s_1\otimes s_2\rangle
 &=\int_M\langle A_{E,t}s_1,A_{E,t}s_2\rangle_E\,dv_g
 =\langle e^{-tL_E}s_1,s_2\rangle_{H_E},
 \label{eq:gauge-quadratic-flattening}\\
 \langle C_{E,t},s^{\otimes4}\rangle
 &=\int_M|A_{E,t}s(x)|_E^4\,dv_g(x)
 =\norm{A_{E,t}s}_{L^4(M,E)}^4.
 \label{eq:gauge-quartic-L4-identity}
\end{align}
Thus the quadratic flattening is the connection heat operator, while
the quartic tensor contains the non-Hilbertian $L^4$ geometry.

\subsection{A vector-valued Lamperti lemma}

We need the spatial form of a map which is simultaneously an
$L^2$-unitary and an $L^4$-isometry.  This is a bundle version of the
vector-valued Banach--Lamperti theorem
\cite{Sourour1978,GreimJamison1987}; in our finite-rank Euclidean setting
it has the following direct proof.

\begin{lemma}[Simultaneous $L^2$--$L^4$ rigidity]
\label{lem:vector-Lamperti}
Let $E\to M$ and $F\to N$ be positive-rank smooth Euclidean vector
bundles over closed manifolds.  Suppose a real linear map
$U_2:L^2(M,E)\to L^2(N,F)$ is unitary and a real linear map
$U_4:L^4(M,E)\to L^4(N,F)$ is a surjective isometry.  Suppose there is
a linear subspace $\mathcal C\subset L^2(M,E)\cap L^4(M,E)$ which is
dense for the norm $\norm{\cdot}_2+\norm{\cdot}_4$, whose image has the
same property on $N$, and on which $U_2=U_4$.  Write $U$ for their common
restriction to $L^2\cap L^4$.  Then there exist a measure-preserving
measure-algebra isomorphism $\phi:N\to M$ modulo null sets and a
measurable family of surjective fiber isometries
$J_y:E_{\phi(y)}\to F_y$ such that
\begin{equation}
 (Us)(y)=J_y s(\phi(y))\quad\text{for a.e. }y\in N.
 \label{eq:vector-Lamperti-representation}
\end{equation}
In particular, the two bundles have the same rank.
\end{lemma}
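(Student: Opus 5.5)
The plan is to reduce the statement to a pointwise-disjointness argument, which is where the exponent $4\neq2$ enters, and then to read off the fiber maps from the images of local frame sections.

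\emph{Step 1: an exact quartic identity for Euclidean fibers.} For $u,v$ in a Euclidean space, put $A=|u|^2+|v|^2$ and $B=\langle u,v\rangle$, so that $|u\pm v|^2=A\pm2B$. Then
\[
 |u+v|^4+|u-v|^4-2|u|^4-2|v|^4=8\langle u,v\rangle^2+4|u|^2|v|^2\ \ge0 .
\]
Integrating over $M$, for $s_1,s_2\in L^4(M,E)$,
\[
 \norm{s_1+s_2}_4^4+\norm{s_1-s_2}_4^4-2\norm{s_1}_4^4-2\norm{s_2}_4^4\ \ge0,
\]
with equality if and only if $|s_1|\,|s_2|=0$ almost everywhere, that is, iff the two sections have disjoint supports. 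The surjective isometry $U_4$ preserves the left side. Hence $U_4$ and $U_4^{-1}$ both preserve disjointness of supports. This is the only place where $q=4\neq2$ is used.

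\emph{Step 2: the measure-algebra map.} Fix a finite cover of $M$ by trivializing open sets with measurable orthonormal frames, and refine it to a measurable partition $M=\bigsqcup_j M_j$ carrying frames $(e^j_\alpha)$. For measurable $B\subset M$, define $\Phi(B)\subset N$ as the union of the supports of $U_4(\mathbf1_{B\cap M_j}e^j_\alpha)$ over $j,\alpha$. Disjointness preservation in both directions makes $\Phi$ a $\sigma$-complete Boolean isomorphism of the measure algebras, with null sets preserved because $U_4$ is injective. By the standard realization theorem for standard Borel spaces, it is induced by a point map $\phi:N\to M$ defined modulo null sets.

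\emph{Step 3: local multiplicativity and the matrix field.} Disjointness additivity gives $U_4(\mathbf1_B s)=\mathbf1_{\phi^{-1}(B)}U_4 s$. By linearity and density this extends to $U_4(g s)=(g\circ\phi)\,U_4 s$ for $g\in L^\infty$. On $\phi^{-1}(M_j)$, define $T_y e^j_\alpha:=U_4(\mathbf1_{M_j}e^j_\alpha)(y)$ and extend linearly. This yields a measurable field of linear maps $T_y:E_{\phi(y)}\to F_y$ with $(U_4s)(y)=T_y s(\phi(y))$, first for simple sections and then for all sections by density. Next I must check that $U=U_2=U_4$ on $L^2\cap L^4$. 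Both maps are continuous for $\norm{\cdot}_2+\norm{\cdot}_4$ and agree on the dense subspace $\mathcal C$, so they agree. Consequently the same spatial formula computes $U$ and hence $U_2$ on bounded simple sections.

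\emph{Step 4: the norm identities force isometric fibers and preservation of measure.} Let $\rho$ be the Radon--Nikodym derivative of $\phi_*^{-1}dv_g$ with respect to $dv_h$. Apply the $L^2$ and $L^4$ identities to $g\,v$ with $g$ simple and $v$ a fixed frame combination. This gives $|T_yv|^p=\rho(y)|v|^p$ almost everywhere for $p=2$ and $p=4$, after a countable dense choice of $v$. Squaring the $p=2$ identity and comparing with the $p=4$ identity gives $\rho^2=\rho$, so $\rho=1$ a.e. Thus $\phi$ is measure preserving and each $T_y$ is an isometry. Running the whole argument for $U^{-1}$ shows that each $T_y$ is onto, so setting $J_y:=T_y$ proves \eqref{eq:vector-Lamperti-representation}, and the ranks coincide.

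The main obstacle is Step 2 together with Step 3. The difficulty is turning disjointness preservation into an honest point map together with a measurable, basis-independent fiber field. I would handle this by working on the fixed measurable partition with countably many frame sections, where all the choices are countable.
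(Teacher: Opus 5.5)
Your proposal is correct and follows the paper's proof essentially step for step. You use the same quartic identity to get disjointness preservation, pass from the induced measure-algebra isomorphism to the $L^\infty$-module property and a measurable fiber field built from frame sections, and compare the $L^2$ and $L^4$ norm identities on $\mathbf 1_B c$ to force $\rho^2=\rho$, hence $\rho=1$. The only differences are cosmetic: you define the Boolean map through supports of images of frame-indicator sections rather than through bands $\mathcal S^{\perp\perp}$, and you verify $U_2=U_4$ on $L^2\cap L^4$ by continuity into the common target $L^2(N,F)$ (legitimate because $N$ has finite volume, so $L^4\hookrightarrow L^2$) rather than by the paper's almost-everywhere subsequence argument.
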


\begin{proof}
First, $U_2$ and $U_4$ agree on all of $L^2\cap L^4$.  Indeed, approximate
any section in the sum norm by elements of $\mathcal C$.  The two image
sequences converge to $U_2s$ in $L^2$ and to $U_4s$ in $L^4$; after
passing to a common almost-everywhere convergent subsequence, the limits
coincide.  The same argument applies to the inverses.

For vectors $a,b$ in a real Hilbert space, direct expansion gives
\begin{equation}
 |a+b|^4+|a-b|^4-2|a|^4-2|b|^4
 =4|a|^2|b|^2+8\langle a,b\rangle^2.
 \label{eq:quartic-disjointness-identity}
\end{equation}
After integration, the left-hand side vanishes exactly when two
sections have disjoint essential supports.  Hence $U$ and $U^{-1}$
preserve disjointness.

For measurable $A\subset M$, put
\[
 \mathcal B_A=\{s\in L^4(M,E):s=0\text{ a.e. on }M\setminus A\}.
\]
These are precisely the bands determined by disjointness: if
$\mathcal S^\perp$ denotes all sections disjoint from every member of
$\mathcal S$, then
$\mathcal S^{\perp\perp}=\mathcal B_A$, where $A$ is the essential union
of the supports of members of $\mathcal S$.  Therefore $U$ induces a
Boolean isomorphism of the measure algebras.  Since compact manifolds
are standard Borel finite measure spaces, it is represented by a
measure-class isomorphism $\phi:N\to M$, uniquely modulo null sets.

The decomposition into the complementary bands of $A$ and $M\setminus A$
gives
\begin{equation}
 U(\mathbf1_A s)=\mathbf1_{\phi^{-1}(A)}Us.
 \label{eq:module-indicator-intertwining}
\end{equation}
By linearity and uniform approximation by simple functions,
\begin{equation}
 U(bs)=(b\circ\phi)Us,
 \qquad b\in L^\infty(M).
 \label{eq:Linfty-module-intertwining}
\end{equation}

Choose a countable Borel trivialization of $E$ by orthonormal frames.
On one trivializing set $A$, let the frame be $e_1,\ldots,e_r$ and put
$u_i=U(\mathbf1_Ae_i)$.  Formula
\eqref{eq:Linfty-module-intertwining} gives, first for simple sections
and then by density,
\begin{equation}
 U\left(\sum_{i=1}^rb_i e_i\right)(y)
 =\sum_{i=1}^rb_i(\phi(y))u_i(y),
 \qquad y\in\phi^{-1}(A).
 \label{eq:local-decomposable-isometry}
\end{equation}
Thus $U$ is represented locally by a measurable linear map
$T_y:E_{\phi(y)}\to F_y$.

Let $\rho>0$ be characterized by
\begin{equation}
 \int_{\phi^{-1}(B)}\rho(y)\,dv_h(y)=dv_g(B)
 \quad(B\subset M\text{ measurable}).
 \label{eq:vector-Lamperti-Radon-density}
\end{equation}
Apply the $L^4$-isometry to $\mathbf1_Bc$, with $c$ constant in the
chosen frame.  Varying $B$ and then using a countable dense set of
vectors gives
\begin{equation}
 |T_yc|^4=\rho(y)|c|^4
 \quad(c\in E_{\phi(y)})
 \label{eq:pointwise-L4-scaled-isometry}
\end{equation}
almost everywhere.  The $L^2$-unitarity applied to the same sections
gives
\begin{equation}
 |T_yc|^2=\rho(y)|c|^2.
 \label{eq:pointwise-L2-scaled-isometry}
\end{equation}
Squaring the latter and comparing with the former yields
$\rho^2=\rho$.  Since $\rho>0$, we have $\rho=1$.  Hence $\phi$
preserves volume and $T_y$ is a fiberwise isometric embedding.  The
same argument for $U^{-1}$ shows that it is onto and that the ranks
agree.  Taking $J_y=T_y$ proves the lemma.
\end{proof}

\subsection{Reconstruction of the connection}

\begin{theorem}[A scalar replica observable hears an orthogonal gauge field]
\label{thm:scalar-observable-hears-gauge-field}
Fix $t>0$.  Let
$(M,g,E,\nabla^E)$ and $(N,h,F,\nabla^F)$ be smooth positive-rank
Euclidean vector bundles with metric connections over closed connected
Riemannian manifolds.  No equality of dimensions or ranks is assumed.
Then
\begin{equation}
 \mathfrak R_t(M,g,E,\nabla^E)
 =\mathfrak R_t(N,h,F,\nabla^F)
 \label{eq:gauge-replica-signature-equality}
\end{equation}
if and only if there exist a Riemannian isometry
$\phi:(N,h)\to(M,g)$ and a smooth fiberwise orthogonal bundle
isomorphism $J:\phi^*E\to F$ which transports the connection:
\begin{equation}
 \nabla^F\bigl(J(\phi^*s)\bigr)
 =(\Id_{T^*N}\otimes J)\bigl(\phi^*(\nabla^E s)\bigr)
 \qquad(s\in C^\infty(M,E)).
 \label{eq:gauge-connection-transport}
\end{equation}
Thus the signature determines the bundle topology, metric connection,
curvature, and holonomy up to the natural isometry--$O(r)$ gauge action.
\end{theorem}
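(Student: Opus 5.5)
The plan follows the scalar proof of \cref{thm:scalar-observable-hears-manifold}, with \cref{lem:vector-Lamperti} in place of the scalar Lamperti theorem.

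\emph{Transporting unitary and generators.} From equality of signatures, \cref{thm:scalar-replica-tannaka} applied to the packet $(0,0,Q_{E,t},0,C_{E,t})$ gives one real unitary $U$ between the minimal supports. This $U$ transports both $Q_{E,t}$ and $C_{E,t}$. By \eqref{eq:gauge-quadratic-flattening}, the flattening of $Q_{E,t}$ is $e^{-tL_E}$, which is injective with dense range. Hence the supports are all of $H_E$ and $H_F$, and $Ue^{-tL_E}U^{-1}=e^{-tL_F}$. Taking the unbounded logarithm as in \eqref{eq:generator-as-unbounded-logarithm} gives $UL_E=L_FU$ with domains, and $UA_{E,t}=A_{F,t}U$.

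\emph{Spatial form of $U$.} Evaluate the quartic transport on $(Us)^{\otimes4}$ and use \eqref{eq:gauge-quartic-L4-identity}. This gives $\norm{UA_{E,t}s}_{L^4}=\norm{A_{E,t}s}_{L^4}$. The range of $A_{E,t}$ contains all finite eigensection combinations, so it is dense in smooth sections and hence in the sum norm $\norm{\cdot}_2+\norm{\cdot}_4$. The same holds on $N$, and $U$ maps one range onto the other. Therefore $U$ extends to a surjective $L^4$-isometry agreeing with $U$ on $\mathcal C=\Ran A_{E,t}$. \cref{lem:vector-Lamperti} then yields a measure-preserving $\phi$ and measurable fiber isometries $J_y$ with $(Us)(y)=J_ys(\phi(y))$, and in particular equal ranks.

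\emph{Smoothing $\phi$ and $J$.} Elliptic regularity gives $C^\infty(M,E)=\bigcap_m\operatorname{Dom}(L_E^m)$, so $U$ maps smooth sections bijectively onto smooth sections. It is also an $L^\infty(M)$-module map over $\phi$ by \eqref{eq:Linfty-module-intertwining}. To recover a smooth base map I will use fiber inner products. For smooth $s_1,s_2$, the function $\langle Us_1,Us_2\rangle_F=\langle s_1,s_2\rangle_E\circ\phi$ is smooth. Every smooth function on $M$ is a finite sum of such inner products (e.g.\ $f\langle e,e\rangle$ with a partition of unity), so $f\mapsto f\circ\phi$ maps $C^\infty(M)$ into $C^\infty(N)$. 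The embedding argument of Step~5 of \cref{thm:scalar-observable-hears-manifold}, applied to both $U$ and $U^{-1}$, then makes $\phi$ a diffeomorphism. Next, $J$ is smooth because $J_y(s(\phi(y)))=(Us)(y)$ with $Us$ smooth, and smooth sections $s$ span every fiber locally. The module property with smooth $b$ shows that $U$ is pullback followed by $J$ on all smooth sections.

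\emph{Main obstacle: symbol comparison.} The key point is to extract $g,h$ and the connection from $UL_E=L_FU$ with $U=J\phi^*$. Write $L'=U^{-1}L_FU$, a second-order operator on $E$ equal to $L_E$. The principal symbol of $L_E$ is $|\xi|^2_g\Id$ and that of $L'$ is $|\xi|^2_{\phi_*h}\Id$. Equality of symbols, or the commutator identity $[[L,f],f]=-2|df|^2$ applied to scalar $f$, gives $\phi^*g=h$, so $\phi$ is an isometry. The first-order part then compares $\nabla^E$ with the pulled-back $J^{-1}\nabla^FJ$. Concretely, $[L,f]s=-2\nabla_{\operatorname{grad}f}s+(\Delta f)s$, with the Laplacian terms already matching, and this yields \eqref{eq:gauge-connection-transport}. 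Since $V_E=0$ here, no zeroth-order term remains.

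\emph{Converse.} The converse is direct: the isometry--gauge pair gives a unitary commuting with $A_{E,t}$ and preserving the fiber norms. It therefore transports $q_{t,x}$, hence $Q_{E,t}$ and $C_{E,t}$, and the reverse direction of \cref{thm:scalar-replica-tannaka} equates the signatures.
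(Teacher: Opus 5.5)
Your proposal is correct and follows the paper's proof step for step: the Replica--Tannaka unitary on the full $L^2$ spaces, the unbounded logarithm, the $L^4$-isometry on the heat range, \cref{lem:vector-Lamperti}, elliptic regularity with the embedding argument, and symbol comparison. The differences are minor: you get smoothness of $\phi$ from $\langle Us_1,Us_2\rangle_F=\langle s_1,s_2\rangle_E\circ\phi$ rather than from the paper's quotient $\langle M_{b\circ\phi}v,v\rangle_F/|v|_F^2$, and you read off the connection by conjugating the coordinate-free identity $[L,f]s=(\Delta f)s-2\nabla_{\operatorname{grad}f}s$ by $U$, instead of the paper's normal-frame expansion of $(\widetilde\nabla+B)^*(\widetilde\nabla+B)$ with first-order part $-2\sum_jB(X_j)\partial_{X_j}$.
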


\begin{proof}
Assume \eqref{eq:gauge-replica-signature-equality}.  Apply
\cref{thm:scalar-replica-tannaka} to the tensor pairs
$(Q_{E,t},C_{E,t})$ and $(Q_{F,t},C_{F,t})$.  By
\eqref{eq:gauge-quadratic-flattening}, the quadratic flattenings are
injective with dense range, so both minimal active supports are the
whole $L^2$ spaces.  We obtain a surjective real unitary
$U:H_E\to H_F$ such that
\begin{equation}
 Ue^{-tL_E}=e^{-tL_F}U,
 \qquad U^{\odot4}C_{E,t}=C_{F,t}.
 \label{eq:gauge-heat-and-quartic-transport}
\end{equation}
The unbounded logarithm in the spectral functional calculus gives
\begin{equation}
 U\operatorname{Dom}(L_E)=\operatorname{Dom}(L_F),\qquad
 UL_E=L_FU,\qquad Ue^{-sL_E}=e^{-sL_F}U\quad(s\ge0).
 \label{eq:gauge-generator-intertwining}
\end{equation}

Evaluate the quartic identity in
\eqref{eq:gauge-heat-and-quartic-transport} on four copies of $s\in H_E$
and use \eqref{eq:gauge-quartic-L4-identity}.  Since
$Ue^{-tL_E/2}=e^{-tL_F/2}U$,
\begin{equation}
 \norm{UA_{E,t}s}_{L^4(N,F)}
 =\norm{A_{E,t}s}_{L^4(M,E)}.
 \label{eq:gauge-L4-on-heat-range}
\end{equation}
Let $(\psi_j)_{j\ge1}$ be an orthonormal eigensection basis of $L_E$,
with $L_E\psi_j=\lambda_j\psi_j$.  For every smooth section, its spectral
truncations converge in every Sobolev norm by elliptic regularity, hence
in $L^4$.  Since smooth sections are dense in $L^4$, finite eigensection
sums are dense there.  Every such sum belongs to the heat range because
\[
 \sum_{j=1}^m c_j\psi_j
 =A_{E,t}\left(\sum_{j=1}^m e^{t\lambda_j/2}c_j\psi_j\right).
\]
Consequently $\operatorname{Ran}(A_{E,t})$ is $L^4$-dense, and heat
intertwining gives
$U\operatorname{Ran}(A_{E,t})=\operatorname{Ran}(A_{F,t})$.
Equation \eqref{eq:gauge-L4-on-heat-range} therefore extends uniquely to
a surjective $L^4$-isometry.  Finite spectral sums are also dense in
$L^2\cap L^4$ for the sum norm: first approximate simultaneously by a
smooth section, then use convergence of its spectral truncations in a
high Sobolev norm.  Thus the $L^2$ and $L^4$ realizations satisfy the
common-core hypothesis of \cref{lem:vector-Lamperti}.  Consequently,
\begin{equation}
 (Us)(y)=J_y s(\phi(y))
 \label{eq:gauge-measurable-spatial-form}
\end{equation}
for a volume-preserving measure-class isomorphism $\phi$ and a
measurable field of fiber isometries $J_y$.

We next prove smoothness.  Elliptic regularity gives
\begin{equation}
 C^\infty(M,E)=\bigcap_{k\ge1}\operatorname{Dom}(L_E^k),\qquad
 C^\infty(N,F)=\bigcap_{k\ge1}\operatorname{Dom}(L_F^k).
 \label{eq:gauge-smooth-vectors}
\end{equation}
Thus $U$ bijects the smooth section spaces.  For
$b\in C^\infty(M)$, \eqref{eq:Linfty-module-intertwining} gives
\begin{equation}
 UM_bU^{-1}=M_{b\circ\phi}.
 \label{eq:gauge-smooth-multiplier-conjugacy}
\end{equation}
The left-hand side preserves smooth sections.  Around any point of $N$,
choose a smooth section $v$ of $F$ which is nowhere zero.  Then
\[
 b\circ\phi
 =\frac{\langle M_{b\circ\phi}v,v\rangle_F}{|v|_F^2}
\]
is smooth there.  Applying the argument to $U^{-1}$ shows that $\phi$
and its inverse pull smooth functions to smooth functions.  To make the
resulting regularization explicit, choose a Whitney embedding
$\iota:M\hookrightarrow\mathbb R^K$.  Every coordinate of
$\iota\circ\phi$ has a smooth representative on $N$, and these
representatives define a smooth map $\Phi:N\to\mathbb R^K$.  It lands in
$\iota(M)$ almost everywhere.  Since $\iota(M)$ is closed and every
nonempty open subset of $N$ has positive volume, continuity implies
$\Phi(N)\subset\iota(M)$.  Thus $\phi$ has a smooth representative
$\iota^{-1}\circ\Phi$.  Repeating the construction for $\phi^{-1}$ gives
a smooth map in the other direction.  Their two compositions equal the
identity almost everywhere and hence everywhere by continuity.  The
representatives are therefore inverse diffeomorphisms; this argument in
particular recovers the dimension.  If $e_1,\ldots,e_r$ is a local smooth
orthonormal frame of $E$ and $\chi$ is a cutoff equal to one on a smaller
patch, the smooth sections $U(\chi e_i)$ are locally the columns of $J$.
Hence $J$ has a smooth fiberwise orthogonal representative.

The principal symbol of the Bochner Laplacian is
\begin{equation}
 \sigma_2(L_E)(x,\xi)=|\xi|_g^2\Id_{E_x}.
 \label{eq:gauge-Bochner-principal-symbol}
\end{equation}
Conjugating by the zeroth-order bundle map $J$ over $\phi$ and using
$UL_E=L_FU$ yields
$|(d\phi_y)^*\xi|_{h_y^*}^2=|\xi|_{g_{\phi(y)}^*}^2$ for every
$y\in N$ and $\xi\in T^*_{\phi(y)}M$.  Thus $\phi$ is a Riemannian
isometry.

Transport $\nabla^E$ by $(\phi,J)$ to a metric connection
$\widetilde\nabla$ on $F$.  Naturality and
\eqref{eq:gauge-generator-intertwining} give
\begin{equation}
 \widetilde\nabla^*\widetilde\nabla
 = (\nabla^F)^*\nabla^F.
 \label{eq:gauge-equal-Bochner-Laplacians}
\end{equation}
Write $\nabla^F=\widetilde\nabla+B$, with
$B\in\Omega^1(N,\mathfrak{so}(F))$.  At a point $y$, choose normal
coordinates and a $\widetilde\nabla$-normal orthonormal bundle frame.
The first-order part at $y$ of the difference in
\eqref{eq:gauge-equal-Bochner-Laplacians} is
\begin{equation}
 -2\sum_{j=1}^{\dim N}B(X_j)\,\partial_{X_j},
 \label{eq:gauge-connection-first-order-difference}
\end{equation}
where $X_1,\ldots,X_{\dim N}$ is the chosen orthonormal tangent frame;
all remaining terms have order zero.  Equality of the differential
operators forces $B(X_j)=0$ for every $j$.  Thus $B=0$, which is exactly
\eqref{eq:gauge-connection-transport}.

Conversely, suppose $(\phi,J)$ satisfies
\eqref{eq:gauge-connection-transport}.  Then
$(Us)(y)=J_ys(\phi(y))$ defines a real $L^2$-unitary which intertwines
the connection heat operators and preserves fiber metric and volume.
Formula \eqref{eq:gauge-integrated-tensors} gives
$U^{\odot2}Q_{E,t}=Q_{F,t}$ and
$U^{\odot4}C_{E,t}=C_{F,t}$.  Equivariance of multiple Wiener integrals
identifies every correlated joint law.
\end{proof}

\begin{corollary}[A smooth matrix potential is heard as well]
\label{cor:gauge-potential}
Let $V_E\in C^\infty(M,\operatorname{Sym}(E))$ and
$V_F\in C^\infty(N,\operatorname{Sym}(F))$, and replace the Bochner
Laplacians by
\[
 \mathcal L_E=(\nabla^E)^*\nabla^E+V_E,
 \qquad \mathcal L_F=(\nabla^F)^*\nabla^F+V_F.
\]
Equality of the corresponding replica signatures is equivalent to the
existence of $(\phi,J)$ as in
\cref{thm:scalar-observable-hears-gauge-field}, together with
\begin{equation}
 V_F(y)=J_yV_E(\phi(y))J_y^{-1}.
 \label{eq:gauge-potential-transport}
\end{equation}
\end{corollary}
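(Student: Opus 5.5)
The plan is to rerun the proof of \cref{thm:scalar-observable-hears-gauge-field} with $L_E$ replaced by $\mathcal L_E$, and to isolate the single place where the potential enters: the zeroth-order symbol.

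\emph{Forward direction.} First I check that the analytic inputs survive the change of generator. The operator $\mathcal L_E$ is elliptic, self-adjoint and bounded below, so $e^{-t\mathcal L_E}$ is a positive injective operator with dense range. The tensors $Q_{E,t},C_{E,t}$ are built exactly as in \eqref{eq:gauge-integrated-tensors}, now from $A_{E,t}=e^{-t\mathcal L_E/2}$. The identities \eqref{eq:gauge-quadratic-flattening} and \eqref{eq:gauge-quartic-L4-identity} hold verbatim, and the eigensection density argument for $L^4$ is unchanged.

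Assume the replica signatures are equal. Then \cref{thm:scalar-replica-tannaka} gives a unitary $U$ with $U e^{-t\mathcal L_E}=e^{-t\mathcal L_F}U$ that also transports the quartic tensor. Negativity of the spectrum is harmless: $\mathcal L_E=-t^{-1}\log e^{-t\mathcal L_E}$ still holds by functional calculus. Hence $U\mathcal L_E=\mathcal L_FU$ on domains, and $U$ intertwines all heat times.

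The $L^4$-isometry step and \cref{lem:vector-Lamperti} then give the spatial form $(Us)(y)=J_ys(\phi(y))$. The smoothness argument is unchanged, because smooth sections are still $\bigcap_k\operatorname{Dom}(\mathcal L^k)$. The principal symbol of $\mathcal L_E$ is still $|\xi|^2\Id$, so $\phi$ is a Riemannian isometry.

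Now transport $\nabla^E$ and $V_E$ by $(\phi,J)$ to obtain $\widetilde\nabla$ and $\widetilde V(y)=J_yV_E(\phi(y))J_y^{-1}$ on $F$. Intertwining gives
\[
\widetilde\nabla^*\widetilde\nabla+\widetilde V=(\nabla^F)^*\nabla^F+V_F .
\]
Write $\nabla^F=\widetilde\nabla+B$. Since the potentials have order zero, the first-order part of the difference is again \eqref{eq:gauge-connection-first-order-difference}, which forces $B=0$. With $B=0$ the two Bochner Laplacians coincide, and the residual identity is $\widetilde V=V_F$ as multiplication operators. Applying both sides to local smooth frame sections gives \eqref{eq:gauge-potential-transport} pointwise.

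\emph{Converse.} Given $(\phi,J)$ satisfying \eqref{eq:gauge-connection-transport} and \eqref{eq:gauge-potential-transport}, the spatial unitary $U$ intertwines $\mathcal L_E$ and $\mathcal L_F$, hence the heat operators. It preserves the fiber metric and volume, so it transports $Q$ and $C$, and equivariance of Wiener integrals gives equal signatures.

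The main obstacle is the first-order symbol comparison. One must confirm that expanding $(\widetilde\nabla+B)^*(\widetilde\nabla+B)$ produces first-order terms only of the form $-2B(X_j)\partial_{X_j}$. This relies on skew-symmetry of $B$; the remaining terms, $\widetilde\nabla^*B$ and $B^*B$, have order zero and merely shift the potential. Only after $B=0$ is established do those terms vanish, so the order of deductions matters: conclude $B=0$ from the first-order comparison before comparing the zeroth-order parts.
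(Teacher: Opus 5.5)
Your proposal is correct and follows the paper's own route. You rerun the Replica--Tannaka, $L^4$-density, vector-Lamperti and elliptic-regularity chain to get a smooth spatial unitary intertwining $\mathcal L_E$ and $\mathcal L_F$, use the principal symbol for the isometry, use the first-order part $-2\sum_j B(X_j)\partial_{X_j}$ to force $B=0$, and then read off the potential from the zeroth-order identity. Your remarks that a possibly negative spectrum is harmless and that skew-symmetry of $B$ (i.e.\ both connections being metric) produces the factor $-2$ correctly fill in details the paper leaves implicit.
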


\begin{proof}
Each $\mathcal L$ is self-adjoint, bounded below, elliptic, and has a
smoothing heat semigroup.  Replica--Tannaka and the preceding $L^4$
argument first produce a smooth spatial unitary $U$ satisfying
$U\mathcal L_EU^{-1}=\mathcal L_F$.  Comparison of principal symbols
shows that the underlying diffeomorphism is a Riemannian isometry.
Transport $\nabla^E$ to a metric connection $\widetilde\nabla$ on $F$
and write $\nabla^F=\widetilde\nabla+B$.  The potentials are zeroth-order
operators, so the first-order part of the difference is
\[
 -2\sum_jB(X_j)\partial_{X_j}.
\]
It follows that $B=0$.  The remaining zeroth-order equality is precisely
\eqref{eq:gauge-potential-transport}; the converse is immediate.
\end{proof}

The same reduction to one predetermined Gaussian canvas is available
for the vector-bundle observable.  Let
$(\mathbb W_{E,x})_{x\in\mathbb R}$ be the canvas of
\cref{subsec:universal-gaussian-canvas} over $H_E$ and define
\begin{equation}
 \mathcal Z_{E,\nabla,V,t}(x)
 :=I_2^{\mathbb W_{E,x}}(Q_{E,t})
   +I_4^{\mathbb W_{E,x}}(C_{E,t}),
 \qquad x\in\mathbb R.
 \label{eq:gauge-universal-canvas-process}
\end{equation}
For $V=0$ these are the tensors in
\eqref{eq:gauge-integrated-tensors}; for a potential, they are formed
in exactly the same way with
$A_{E,t}=e^{-t((\nabla^E)^*\nabla^E+V_E)/2}$.

\begin{corollary}[One fixed Gaussian canvas hears the gauge field]
\label{cor:universal-canvas-hears-gauge-field}
Let
$(M,g,E,\nabla^E,V_E)$ and $(N,h,F,\nabla^F,V_F)$ be as in
\cref{cor:gauge-potential}; the case $V_E=V_F=0$ is included.  Then
\begin{equation}
 \Law_{C(\mathbb R)}\bigl(\mathcal Z_{E,\nabla^E,V_E,t}\bigr)
 =
 \Law_{C(\mathbb R)}\bigl(\mathcal Z_{F,\nabla^F,V_F,t}\bigr)
 \label{eq:gauge-canvas-law-equality}
\end{equation}
if and only if there exist a Riemannian isometry
$\phi:(N,h)\to(M,g)$ and a smooth orthogonal bundle isomorphism
$J:\phi^*E\to F$ such that
\begin{equation}
 \nabla^F=J\phi^*\nabla^EJ^{-1},
 \qquad
 V_F=J(\phi^*V_E)J^{-1}.
 \label{eq:gauge-canvas-geometric-equivalence}
\end{equation}
The covariance kernel $K(x,y)=e^{-(x-y)^2}$ on $\mathbb R$ is fixed
independently of all geometric data.
\end{corollary}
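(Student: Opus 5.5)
The plan is to reduce \cref{cor:universal-canvas-hears-gauge-field} to the correlated-replica statement \cref{cor:gauge-potential} through the universal canvas theorem, in the same way \cref{cor:universal-canvas-hears-manifold} was deduced from \cref{thm:scalar-observable-hears-manifold}.

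\emph{Forward direction.} Assume \eqref{eq:gauge-canvas-law-equality}. Apply \cref{thm:universal-canvas-replica-tannaka} to the graded packets $(0,0,Q_{E,t},0,C_{E,t})$ and $(0,0,Q_{F,t},0,C_{F,t})$. This yields one surjective real unitary $U$ between the minimal active supports that transports both tensors. With the potential included, the flattening identity \eqref{eq:gauge-quadratic-flattening} reads $\operatorname{Flat}(Q_{E,t})=e^{-t\mathcal L_E}$. Since $\mathcal L_E$ is self-adjoint, this heat operator is positive and injective with dense range. Hence both supports are the full spaces $H_E$ and $H_F$, and $U:H_E\to H_F$ is unitary.

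From here the datum is exactly the one used in the proof of \cref{cor:gauge-potential}: a single unitary transporting both $Q_t$ and $C_t$. I would therefore run that argument verbatim.
\begin{enumerate}
\item The unbounded logarithm gives $U\mathcal L_E=\mathcal L_FU$ together with all-time heat intertwining.
\item The quartic identity \eqref{eq:gauge-quartic-L4-identity}, now with $A_{E,t}=e^{-t\mathcal L_E/2}$, makes $U$ an $L^4$-isometry on the heat range. Density of finite eigensection sums in $L^4$ and in $L^2\cap L^4$ extends it, and \cref{lem:vector-Lamperti} makes it spatial.
\item Smoothness of $(\phi,J)$ follows from elliptic regularity and the multiplier conjugacy \eqref{eq:gauge-smooth-multiplier-conjugacy}.
\item Comparison of principal, first-order and zeroth-order symbols produces, in turn, the isometry, the connection transport and \eqref{eq:gauge-potential-transport}.
\end{enumerate}
Together these give \eqref{eq:gauge-canvas-geometric-equivalence}.

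\emph{Converse.} Given $(\phi,J)$, define $(Us)(y)=J_ys(\phi(y))$. This is an $L^2$-unitary preserving the fibre metric and volume, and it conjugates $\mathcal L_E$ to $\mathcal L_F$, so it intertwines the heat operators $A_{E,t}$ and $A_{F,t}$. Hence $U$ maps $q_{t,x}$ to $q_{t,\phi^{-1}(x)}$. Integrating against the preserved volume gives $U^{\odot2}Q_{E,t}=Q_{F,t}$ and $U^{\odot4}C_{E,t}=C_{F,t}$. The reverse implication of \cref{thm:universal-canvas-replica-tannaka} then identifies the $C(\mathbb R)$-laws.

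\emph{Main obstacle.} The one step needing care is checking that nothing in the proof of \cref{cor:gauge-potential} used the flexible correlation matrices beyond what \cref{thm:scalar-replica-tannaka} supplies. That proof only consumed the common unitary, and the canvas theorem provides the same conclusion, so the reduction should go through. A smaller point is that $\mathcal L_E$ may be non-positive when $V_E$ is indefinite. Its spectrum is bounded below and discrete, however, so $e^{-t\mathcal L_E}$ is still positive and injective, and $\mathcal L_E=-t^{-1}\log e^{-t\mathcal L_E}$ remains valid. For the same reason the eigensection density argument for $\operatorname{Ran}A_{E,t}$ is unaffected.
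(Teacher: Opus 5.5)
Your proposal is correct and follows the paper's proof essentially verbatim. The universal-canvas theorem supplies one unitary transporting $Q_{E,t}$ and $C_{E,t}$; injectivity and dense range of $e^{-t\mathcal L_E}$ make the active supports the full $L^2$ spaces; the argument of \cref{thm:scalar-observable-hears-gauge-field,cor:gauge-potential} is rerun from that common unitary; and the converse transports both tensors spatially before applying the reverse implication of \cref{thm:universal-canvas-replica-tannaka}. Your extra remark is also correct and is left implicit in the paper: an indefinite potential still leaves $e^{-t\mathcal L_E}$ positive and injective (though no longer a contraction), so the logarithm and eigensection-density steps go through unchanged.
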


\begin{proof}
By \cref{thm:universal-canvas-replica-tannaka}, equality in
\eqref{eq:gauge-canvas-law-equality} is equivalent to the existence of
a single surjective real unitary $U:H_E\to H_F$ transporting both
$Q_{E,t}$ and $C_{E,t}$.  The quadratic flattening is the injective
operator $e^{-t\mathcal L_E}$, so the active support is all of $H_E$,
and similarly on $F$.  The proof of
\cref{thm:scalar-observable-hears-gauge-field,cor:gauge-potential},
starting from this common unitary, yields precisely
\eqref{eq:gauge-canvas-geometric-equivalence}.  Conversely, such
$(\phi,J)$ transports the two tensors, and the converse part of the
universal-canvas theorem gives equality of the process laws.
\end{proof}

\subsection{Gauge moduli, holonomy, and finite separation}

The reconstruction theorem acts on the full moduli space of smooth
metric connections and therefore on every geometrically defined locus
inside it.

\begin{corollary}[Gauge moduli, holonomy, and finite scalar separation]
\label{cor:gauge-moduli-holonomy-separation}
Fix $t>0$.
\begin{enumerate}[label=\textnormal{(\alph*)}]
\item On a fixed closed connected $(M,g)$ and at every rank $r\ge1$, the
universal-canvas law induces an injection from smooth metric connections
on all rank-$r$ Euclidean bundles, modulo base isometries and orthogonal
gauge.  Its restriction to the Yang--Mills locus
$d_\nabla^*F_\nabla=0$ is injective as well.

\item The law determines the curvature tensor, the Yang--Mills energy,
and the complete parallel-transport functor up to the same equivalence.
For every piecewise smooth path $\gamma$ in $N$, equivalent data satisfy
\begin{equation}
 P_{\nabla^F}(\gamma)
 =J_{\gamma(1)}P_{\nabla^E}(\phi\circ\gamma)J_{\gamma(0)}^{-1}.
 \label{eq:gauge-parallel-transport-naturality}
\end{equation}
In particular, it determines the conjugacy class of holonomy along every
loop.

\item On the flat locus, equality of canvas laws is equivalent to
isometric conjugacy of the monodromy representations.  For a fixed base,
this gives an injection
\begin{equation}
 \frac{\operatorname{Hom}(\pi_1(M),O(r))/O(r)}
      {\operatorname{Isom}(M,g)}
 \hookrightarrow
 \{\text{universal scalar canvas laws at time }t\}.
 \label{eq:gauge-character-variety-embedding}
\end{equation}
For $r=1$, the invariant is the class in $H^1(M;\mathbb Z_2)$ modulo the
natural isometry action.

\item Let $\mathcal L_E=(\nabla^E)^*\nabla^E+V_E$ and
$\mathcal L_F=(\nabla^F)^*\nabla^F+V_F$ be two smooth self-adjoint
matrix Schr\"odinger geometries which are not isometry--orthogonal-gauge
equivalent.  Then a finite rational canvas configuration separates their
joint scalar laws.  This finite separation persists for every ordinary
isospectral but geometrically inequivalent pair.
\end{enumerate}
\end{corollary}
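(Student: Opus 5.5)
Each of the four parts follows from \cref{cor:universal-canvas-hears-gauge-field}, which shows that the canvas law at time $t$ is a complete invariant for the isometry--orthogonal-gauge class of $(M,g,E,\nabla^E,V_E)$. The only further work is to check that each listed quantity is natural under pairs $(\phi,J)$, and that each sublocus is preserved by that action.

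For (a) I set $V=0$ and fix the base. If two connections on rank-$r$ bundles over $(M,g)$ have equal laws, the corollary gives an isometry $\phi$ of $(M,g)$ and an orthogonal $J$ with $\nabla^F=J\phi^*\nabla^EJ^{-1}$, so the map from classes to laws is well defined and injective. The converse part of the same corollary shows that equivalent data have equal laws. The Yang--Mills equation is natural under isometries and gauge, so the Yang--Mills locus is invariant and the restricted map is still injective.

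For (b) I first note that the curvature of the transported connection satisfies $F_{\nabla^F}=J(\phi^*F_{\nabla^E})J^{-1}$. Since $\phi$ is an isometry and $J$ is fiberwise orthogonal, the Yang--Mills energy $\int|F|^2$ is preserved. For parallel transport, I take a path $\gamma$ in $N$ and a $\nabla^E$-parallel section $s$ along $\phi\circ\gamma$. Then $J(s\circ\phi)$ is $\nabla^F$-parallel along $\gamma$ by \eqref{eq:gauge-connection-transport}, and reading off endpoints gives \eqref{eq:gauge-parallel-transport-naturality}. Taking $\gamma$ to be a loop gives conjugacy of the holonomies, after identifying fibres with $\mathbb R^r$ by orthonormal frames.

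For (c) I use the standard correspondence on the flat locus. Flat metric connections on rank-$r$ Euclidean bundles correspond to $\operatorname{Hom}(\pi_1(M),O(r))/O(r)$ through the monodromy. Flat gauge equivalence over the identity is exactly conjugacy, and a base isometry acts by the induced map on $\pi_1$. Injectivity of \eqref{eq:gauge-character-variety-embedding} then follows from (a) and (b). For $r=1$ we have $O(1)=\mathbb Z_2$, so $\operatorname{Hom}(\pi_1,\mathbb Z_2)=H^1(M;\mathbb Z_2)$.

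For (d) I apply \cref{cor:finite-canvas-witness} to the packets $(0,0,Q,0,C)$ on $H_E$ and $H_F$. If the geometries are inequivalent, \cref{cor:universal-canvas-hears-gauge-field} says the packets are not orbit-equivalent, so some $r$-point configuration, and in particular a rational one, separates the finite-dimensional laws. The ordinary isospectral case is then just an instance of inequivalence.

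The main obstacle is (c). Its converse half, that isometric conjugacy of monodromies gives an isometry--gauge equivalence, needs the explicit reconstruction of a flat bundle with connection from its monodromy. I would carry this out through the associated bundle $\widetilde M\times_\rho\mathbb R^r$, noting that an isometry lifts to the universal cover equivariantly up to the $\pi_1$-automorphism it induces.
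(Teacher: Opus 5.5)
Your proposal is correct and follows the paper's proof. Both arguments derive every part from the isometry--orthogonal-gauge reconstruction (\cref{cor:universal-canvas-hears-gauge-field}), use naturality of curvature and of parallel transport for (a)--(b), the monodromy correspondence for flat bundles in (c), and \cref{cor:finite-canvas-witness} applied to the inequivalent master packets in (d); the only difference is that you spell out the associated-bundle construction $\widetilde M\times_\rho\mathbb R^r$ for the converse in (c), which the paper simply asserts as the equivalence between parallel orthogonal bundle isomorphisms and monodromy conjugacy.
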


\begin{proof}
The gauge reconstruction theorem with potential identifies equality of
canvas laws with a unique isometry--orthogonal-gauge arrow.  Curvature
naturality gives
\[
 F_{\nabla^F}
 =(\Id_{\Lambda^2T^*N}\otimes\operatorname{Ad}J)\phi^*F_{\nabla^E},
\]
which preserves the Yang--Mills equation and curvature energy, while
naturality of covariant differentiation along paths gives
\eqref{eq:gauge-parallel-transport-naturality}.  For flat connections,
parallel orthogonal bundle isomorphism is equivalent to conjugacy of
monodromy, proving \eqref{eq:gauge-character-variety-embedding}.  The
finite rational separation follows from
\cref{cor:finite-canvas-witness} applied to the inequivalent master
packets.  Ordinary isospectrality fixes only the quadratic spectral
orbit and leaves the quartic spatial tensor available to separate the
pair.
\end{proof}

\begin{remark}[Why orthogonal gauge is optimal]
\label{rem:orthogonal-gauge-optimal}
The radial observable does not mark an orientation or a complex
structure on the fibers.  This is a genuine obstruction, not a defect
of the proof.  Choose a closed connected Riemannian manifold $(M,g)$ of
dimension at least two with trivial isometry group, and a one-form
$A\in\Omega^1(M)$ satisfying $dA\ne0$.  Such metrics exist, for example
among generic metrics on a closed surface of genus at least two.  On the
trivial oriented rank-two bundle put
\[
 J_0=\begin{pmatrix}0&-1\\1&0\end{pmatrix},
 \qquad \nabla^A=d+J_0A.
\]
A constant reflection $C\in O(2)\setminus SO(2)$ satisfies
$CJ_0C^{-1}=-J_0$, hence conjugates $\nabla^A$ to $\nabla^{-A}$ and
preserves every scalar replica signature.  They are not equivalent by a
base isometry and an $SO(2)$ gauge.  Indeed, the base isometry must be
the identity, and an $SO(2)$ gauge changes $A$ by a closed one-form with
integral periods.  Sending $A$ to $-A$ would therefore force $-2A$ to be
closed, contrary to $dA\ne0$.  Therefore a reconstruction modulo
$SO(r)$, $U(r)$, or a smaller structure group requires additional
Tannakian marker tensors defining that reduction.  The theorem modulo
$O(r)$ is sharp.
\end{remark}

\begin{remark}[Relation with inverse connection problems]
Connection Laplacians have been reconstructed from local hyperbolic
Dirichlet-to-Neumann data
\cite{KurylevOksanenPaternain2018}.  On a fixed closed negatively curved
manifold with simple length spectrum, ordinary Bochner spectra determine
low-rank bundles and connections up to gauge
\cite{CekicLefeuvre2025}.  The datum here is different: the observer
sees only the law of one smooth scalar process on the fixed Gaussian
canvas of \cref{subsec:universal-gaussian-canvas}.  The canvas compresses the
flexible correlated-replica signature into a single fixed-architecture
law with no adaptive covariance intervention, while the quartic chaos
supplies the spatial $L^4$ module structure missing from the ordinary
spectrum.
\end{remark}
\section{Deterministic completeness of the master heat packet}
\label{sec:radial-heat-networks}

The preceding reconstruction theorems admit a formulation which no
longer mentions random variables.  The complete invariant is a family
of ordinary scalar contractions of two heat tensors.  This also makes
the residual symmetry of the invariant completely explicit.

\subsection{The two heat tensors and their closed networks}

Let
\begin{equation}
 \mathfrak X=(M,g,E,\nabla^E,V_E),\qquad
 \mathcal L_{\mathfrak X}
   =(\nabla^E)^*\nabla^E+V_E,
 \label{eq:network-Laplace-type-datum}
\end{equation}
where $M$ is closed and connected, $E\to M$ is a positive-rank real
Euclidean bundle, $\nabla^E$ is a metric connection, and
$V_E\in C^\infty(M,\operatorname{Sym}(E))$.  Fix $t>0$ and put
\begin{equation}
 H_{\mathfrak X}=L^2(M,E),\qquad
 A_{\mathfrak X,t}=e^{-t\mathcal L_{\mathfrak X}/2},\qquad
 R_{\mathfrak X,t,x}s=(A_{\mathfrak X,t}s)(x).
 \label{eq:network-heat-evaluation-map}
\end{equation}
The evaluation map in \eqref{eq:network-heat-evaluation-map} is bounded
because the heat operator is smoothing.  If
$(\varepsilon_\alpha)_{\alpha=1}^{\operatorname{rank}E}$ is any
orthonormal basis of $E_x$, define
\begin{equation}
 q_{\mathfrak X,t,x}
 :=\sum_\alpha
 (R_{\mathfrak X,t,x}^*\varepsilon_\alpha)^{\otimes2}
 \in H_{\mathfrak X}^{\odot2}.
 \label{eq:network-pointwise-radial-tensor}
\end{equation}
As in \eqref{eq:gauge-pointwise-quadratic-tensor}, this does not depend
on the chosen basis.  The two integrated radial heat tensors are
\begin{equation}
 T_{\mathfrak X,t}^{(2)}
 :=\int_Mq_{\mathfrak X,t,x}\,dv_g(x),\qquad
 T_{\mathfrak X,t}^{(4)}
 :=\int_Mq_{\mathfrak X,t,x}\odot
                 q_{\mathfrak X,t,x}\,dv_g(x).
 \label{eq:network-two-integrated-heat-tensors}
\end{equation}
Thus these are $Q_{E,t}$ and $C_{E,t}$ from
\eqref{eq:gauge-integrated-tensors}, with the same definition when a
potential is present.  In particular, the flattening of
$T_{\mathfrak X,t}^{(2)}$ is the injective operator
$e^{-t\mathcal L_{\mathfrak X}}$.

\begin{definition}[The master heat packet]
\label{def:master-heat-packet}
The canonical marked-time packet of $\mathfrak X$ is
\begin{equation}
 \mathbf T_t^{\mathrm{master}}(\mathfrak X)
 :=\bigl(H_{\mathfrak X},T_{\mathfrak X,t}^{(2)},
                         T_{\mathfrak X,t}^{(4)}\bigr).
 \label{eq:master-heat-packet-definition}
\end{equation}
It is called \emph{master} because it is defined directly from heat
evaluation and every homogeneous scalar realization constructed below
factors through it.
\end{definition}

\begin{proposition}[Minimality of the positive radial master packet]
\label{prop:radial-master-minimality}
Let $V$ be a nonzero finite-dimensional real Euclidean space.  For every
integer $d\ge0$, the space of homogeneous $O(V)$-invariant polynomials
of degree $d$ is
\begin{equation}
 \mathbb R[V]_d^{O(V)}
 =\begin{cases}
   \{0\},&d\text{ odd},\\
   \mathbb R\,|v|^{2k},&d=2k.
  \end{cases}
 \label{eq:radial-invariant-classification}
\end{equation}
Consequently, among local homogeneous scalar observables of one vector
that are intrinsic under arbitrary orthogonal changes of fiber frame,
degree two gives only the Hilbert norm and degree four is the first
additional positive degree producing a non-Hilbertian $L^p$ geometry.
The heat packet
$\mathbf T_t^{\mathrm{master}}=(T_t^{(2)},T_t^{(4)})$ is therefore
minimal among positive radial packets that work uniformly for every
finite fiber rank.
\end{proposition}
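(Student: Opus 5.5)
The plan is to prove the classification \eqref{eq:radial-invariant-classification} by restricting an invariant polynomial to a ray and using transitivity of $O(V)$ on spheres, and then to read off the consequences. First, let $p\in\mathbb R[V]_d^{O(V)}$ and fix a unit vector $e\in V$. Since $V\ne0$, $O(V)$ acts transitively on the unit sphere, so for every $v\neq0$ there is $g\in O(V)$ with $gv=|v|e$. Invariance and homogeneity then give
\[
 p(v)=p(|v|e)=|v|^d\,p(e)
\]
for $v\ne0$, and trivially $p(0)=0$ when $d>0$.

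Next I treat the two parities. For $d$ odd, $-\Id\in O(V)$ gives $p(-v)=p(v)$, while homogeneity gives $p(-v)=(-1)^dp(v)=-p(v)$. Hence $p\equiv0$. For $d=2k$, the formula above says $p(v)=p(e)|v|^{2k}$ on $V\setminus\{0\}$, and both sides are polynomials, so the identity holds everywhere. Conversely, $|v|^{2k}$ is plainly invariant, so the invariant space is exactly $\mathbb R|v|^{2k}$. The case $d=0$ (constants) fits the even formula. Summing over degrees gives $\mathbb R[V]^{O(V)}=\mathbb R[|v|^2]$, which is \cref{prop:intro-radial-minimality}. The argument never uses $\dim V$, which is why the conclusion holds uniformly in every rank.

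For the consequences, I would describe a local homogeneous intrinsic observable of degree $d$ as an integral $\int_M P(A s(x))\,dv_g$ with $P$ fiberwise $O(E_x)$-invariant. The classification forces $P(v)=c|v|^{2k}$, so the only available observables are multiples of $\|A s\|_{L^{2k}}^{2k}$. Degree two therefore returns the Hilbert quadratic form, which is already encoded by $T^{(2)}_t$ via \eqref{eq:gauge-quadratic-flattening}. Degree four is the first degree giving $\|\cdot\|_{L^p}$ with $p\ne2$, and it is realized exactly by $T_t^{(4)}$ through \eqref{eq:gauge-quartic-L4-identity}.

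I expect the main obstacle to be making the minimality claim precise, since it is a claim about the design of the packet rather than a rigid theorem. I would state the admissible class explicitly: packets built from integrated fiberwise $O(E_x)$-invariant homogeneous polynomials of heat-evaluated sections, required to make sense for every finite rank. Within that class, the classification shows that any such packet has degrees in $2\mathbb N$. I would then argue that a packet containing only degree two cannot transport beyond unitary spectral equivalence, so it fails on isospectral examples. This makes $\{2,4\}$ the smallest degree set that supplies the $L^{p}$, $p\ne2$, input required by \cref{lem:vector-Lamperti}.
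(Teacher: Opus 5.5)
Your proposal is correct and follows the paper's proof essentially step for step: transitivity of $O(V)$ on the unit sphere together with homogeneity gives $p(v)=p(e)|v|^d$, the element $-\Id$ kills odd degrees, and the consequences are read off from the flattening identity for $T_t^{(2)}$ and the $L^4$ identity for $T_t^{(4)}$. Your additional step of fixing an explicit admissible class and using isospectral pairs to rule out a degree-two-only packet goes slightly beyond the paper's proof, which leaves the minimality claim informal and makes the isospectral point only in the remark on why both chaos orders are essential, but it is consistent with that proof.
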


\begin{proof}
The orthogonal group acts transitively on every Euclidean sphere, so an
invariant homogeneous polynomial $P$ of degree $d$ is constant on the
unit sphere.  Homogeneity gives
$P(v)=c|v|^d$ for $v\ne0$.  If $d$ is odd, invariance under $-\Id$
forces $P=0$.  If $d=2k$, then $P(v)=c|v|^{2k}$, which is polynomial and
spans the invariant space.  For $d=2$, the integrated heat observable is
exactly the quadratic tensor whose flattening is the Hilbert-space heat
operator.  The next positive radial degree is $d=4$, and its diagonal is
the fourth power of the $L^4$ norm on the heat range.
\end{proof}

\begin{remark}[The native cubic scalar sector]
\label{rem:native-scalar-cubic}
For the trivial real line bundle one may also form
\[
 T_{M,t}^{(3)}=\int_M k_{M,t,x}^{\otimes3}\,dv_g(x).
\]
If one orthogonal map $U:L^2(M)\to L^2(N)$ transports
$T_{M,t}^{(2)}$ and $T_{M,t}^{(3)}$, the quadratic identity intertwines
the heat semigroups and the cubic identity gives
\[
 \int_N(Uf)(Ug)(Uh)\,dv_h=\int_Mfgh\,dv_g
\]
on the heat range.  Since $U\mathbf1=c\mathbf1$, the $L^2$ and cubic
identities evaluated on constants give $c=1$.  Unitarity then yields
\[
 \langle U(fg),Uh\rangle_{L^2(N)}
 =\langle (Uf)(Ug),Uh\rangle_{L^2(N)},
\]
so $U(fg)=(Uf)(Ug)$ on the smooth spectral core and hence on
$C^\infty(M)$.  Thus the scalar sector has a native cubic multiplication
realization.  The master status of the quadratic--quartic packet is
stronger: it is positive, radial, and intrinsic for bundles of every
rank and for the singular Dirichlet setting.
\end{remark}

A \emph{closed loop-free $2/4$-network} is a finite multigraph
$\Gamma=(\mathsf V,\mathsf E)$ without self-loops, with parallel edges
allowed, such that every vertex has degree $2$ or $4$.  At a vertex $v$
of degree $d(v)$ place $T_{\mathfrak X,t}^{(d(v))}$ and contract tensor
slots along the edges.  Symmetry of the tensors makes the resulting
number independent of all choices of slot order.  We denote it by
\begin{equation}
 \mathcal N_{\Gamma,t}(\mathfrak X)
 :=\operatorname{Contr}_{\Gamma}
   \left((T_{\mathfrak X,t}^{(d(v))})_{v\in\mathsf V}\right).
 \label{eq:closed-radial-heat-network-value}
\end{equation}
Equivalently, if
\(
 \tau_{2,x}=q_{\mathfrak X,t,x}
\)
and
\(
 \tau_{4,x}=q_{\mathfrak X,t,x}\odot q_{\mathfrak X,t,x},
\)
then Bochner--Fubini gives the local heat-kernel formula
\begin{equation}
 \mathcal N_{\Gamma,t}(\mathfrak X)
 =\int_{M^{\mathsf V}}
   \operatorname{Contr}_{\Gamma}
   \left((\tau_{d(v),x_v})_{v\in\mathsf V}\right)
   \prod_{v\in\mathsf V}dv_g(x_v).
 \label{eq:network-local-heat-kernel-formula}
\end{equation}
For the trivial real line bundle with its trivial connection and
$V_E=0$, this reduces to the familiar scalar partition function
\begin{equation}
 \mathcal N_{\Gamma,t}(M,g)
 =\int_{M^{\mathsf V}}
   \prod_{\{v,w\}\in\mathsf E}p_t^M(x_v,x_w)
   \prod_{v\in\mathsf V}dv_g(x_v),
 \label{eq:scalar-heat-network-partition-function}
\end{equation}
where edges are counted with multiplicity.  In the vector-bundle case,
each edge carries the kernel of $e^{-t\mathcal L_{\mathfrak X}}$ and
the radial tensors perform the intrinsic fiber-index contractions at
the vertices.

\subsection{The master equivalence}

\begin{theorem}[Master heat-packet equivalence]
\label{thm:radial-heat-network-rigidity}
Fix $t>0$.  Let
\(
 \mathfrak X=(M,g,E,\nabla^E,V_E)
\)
and
\(
 \mathfrak Y=(N,h,F,\nabla^F,V_F)
\)
be data of the form \eqref{eq:network-Laplace-type-datum}; no equality
of base dimensions or bundle ranks is assumed.  The following are
equivalent.
\begin{enumerate}[label=\textup{(\roman*)}]
 \item There exist a Riemannian isometry
 $\phi:(N,h)\to(M,g)$ and a smooth orthogonal bundle isomorphism
 $J:\phi^*E\to F$ such that
 \begin{equation}
  \nabla^F=J\phi^*\nabla^EJ^{-1},\qquad
  V_F=J(\phi^*V_E)J^{-1}.
  \label{eq:network-geometric-equivalence}
 \end{equation}

 \item There is a surjective real unitary
 $U:H_{\mathfrak X}\to H_{\mathfrak Y}$ satisfying
 \begin{equation}
  T_{\mathfrak Y,t}^{(2)}
    =U^{\odot2}T_{\mathfrak X,t}^{(2)},\qquad
  T_{\mathfrak Y,t}^{(4)}
    =U^{\odot4}T_{\mathfrak X,t}^{(4)}.
  \label{eq:network-tensor-orbit-equivalence}
 \end{equation}

 \item For every closed loop-free $2/4$-network $\Gamma$,
 \begin{equation}
  \mathcal N_{\Gamma,t}(\mathfrak X)
  =\mathcal N_{\Gamma,t}(\mathfrak Y).
  \label{eq:all-radial-network-values-equal}
 \end{equation}
\end{enumerate}
Moreover, whenever a unitary $U$ in \textup{(ii)} is prescribed, the
pair $(\phi,J)$ in \textup{(i)} is obtained from that same unitary and
satisfies
\begin{equation}
 (Us)(y)=J_y s(\phi(y))
 \quad\text{for a.e. }y\in N.
 \label{eq:network-unitary-is-geometric}
\end{equation}
Disconnected networks may be omitted in \textup{(iii)}, since their
values are products of the values of their connected components.
\end{theorem}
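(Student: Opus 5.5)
I would prove the cycle (i)$\Rightarrow$(ii)$\Rightarrow$(iii)$\Rightarrow$(ii)$\Rightarrow$(i). The implication (ii)$\Rightarrow$(iii) is immediate, since every closed contraction is invariant under simultaneous transport by $U^{\odot d}$.

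For (i)$\Rightarrow$(ii), define $(Us)(y)=J_ys(\phi(y))$. This is a real unitary because $\phi$ preserves volume and $J$ is fiberwise orthogonal. The transport identities \eqref{eq:network-geometric-equivalence} give $U\mathcal L_{\mathfrak X}U^{-1}=\mathcal L_{\mathfrak Y}$, hence $UA_{\mathfrak X,t}=A_{\mathfrak Y,t}U$. It follows that $R_{\mathfrak Y,t,y}U=J_yR_{\mathfrak X,t,\phi(y)}$, and therefore $U^{\odot2}q_{\mathfrak X,t,\phi(y)}=q_{\mathfrak Y,t,y}$ by basis-independence of the radial tensor. Integrating over $y$ and changing variables by the isometry $\phi$ gives \eqref{eq:network-tensor-orbit-equivalence}, exactly as in the converse part of \cref{thm:scalar-observable-hears-gauge-field}.

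For (iii)$\Rightarrow$(ii), I would view $(0,0,T^{(2)},0,T^{(4)})$ as a tensor packet. By \eqref{eq:exact-scalar-coefficient}, its closed loop-free register consists precisely of the values $\mathcal N_{\Gamma,t}$ multiplied by universal combinatorial constants. Vertices of odd degree or degree $1,3$ carry zero kernels, so only $2/4$-networks appear, with $f_0=0$ on both sides. The proof of \cref{thm:scalar-replica-tannaka} from the support Gram onward uses only this register: the trace moments of $D$, the spectral-cap opening of \cref{lem:spectral-cap-opening}, the finite orbit separation of \cref{prop:finite-orthogonal-orbit-separation}, and the compact inverse limit. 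It therefore yields a unitary $U:S\to S'$ transporting both tensors. The flattening of $T^{(2)}$ is the injective heat operator $e^{-t\mathcal L}$, so the minimal supports are the full spaces $H_{\mathfrak X},H_{\mathfrak Y}$, and $U$ is defined on all of them. Disconnected networks factor into connected ones, which is why they may be omitted.

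For (ii)$\Rightarrow$(i), with the unitary $U$ prescribed, I would rerun the argument of \cref{thm:scalar-observable-hears-gauge-field} and \cref{cor:gauge-potential} starting from this same $U$; this also gives \eqref{eq:network-unitary-is-geometric}. The steps are as follows.
\begin{enumerate}[label=\textup{(\arabic*)}]
\item Take the unbounded logarithm of $Ue^{-t\mathcal L_{\mathfrak X}}=e^{-t\mathcal L_{\mathfrak Y}}U$ to intertwine the generators and all heat times.
\item Use the quartic identity \eqref{eq:gauge-quartic-L4-identity}, together with $L^4$-density of finite eigensection sums, to extend $U$ to a surjective $L^4$-isometry sharing a common core with the $L^2$ realization.
\item Apply \cref{lem:vector-Lamperti} to obtain $(Us)(y)=J_ys(\phi(y))$ with $\phi$ measure-preserving and $J$ a measurable field of fiber isometries.
\item Use $C^\infty=\bigcap_k\operatorname{Dom}(\mathcal L^k)$ and the multiplier conjugacy \eqref{eq:gauge-smooth-multiplier-conjugacy} to upgrade $\phi$ and $J$ to smooth representatives, via a Whitney embedding.
\item Read off the geometry by comparing symbols: the principal symbol makes $\phi$ a Riemannian isometry, the first-order part forces the connection difference $B=0$, and the zeroth-order part gives the potential transport.
\end{enumerate}

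One subtlety arises in the scalar case with potential. There $U\mathbf 1$ need not be constant, but no sign normalization is needed, because \cref{lem:vector-Lamperti} already absorbs signs into $J$, which is only required to be orthogonal.

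I expect the main obstacle to be the (iii)$\Rightarrow$(ii) bookkeeping: verifying that the self-contraction-free $2/4$ register alone suffices for the support-Gram argument. The dipoles of $D=L_{T^{(2)}}L_{T^{(2)}}^*+L_{T^{(4)}}L_{T^{(4)}}^*$ only create vertices of degree $2$ and $4$, so every network that argument needs lies in the given class. I would check this carefully, including the coefficient normalization, which is nonzero and universal, so that equality of the $\mathcal N_\Gamma$ implies equality of the registers.
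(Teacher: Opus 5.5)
Your proposal is correct and follows the paper's proof essentially step for step. It uses the same cycle of implications and the same reduction of (iii)$\Rightarrow$(ii) to the support-Gram, spectral-cap, and inverse-limit argument for the packet $(0,0,T^{(2)},0,T^{(4)})$, with full active supports because $e^{-t\mathcal L}$ is injective. It then reruns the gauge-field argument from the prescribed $U$ in the same order as the paper: heat logarithm, $L^4$ extension on a common core, \cref{lem:vector-Lamperti}, smoothing, and symbol comparison. Your remark that no sign normalization is needed when a potential is present is consistent with the paper, since the vector-valued Lamperti lemma already absorbs the sign into $J$.
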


\begin{proof}
Assume \textup{(i)} and define
\(
 (Us)(y)=J_ys(\phi(y)).
\)
The map $U$ is a real $L^2$-unitary.  Naturality of the connection
Laplacian and the potential identity in
\eqref{eq:network-geometric-equivalence} give
\(
 U\mathcal L_{\mathfrak X}=\mathcal L_{\mathfrak Y}U
\)
and hence
\(
 UA_{\mathfrak X,t}=A_{\mathfrak Y,t}U.
\)
Because $\phi$ preserves volume and $J$ preserves the fiber metric,
the definitions \eqref{eq:network-pointwise-radial-tensor}--
\eqref{eq:network-two-integrated-heat-tensors} then give
\eqref{eq:network-tensor-orbit-equivalence}.  Thus
\textup{(i)} implies \textup{(ii)}.

Every complete contraction is invariant under a simultaneous unitary
change of all tensor legs, so \textup{(ii)} implies \textup{(iii)}.

Conversely, the numbers in \textup{(iii)} are exactly the complete
loop-free contraction register of the finite tensor packet
\begin{equation}
 \bigl(0,0,T_{\mathfrak X,t}^{(2)},0,
             T_{\mathfrak X,t}^{(4)}\bigr),
 \label{eq:network-finite-tensor-packet}
\end{equation}
and similarly for $\mathfrak Y$.  The support-Gram, finite-dimensional
orbit-separation, and compact inverse-limit argument used in the proof
of \cref{thm:universal-canvas-replica-tannaka} therefore gives a single
unitary between the active supports which transports both tensors.  The
quadratic flattenings are
$e^{-t\mathcal L_{\mathfrak X}}$ and
$e^{-t\mathcal L_{\mathfrak Y}}$.  They are injective with dense range,
so the active supports are the full spaces $H_{\mathfrak X}$ and
$H_{\mathfrak Y}$.  This proves \textup{(ii)}.

Finally, start with a prescribed $U$ satisfying \textup{(ii)}.  The
proof of
\cref{thm:scalar-observable-hears-gauge-field,cor:gauge-potential}
applies from \eqref{eq:gauge-heat-and-quartic-transport} onward: the
quartic tensor makes $U$ a vector-valued $L^4$-isometry on a common
dense core, \cref{lem:vector-Lamperti} gives the measurable form
\eqref{eq:network-unitary-is-geometric}, elliptic regularity makes
$(\phi,J)$ smooth, and comparison of the principal and first-order
symbols recovers respectively $g$, $\nabla^E$, and then $V_E$.
This proves \textup{(i)} for the same $U$ and completes all
implications.
\end{proof}

The random field formulation is now an immediate, but useful,
consequence.  Let $(\mathbb W_{\mathfrak X,x})_{x\in\mathbb R}$ be the
fixed canvas over $H_{\mathfrak X}$ and set
\begin{equation}
 \mathcal Z_{\mathfrak X,t}(x)
 :=I_2^{\mathbb W_{\mathfrak X,x}}
       (T_{\mathfrak X,t}^{(2)})
   +I_4^{\mathbb W_{\mathfrak X,x}}
       (T_{\mathfrak X,t}^{(4)}),
 \qquad x\in\mathbb R.
 \label{eq:network-master-canvas-process}
\end{equation}

\begin{corollary}[Geometry, networks, and one universal field]
\label{cor:network-universal-field-equivalence}
For the two data sets in
\cref{thm:radial-heat-network-rigidity}, the equivalent conditions
\textup{(i)}--\textup{(iii)} of that theorem are also equivalent to
\begin{equation}
 \Law_{C(\mathbb R)}\bigl(\mathcal Z_{\mathfrak X,t}\bigr)
 =
 \Law_{C(\mathbb R)}\bigl(\mathcal Z_{\mathfrak Y,t}\bigr).
 \label{eq:network-master-canvas-law-equality}
\end{equation}
Thus one and the same predetermined scalar Gaussian canvas is a
complete scalar realization of all the closed radial heat networks, and both are
complete invariants of the smooth Laplace-type geometry up to isometry
and orthogonal gauge.
\end{corollary}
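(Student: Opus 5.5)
The plan is to insert one new link into the chain of equivalences already proved. The corollary only adds condition \eqref{eq:network-master-canvas-law-equality} to conditions \textup{(i)}--\textup{(iii)} of \cref{thm:radial-heat-network-rigidity}. It therefore suffices to show that equality of the canvas laws is equivalent to condition \textup{(ii)}, the existence of one surjective real unitary transporting both heat tensors.

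For this link I will apply \cref{thm:universal-canvas-replica-tannaka} to the graded packets
\[
(0,0,T_{\mathfrak X,t}^{(2)},0,T_{\mathfrak X,t}^{(4)})
\quad\text{and}\quad
(0,0,T_{\mathfrak Y,t}^{(2)},0,T_{\mathfrak Y,t}^{(4)}).
\]
These have degree $4$ and equal constants. By the definition \eqref{eq:network-master-canvas-process}, their smooth canvas processes are exactly $\mathcal Z_{\mathfrak X,t}$ and $\mathcal Z_{\mathfrak Y,t}$. The theorem then says that equal laws on $C(\mathbb R)$ are equivalent to the existence of a surjective unitary $U:S\to S'$ between the minimal active supports transporting every grade. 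The zero grades are transported trivially, so this is the same as transporting the degree-two and degree-four tensors.

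The one point needing care is that condition \textup{(ii)} asks for a unitary on the whole Hilbert spaces $H_{\mathfrak X}\to H_{\mathfrak Y}$, not just on the supports. As observed after \eqref{eq:network-two-integrated-heat-tensors}, the flattening of $T^{(2)}_{\mathfrak X,t}$ is $e^{-t\mathcal L_{\mathfrak X}}$. This operator is self-adjoint and injective, so it has dense range. The support is at least the closure of the range of $L_{T^{(2)}}$, hence $S(\mathbf T_{\mathfrak X})=H_{\mathfrak X}$, and likewise $S(\mathbf T_{\mathfrak Y})=H_{\mathfrak Y}$. With full supports, the support-level statement of \cref{thm:universal-canvas-replica-tannaka} and condition \textup{(ii)} coincide in both directions.

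For the converse, \textup{(ii)} supplies such a $U$ on the full spaces, which are the supports. The reverse implication of \cref{thm:universal-canvas-replica-tannaka} then gives equality of the laws.

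I expect no step to be a real obstacle. The only thing to verify explicitly is the support identification, which also appeared in the proof of \cref{cor:universal-canvas-hears-gauge-field}. The closing sentence of the corollary, about a complete scalar realization of the networks and a complete invariant of the geometry, is then just a restatement of the combined equivalence with \textup{(i)} and \textup{(iii)}.
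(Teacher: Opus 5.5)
Your proposal is correct and follows essentially the same route as the paper. You apply \cref{thm:universal-canvas-replica-tannaka} to the packets $(0,0,T^{(2)}_{\mathfrak X,t},0,T^{(4)}_{\mathfrak X,t})$ and $(0,0,T^{(2)}_{\mathfrak Y,t},0,T^{(4)}_{\mathfrak Y,t})$, use that the quadratic flattening $e^{-t\mathcal L_{\mathfrak X}}$ is injective with dense range to identify the active supports with the full $L^2$ spaces, and then chain with condition \textup{(ii)} of \cref{thm:radial-heat-network-rigidity}; the paper's three-line proof is exactly this argument in compressed form.
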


\begin{proof}
By \cref{thm:universal-canvas-replica-tannaka}, equality in
\eqref{eq:network-master-canvas-law-equality} is equivalent to a unitary
transporting the two tensors on their active supports.  The quadratic
flattening has dense range, so those supports are the full $L^2$
spaces.  The conclusion follows from
\cref{thm:radial-heat-network-rigidity}.
\end{proof}

\subsection{The exact stabilizer}

Let $\operatorname{Aut}(\mathfrak X)$ be the group of pairs
$(\psi,\mathcal J)$ in which $\psi\in\operatorname{Isom}(M,g)$ and
$\mathcal J:E\to E$ is a smooth orthogonal bundle automorphism covering
$\psi$ which preserves $\nabla^E$ and $V_E$.  Its natural action on
$H_{\mathfrak X}$ is
\begin{equation}
 (\rho(\psi,\mathcal J)s)(y)
 :=\mathcal J_{\psi^{-1}(y)}s(\psi^{-1}(y)).
 \label{eq:geometric-automorphism-L2-action}
\end{equation}
Define the tensor stabilizer at time $t$ by
\begin{equation}
 \operatorname{Stab}_t(\mathfrak X)
 :=\left\{U\in O(H_{\mathfrak X}):
 \begin{array}{l}
 U^{\odot2}T_{\mathfrak X,t}^{(2)}
       =T_{\mathfrak X,t}^{(2)},\\
 U^{\odot4}T_{\mathfrak X,t}^{(4)}
       =T_{\mathfrak X,t}^{(4)}
 \end{array}\right\}.
 \label{eq:radial-heat-tensor-stabilizer}
\end{equation}

\begin{corollary}[Exact geometric automorphism group]
\label{cor:radial-network-stabilizer}
For every datum $\mathfrak X$ in
\eqref{eq:network-Laplace-type-datum}, the action
\eqref{eq:geometric-automorphism-L2-action} is a group isomorphism
\begin{equation}
 \rho:\operatorname{Aut}(\mathfrak X)
 \xrightarrow{\ \simeq\ }
 \operatorname{Stab}_t(\mathfrak X).
 \label{eq:geometric-automorphism-equals-stabilizer}
\end{equation}
In particular, for the scalar packet
$(f_{M,t}^{(2)},f_{M,t}^{(4)})$ on a closed connected manifold,
\begin{equation}
 \operatorname{Stab}
   (f_{M,t}^{(2)},f_{M,t}^{(4)})
 \cong\{\pm1\}\times\operatorname{Isom}(M,g).
 \label{eq:scalar-heat-tensor-stabilizer}
\end{equation}
\end{corollary}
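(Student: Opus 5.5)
The plan is to check four things in turn: that $\rho$ lands in $\operatorname{Stab}_t(\mathfrak X)$, that it is a homomorphism, that it is injective, and that it is surjective. Surjectivity carries the real content, and it follows from the arrow-level part of \cref{thm:radial-heat-network-rigidity} applied with $\mathfrak Y=\mathfrak X$.

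\emph{Well-definedness and homomorphism.} Let $(\psi,\mathcal J)\in\operatorname{Aut}(\mathfrak X)$. Set $\phi=\psi^{-1}:M\to M$, which is a Riemannian isometry, and $J_y=\mathcal J_{\psi^{-1}(y)}$, which is a smooth orthogonal isomorphism $\phi^*E\to E$. Preservation of $\nabla^E$ and $V_E$ is exactly condition (i) of \cref{thm:radial-heat-network-rigidity} with $\mathfrak Y=\mathfrak X$. The implication (i)$\Rightarrow$(ii) proved there shows that $\rho(\psi,\mathcal J)$ is an orthogonal operator fixing both $T^{(2)}_{\mathfrak X,t}$ and $T^{(4)}_{\mathfrak X,t}$. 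The homomorphism property comes from composing \eqref{eq:geometric-automorphism-L2-action} directly, using the group law $(\psi_1,\mathcal J_1)(\psi_2,\mathcal J_2)=(\psi_1\psi_2,\mathcal J_1\mathcal J_2)$.

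\emph{Injectivity.} Suppose $\rho(\psi,\mathcal J)=\Id$. Multiplying a section by $b\in L^\infty(M)$ transforms as $\rho(bs)=(b\circ\psi^{-1})\rho(s)$. Applying this to sections that are nonvanishing near a given point shows $b\circ\psi^{-1}=b$ almost everywhere for all smooth $b$, so $\psi=\mathrm{id}$ by continuity. Then $\mathcal J_ys(y)=s(y)$ for every smooth $s$, and smooth sections span each fiber, so $\mathcal J=\Id$.

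\emph{Surjectivity.} Given $U\in\operatorname{Stab}_t(\mathfrak X)$, apply the ``Moreover'' clause of \cref{thm:radial-heat-network-rigidity} to this prescribed $U$. It produces a Riemannian isometry $\phi$ and a smooth orthogonal $J:\phi^*E\to E$ satisfying \eqref{eq:network-geometric-equivalence} and $(Us)(y)=J_ys(\phi(y))$ almost everywhere. Put $\psi=\phi^{-1}$ and $\mathcal J_x=J_{\psi(x)}$. Then $(\psi,\mathcal J)\in\operatorname{Aut}(\mathfrak X)$ and $\rho(\psi,\mathcal J)=U$ as $L^2$ operators. The main point to check is this bookkeeping: the orientation of $\phi$ relative to $\psi$, and that the transport identities for $\nabla^E$ and $V_E$ rewrite as preservation by $(\psi,\mathcal J)$. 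The analytic content has already been done in the theorem, namely Lamperti rigidity, smoothing of $(\phi,J)$, and the symbol comparisons.

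\emph{Scalar case.} Here $E$ is the trivial line bundle with the trivial connection and $V_E=0$. A smooth orthogonal automorphism covering $\psi$ is multiplication by a smooth function with values in $\{\pm1\}$. Preserving the trivial connection forces this function to be locally constant, hence a global sign $\pm1$ because $M$ is connected. So $\operatorname{Aut}(\mathfrak X)\cong\{\pm1\}\times\operatorname{Isom}(M,g)$, which is a direct product since a constant sign commutes with every pullback. The tensors $T^{(2)}_{\mathfrak X,t}$ and $T^{(4)}_{\mathfrak X,t}$ coincide with $f^{(2)}_{M,t}$ and $f^{(4)}_{M,t}$, because $q_{t,x}=k_{M,t,x}^{\otimes2}$ in rank one. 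This gives \eqref{eq:scalar-heat-tensor-stabilizer}, matching the global sign that appeared in Step~4 of \cref{thm:scalar-observable-hears-manifold}.
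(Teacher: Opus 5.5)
Your proposal is correct and follows essentially the same route as the paper: (i)$\Rightarrow$(ii) of \cref{thm:radial-heat-network-rigidity} puts the image of $\rho$ in the stabilizer, and that theorem's prescribed-transport clause with $\mathfrak Y=\mathfrak X$, reindexed by $\psi=\phi^{-1}$ and $\mathcal J_x=J_{\phi^{-1}(x)}$, gives surjectivity; injectivity comes from recovering the base and fiber maps from the induced operator, and the scalar case reduces to $O(1)$-valued gauges on connected $M$. Your explicit homomorphism check is a harmless addition, and a smooth $\{\pm1\}$-valued gauge is locally constant by continuity alone, so you do not need the connection condition for that step.
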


\begin{proof}
Naturality shows that the image of $\rho$ is contained in the tensor
stabilizer.  Conversely, apply the final assertion of
\cref{thm:radial-heat-network-rigidity} with
$\mathfrak Y=\mathfrak X$ to any
$U\in\operatorname{Stab}_t(\mathfrak X)$.  It represents that same $U$
by a pair $(\phi,J)$ with $J_y:E_{\phi(y)}\to E_y$.  To match the
convention in \eqref{eq:geometric-automorphism-L2-action}, put
$\psi=\phi^{-1}$ and
$\mathcal J_x=J_{\phi^{-1}(x)}:E_x\to E_{\psi(x)}$; its induced
operator is exactly $U$.  This proves surjectivity.  If two geometric
automorphisms induce the same operator on all $L^2$ sections, their
base maps and fiber maps agree (first almost everywhere, then
everywhere by smoothness); hence $\rho$ is injective.

For the trivial real line bundle, a smooth orthogonal gauge
transformation is a map from connected $M$ to $O(1)=\{\pm1\}$ and is
therefore constant.  Both tensor degrees are even, so both constants
occur and commute with pullback by every isometry.  This gives
\eqref{eq:scalar-heat-tensor-stabilizer}.
\end{proof}
\section{Positive orthogonal Tannaka duality for the master heat packet}
\label{sec:positive-protannakian-geometry}

The preceding results single out one canonical tensor object.  For a
marked heat geometry $\mathfrak X$ and $t>0$, set
\begin{equation}
 \mathcal T_t(\mathfrak X)
 :=\bigl(H_{\mathfrak X},Q_{\mathfrak X,t},C_{\mathfrak X,t}\bigr)
 :=\bigl(H_{\mathfrak X},T_{\mathfrak X,t}^{(2)},
                  T_{\mathfrak X,t}^{(4)}\bigr).
 \label{eq:master-heat-packet}
\end{equation}
The flattening of $Q_{\mathfrak X,t}$ is
$e^{-t\mathcal L_{\mathfrak X}}$, while the diagonal of
$C_{\mathfrak X,t}$ is the fourth power of the radial $L^4$ norm on the
heat range.  Thus the two degrees have different intrinsic roles:
$Q_t$ recovers the generator and $C_t$ forces the transporting unitary
to be spatial.  This is why \eqref{eq:master-heat-packet}, rather than
any of its homogeneous presentations below, is the \emph{master heat
packet}.

\subsection{Finite cuts and the positive pro-character}

Put $H=H_{\mathfrak X}$, $T_2=Q_{\mathfrak X,t}$, and
$T_4=C_{\mathfrak X,t}$.  The intrinsic support Gram
\begin{equation}
 D=L_{T_2}L_{T_2}^*+L_{T_4}L_{T_4}^*
 \label{eq:protannaka-support-Gram}
\end{equation}
is positive, trace class, and injective.  For every regular threshold
$\delta>0$, meaning $\delta\notin\sigma(D)$, set
\begin{equation}
 P_\delta=\mathbf1_{[\delta,\infty)}(D),\qquad
 H_\delta=P_\delta H,\qquad
 B_\delta=P_\delta DP_\delta,\qquad
 T_{j,\delta}=P_\delta^{\otimes j}T_j\quad(j=2,4).
 \label{eq:protannaka-spectral-cut}
\end{equation}
The cut space $H_\delta$ is finite-dimensional and the marker
$B_\delta$ canonically records its position in the spectral flag.

Let $\mathsf D_{\mathrm{free}}$ be the free real linear rigid symmetric
monoidal $*$-category generated by one self-dual object $\mathsf h$, a
self-adjoint endomorphism $\mathsf b$, and symmetric arrows
$\mathsf t_j:\mathbf1\to\mathsf h^{\otimes j}$ for $j=2,4$.
Evaluation at $(H_\delta,B_\delta,T_{2,\delta},T_{4,\delta})$ gives a
symmetric monoidal $*$-functor $\omega^0_\delta$ to finite-dimensional
real Hilbert spaces.  Define
\begin{equation}
 \mathsf D^{\mathrm{eff}}_\delta
 :=\operatorname{Kar}\bigl(
 \mathsf D_{\mathrm{free}}/\ker\omega^0_\delta\bigr),
 \qquad
 \omega_\delta:\mathsf D^{\mathrm{eff}}_\delta
 \longrightarrow\mathsf{Hilb}^{\mathbb R}_{\mathrm{fd}}.
 \label{eq:protannaka-effective-category}
\end{equation}
The induced functor is faithful, rigid, orthogonal, and symmetric
monoidal.  Evaluation of closed diagrams defines a multiplicative
positive character
\begin{equation}
 \chi_\delta(d\sqcup d')=\chi_\delta(d)\chi_\delta(d'),
 \qquad
 \chi_\delta\!\left(\operatorname{cl}(a^*a)\right)
 =\norm{\omega_\delta(a)}_{\mathrm{HS}}^2\ge0,
 \label{eq:protannaka-character-positivity}
\end{equation}
where $\operatorname{cl}$ denotes categorical closure by the chosen
duality.  The quotient removes all dimension-dependent and
packet-dependent tensor relations, and the Karoubi envelope records all
orthogonal summands generated by idempotents.

\begin{proposition}[The positive cut character is complete]
\label{prop:positive-cut-character-complete}
For every regular $\delta$, the character $\chi_\delta$ determines the
orthogonal orbit of
\begin{equation}
 (H_\delta,B_\delta,T_{2,\delta},T_{4,\delta})
 \label{eq:protannaka-cut-tuple}
\end{equation}
and therefore determines
$(\mathsf D^{\mathrm{eff}}_\delta,\omega_\delta)$ up to orthogonal
symmetric monoidal equivalence.  Conversely, such an equivalence
preserves $\chi_\delta$.
\end{proposition}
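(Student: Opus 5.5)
The plan is to reduce the statement to the finite orthogonal orbit separation of \cref{prop:finite-orthogonal-orbit-separation}, applied to the tuple \eqref{eq:protannaka-cut-tuple} regarded as a point of
\[
 \mathcal W=\operatorname{End}(V)\oplus V^{\otimes2}\oplus V^{\otimes4},
 \qquad V=H_\delta,\qquad \operatorname{End}(V)\cong V^{\otimes2}.
\]
The proof has three steps: fix the dimension, apply orbit separation, then pass from the orbit to the effective category.

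\emph{Step 1: the dimension.} I would first observe that $\chi_\delta$ determines $\dim H_\delta$. The closed diagram given by the closure of $\mathrm{id}_{\mathsf h}$ is a loop, and its value is $\dim H_\delta$. Given two cuts with equal characters, I would choose any isometry $H'_\delta\to H_\delta$ and transport the primed tuple to $H_\delta$.

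\emph{Step 2: orbit separation.} Every complete pairwise contraction of a tensor word in $B_\delta,T_{2,\delta},T_{4,\delta}$ (with $B_\delta$ read as a 2-tensor via the self-duality) is the value under $\omega^0_\delta$ of a closed diagram in $\mathsf D_{\mathrm{free}}$. The diagram is built from the generators $\mathsf b,\mathsf t_2,\mathsf t_4$ together with cups and caps. Hence equality of $\chi_\delta$ gives equality of all such contractions, and \cref{prop:finite-orthogonal-orbit-separation} yields $U\in O(H_\delta)$ with $UB_\delta U^*=B'_\delta$ and $U^{\otimes j}T_{j,\delta}=T'_{j,\delta}$. Here I should check that $\chi_\delta$ is well defined on the quotient category. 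This holds because morphisms are identified exactly when their evaluations agree, so $\chi_\delta$ factors through $\ker\omega^0_\delta$.

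\emph{Step 3: from the orbit to the categorical equivalence.} I would show that $U$ induces an equivalence $(\mathsf D^{\mathrm{eff}}_\delta,\omega_\delta)\simeq(\mathsf D'^{\mathrm{eff}}_\delta,\omega'_\delta)$. The key claim is that the kernels $\ker\omega^0_\delta$ and $\ker\omega'^0_\delta$ coincide as ideals of $\mathsf D_{\mathrm{free}}$. For a morphism $a:\mathsf h^{\otimes k}\to\mathsf h^{\otimes l}$ we have
\[
 \omega'^0_\delta(a)=U^{\otimes l}\,\omega^0_\delta(a)\,(U^*)^{\otimes k},
\]
because $U$ intertwines every generator and preserves the cups and caps by orthogonality. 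So one evaluation vanishes iff the other does. The identity functor on $\mathsf D_{\mathrm{free}}$ therefore descends to an isomorphism of the quotients. Karoubi envelopes are functorial, so it extends to the envelopes. Conjugation by $U$ supplies a monoidal natural isomorphism $\omega_\delta\Rightarrow\omega'_\delta$ whose components $U^{\otimes k}$ are orthogonal; on idempotent summands the components are the restrictions. Since the identity functor is compatible with the symmetry, the result is an orthogonal symmetric monoidal equivalence.

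\emph{Converse.} An orthogonal symmetric monoidal equivalence preserves the images of the generators up to the orthogonal natural isomorphism. The value of a closed diagram lies in $\operatorname{End}(\mathbf1)=\mathbb R$, and conjugation fixes scalars, so $\chi_\delta$ is preserved.

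\emph{Main obstacle.} I expect the difficulty to be definitional rather than analytic. One must pin down what "orthogonal symmetric monoidal equivalence" means: it should be compatible with the fibre functors $\omega_\delta$, not abstract. One must also make sure that the labelled generators $\mathsf b,\mathsf t_j$ are remembered, so that the equivalence carries generators to generators. I will take the equivalence to be one under the free category, which makes Step 3 rigid. The positivity in \eqref{eq:protannaka-character-positivity} plays no role in completeness.
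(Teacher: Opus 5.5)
Your proposal is correct and follows the paper's proof. You identify the values of $\chi_\delta$ with the complete contractions of $B_\delta,T_{2,\delta},T_{4,\delta}$ and apply \cref{prop:finite-orthogonal-orbit-separation} to obtain one $U\in O(H_\delta)$; conjugation by $U$ then matches the null ideals, quotients, Karoubi envelopes and fibre functors, and the converse comes from invariance of closed diagrams. Your recovery of $\dim H_\delta$ from the closed loop, which is needed to put both tuples on one space, is a detail the paper leaves implicit, and the positive pairings the paper mentions are, as you note, only an equivalent way to see the null ideal.
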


\begin{proof}
The values of $\chi_\delta$ are exactly the complete contractions, with
arbitrary repetitions, of the marker and the two tensors in
\eqref{eq:protannaka-cut-tuple}.  By
\cref{prop:finite-orthogonal-orbit-separation}, these values determine
the tuple up to one element of $O(H_\delta)$.  Conjugating by that
orthogonal map intertwines the evaluation functors, their tensor
$*$-ideals of zero arrows, and their Karoubi envelopes.  Equivalently,
the positive pairings
\[
 \langle a,b\rangle_{\chi_\delta}
 :=\chi_\delta\!\left(\operatorname{cl}(a^*b)\right)
 =\langle\omega^0_\delta(a),\omega^0_\delta(b)\rangle_{\mathrm{HS}}
\]
recover the null ideal and all Hilbert structures on generated morphism
spaces.  The converse follows from functoriality of categorical closure.
\end{proof}

By definition, $\operatorname{Aut}^{\otimes}(\omega_\delta)$ is the
group of unitary symmetric monoidal natural automorphisms of
$\omega_\delta$.  Every such automorphism is determined by its component
on $\mathsf h$, and
\begin{equation}
 G_\delta:=\operatorname{Aut}^{\otimes}(\omega_\delta)
 =\left\{U\in O(H_\delta):
 UB_\delta U^*=B_\delta,
 \ U^{\otimes j}T_{j,\delta}=T_{j,\delta},\ j=2,4\right\}.
 \label{eq:protannaka-cut-group}
\end{equation}
If $0<\varepsilon<\delta$ are regular, $H_\delta$ is a spectral
subspace of $B_\varepsilon$ and restriction gives a homomorphism
$G_\varepsilon\to G_\delta$.

\begin{definition}[Positive orthogonal pro-character]
\label{def:positive-orthogonal-procharacter}
The cofinal system
\begin{equation}
 \chi_{\mathfrak X,t,\bullet}
 :=\bigl(H_\delta,\mathsf D^{\mathrm{eff}}_\delta,
 \omega_\delta,\chi_\delta,G_\varepsilon\to G_\delta\bigr)_\delta
 \label{eq:protannaka-procharacter}
\end{equation}
is the \emph{positive orthogonal pro-character} of $(\mathfrak X,t)$.
Two pro-characters are isomorphic if, after passage to cofinal families
of regular thresholds, there are compatible orthogonal isomorphisms of
cut spaces transporting $B_\delta,T_{2,\delta},T_{4,\delta}$;
equivalently, there are compatible orthogonal symmetric monoidal
equivalences of their effective cut categories intertwining the fiber
functors and characters.
\end{definition}

\begin{proposition}[The inverse system reconstructs the full stabilizer]
\label{prop:protannaka-limit-stabilizer}
Restriction to the spectral flag is a topological group isomorphism
\begin{equation}
 \operatorname{Stab}_{O(H)}(T_2,T_4)
 \xrightarrow{\ \simeq\ }
 \varprojlim_{\delta\downarrow0}G_\delta,
 \label{eq:protannaka-limit-group}
\end{equation}
where the left side carries the strong operator topology and the right
side the projective-limit topology.
\end{proposition}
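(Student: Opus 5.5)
We define the restriction map $r:\operatorname{Stab}_{O(H)}(T_2,T_4)\to\varprojlim G_\delta$ by $r(U)=(U|_{H_\delta})_\delta$ and then show that it is well defined, bijective, and bicontinuous.

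\emph{Well defined.} Let $U$ be in the stabilizer. Then $U$ commutes with the flattenings up to the tensor action, so
\[
U L_{T_j}(U^{\otimes(j-1)})^*=L_{U^{\otimes j}T_j}=L_{T_j}.
\]
Consequently $UDU^*=D$, and $U$ commutes with every spectral projection $P_\delta$. Hence $U$ maps $H_\delta$ onto itself and $U|_{H_\delta}$ is orthogonal. Furthermore $U|_{H_\delta}$ commutes with $B_\delta$, and
\[
U^{\otimes j}P_\delta^{\otimes j}T_j=P_\delta^{\otimes j}U^{\otimes j}T_j=T_{j,\delta}.
\]
So $U|_{H_\delta}\in G_\delta$, and the restrictions are compatible with the maps $G_\varepsilon\to G_\delta$. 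The map $r$ is clearly a homomorphism.

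\emph{Injective.} By injectivity of $D$, the union $\bigcup_\delta H_\delta$ is dense in $H$. A bounded operator is determined by its values there.

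\emph{Surjective.} This follows the coherent inverse-limit step in the proof of \cref{thm:scalar-replica-tannaka}. Take a compatible family $(U_\delta)$ and choose a sequence $\delta_n\downarrow0$ of regular thresholds. The $U_{\delta_n}$ glue to an isometry of the dense union $\bigcup_n H_{\delta_n}$ onto itself, and this extends to a surjective orthogonal $U$ on $H$. Since $P_{\delta_n}^{\otimes j}T_j\to T_j$ in Hilbert tensor norm, passing to the limit in $U^{\otimes j}T_{j,\delta_n}=T_{j,\delta_n}$ gives $U^{\otimes j}T_j=T_j$. One point needs care: every regular $\delta$ is dominated by some $\delta_n$. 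The components at intermediate thresholds are then forced by the marker, because $H_\delta$ is a spectral subspace of $B_{\delta_n}$. So the glued $U$ restricts correctly at all thresholds and not only along the sequence.

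\emph{Topology.} This is the main place where something must be checked. The projective-limit topology is generated by the maps $U\mapsto U|_{H_\delta}$ into the finite-dimensional compact groups $G_\delta$. Continuity of $r$ from the strong operator topology is immediate, since on a finite-dimensional space strong convergence on a basis is norm convergence.

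For the inverse, suppose a net $U_\alpha\to U$ in the projective-limit topology, and fix $x\in H$ and $\epsilon>0$. Choose $\delta$ with $\norm{x-P_\delta x}<\epsilon$. Then
\[
\norm{U_\alpha x-Ux}\le 2\epsilon+\norm{(U_\alpha-U)P_\delta x},
\]
using that all operators involved are isometries. The last term tends to $0$, so $U_\alpha\to U$ strongly. Thus $r$ is a homeomorphism onto the limit. As a byproduct, the stabilizer is compact, since it is homeomorphic to a closed subgroup of $\prod_\delta G_\delta$.
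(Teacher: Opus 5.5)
Your proof is correct and follows essentially the same route as the paper's. Invariance of $D$ gives the restriction map, density of $\bigcup_\delta H_\delta$ gives injectivity, and gluing a compatible family then passing to the limit in $P_\delta^{\otimes j}T_j\to T_j$ gives surjectivity; your $2\epsilon$-argument spells out the bicontinuity that the paper only asserts. One minor correction: at intermediate thresholds, $U_\delta=U_{\delta_n}|_{H_\delta}$ follows from the compatibility of the given family, not from the marker. The marker only guarantees that this restriction is a well-defined element of $G_\delta$.
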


\begin{proof}
If $U$ stabilizes $T_2$ and $T_4$, then it stabilizes the support Gram
$D$, commutes with every spectral projection $P_\delta$, and restricts
to an element of $G_\delta$.  These restrictions are compatible and
determine $U$ because $\bigcup_\delta H_\delta$ is dense.

Conversely, let $(U_\delta)_\delta$ be a compatible family.  The formula
$Ux=U_\delta x$ for $x\in H_\delta$ is well defined on the dense union
of the cuts and is isometric.  The compatible inverse family
$(U_\delta^{-1})_\delta$ shows that its extension is a surjective
orthogonal operator on $H$.  For $j=2,4$,
\[
 U^{\otimes j}P_\delta^{\otimes j}T_j
 =P_\delta^{\otimes j}T_j.
\]
Since $P_\delta^{\otimes j}T_j\to T_j$ in Hilbert tensor norm, passage
to the limit gives $U^{\otimes j}T_j=T_j$.  The constructions in both
directions are continuous for the stated topologies, proving the
isomorphism.
\end{proof}

We write
$\operatorname{Aut}^{\otimes}(\omega_{\mathfrak X,t,\bullet})$ for the
projective-limit group in \eqref{eq:protannaka-limit-group}.  This is the
orthogonal Tannaka group naturally carried by the separable Hilbert
object through its complete cofinal family of finite rigid cuts
\cite{Saavedra1972,DoplicherRoberts1989,DeligneTensorCategories,
EtingofTensorCategories2015}.

\subsection{Full faithfulness and scalar duality}

Let $\mathsf{Heat}_t$ be the groupoid of marked heat geometries, whose
arrows are isometries together with orthogonal bundle maps preserving
the connection and potential.  Let $\mathsf{Ten}_{2,4}$ be the groupoid
of master packets \eqref{eq:master-heat-packet}, with simultaneous
tensor-transporting unitaries as arrows.

\begin{theorem}[Positive scalar Tannaka duality]
\label{thm:positive-scalar-tannaka-duality}
The master heat-packet functor
\begin{equation}
 \mathcal T_t:\mathsf{Heat}_t\longrightarrow\mathsf{Ten}_{2,4}
 \label{eq:master-heat-packet-functor}
\end{equation}
is fully faithful.  For every pair $\mathfrak X,\mathfrak Y$, it induces
the bijection
\begin{equation}
 \operatorname{Hom}_{\mathsf{Heat}_t}(\mathfrak X,\mathfrak Y)
 \simeq
 \left\{U:H_{\mathfrak X}\to H_{\mathfrak Y}:\begin{array}{l}
 U\text{ is a surjective real unitary},\\[-1mm]
 U^{\otimes j}T_{\mathfrak X,t}^{(j)}
 =T_{\mathfrak Y,t}^{(j)},\ j=2,4
 \end{array}\right\}.
 \label{eq:protannaka-Hom-bijection}
\end{equation}
On isomorphism classes, the following are equivalent complete
invariants:
\begin{equation}
 [\mathfrak X]_{\mathrm{isometry+gauge}},\qquad
 [\chi_{\mathfrak X,t,\bullet}]_\otimes,\qquad
 \Law_{C(\mathbb R)}(\mathcal Z_{\mathfrak X,t}).
 \label{eq:protannaka-three-invariants}
\end{equation}
Moreover,
\begin{equation}
 \operatorname{Aut}(\mathfrak X)
 \simeq\operatorname{Stab}_{O(H_{\mathfrak X})}(Q_{\mathfrak X,t},
                                               C_{\mathfrak X,t})
 \simeq\operatorname{Aut}^{\otimes}
       (\omega_{\mathfrak X,t,\bullet}).
 \label{eq:protannaka-automorphism-identity}
\end{equation}
\end{theorem}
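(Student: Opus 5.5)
The plan is to assemble the theorem from \cref{thm:radial-heat-network-rigidity}, \cref{cor:radial-network-stabilizer}, \cref{cor:network-universal-field-equivalence} and the two propositions on finite cuts. The functor $\mathcal T_t$ sends an arrow $(\phi,J)\colon\mathfrak X\to\mathfrak Y$ to the unitary $(Us)(y)=J_ys(\phi(y))$. The implication (i)$\Rightarrow$(ii) of \cref{thm:radial-heat-network-rigidity} shows that this unitary transports $T^{(2)}_t$ and $T^{(4)}_t$, so the functor is well defined. Functoriality, namely compatibility with composition and identities, is a direct computation with pullbacks.

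For \emph{fullness}, take any surjective unitary $U$ in the right-hand side of \eqref{eq:protannaka-Hom-bijection}. The final assertion of \cref{thm:radial-heat-network-rigidity} gives a geometric pair $(\phi,J)$ constructed from this same $U$, with $(Us)(y)=J_ys(\phi(y))$ almost everywhere. Hence $U=\mathcal T_t(\phi,J)$.

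For \emph{faithfulness}, suppose two smooth pairs $(\phi,J)$ and $(\phi',J')$ induce the same operator. Testing on $b\cdot s$ with $b\in C^\infty(M)$ and using \eqref{eq:Linfty-module-intertwining} gives $b\circ\phi=b\circ\phi'$ almost everywhere for all such $b$, so $\phi=\phi'$ by continuity. Testing on local smooth frames then gives $J=J'$ almost everywhere, hence everywhere by smoothness. This is the argument already given in the injectivity part of \cref{cor:radial-network-stabilizer}.

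For the three invariants in \eqref{eq:protannaka-three-invariants}, \cref{cor:network-universal-field-equivalence} already gives the equivalence of the geometric class with the canvas law. It remains to insert the pro-character. If the geometries are equivalent, the transporting unitary $U$ conjugates the support Grams $D$, so it maps $P_\delta$ to $P'_\delta$ for every regular $\delta$. The two spectra then coincide and every threshold is regular for both. The restrictions of $U$ give compatible orthogonal isomorphisms of the cut tuples, so the pro-characters are isomorphic. Conversely, an isomorphism of pro-characters supplies, on a cofinal family $\delta_n\downarrow0$, compatible isometries $H_{\delta_n}\to H'_{\delta_n}$ transporting $B_\delta$, $T_{2,\delta}$ and $T_{4,\delta}$. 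This is exactly the compatible sequence that the inverse-limit step of \cref{thm:scalar-replica-tannaka} otherwise had to produce by compactness. Density of the union of the cuts and tensor-norm convergence $P_\delta^{\otimes j}T_j\to T_j$ then yield one global unitary transporting $T_2$ and $T_4$, that is, condition (ii) of \cref{thm:radial-heat-network-rigidity}. \cref{prop:positive-cut-character-complete} confirms that the categorical formulation of pro-character isomorphism reduces to these cut-tuple isomorphisms.

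For \eqref{eq:protannaka-automorphism-identity}, the first isomorphism is \cref{cor:radial-network-stabilizer}, which is also the case $\mathfrak Y=\mathfrak X$ of the Hom bijection. The second isomorphism is \cref{prop:protannaka-limit-stabilizer} together with the identification \eqref{eq:protannaka-cut-group} of $G_\delta$.

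The main obstacle is the converse direction from pro-character isomorphism to a global unitary. The cofinal families of thresholds chosen on the two sides must be matched, and compatibility has to be checked against the restriction maps defined through the spectral subspaces of $B_\varepsilon$. I would first pass to a common cofinal subfamily and show that any orthogonal map transporting $B_\varepsilon$ carries the spectral projection of $B_\varepsilon$ for $[\delta,\infty)$ onto its primed counterpart, as in the inverse-limit argument. Once that is in place, everything else is already proved in earlier results.
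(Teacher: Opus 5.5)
Your proposal is correct and follows essentially the same route as the paper. Full faithfulness comes from the prescribed-transport assertion of \cref{thm:radial-heat-network-rigidity}, with uniqueness as in \cref{cor:radial-network-stabilizer}; the pro-character converse comes from gluing compatible cut transports over the dense spectral flag as in \cref{prop:protannaka-limit-stabilizer}; the scalar law comes from \cref{cor:network-universal-field-equivalence}; and the automorphism groups come from \cref{cor:radial-network-stabilizer} together with \cref{prop:protannaka-limit-stabilizer}, while you spell out steps the paper leaves terse, such as the shared regular thresholds and the density and tensor-norm limit that produce the global unitary.
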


\begin{proof}
The prescribed-transport statement of
\cref{thm:radial-heat-network-rigidity} identifies every simultaneous
transport of $(Q_t,C_t)$ with one unique geometric arrow, proving full
faithfulness.  By \cref{prop:positive-cut-character-complete}, the cut
characters are equivalent to the finite compressed tensor orbits.  An
isomorphism of pro-characters gives compatible cut transports, and
\cref{prop:protannaka-limit-stabilizer} extends them to one global
transport of the master packets.  The converse follows by restricting a
master-packet transport to the common spectral flag.  Finally,
\cref{cor:network-universal-field-equivalence} identifies the closed
network register with the universal scalar law, and
\cref{cor:radial-network-stabilizer,prop:protannaka-limit-stabilizer}
identifies all three automorphism groups.
\end{proof}

The scalar law therefore reconstructs the complete transport torsor and
all stabilizers of the master packet.  The positive pro-character is its
finite-cut Tannaka realization, while the homogeneous appendix gives
pure-chaos carriers of the same fully faithful geometric object.

\section{Conclusion: one scalar law reconstructs heat geometry}
\label{sec:conclusion}

The universal Gaussian canvas turns every finite symmetric tensor packet
on a separable real Hilbert space into one smooth scalar finite-chaos
process whose law is a complete invariant of the packet's simultaneous
orthogonal orbit.  The graph-character decoder extracts the loop-free
closed tensor character directly from the law; the intrinsic support
Gram, its finite spectral flag, and orthogonal invariant theory recover
the full orbit and every transport on the active Hilbert space.

For heat geometry, the canonical object is the positive radial packet
\[
 \mathcal T_t(\mathfrak X)
 =\bigl(H_{\mathfrak X},Q_{\mathfrak X,t},C_{\mathfrak X,t}\bigr).
\]
Its quadratic component is the marked heat operator, while its quartic
component is the first positive radial tensor carrying non-Hilbertian
spatial geometry in every orthogonal fiber rank.  Their simultaneous
transport reconstructs the measure algebra, smooth structure, metric,
bundle, connection, and self-adjoint potential.  The resulting
heat-packet functor is fully faithful: geometric arrows, tensor
transports, and scalar laws form equivalent groupoids.

The same architecture reconstructs compact finite-dimensional RCD
spaces from their Cheeger heat flows.  At the categorical level, the
positive cut characters recover the finite orthogonal tensor objects,
and their inverse system reconstructs the complete geometric
automorphism group.  The universal scalar law is therefore a positive
orthogonal Tannaka realization of heat geometry, not merely an invariant
of isomorphism classes.

The appendices show that the master packet admits faithful carriers in
every pure Wiener chaos of order $q\ge3$, identify the exact scale
sensitivity of inverse reconstruction, and give finite statistical
experiments separating arbitrary fixed candidate geometries.  Together
the results establish the hierarchy
\begin{equation}
 \boxed{\begin{gathered}
 \text{heat-geometric groupoid}
 \ \Longleftrightarrow\ 
 \text{positive radial }(2+4)\text{ master packet}\\
 \Longleftrightarrow
 \text{positive orthogonal pro-character}
 \Longleftrightarrow
 \text{one universal scalar process law}.
 \end{gathered}}
 \label{eq:conclusion-master-hierarchy}
\end{equation}

\appendix

\section{Homogeneous chaos carriers of the master packet}
\label{sec:homogeneous-chaos-realizations}

The canonical datum constructed above is the quadratic--quartic heat
packet: its quadratic component is the heat operator and its quartic
component is the radial $L^4$ tensor.  This appendix proves that the
positive master packet admits faithful \emph{derived presentations} in
every homogeneous Wiener chaos of order $q\geq3$.  Hence every pure chaos
order at least three can carry the complete heat geometry, while the
quadratic--quartic packet remains the intrinsic geometric source of all
these presentations.

We first state the precise analytic class on which the constructions are
defined.  Throughout, $\Sym$ denotes the orthogonal symmetrizer.  If
$T\in H^{\odot2}$, we identify $T$ with the self-adjoint
Hilbert--Schmidt operator $A_T$ determined by
\begin{equation}
 \langle A_Tx,y\rangle_H
 =\langle T,x\odot y\rangle_{H^{\odot2}}.
 \label{eq:master-quadratic-operator}
\end{equation}
For $C\in H^{\odot4}$, its $2+2$ flattening is the self-adjoint
Hilbert--Schmidt operator on $H^{\odot2}$ given by
\begin{equation}
 \langle \widehat C\,\xi,\eta\rangle_{H^{\odot2}}
 =\langle C,\xi\odot\eta\rangle_{H^{\odot4}}.
 \label{eq:master-positive-flattening}
\end{equation}
The map $C\mapsto\widehat C$ is injective on $H^{\odot4}$.
Indeed, the tensors $\xi\odot\eta$ with
$\xi,\eta\in H^{\odot2}$ span a dense subspace of $H^{\odot4}$.

\begin{definition}[Admissible positive master packet]
\label{def:admissible-positive-master-packet}
An admissible positive master packet is a triple
\begin{equation}
 \mathbf T=(H,Q,C),
 \qquad Q\in H^{\odot2},\quad C\in H^{\odot4},
 \label{eq:admissible-master-packet}
\end{equation}
where $H$ is a real separable Hilbert space and:
\begin{enumerate}[label=\textnormal{(\alph*)}]
\item $A_Q$ is positive and injective;
\item $\widehat C$ is positive and trace class on $H^{\odot2}$.
\end{enumerate}
An arrow
$U:(H,Q,C)\to(H',Q',C')$ is a surjective real unitary satisfying
\begin{equation}
 Q'=U^{\odot2}Q,
 \qquad
 C'=U^{\odot4}C.
 \label{eq:master-packet-arrow}
\end{equation}
We denote the resulting groupoid by $\mathsf{Pos}_{2,4}$.
\end{definition}

\begin{lemma}[Positivity of a radial quartic]
\label{lem:radial-quartic-positive-flattening}
Let $q\in H^{\odot2}$ and suppose that its associated operator $A_q$ is
positive and trace class.  Then the $2+2$ flattening of $q\odot q$ is
positive and trace class.  More precisely, for every
$\xi\in H^{\odot2}$,
\begin{equation}
 \left\langle\widehat{q\odot q}\,\xi,\xi\right\rangle
 =\frac13\left(
   \langle q,\xi\rangle^2
   +2\left\|A_q^{1/2}A_\xi A_q^{1/2}\right\|_{\mathcal S_2}^2
 \right)\geq0.
 \label{eq:radial-quartic-positive-formula}
\end{equation}
Moreover,
\begin{equation}
 \Tr\bigl(\widehat{q\odot q}\bigr)
 =\frac13\left((\Tr A_q)^2+2\norm{A_q}_{\mathcal S_2}^2\right).
 \label{eq:radial-quartic-trace-formula}
\end{equation}
\end{lemma}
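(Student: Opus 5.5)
We will compute the quadratic form of $\widehat{q\odot q}$ directly by expanding the symmetrizer, and read both positivity and the trace off the resulting closed formula. Diagonalize $A_q=\sum_i\lambda_i e_i\otimes e_i$ with $\lambda_i\ge0$, $\sum\lambda_i<\infty$, so that $q=\sum_i\lambda_i e_i\otimes e_i$. For $\xi\in H^{\odot2}$ the definition \eqref{eq:master-positive-flattening} gives
\[
 \langle\widehat{q\odot q}\,\xi,\xi\rangle=\langle q\odot q,\xi\odot\xi\rangle_{H^{\otimes4}}.
\]
Here $\odot$ is the orthogonal symmetrization of the tensor product. Since $q\otimes q$ is already invariant under the subgroup generated by swapping slots $1\leftrightarrow2$, swapping slots $3\leftrightarrow4$, and exchanging the pairs, the inner product with the symmetrized $\xi\otimes\xi$ reduces to an average over the three pairings of four slots into two pairs. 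This requires $\langle\operatorname{Sym}u,\operatorname{Sym}v\rangle=\langle u,\operatorname{Sym}v\rangle$ and that $\operatorname{Sym}(\xi\otimes\xi)$ averages over $S_4$ modulo the stabilizer of $q\otimes q$.

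The three pairings give three contributions.

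\textbf{Pairing $(12)(34)$.} This gives $\langle q,\xi\rangle^2$.

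\textbf{Pairings $(13)(24)$ and $(14)(23)$.} Each gives
\[
 \sum_{i,j}\lambda_i\lambda_j\langle\xi,e_i\otimes e_j\rangle^2=\Tr(A_qA_\xi A_qA_\xi)=\|A_q^{1/2}A_\xi A_q^{1/2}\|_{\mathcal S_2}^2,
\]
using that $A_\xi$ is self-adjoint.

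Averaging yields \eqref{eq:radial-quartic-positive-formula}, and positivity follows. First verify it for finite-rank $q$ and $\xi$; both sides are continuous in $\xi$ in $H^{\odot2}$, since $A_\xi$ is Hilbert--Schmidt and $A_q$ is bounded.

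\textbf{Trace class and trace.} A positive operator is trace class iff $\sum_k\langle\widehat{q\odot q}\,\xi_k,\xi_k\rangle<\infty$ for an orthonormal basis $(\xi_k)$ of $H^{\odot2}$, with the sum equal to the trace. Take the basis $e_i\otimes e_i$ together with $\sqrt2\,e_i\odot e_j$ for $i<j$, suitably normalized. Summing $\langle q,\xi_k\rangle^2$ over this basis gives $\|q\|^2=\Tr A_q^2$. I expect this term to produce the $(\Tr A_q)^2$ part only after combining with the other terms, so I will recompute directly.

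The cleaner route is to sum the full pairing expression $\langle q\otimes q,\operatorname{Sym}(\xi_k\otimes\xi_k)\rangle$ over $k$, using
\[
 \sum_k\xi_k\otimes\xi_k=\text{identity on }H^{\odot2}=\Pi\,(\text{identity on }H^{\otimes2}),
\]
where $\Pi$ is the $2$-symmetrizer. The three pairings then become partial traces of $q\otimes q$ against the identity. Contracting $q$ with $q$ across both legs gives $\Tr A_q^2$, and contracting each $q$ with itself gives $(\Tr A_q)^2$. After inserting the $\tfrac12(1+\text{swap})$ from $\Pi$, the three pairings contribute combinations of $(\Tr A_q)^2$ and $\Tr A_q^2$. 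I expect these to total $\tfrac13\bigl((\Tr A_q)^2+2\Tr A_q^2\bigr)$, which is \eqref{eq:radial-quartic-trace-formula}.

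The main obstacle is the careful bookkeeping of the symmetrizer weights, namely which pairing receives which factor and the normalization of $\odot$. I would fix these by testing on rank one, $q=e\otimes e$, where $q\odot q=e^{\otimes4}$ and $\widehat{q\odot q}$ is the rank-one projection onto $e\otimes e$, so its trace is $1=\tfrac13(1+2)$. This check confirms both the constants and the convention.
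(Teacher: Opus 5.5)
Your treatment of the quadratic form is correct and is the same argument as the paper's. Invariance under the order-$8$ subgroup preserving the pair structure turns $\Sym$ into the uniform average over the three pairings. This gives $\frac13\langle q,\xi\rangle^2+\frac23\sum_{i,j}\lambda_i\lambda_j\xi_{ij}^2$, and positivity follows.

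The gap is in the trace-class and trace half. There you stop at ``I expect these to total'' and propose to fix the constants with the rank-one test. That test cannot do this. For $q=\lambda\,e\otimes e$ one has $(\Tr A_q)^2=\norm{A_q}_{\mathcal S_2}^2=\lambda^2$, so every candidate $\alpha(\Tr A_q)^2+\beta\norm{A_q}_{\mathcal S_2}^2$ with $\alpha+\beta=1$ passes it, including $(\Tr A_q)^2$ alone or $\Tr A_q^2$ alone. You also misplace the source of $(\Tr A_q)^2$. The rank-one term contributes only $\norm{q}^2=\Tr A_q^2$; the self-contractions come from the two crossed pairings.

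The missing step is short, and you had already set it up. Complete the eigenbasis to an orthonormal basis of $H$, and use the basis $e_i\otimes e_i$, $\sqrt2\,e_i\odot e_j$ ($i<j$) of $H^{\odot2}$. The crossed term $\sum_{a,b}\lambda_a\lambda_b(\xi_k)_{ab}^2$ equals $\lambda_i^2$ on $e_i\otimes e_i$ and $\lambda_i\lambda_j$ on $\sqrt2\,e_i\odot e_j$. This is exactly the paper's observation that $X\mapsto A_qXA_q$ is diagonal in this basis with these eigenvalues. Summing gives
\[
\sum_i\lambda_i^2+\sum_{i<j}\lambda_i\lambda_j=\tfrac12\bigl(\norm{A_q}_{\mathcal S_2}^2+(\Tr A_q)^2\bigr)<\infty .
\]
Every term is nonnegative, so you may sum the three pairing contributions separately. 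Hence $\widehat{q\odot q}$ is trace class with trace $\frac13\bigl(\norm{A_q}_{\mathcal S_2}^2+2\cdot\frac12(\norm{A_q}_{\mathcal S_2}^2+(\Tr A_q)^2)\bigr)$, which is \eqref{eq:radial-quartic-trace-formula}.

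Your partial-trace variant gives the same split: $\Tr A_q^2$ from the uncrossed pairing and $\frac12\bigl((\Tr A_q)^2+\Tr A_q^2\bigr)$ from each crossed one. However, $\sum_k\xi_k\otimes\xi_k$ does not converge in $H^{\otimes4}$, so that route must also be run termwise as above. A test that actually discriminates the constants is rank two, for example $A_q=\Id$ on $\mathbb R^2$, whose trace is $8/3$.
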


\begin{proof}
Diagonalize $A_q$ and expand the orthogonal symmetrizer of
$q\otimes q$.  The three pairings of four tensor legs give
\eqref{eq:radial-quartic-positive-formula}.  In an eigenbasis of $A_q$
with eigenvalues $(\lambda_i)$, the map
$X\mapsto A_qXA_q$ on self-adjoint Hilbert--Schmidt operators has
eigenvalues $\lambda_i^2$ on the diagonal directions and
$\lambda_i\lambda_j$ on the symmetric off-diagonal directions.  Adding
the rank-one term $\xi\mapsto\langle q,\xi\rangle q$ and summing these
eigenvalues gives \eqref{eq:radial-quartic-trace-formula}.
\end{proof}

The injectivity in Definition~\ref{def:admissible-positive-master-packet}
is exactly the condition used earlier to show that the active support of
the heat packet is the whole $L^2$ space.  The trace-class hypothesis is
the condition which makes the square-root factor used below
Hilbert--Schmidt.  Both conditions hold for the radial heat packets of
this paper.  Indeed, $A_Q=e^{-t\mathcal L}$ is positive and injective at
every $t>0$.  Moreover, the pointwise radial quadratic tensor is positive
and finite rank.  By
\cref{lem:radial-quartic-positive-flattening}, the $2+2$ flattening of its
symmetric square is positive and trace class.  Heat smoothing, finite
fiber rank, and compactness make the integral of these positive
flattenings trace class.

\subsection{Anchor padding in every order \texorpdfstring{$q\geq4$}{q>=4}}
\label{subsec:anchor-padding}

Let $a$ be a fixed unit vector orthogonal to $H$.  For
$T\in H^{\odot k}$ and $q\geq k$, define
\begin{equation}
 \operatorname{Pad}_{q,k}^{a}(T)
 :=\binom qk\Sym\bigl(a^{\otimes(q-k)}\otimes T\bigr)
 \in(\mathbb Ra\oplus H)^{\odot q}.
 \label{eq:anchor-padding-definition}
\end{equation}
The normalization in \eqref{eq:anchor-padding-definition} makes this the
sum over the distinct placements of the $k$ legs of $T$; no conclusion
below depends on that normalization.  For $q\geq4$, put
\begin{equation}
 \begin{split}
 K_q(\mathbf T)&=\mathbb Ra\oplus H,\\
 P_q(\mathbf T)&=a^{\otimes q},\\
 S_q(\mathbf T)&=
 \operatorname{Pad}_{q,2}^{a}(Q)
 +\operatorname{Pad}_{q,4}^{a}(C).
 \end{split}
 \label{eq:q-homogeneous-padding-packet}
\end{equation}
Both marked tensors $P_q(\mathbf T)$ and $S_q(\mathbf T)$ have pure
degree $q$.

\begin{theorem}[Faithfulness of anchor padding]
\label{thm:anchor-padding-faithfulness}
Let $q\geq4$ and let $\mathbf T=(H,Q,C)$ and
$\mathbf T'=(H',Q',C')$ be admissible positive master packets.  Then
\begin{equation}
 \begin{split}
 &\exists\,U:H\longrightarrow H'\text{ satisfying }
     \eqref{eq:master-packet-arrow}\\
 &\quad\Longleftrightarrow\quad
 \exists\,W:K_q(\mathbf T)\longrightarrow K_q(\mathbf T')
 \text{ unitary such that}\\[-1mm]
 &\hspace{42mm}
 P_q(\mathbf T')=W^{\odot q}P_q(\mathbf T),\qquad
 S_q(\mathbf T')=W^{\odot q}S_q(\mathbf T).
 \end{split}
 \label{eq:padding-orbit-equivalence}
\end{equation}
More precisely, in the subgroupoid whose arrows preserve the oriented
anchor, $Wa=a'$, the functor
\begin{equation}
 \mathcal E_q(U)=\Id_{\mathbb Ra}\oplus U
 \label{eq:padding-arrow-functor}
\end{equation}
is fully faithful and identifies $\mathsf{Pos}_{2,4}$ with the
subgroupoid formed by the packets \eqref{eq:q-homogeneous-padding-packet}
and the anchor-preserving arrows.
If the anchor is remembered only through $P_q$, the same assertion holds
without qualification for odd $q$.  For even $q$, the kernel on arrows is
exactly the auxiliary reflection
$(-\Id_{\mathbb Ra})\oplus\Id_H$; quotienting by this order-two kernel
again gives an equivalent groupoid.
\end{theorem}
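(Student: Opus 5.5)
The plan is to treat both directions through the orthogonal decomposition $K_q(\mathbf T)=\mathbb Ra\oplus H$ and the resulting grading of $(\mathbb Ra\oplus H)^{\odot q}$ by the number of anchor legs. Write $\mathcal K_j\subset(\mathbb Ra\oplus H)^{\odot q}$ for the closed subspace spanned by symmetrizations of $a^{\otimes(q-j)}\otimes H^{\otimes j}$. These subspaces are mutually orthogonal and sum to the whole symmetric power. By construction $\operatorname{Pad}^a_{q,k}(H^{\odot k})\subset\mathcal K_k$, so $S_q(\mathbf T)$ has component $\operatorname{Pad}^a_{q,2}(Q)$ in $\mathcal K_2$ and component $\operatorname{Pad}^a_{q,4}(C)$ in $\mathcal K_4$. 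These are distinct graded pieces because $q\ge4$; for $q=4$ the second piece is $C$ itself in $\mathcal K_4=H^{\odot4}$.

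\emph{Forward direction.} Given $U$ satisfying \eqref{eq:master-packet-arrow}, I would set $W=\mathcal E_q(U)$, sending $a\mapsto a'$. Then $W^{\odot q}a^{\otimes q}=a'^{\otimes q}$. Moreover, $W^{\odot q}\operatorname{Pad}^a_{q,k}(T)=\operatorname{Pad}^{a'}_{q,k}(U^{\odot k}T)$ by naturality of $\Sym$. Functoriality of $\mathcal E_q$ is immediate.

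\emph{Reverse direction.} Suppose $W$ transports $P_q$ and $S_q$. From $(Wa)^{\otimes q}=a'^{\otimes q}$, contracting $q-1$ legs with $a'$ gives $Wa=\varepsilon a'$ with $\varepsilon^q=1$, so $\varepsilon\in\{\pm1\}$. For odd $q$ this forces $\varepsilon=1$. Since $W$ is unitary, it maps $H=a^\perp$ onto $H'=a'^\perp$, and so $W=\varepsilon\oplus U$ for a surjective unitary $U:H\to H'$. This block form shows that $W^{\odot q}$ maps $\mathcal K_j$ onto $\mathcal K'_j$, acting there by
\[
 W^{\odot q}\operatorname{Pad}^a_{q,j}(T)=\varepsilon^{q-j}\operatorname{Pad}^{a'}_{q,j}(U^{\odot j}T).
\]
Comparing the $\mathcal K_2$ and $\mathcal K_4$ components of $S_q(\mathbf T')=W^{\odot q}S_q(\mathbf T)$ then gives
\[
 \operatorname{Pad}_{q,2}(Q')=\varepsilon^{q-2}\operatorname{Pad}_{q,2}(U^{\odot2}Q),
 \qquad
 \operatorname{Pad}_{q,4}(C')=\varepsilon^{q-4}\operatorname{Pad}_{q,4}(U^{\odot4}C).
\]
In both cases ($q$ odd with $\varepsilon=1$, or $q$ even with $\varepsilon=\pm1$) the signs $\varepsilon^{q-2}$ and $\varepsilon^{q-4}$ are trivial. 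Padding is injective: contracting $q-k$ slots of $\operatorname{Pad}^{a'}_{q,k}(T)$ against $a'$ kills every term in which some $T$-leg meets $a'$, since $T\in H'^{\odot k}$ is orthogonal to $a'$ in each leg. What survives is a fixed nonzero combinatorial multiple of $T$. Hence $Q'=U^{\odot2}Q$ and $C'=U^{\odot4}C$.

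\emph{Groupoid statements.} For full faithfulness on anchor-preserving arrows, the argument above shows that every such $W$ has the form $\operatorname{Id}\oplus U$ with $U$ an arrow of $\mathsf{Pos}_{2,4}$; this gives fullness. Faithfulness holds because $U=W|_H$. Essential surjectivity onto the subgroupoid of padded packets holds by definition. When the anchor is remembered only through $P_q$, the same analysis gives $W=\varepsilon\oplus U$ with $\varepsilon=1$ forced for odd $q$. For even $q$ both signs occur, and the computation above shows that $W=(-\operatorname{Id}_{\mathbb Ra})\oplus\operatorname{Id}_H$ fixes $P_q$ and $S_q$ and maps to the identity of $\mathbf T$. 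So the map $W\mapsto U$ is a surjective groupoid morphism whose kernel is exactly this order-two reflection, and quotienting by it gives the stated equivalence.

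The main obstacle I expect is bookkeeping rather than ideas: verifying that the anchor-leg grading is genuinely preserved by $W^{\odot q}$ with the exact sign $\varepsilon^{q-j}$, and that the contraction with $a'^{\otimes(q-k)}$ produces a nonzero multiple of $T$ under the chosen normalization of $\Sym$. I would settle both by expanding $\Sym$ over placements of the $k$ $T$-legs.
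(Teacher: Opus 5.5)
Your proposal is correct and follows the paper's proof essentially step for step. It obtains the anchor sign $Wa=\varepsilon a'$ from the rank-one tensor, the block form $W=\varepsilon\oplus U$, and, by projecting onto the anchor-occupation summands, $Q'=\varepsilon^{q-2}U^{\odot2}Q$ and $C'=\varepsilon^{q-4}U^{\odot4}C$; the parity argument then handles both the signs and the even-$q$ reflection kernel. Your contraction argument for injectivity of $\operatorname{Pad}$ and your functoriality check for $W\mapsto W|_H$ only spell out details the paper leaves implicit.
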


\begin{proof}
If $U$ transports $Q$ and $C$, then
$W=\Id_{\mathbb Ra}\oplus U$ transports both tensors in
\eqref{eq:q-homogeneous-padding-packet}.  This proves the reverse
implication in \eqref{eq:padding-orbit-equivalence} and defines
\eqref{eq:padding-arrow-functor}.

Conversely, suppose that $W$ transports $P_q$.  Equality of two unit
rank-one symmetric tensors gives
\begin{equation}
 Wa=\varepsilon a',
 \qquad \varepsilon\in\{+1,-1\},
 \qquad \varepsilon^q=1.
 \label{eq:padding-anchor-sign}
\end{equation}
Hence $W(H)=H'$; write $U=W|_H$.  Relative to the orthogonal splitting
\begin{equation}
 (\mathbb Ra\oplus H)^{\odot q}
 =\bigoplus_{k=0}^q
   a^{\odot(q-k)}\odot H^{\odot k},
 \label{eq:padding-multidegree-splitting}
\end{equation}
the two summands of $S_q$ have different $H$-degrees, namely $2$ and
$4$.  Projecting the equality
$S_q(\mathbf T')=W^{\odot q}S_q(\mathbf T)$ onto these two summands gives
\begin{equation}
 Q'=\varepsilon^{q-2}U^{\odot2}Q,
 \qquad
 C'=\varepsilon^{q-4}U^{\odot4}C.
 \label{eq:padding-projected-transport}
\end{equation}
If $q$ is odd, \eqref{eq:padding-anchor-sign} forces
$\varepsilon=1$.  If $q$ is even, both exponents in
\eqref{eq:padding-projected-transport} are even.  Thus in all cases $U$
satisfies \eqref{eq:master-packet-arrow}.

If $Wa=a'$, the restriction $U$ determines $W$ uniquely, which proves
full faithfulness for oriented anchors.  Without the orientation, the
only two possible extensions of a fixed $U$ are obtained by the two
signs in \eqref{eq:padding-anchor-sign}; the negative sign occurs exactly
when $q$ is even.  This proves the final assertion.
\end{proof}

The jointly active support of the marked pair $(P_q,S_q)$ is all of
$K_q(\mathbf T)$.  Indeed, $P_q$ supplies the anchor line.  Contracting
the $q-2$ anchor legs of $S_q$ kills the quartic summand and recovers a
nonzero multiple of $Q$.  Since $A_Q$ is injective and self-adjoint, its
range is dense in $H$, so no direction of $H$ is invisible.

\subsection{The minimal positive cubic dilation}
\label{subsec:minimal-positive-cubification}

Padding cannot place a quartic tensor directly in degree three.  The
positive flattening of $C$ provides the missing factorization.  Set
\begin{equation}
 E_H=H^{\odot2},
 \qquad
 L_C=\overline{\Ran\widehat C^{1/2}}\subset E_H,
 \qquad
 B_C:E_H\longrightarrow L_C,\qquad
 B_C\xi=\widehat C^{1/2}\xi.
 \label{eq:cubic-minimal-factorization}
\end{equation}
Then $B_C$ is Hilbert--Schmidt and has dense range, and
\begin{equation}
 B_C^*B_C=\widehat C,
 \qquad
 \norm{B_C}_{\mathcal S_2}^2=\Tr(\widehat C).
 \label{eq:cubic-factorization-identities}
\end{equation}
This factorization is minimal in the usual Kolmogorov sense.

\begin{lemma}[Uniqueness of the minimal positive factor]
\label{lem:minimal-positive-factor-uniqueness}
Suppose that $\widetilde B:E_H\to\widetilde L$ is Hilbert--Schmidt,
$\overline{\Ran\widetilde B}=\widetilde L$, and
$\widetilde B^*\widetilde B=\widehat C$.  There is a unique surjective
unitary $J:L_C\to\widetilde L$ such that
\begin{equation}
 \widetilde B=JB_C.
 \label{eq:minimal-positive-factor-unitary}
\end{equation}
\end{lemma}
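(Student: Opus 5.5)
The plan is the standard Kolmogorov/polar-decomposition argument. The key identity is that for every $\xi\in E_H$,
\[
 \|\widetilde B\xi\|^2=\langle\widetilde B^*\widetilde B\xi,\xi\rangle
 =\langle\widehat C\xi,\xi\rangle=\|B_C\xi\|^2 .
\]
Hence the assignment $J_0:B_C\xi\mapsto\widetilde B\xi$ is well defined on $\Ran B_C$: if $B_C\xi=B_C\eta$, the identity applied to $\xi-\eta$ gives $\widetilde B\xi=\widetilde B\eta$. The map $J_0$ is linear and isometric from $\Ran B_C$ onto $\Ran\widetilde B$.

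Next I extend $J_0$ by continuity. Since $\Ran B_C=\Ran\widehat C^{1/2}$ is dense in $L_C$ by the definition \eqref{eq:cubic-minimal-factorization}, $J_0$ extends uniquely to an isometry $J:L_C\to\widetilde L$. Its range is closed, being the image of a complete space under an isometry. It contains $\Ran\widetilde B$, which is dense in $\widetilde L$ by hypothesis, so $J$ is onto. A surjective real isometry between Hilbert spaces is unitary. By construction $JB_C\xi=\widetilde B\xi$ for all $\xi$, which is \eqref{eq:minimal-positive-factor-unitary}.

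Uniqueness follows directly. If $J'$ is another unitary with $\widetilde B=J'B_C$, then $J$ and $J'$ agree on $\Ran B_C$, which is dense in $L_C$, and both are continuous, so $J=J'$.

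There is no real obstacle here. The only point requiring care is the well-definedness of $J_0$ on $\Ran B_C$, which the norm identity settles. Note also that the Hilbert--Schmidt hypotheses play no role in this argument; they matter only for how the factor is used later.
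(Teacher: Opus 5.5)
Your proof is correct and follows the paper's argument: you define $J$ on $\Ran B_C$ by $B_C\xi\mapsto\widetilde B\xi$, use $\widetilde B^*\widetilde B=\widehat C=B_C^*B_C$ to get well-definedness and isometry, and extend by density of both ranges. You also write out the surjectivity step (the range is closed and contains a dense subspace) and the uniqueness step, which the paper states together in a single sentence.
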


\begin{proof}
On $\Ran B_C$, define $J(B_C\xi)=\widetilde B\xi$.  This is well defined
and isometric because
\[
 \langle B_C\xi,B_C\eta\rangle
 =\langle\xi,\widehat C\eta\rangle
 =\langle\widetilde B\xi,\widetilde B\eta\rangle.
\]
The two ranges are dense, so $J$ extends uniquely to a surjective
unitary.
\end{proof}

Let $\beta_{B_C}\in L_C\otimes H^{\odot2}$ be the Hilbert tensor
corresponding to the Hilbert--Schmidt map $B_C$:
\begin{equation}
 \langle\beta_{B_C},\ell\otimes\xi\rangle
 =\langle B_C\xi,\ell\rangle_{L_C},
 \qquad \ell\in L_C,\quad\xi\in H^{\odot2}.
 \label{eq:cubic-beta-definition}
\end{equation}
Identify the positive trace-class operator $B_CB_C^*$ on $L_C$ with its
quadratic tensor.  On
\begin{equation}
 K_3(\mathbf T)=\mathbb Ra\oplus H\oplus L_C
 \label{eq:cubic-dilation-space}
\end{equation}
define
\begin{equation}
 \begin{split}
 R_{\mathbf T}&=Q\oplus(-B_CB_C^*)
       \in(H\oplus L_C)^{\odot2},\\
 P_3(\mathbf T)&=a^{\otimes3},\\
 S_3(\mathbf T)&=
 3\Sym(a\otimes R_{\mathbf T})
 +3\Sym(\beta_{B_C})
       \in K_3(\mathbf T)^{\odot3}.
 \end{split}
 \label{eq:positive-cubification}
\end{equation}
Here $\beta_{B_C}$ is first placed in
$L_C\otimes H\otimes H$ and then symmetrized.  The factors $3$ make the
two terms the sums over the three possible positions of the distinguished
leg.

\begin{theorem}[Full faithfulness of the positive cubic dilation]
\label{thm:positive-cubification-full-faithfulness}
The assignment
\begin{equation}
 \mathcal E_3:(H,Q,C)\longmapsto
 \bigl(K_3(\mathbf T),P_3(\mathbf T),S_3(\mathbf T)\bigr)
 \label{eq:cubification-object-functor}
\end{equation}
is fully faithful from $\mathsf{Pos}_{2,4}$ to the groupoid of marked
symmetric cubic pairs.  Explicitly,
\begin{equation}
 \begin{split}
 &\left\{U:H\to H':Q'=U^{\odot2}Q,
                    \ C'=U^{\odot4}C\right\}\\
 &\quad\xrightarrow{\ \simeq\ }
 \left\{W:K_3(\mathbf T)\to K_3(\mathbf T'):
 \begin{array}{l}
 P_3(\mathbf T')=W^{\odot3}P_3(\mathbf T),\\
 S_3(\mathbf T')=W^{\odot3}S_3(\mathbf T)
 \end{array}\right\}.
 \end{split}
 \label{eq:cubification-hom-bijection}
\end{equation}
For a master arrow $U$, the corresponding cubic arrow is
\begin{equation}
 \widetilde U=\Id_{\mathbb Ra}\oplus U\oplus
       \left.U^{\odot2}\right|_{L_C}.
 \label{eq:cubification-arrow}
\end{equation}
Consequently the cubic pair and the quadratic--quartic packet have
equivalent orthogonal orbits and isomorphic stabilizer groupoids.
\end{theorem}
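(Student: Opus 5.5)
The plan is to prove that the map $U\mapsto\widetilde U$ in \eqref{eq:cubification-arrow} is well defined, injective and surjective onto the displayed set of cubic arrows.

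\textbf{Forward direction.} Let $U$ transport $Q$ and $C$. Then $U^{\odot2}$ is a unitary $H^{\odot2}\to H'^{\odot2}$ with $\widehat{C'}=U^{\odot2}\widehat C(U^{\odot2})^*$. By functional calculus,
\[
 \widehat{C'}^{1/2}=U^{\odot2}\widehat C^{1/2}(U^{\odot2})^*,
\]
so $U^{\odot2}$ maps $L_C$ onto $L_{C'}$ and $B_{C'}U^{\odot2}=U^{\odot2}B_C$. Consequently $\widetilde U$ is a unitary $K_3(\mathbf T)\to K_3(\mathbf T')$. It transports $a^{\otimes3}$ trivially. It transports $R_{\mathbf T}$ because $U^{\odot2}Q=Q'$ and $U^{\odot2}B_CB_C^*(U^{\odot2})^*=B_{C'}B_{C'}^*$. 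Finally, it transports $\beta_{B_C}$ by the intertwining just proved, read through \eqref{eq:cubic-beta-definition}. Injectivity of $U\mapsto\widetilde U$ is clear, since $U=\widetilde U|_H$.

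\textbf{Converse.} Let $W$ transport $P_3$ and $S_3$. Since $q=3$ is odd, $Wa=a'$ exactly as in \eqref{eq:padding-anchor-sign}, so $W$ maps $G:=H\oplus L_C$ onto $G'$. Split $K_3^{\odot3}$ by anchor degree as in \eqref{eq:padding-multidegree-splitting}.
\begin{itemize}
\item The anchor-degree-one part of $S_3$ is the quadratic tensor $R_{\mathbf T}$ on $G$, so $W|_G$ transports $R_{\mathbf T}$ to $R_{\mathbf T'}$.
\item The anchor-degree-zero part is $3\Sym(\beta_{B_C})$, which $W|_G$ also transports.
\end{itemize}
Now $R_{\mathbf T}$, viewed as a self-adjoint operator, equals $A_Q\oplus(-B_CB_C^*)$. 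Here $A_Q>0$ is injective on $H$, and $-B_CB_C^*<0$ is injective on $L_C$, because $B_C^*$ is injective on the closure of the range. Hence $H$ and $L_C$ are exactly the closures of the positive and negative spectral subspaces of $R_{\mathbf T}$. A unitary conjugating $R_{\mathbf T}$ to $R_{\mathbf T'}$ must preserve these subspaces, so $W=\Id\oplus U\oplus V$ with $U:H\to H'$ and $V:L_C\to L_{C'}$ unitary. This spectral-sign separation is why the minus sign appears in $R_{\mathbf T}$.

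The $H$-block of the $R$ identity gives $U^{\odot2}Q=Q'$. The symmetrized $\beta$ lies in the $(L_C\text{-degree }1,\ H\text{-degree }2)$ summand. On that summand, symmetrization is an injective map from $L_C\otimes H^{\odot2}$, up to the constant factor. Hence $(V\otimes U^{\odot2})\beta_{B_C}=\beta_{B_{C'}}$, that is,
\[
 VB_C=B_{C'}U^{\odot2}.
\]
It follows that
\[
 \widehat C=B_C^*B_C=(U^{\odot2})^*B_{C'}^*B_{C'}U^{\odot2}=(U^{\odot2})^*\widehat{C'}U^{\odot2}.
\]
Injectivity of $C\mapsto\widehat C$ then gives $C'=U^{\odot4}C$. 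So $U$ is a master arrow.

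Finally we must show $V=U^{\odot2}|_{L_C}$. By the forward direction, $U^{\odot2}|_{L_C}$ also satisfies $(U^{\odot2}|_{L_C})B_C=B_{C'}U^{\odot2}$. Then $\widetilde B:=B_{C'}U^{\odot2}$ is a minimal factor of $\widehat C$ with range dense in $L_{C'}$. The uniqueness in \cref{lem:minimal-positive-factor-uniqueness} forces $V=U^{\odot2}|_{L_C}$, so $W=\widetilde U$. This proves the bijection, and the orbit and stabilizer statements follow immediately.

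The main obstacle is the block decomposition of $W$. It needs a careful check that the unbounded-free spectral argument really identifies $H$ and $L_C$ intrinsically, including the dense-range and injectivity points. It also needs the verification that the $\Sym(\beta)$ term cannot mix with the $R$ term across the multidegree splitting of $(H\oplus L_C)^{\odot3}$.
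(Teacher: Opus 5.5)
Your proposal is correct and follows essentially the same route as the paper: the odd power recovers the anchor, the signed operator $A_Q\oplus(-B_CB_C^*)$ splits $W$ into $U\oplus V$ by spectral sign, the $\beta$-component gives $VB_C=B_{C'}U^{\odot2}$ and hence $\widehat{C'}=U^{\odot2}\widehat C(U^{\odot2})^*$, and uniqueness of $V$ comes from the dense range of $B_C$. The differences are cosmetic: you isolate $R_{\mathbf T}$ by anchor-degree projection where the paper uses the contraction $\iota_aS_3$, and you cite the minimal-factor uniqueness lemma where the paper argues directly on $\Ran B_C$.
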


\begin{proof}
Assume first that $U$ is a master arrow and put
$U_2=U^{\odot2}$.  Equivariance of the $2+2$ flattening gives
\begin{equation}
 \widehat C'=U_2\widehat C U_2^*.
 \label{eq:cubic-flattening-equivariance}
\end{equation}
Functional calculus then gives
$\widehat C'^{1/2}=U_2\widehat C^{1/2}U_2^*$ and
$U_2L_C=L_{C'}$.  Therefore
\begin{equation}
 V_U:=\left.U_2\right|_{L_C}:L_C\longrightarrow L_{C'}
 \quad\text{is unitary},
 \qquad
 B_{C'}U_2=V_UB_C.
 \label{eq:cubic-B-equivariance}
\end{equation}
It follows from \eqref{eq:cubic-beta-definition} that
\begin{equation}
 \beta_{B_{C'}}=(V_U\otimes U^{\odot2})\beta_{B_C},
 \qquad
 B_{C'}B_{C'}^*=V_UB_CB_C^*V_U^*.
 \label{eq:cubic-beta-equivariance}
\end{equation}
Thus the unitary in \eqref{eq:cubification-arrow} transports both cubic
tensors.

Conversely, let $W$ be a cubic arrow.  Since the exponent is odd,
\begin{equation}
 W^{\odot3}a^{\otimes3}=a'^{\otimes3}
 \quad\Longrightarrow\quad Wa=a'.
 \label{eq:cubic-anchor-recovery}
\end{equation}
Hence $W$ restricts to a unitary
$W_0:H\oplus L_C\to H'\oplus L_{C'}$.  Contracting one leg of
$S_3$ against $a$ gives exactly
\begin{equation}
 \iota_aS_3=R_{\mathbf T}.
 \label{eq:cubic-R-extraction}
\end{equation}
Consequently $W_0$ conjugates the self-adjoint operator
\begin{equation}
 A_{R_{\mathbf T}}=A_Q\oplus(-B_CB_C^*)
 \label{eq:cubic-signed-operator}
\end{equation}
to $A_{R_{\mathbf T'}}$.  The positive spectral subspace of
$A_{R_{\mathbf T}}$ is precisely $H$: $A_Q$ is positive and injective.
Its negative spectral subspace is precisely $L_C$: the dense-range
property of $B_C$ implies
$\ker(B_CB_C^*)=\ker B_C^*=\{0\}$.  Spectral projections are preserved by
unitary conjugacy, and therefore
\begin{equation}
 W_0=U\oplus V
 \label{eq:cubic-block-recovery}
\end{equation}
for unitaries $U:H\to H'$ and $V:L_C\to L_{C'}$.  The positive block in
\eqref{eq:cubic-signed-operator} yields
$Q'=U^{\odot2}Q$.

Subtract $3\Sym(a\otimes R)$ from $S_3$ and project one tensor leg to
$L_C$ and the other two to $H$.  By
\eqref{eq:cubic-beta-definition}, the remaining transport identity is
\begin{equation}
 \beta_{B_{C'}}=(V\otimes U^{\odot2})\beta_{B_C},
 \quad\text{equivalently}\quad
 B_{C'}U^{\odot2}=VB_C.
 \label{eq:cubic-B-recovery}
\end{equation}
Taking adjoints and products gives
\begin{equation}
 \widehat C'
 =B_{C'}^*B_{C'}
 =U^{\odot2}\widehat C(U^{\odot2})^*.
 \label{eq:cubic-C-recovery}
\end{equation}
Injectivity of the flattening map now gives
$C'=U^{\odot4}C$.  Hence $U$ is a master arrow.  Finally,
\eqref{eq:cubic-B-recovery} determines $V$ uniquely on the dense range
of $B_C$, and then on all of $L_C$.  Thus $W$ is exactly the unitary in
\eqref{eq:cubification-arrow}, proving the bijection on every morphism
set.
\end{proof}

The active support of $(P_3,S_3)$ is all of $K_3(\mathbf T)$.  The first
tensor supplies $a$.  Equation \eqref{eq:cubic-R-extraction} and the
injectivity of both diagonal blocks in
\eqref{eq:cubic-signed-operator} show that the tensor legs of $S_3$ have
dense support in $H\oplus L_C$.  This observation will allow us to apply
Replica--Tannaka without an unobservable orthogonal complement.

\subsection{One marked scalar law in a pure homogeneous chaos}
\label{subsec:pure-chaos-scalar-law}

We record the typed form of the universal-canvas theorem.  It is needed
because a homogeneous presentation consists of two cubic, or two
$q$-ic, tensors rather than one tensor in each of two different grades.

\begin{lemma}[Quadratic--exponential independence]
\label{lem:quadratic-exponential-independence}
Let $(a_j,b_j)_{1\le j\le N}$ be distinct pairs in
$[0,\infty)\times\mathbb R$.  Then the functions
\begin{equation}
 t\longmapsto e^{-a_jt^2+b_jt},\qquad 1\le j\le N,
 \label{eq:quadratic-exponential-family}
\end{equation}
are linearly independent on $\mathbb R$.
\end{lemma}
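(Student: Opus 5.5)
We argue by asymptotic dominance rather than a Wronskian computation, reducing to a leading-term comparison as $t\to\infty$ in two stages. Suppose
\begin{equation*}
 \sum_{j=1}^N c_j e^{-a_jt^2+b_jt}=0 \quad\text{for all } t\in\mathbb R,
\end{equation*}
and that not all $c_j$ vanish. Discard the indices with $c_j=0$, so that every remaining coefficient is nonzero. Among the remaining pairs, let $a_*=\min_j a_j$. Within the group $\{j:a_j=a_*\}$ the $b_j$ are distinct, since the pairs are distinct; let $b_*$ be the largest of them and $j_*$ the unique index attaining it.

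Multiply the relation by $e^{a_*t^2-b_*t}$. The $j_*$ term becomes the constant $c_{j_*}$, and every other term becomes $c_je^{-(a_j-a_*)t^2+(b_j-b_*)t}$. If $a_j>a_*$, the quadratic term gives decay as $t\to+\infty$ whatever the sign of $b_j-b_*$. If $a_j=a_*$ and $j\ne j_*$, then $b_j<b_*$ and the term decays exponentially. Letting $t\to+\infty$ therefore gives $c_{j_*}=0$, contradicting our reduction.

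The same argument shows linear independence on any half-line $[T,\infty)$. Since the functions are real-analytic, independence on any interval also follows from the identity theorem. The only point needing care is the ordering: we compare first by $a$ (the smallest one dominates) and then by $b$ (the largest one dominates). This lexicographic choice is the essential step, and it is what allows $a_j=0$, i.e.\ pure exponentials with $a=0$ dominate every Gaussian factor.

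An alternative, matching the proof of \cref{lem:Gaussian-graph-character-faithfulness}, would be an induction on $N$ with repeated differentiation. The dominance argument avoids that and uses no earlier result. The only step requiring attention is that $b_j-b_*$ may be positive when $a_j>a_*$; the strict positivity of $a_j-a_*$ handles it.
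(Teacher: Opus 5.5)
Your proof is correct and is essentially the paper's argument. Among indices with $c_j\neq 0$, the paper also takes the minimal $a$ and then the maximal $b$, multiplies by $e^{a_0t^2-b_0t}$, and lets $t\to+\infty$ to force that coefficient to vanish; the only difference is that it iterates this step where you argue by contradiction.
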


\begin{proof}
Assume $\sum_jc_je^{-a_jt^2+b_jt}=0$.  Among indices with $c_j\ne0$,
choose $a_0$ minimal and, among those with $a_j=a_0$, choose $b_0$
maximal.  Multiplication by $e^{a_0t^2-b_0t}$ and passage to
$t\to+\infty$ leaves only the coefficient attached to $(a_0,b_0)$,
which must vanish.  Iteration proves that every coefficient is zero.
\end{proof}

\begin{lemma}[Typed homogeneous Canvas--Replica reconstruction]
\label{lem:typed-homogeneous-canvas}
Fix $q\geq1$ and distinct real tags $s_1,\ldots,s_r$.  Let
$\mathbf f=(f_1,\ldots,f_r)$ with $f_\alpha\in H^{\odot q}$, and let
$(\mathbb W_u)_{u\in\mathbb R}$ be the fixed Gaussian canvas with
\begin{equation}
 \mathbb E[\mathbb W_u(h)\mathbb W_v(k)]
 =e^{-(u-v)^2}\langle h,k\rangle_H.
 \label{eq:typed-canvas-covariance}
\end{equation}
Define the real process
\begin{equation}
 \mathcal Z_{\mathbf f}^{(q)}(u)
 :=\sum_{\alpha=1}^r e^{s_\alpha u}
       I_q^{\mathbb W_u}(f_\alpha).
 \label{eq:typed-pure-chaos-process}
\end{equation}
Then every $\mathcal Z_{\mathbf f}^{(q)}(u)$ belongs to the pure
$q$-th Wiener chaos, and the process has a continuous version.  The
typed process is generally nonstationary, while its driving Gaussian
canvas remains stationary.  For a
second family $\mathbf f'$ with the same tags,
\begin{equation}
 \begin{split}
 &\Law_{C(\mathbb R)}(\mathcal Z_{\mathbf f}^{(q)})
 =\Law_{C(\mathbb R)}(\mathcal Z_{\mathbf f'}^{(q)})\\
 &\quad\Longleftrightarrow\quad
 \exists\,U:S(\mathbf f)\to S(\mathbf f')\text{ unitary such that }
 f_\alpha'=U^{\odot q}f_\alpha\ \text{for every }\alpha.
 \end{split}
 \label{eq:typed-canvas-orbit-equivalence}
\end{equation}
Here $S(\mathbf f)$ is the closed joint active support of the family.
\end{lemma}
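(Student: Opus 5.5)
The plan is to follow the proof of \cref{thm:universal-canvas-replica-tannaka}. There are two changes. Wick pairings are now typed by tags instead of chaos degree. The Gaussian graph characters pick up linear exponential factors, and \cref{lem:quadratic-exponential-independence} will separate these.

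Membership in the pure $q$th chaos is immediate, since each summand is $e^{s_\alpha u}I_q^{\mathbb W_u}(f_\alpha)$ with a deterministic factor. The continuous (indeed smooth) version comes from the argument of \cref{lem:continuous-universal-canvas}: the curve $u\mapsto\sum_\alpha e^{s_\alpha u}T_u^{\otimes q}f_\alpha$ is $C^\infty$ into the symmetric Fock tensor space, so hypercontractivity and Kolmogorov apply unchanged. The reverse implication is also routine. Restrict the canvas to $S(\mathbf f)$ and transport it by $U$. The covariance is preserved and multiple Wiener integrals are equivariant, so all finite-dimensional distributions agree. Evaluations at rational points then identify the $C(\mathbb R)$-laws.

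For the forward implication, take $r$-point moments $M(u)=\E\prod_{a=1}^p\mathcal Z^{(q)}_{\mathbf f}(u_a)$ at $u\in\mathbb R^p$. These are determined by the law via \cref{lem:laws-determine-moments}. Expanding multilinearly over tag assignments $\alpha:\{1,\dots,p\}\to\{1,\dots,r\}$ and applying Wick's formula as in \cref{prop:exact-coefficient-formula} gives
\[
 M(u)=\sum_{\alpha}\sum_{N}c_{\alpha,N}\,e^{\sum_a s_{\alpha(a)}u_a}\,e^{-Q_N(u)},
\]
where $N$ ranges over zero-diagonal arrays with all row sums equal to $q$. The coefficient is
\[
 c_{\alpha,N}=\frac{(q!)^p}{\prod_{a<b}N_{ab}!}\operatorname{Contr}_N\bigl(f_{\alpha(1)},\dots,f_{\alpha(p)}\bigr).
\]
The key step is to show that the functions $u\mapsto e^{\langle s_\alpha,u\rangle-Q_N(u)}$ are linearly independent on $\mathbb R^p$ for distinct pairs $(\alpha,N)$. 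Here $s_\alpha=(s_{\alpha(1)},\dots,s_{\alpha(p)})$. Because the tags are distinct, different $\alpha$ give different linear vectors $s_\alpha$ on all of $\mathbb R^p$; the translation direction $\mathbf 1$ is no longer factored out. By \cref{lem:Gaussian-graph-character-faithfulness}, distinct $N$ give distinct quadratic forms $Q_N$, and each $Q_N$ is positive semidefinite.

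To prove independence, restrict to a line $u=tv$ with $v$ generic. Then $Q_N(v)$ and $\langle s_\alpha,v\rangle$ are nonnegative reals and reals respectively, and $v$ can be chosen so that the pairs $(Q_N(v),\langle s_\alpha,v\rangle)$ are pairwise distinct, since each required inequality fails only on a proper algebraic subset. \cref{lem:quadratic-exponential-independence} then gives independence. Consequently the moment function determines every typed coefficient $c_{\alpha,N}$, and hence every closed loop-free network built from the $f_\alpha$ with tag labels attached.

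From there, the support-Gram argument runs verbatim with $D_{\mathbf f}=\sum_\alpha L_{f_\alpha}L_{f_\alpha}^*$, whose kernel is $S(\mathbf f)^\perp$. The steps are the spectral-moment identification, the spectral-cap opening of \cref{lem:spectral-cap-opening} using typed dipoles, finite orbit separation via \cref{prop:finite-orthogonal-orbit-separation} for the tuple $(P_\delta^{\otimes q}f_\alpha)_\alpha$ together with $B_\delta$, and the compact inverse limit. These produce one unitary $U$ with $f'_\alpha=U^{\odot q}f_\alpha$ for every $\alpha$.

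I expect the main obstacle to be the independence step. The point is that the tags must genuinely distinguish the vertex labels. A tag assignment $\alpha$ shifts only the linear part, while $N$ changes the quadratic part, so the two-parameter exponential lemma is exactly what is needed. One should also check that no two assignments with the same $s_\alpha$ arise; this is where distinctness of the tags enters.
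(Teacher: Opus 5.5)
Your proposal is correct and follows the paper's proof essentially step for step. You Wick-expand the mixed moments into typed loop-free graph characters carrying the linear tag exponents, and you restrict to a generic line so that the pairs $(Q_N(v),\langle s_\alpha,v\rangle)$ become distinct and the quadratic--exponential independence lemma applies. You then run the colored support-Gram, spectral-cap, finite orbit-separation and inverse-limit argument, with continuity and the reverse implication handled exactly as in the paper.
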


\begin{proof}
The reverse implication follows from equivariance of multiple Wiener
integrals.  Equality of process laws gives equality of every finite
mixed moment, since finite-chaos random variables have moments of all
orders; no converse moment-determinacy assertion is used.  For the direct
implication, expand every mixed moment of
\eqref{eq:typed-pure-chaos-process} by Wick's rule.  A loop-free
$q$-regular multigraph with adjacency multiplicities $(m_{ij})$ and a
vertex typing $\tau$ contributes its typed contraction multiplied by
\begin{equation}
 \exp\left(
   \sum_i s_{\tau(i)}u_i
   -\sum_{i<j}m_{ij}(u_i-u_j)^2
 \right).
 \label{eq:typed-canvas-coefficient}
\end{equation}
For each fixed moment order choose a vector $v$ outside the finite union
of the hyperplanes and quadrics on which two distinct pairs
\begin{equation}
 \left(
   \sum_{i<j}m_{ij}(v_i-v_j)^2,
   \sum_i s_{\tau(i)}v_i
 \right)
 \label{eq:typed-canvas-exponent-pair}
\end{equation}
coincide.  On setting $u_i=tv_i$, the coefficients become distinct
functions $e^{-a t^2+bt}$.  Their linear independence is
\cref{lem:quadratic-exponential-independence}; hence the process law
determines every loop-free contraction with every vertex type retained.

Define the colored support Gram
\[
 D_{\mathbf f}=\sum_{\alpha=1}^rL_{f_\alpha}L_{f_\alpha}^*.
\]
The typed loop-free register determines its spectral moments and all
polynomially opened contractions on every finite spectral cap.  Applying
\cref{prop:finite-orthogonal-orbit-separation} to the colored tuple
$(P_\delta^{\otimes q}f_\alpha)_\alpha$ together with the marker
$P_\delta D_{\mathbf f}P_\delta$ produces compatible orthogonal
transports on all caps.  The compact inverse-limit construction of
\cref{thm:scalar-replica-tannaka} then yields one common unitary
transporting every $f_\alpha$.  Continuity follows from the smooth
universal canvas and hypercontractivity.  More explicitly, in the single-isonormal
realization of \eqref{eq:canvas-single-isonormal-realization}, the
deterministic kernel is the Hilbert-smooth curve
\[
 u\longmapsto
 \sum_{\alpha=1}^r e^{s_\alpha u}
 \bigl(\Psi(u)^{\otimes q}\otimes f_\alpha\bigr)
 \in(\mathscr G\otimes H)^{\odot q},
\]
whose derivatives are bounded on compact intervals.
\end{proof}

Apply the lemma with two tags, for example $s_P=0$ and $s_S=1$.  For
$q\geq4$ use the pair in \eqref{eq:q-homogeneous-padding-packet}; for
$q=3$ use the pair in \eqref{eq:positive-cubification}.  Thus, with the
appropriate ambient Hilbert space $K_q(\mathbf T)$, set
\begin{equation}
 \boxed{
 \mathcal Z_{\mathbf T}^{(q)}(u)
 =I_q^{\mathbb W_u}\bigl(P_q(\mathbf T)\bigr)
 +e^u I_q^{\mathbb W_u}\bigl(S_q(\mathbf T)\bigr),
 \qquad q\geq3.}
 \label{eq:master-pure-q-process}
\end{equation}
Although it contains two deterministically tagged tensor types, every
random variable in \eqref{eq:master-pure-q-process} lies entirely in one
homogeneous chaos.

\begin{corollary}[Homogeneous presentations of the master packet]
\label{cor:all-homogeneous-chaoses-hear-master-packet}
Let $\mathbf T$ and $\mathbf T'$ be admissible positive master packets.
For every $q\geq3$,
\begin{equation}
 \begin{split}
 &\Law_{C(\mathbb R)}(\mathcal Z_{\mathbf T}^{(q)})
 =\Law_{C(\mathbb R)}(\mathcal Z_{\mathbf T'}^{(q)})\\
 &\quad\Longleftrightarrow\quad
 \exists\,U:H\to H'\text{ satisfying }
 Q'=U^{\odot2}Q,\qquad C'=U^{\odot4}C.
 \end{split}
 \label{eq:pure-q-master-equivalence}
\end{equation}
For the radial heat packets of
\cref{thm:radial-heat-network-rigidity}, these conditions are further
equivalent to equivalence of the underlying manifold, metric, Euclidean
bundle, metric connection, and self-adjoint potential.
\end{corollary}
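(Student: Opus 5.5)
The plan is to compose three results already in hand: the typed homogeneous canvas lemma, anchor padding (\cref{thm:anchor-padding-faithfulness}) for $q\ge4$, and the cubic dilation (\cref{thm:positive-cubification-full-faithfulness}) for $q=3$. The process \eqref{eq:master-pure-q-process} is exactly the typed process \eqref{eq:typed-pure-chaos-process} with two tags $s_P=0$ and $s_S=1$. It is built from the marked pair $(P_q(\mathbf T),S_q(\mathbf T))$ on $K_q(\mathbf T)$. \cref{lem:typed-homogeneous-canvas} then says the two laws agree if and only if some unitary $W:S(P_q,S_q)\to S(P'_q,S'_q)$ transports both marked tensors simultaneously.

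The first step is to remove the support restriction. I would invoke the remarks following each construction. For padding, $P_q$ supplies the anchor line, and contracting $q-2$ anchor legs of $S_q$ gives a nonzero multiple of $Q$, whose operator $A_Q$ has dense range. For the cubic dilation, $\iota_aS_3=R_{\mathbf T}$ has injective diagonal blocks. In both cases the joint active support is the whole of $K_q(\mathbf T)$, and likewise for $\mathbf T'$. So $W$ is a surjective unitary $K_q(\mathbf T)\to K_q(\mathbf T')$ transporting $P_q$ and $S_q$.

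The second step applies the faithfulness theorems. For $q\ge4$, the forward direction of \eqref{eq:padding-orbit-equivalence} turns $W$ into $U=W|_H$ with $Q'=U^{\odot2}Q$ and $C'=U^{\odot4}C$. The sign $\varepsilon$ from \eqref{eq:padding-anchor-sign} is harmless by \eqref{eq:padding-projected-transport}. For $q=3$, the bijection \eqref{eq:cubification-hom-bijection} gives the same conclusion. Conversely, a master arrow $U$ produces the explicit unitaries $\Id\oplus U$ or \eqref{eq:cubification-arrow}, which transport both marked tensors. The reverse direction of \cref{lem:typed-homogeneous-canvas} then gives equality of laws. One point needs checking here: in the $q=3$ case the ambient spaces $K_3$ differ through $L_C$ and $L_{C'}$, but the constructed $W$ maps between them as required. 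This establishes \eqref{eq:pure-q-master-equivalence}.

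The third step is the geometric statement. I would first check that the radial heat packets are admissible in the sense of \cref{def:admissible-positive-master-packet}: $A_Q=e^{-t\mathcal L}$ is positive and injective, and $\widehat C$ is positive and trace class by \cref{lem:radial-quartic-positive-flattening}, integrated over $M$ as remarked after that lemma. The equivalence of (i) and (ii) in \cref{thm:radial-heat-network-rigidity} then finishes the argument.

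I expect the main obstacle to be the trace-class claim for the integrated flattening $\widehat C=\int_M\widehat{q_x\odot q_x}\,dv_g$. It needs a Bochner integral in trace norm. I would bound it using \eqref{eq:radial-quartic-trace-formula}: $\Tr\widehat{q_x\odot q_x}\le(\Tr A_{q_x})^2$, and $\Tr A_{q_x}=\sum_\alpha\|R_{t,x}^*\varepsilon_\alpha\|^2$ is the pointwise fiber trace of the heat kernel at time $t$. This is continuous and hence bounded on compact $M$. Continuity of $x\mapsto q_x$ then gives trace-norm integrability.
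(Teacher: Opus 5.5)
Your proposal is correct and follows the paper's proof essentially step for step. You get full joint support from the remarks after the padding and cubic theorems, compose the typed canvas lemma with \cref{thm:anchor-padding-faithfulness} or \cref{thm:positive-cubification-full-faithfulness}, and then invoke \cref{thm:radial-heat-network-rigidity} for the geometric clause. Your explicit bound $\Tr\widehat{q_x\odot q_x}\le(\Tr A_{q_x})^2$ just fills in the one-line admissibility remark the paper makes after \cref{lem:radial-quartic-positive-flattening}.
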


\begin{proof}
The two marked families are jointly active by the observations following
\cref{thm:anchor-padding-faithfulness,thm:positive-cubification-full-faithfulness}.
The typed Canvas--Replica equivalence
\eqref{eq:typed-canvas-orbit-equivalence}, followed respectively by
\cref{thm:anchor-padding-faithfulness} or
\cref{thm:positive-cubification-full-faithfulness}, proves
\eqref{eq:pure-q-master-equivalence}.  The geometric assertion is exactly
the deterministic master equivalence already proved in
\cref{thm:radial-heat-network-rigidity}.
\end{proof}

\begin{remark}[The master datum and its homogeneous carriers]
\label{rem:master-versus-derived-carriers}
\Cref{cor:all-homogeneous-chaoses-hear-master-packet} establishes the
exact chain
\begin{equation}
 \boxed{
 \text{pure tagged chaos of order }q\geq3
 \ \Longleftrightarrow\ 
 \operatorname{Orb}(Q,C)
 \ \Longleftrightarrow\ 
 \text{heat geometry}.}
 \label{eq:master-derived-carrier-chain}
\end{equation}
For $q\ge4$, the carrier separates the quadratic and quartic tensors by
occupation number relative to a canonical anchor line.  For $q=3$, it
uses the Kolmogorov-minimal positive factor of $\widehat C$, canonical
up to a unique unitary by
\cref{lem:minimal-positive-factor-uniqueness}.  Hence every order
$q\ge3$ supports a faithful homogeneous realization, while $(Q,C)$
remains the intrinsic geometric packet from which all carriers are
functorially generated.
\end{remark}
\section{Sharpness and inverse stability}
\label{sec:stability-sharpness}

The reconstruction theorems above are exact.  It is natural to ask
whether they admit a uniform inverse modulus on geometrically bounded
classes.  At a fixed positive heat time the answer is sharply
negative under the usual zeroth-order curvature bounds.  The following
example also explains why a quantitative version of Replica--Tannaka
reconstruction must retain information at vanishing heat scales.

\subsection{An oscillating gauge field asymptotically invisible at every fixed scale}

Let
\(
 M=\mathbb R^2/(2\pi\mathbb Z)^2
\)
be the flat two-torus, let
\(E=M\times\mathbb R^2\), and put
\[
 J=\begin{pmatrix}0&-1\\1&0\end{pmatrix}.
\]
For an integer \(k\ge1\), consider the metric connection
\begin{equation}
 \nabla^{(k)}=d+a_k(x)J\,dy,
 \qquad
 a_k(x)=\frac{\sin(kx)}{k},
 \label{eq:oscillating-connection}
\end{equation}
and let \(\nabla^{(0)}=d\).  Its curvature is
\begin{equation}
 F_{\nabla^{(k)}}
 =\cos(kx)J\,dx\wedge dy.
 \label{eq:oscillating-curvature}
\end{equation}
Thus
\begin{equation}
 \norm{F_{\nabla^{(k)}}}_{L^\infty}=c_J,
 \qquad
 \int_M|F_{\nabla^{(k)}}|^2\,dv
 =\frac12\operatorname{Vol}(M)c_J^2,
 \label{eq:oscillating-curvature-size}
\end{equation}
where \(c_J:=|J|>0\) for the fixed norm on \(\mathfrak{so}(2)\), and for
every \(k\).  In particular, no orthogonal gauge transformation can
make these curvatures converge uniformly to the curvature of the flat
connection.

For a fixed marked time \(t>0\), let
\(Q_{k,t}\) and \(C_{k,t}\) be the quadratic and radial quartic tensors
of \eqref{eq:gauge-integrated-tensors}, constructed from
\(\nabla^{(k)}\), and define on the fixed universal canvas
\begin{equation}
 \mathcal Y_{k,t}(u)
 =I_2^{\mathbb W_u}(Q_{k,t})+I_4^{\mathbb W_u}(C_{k,t}),
 \qquad u\in\mathbb R.
 \label{eq:oscillating-canvas-process}
\end{equation}
Equip $C(\mathbb R)$, with its locally uniform topology, with the metric
\begin{equation}
 \rho_{\mathrm{loc}}(z,z')
 =\sum_{j\ge1}2^{-j}
   \min\bigl(1,\norm{z-z'}_{C([-j,j])}\bigr).
 \label{eq:canvas-local-uniform-metric}
\end{equation}
Below, $d_{\mathrm{BL}}$ denotes the bounded-Lipschitz metric associated
with $\rho_{\mathrm{loc}}$: the supremum of the differences of integrals
over all functions bounded by one and $1$-Lipschitz for
$\rho_{\mathrm{loc}}$.

\begin{proposition}[Fixed-time replica data do not control curvature]
\label{prop:fixed-time-instability}
For every fixed \(t>0\),
\begin{equation}
 Q_{k,t}\longrightarrow Q_{0,t}
 \quad\text{in }H^{\odot2},
 \qquad
 C_{k,t}\longrightarrow C_{0,t}
 \quad\text{in }H^{\odot4},
 \label{eq:oscillating-tensor-convergence}
\end{equation}
where \(H=L^2(M,\mathbb R^2)\).  Consequently,
\begin{equation}
 \Law_{C(\mathbb R)}\bigl(\mathcal Y_{k,t}\bigr)
 \longrightarrow
 \Law_{C(\mathbb R)}\bigl(\mathcal Y_{0,t}\bigr)
 \label{eq:oscillating-law-convergence}
\end{equation}
weakly, although
\(\norm{F_{\nabla^{(k)}}}_{L^\infty}=c_J\) and the limiting connection
is flat.  More strongly, for every $p>2$ and $R>0$, under the common
canvas coupling,
\begin{equation}
 \E\norm{\mathcal Y_{k,t}-\mathcal Y_{0,t}}_{C([-R,R])}^p
 \longrightarrow0.
 \label{eq:oscillating-continuous-process-convergence}
\end{equation}
For every
\(0<\tau<T<\infty\),
\begin{equation}
 \sup_{t\in[\tau,T]}
 d_{\mathrm{BL}}\!\left(
  \Law_{C(\mathbb R)}(\mathcal Y_{k,t}),
  \Law_{C(\mathbb R)}(\mathcal Y_{0,t})
 \right)\longrightarrow0.
 \label{eq:oscillating-locally-uniform-time-convergence}
\end{equation}
\end{proposition}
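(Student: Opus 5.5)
Our plan is to prove the operator convergence $e^{-s\mathcal L_k}\to e^{-s\mathcal L_0}$ in a strong enough (trace-type) norm, uniformly for $s$ in compact subsets of $(0,\infty)$. The tensor convergence \eqref{eq:oscillating-tensor-convergence} and then the process statements would follow from \cref{lem:continuous-universal-canvas}.

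\emph{Step 1: perturbation structure.} Write $\nabla^{(k)}=d+a_kJ\,dy$ with $\|a_k\|_\infty\le 1/k$ and $\partial_xa_k$ bounded but not small. Since $J^*=-J$ and $\partial_y a_k=0$,
\[
 L_k=(\nabla^{(k)})^*\nabla^{(k)}=\Delta-2a_kJ\partial_y+a_k^2,
\]
where $\Delta$ acts componentwise. No derivative of $a_k$ appears, because $a_k$ is independent of $y$. The perturbation $V_k:=L_k-L_0=-2a_kJ\partial_y+a_k^2$ is first order and has coefficients of size $O(1/k)$. By Fourier analysis in $y$, both operators commute with $\partial_y$, so $\|V_k(1+L_0)^{-1/2}\|_{\mathrm{op}}\le C/k$, and $L_k\ge0$ uniformly. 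Duhamel's formula then gives
\[
 e^{-sL_k}-e^{-sL_0}=-\int_0^s e^{-(s-r)L_k}V_ke^{-rL_0}\,dr.
\]
We estimate the trace norm by splitting the integral at $s/2$. We put the smoothing factor $e^{-rL_0/2}$ (trace class with uniform bounds for $r\ge s/2$) or $e^{-(s-r)L_k/2}$ on the appropriate side, and use uniform ellipticity to control $\|(1+L_0)^{1/2}e^{-(s-r)L_k/2}\|$ on the other side. This yields $\|e^{-sL_k}-e^{-sL_0}\|_{\mathcal S_1}\le C_{\tau,T}/k$ for $s\in[\tau/2,T]$. We expect this step to be the \textbf{main obstacle}: the first-order part near the singular endpoint of the Duhamel integral must be handled carefully, and the Sobolev equivalence $\|(1+L_k)^{1/2}u\|\simeq\|(1+L_0)^{1/2}u\|$ has to hold uniformly in $k$ (this follows from the quadratic forms differing by $O(1/k)$ relative to the Dirichlet form).

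\emph{Step 2: convergence of the tensors.} The quadratic tensor $Q_{k,t}$ is the flattening of $e^{-tL_k}$, so its $H^{\odot2}$ norm is the Hilbert--Schmidt norm and Step 1 gives $Q_{k,t}\to Q_{0,t}$. For the quartic tensor, $C_{k,t}=\int_M q_{k,x}\odot q_{k,x}\,dv$, where $q_{k,x}$ is the radial tensor of $A_{k,t}=e^{-tL_k/2}$ at $x$. We have
\[
 \|C_{k,t}-C_{0,t}\|\le\int_M\|q_{k,x}-q_{0,x}\|\,(\|q_{k,x}\|+\|q_{0,x}\|)\,dv.
\]
Here $\|q_{k,x}\|\le\Tr(R_{k,x}R_{k,x}^*)=\tr_{E_x}p^{(k)}_t(x,x)$, which is uniformly bounded. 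To show $q_{k,x}-q_{0,x}\to0$ uniformly in $x$, we would use the kernel bound
\[
 \sup_x\|R_{k,x}^*-R_{0,x}^*\|_{\mathrm{HS}}^2=\sup_x\tr\,\big(\text{kernel of }(A_{k}-A_0)^2\text{ at }(x,x)\big).
\]
This quantity is controlled by writing $A_k-A_0=e^{-tL_0/4}X_k+Y_ke^{-tL_k/4}$ via Duhamel with a smoothing factor, and by the uniform $L^2\to L^\infty$ bound of $e^{-tL/4}$. That bound holds uniformly in $k$ by domination (Kato's inequality for metric connections: $|e^{-sL_k}u|\le e^{-s\Delta}|u|$). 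Since the torus is homogeneous in $y$ and the estimates are uniform, we obtain \eqref{eq:oscillating-tensor-convergence} with rate $O(1/k)$, uniformly for $t\in[\tau,T]$.

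\emph{Step 3: the law statements.} Apply \cref{lem:continuous-universal-canvas} to the packets $(0,0,Q_{k,t},0,C_{k,t})$ and $(0,0,Q_{0,t},0,C_{0,t})$ on the common space $H$ and the common canvas. This gives
\[
 \E\|\mathcal Y_{k,t}-\mathcal Y_{0,t}\|^p_{C([-R,R])}\le C_{p,R}\big(2\|Q_{k,t}-Q_{0,t}\|^2+24\|C_{k,t}-C_{0,t}\|^2\big)^{p/2}\to0,
\]
which is \eqref{eq:oscillating-continuous-process-convergence}. For any bounded $1$-Lipschitz test function $\Phi$ for $\rho_{\mathrm{loc}}$,
\[
 |\E\Phi(\mathcal Y_{k,t})-\E\Phi(\mathcal Y_{0,t})|\le\E\rho_{\mathrm{loc}}\le\sum_{j\le J}2^{-j}\,\E\|\cdot\|_{C([-j,j])}+2^{-J}.
\]
The right-hand side is bounded uniformly in $t\in[\tau,T]$ by the uniform tensor rate of Step 2. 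This gives \eqref{eq:oscillating-locally-uniform-time-convergence}, and in particular weak convergence \eqref{eq:oscillating-law-convergence}. The curvature statement is just the direct computation \eqref{eq:oscillating-curvature-size}.
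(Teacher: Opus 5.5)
There is a gap in Step 2. Steps 1 and 3 are sound. In Step 1 the endpoint is not really an obstacle: after splitting at $s/2$, every piece carries a smoothing factor at time at least $s/4$. The form comparison $\|\nabla^{(0)}u\|\le\|\nabla^{(k)}u\|+k^{-1}\|u\|$ gives the uniform Sobolev equivalence and the uniform heat-trace bounds.

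\textbf{The gap.} In Step 2 you claim $\sup_x\|R_{k,x}-R_{0,x}\|_{\mathrm{HS}}\to0$ from the decomposition $A_k-A_0=e^{-tL_0/4}X_k+Y_ke^{-tL_k/4}$ and the fixed-time $L^2\to L^\infty$ bound of $e^{-tL/4}$. Evaluation at $x$ needs smoothing on the \emph{output} side.
\begin{enumerate}[label=\textup{(\alph*)}]
\item For $e^{-tL_0/4}X_k$ the smoothing is on the output side, so that term is fine.
\item In $Y_ke^{-tL_k/4}$ the smoothing factor acts on the input. The output-side factor of $Y_k$ is a heat operator at times tending to $0$. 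So a bound on $\|e^{-tL/4}\|_{L^2\to L^\infty}$ gives no control of $\mathrm{ev}_x\circ Y_ke^{-tL_k/4}$.
\item Expanding $(A_k-A_0)^2$ in the same way does not help. Every such expansion leaves terms with $X_k$, $Y_k$ or an adjoint in an outer position.
\end{enumerate}

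\textbf{The missing ingredient.} What is needed is the \emph{short-time} bound $\|e^{-sL_k}\|_{L^2\to L^\infty}\lesssim s^{-1/2}$, which holds uniformly in $k$ by domination. Combined with $\|V_ke^{-sL_0}\|\lesssim k^{-1}s^{-1/2}$, it gives a Duhamel integrand bounded by $k^{-1}(T-s)^{-1/2}s^{-1/2}$. This is integrable because the base is two-dimensional: the exponent is $d/4=1/2$. This is exactly the paper's proof. One $L^2\to L^\infty$ Duhamel estimate at time $t/2$ gives $\sup_x\|K_{k,t,x}-K_{0,t,x}\|_{\mathrm{HS}}\le C_t/k$. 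Both tensors then converge by integrating over $M$, with no trace-norm estimate at all.

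\textbf{A repair inside your own framework.} You can instead drop uniformity in $x$ and rely only on Step 1.
\begin{enumerate}[label=\textup{(\roman*)}]
\item Since $\sup_{k,x}\|q_{k,x}\|<\infty$, it suffices to show $\int_M\|q_{k,x}-q_{0,x}\|\,dv\to0$.
\item From $q_{k,x}\leftrightarrow R_{k,x}^*R_{k,x}$ you get $\|q_{k,x}-q_{0,x}\|\le(\|R_{k,x}\|+\|R_{0,x}\|)\|R_{k,x}-R_{0,x}\|_{\mathrm{HS}}$.
\item Next, $\int_M\|R_{k,x}-R_{0,x}\|_{\mathrm{HS}}^2\,dv=\|A_{k,t}-A_{0,t}\|_{\mathcal S_2}^2\le\|A_{k,t}-A_{0,t}\|_{\mathcal S_1}^2\le C^2/k^2$. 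The last bound is your Step 1 at time $t/2$, uniform for $t\in[\tau,T]$.
\item Cauchy--Schwarz in $x$ then gives $C_{k,t}\to C_{0,t}$ at rate $O(1/k)$.
\end{enumerate}

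With this repair your route is a valid alternative to the paper's. It controls the heat operators in Schatten norms and never uses pointwise short-time smoothing, so it does not depend on the dimension of the base. The paper's route rests on a single singular Duhamel estimate, which needs $d/4<1$. In exchange it yields the stronger uniform-in-$x$ convergence of the evaluation maps.
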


\begin{proof}
In the standard trivialization, the Bochner Laplacian is
\begin{equation}
 L_k=(\nabla^{(k)})^*\nabla^{(k)}
 =L_0-2a_kJ\partial_y+a_k^2\Id,
 \qquad
 L_0=-\partial_x^2-\partial_y^2.
 \label{eq:oscillating-Bochner-Laplacian}
\end{equation}
Thus, for \(s>0\),
\begin{equation}
 \norm{(L_k-L_0)e^{-sL_0}}_{L^2\to L^2}
 \le \frac{C}{k\sqrt{s}}+\frac{1}{k^2}.
 \label{eq:oscillating-relative-bound}
\end{equation}
The heat semigroups of the metric connections are dominated in norm by
the scalar heat semigroup.  In dimension two this gives, uniformly in
\(k\),
\begin{equation}
 \norm{e^{-sL_k}}_{L^2\to L^\infty}
 \le C_s,
 \qquad
 C_s\le C(1+s^{-1/2})\qquad(0<s\le1).
 \label{eq:oscillating-uniform-smoothing}
\end{equation}
For \(T>0\), Duhamel's formula, first on the common core
\(C^\infty(M,\mathbb R^2)\) and then by closure, therefore yields
\begin{align}
 \norm{e^{-TL_k}-e^{-TL_0}}_{L^2\to L^\infty}
 &\le
 \int_0^T
 \norm{e^{-(T-s)L_k}}_{L^2\to L^\infty}
 \norm{(L_k-L_0)e^{-sL_0}}_{L^2\to L^2}\,ds \notag\\
 &\le \frac{C_T}{k}.
 \label{eq:oscillating-heat-kernel-convergence}
\end{align}
The integrability at the endpoints follows from
\((T-s)^{-1/2}s^{-1/2}\in L^1(0,T)\).

Take \(T=t/2\).  The $L^2\to L^\infty$ norm first controls the
essential supremum of the row-operator norms.  The heat kernels are
smooth and the volume has full support, so the bound holds at every
\(x\).  Evaluation at \(x\) turns
\(e^{-TL_k}\) into a map
\(K_{k,t,x}:H\to\mathbb R^2\).  Since the target has dimension two,
\eqref{eq:oscillating-heat-kernel-convergence} implies, uniformly in
\(x\),
\begin{equation}
 \norm{K_{k,t,x}-K_{0,t,x}}_{\mathrm{HS}}
 \le \frac{C_t}{k},
 \qquad
 \sup_k\sup_x\norm{K_{k,t,x}}_{\mathrm{HS}}<\infty.
 \label{eq:oscillating-evaluation-convergence}
\end{equation}
For
\(
 q_{k,t,x}=\sum_{\alpha=1}^2
 K_{k,t,x}^*e_\alpha\otimes K_{k,t,x}^*e_\alpha
\), the elementary tensor estimate gives
\begin{equation}
 \sup_x\norm{q_{k,t,x}-q_{0,t,x}}_{H^{\otimes2}}
 \le \frac{C_t}{k}.
 \label{eq:oscillating-pointwise-tensor-convergence}
\end{equation}
Integration over the fixed compact base proves convergence of the
quadratic tensors.  Applying
\(
 \norm{a\odot a-b\odot b}
 \le(\norm a+\norm b)\norm{a-b}
\)
before integration proves convergence of the quartic tensors.

Put $f_{k,2}=Q_{k,t}$ and $f_{k,4}=C_{k,t}$.  Realize all the processes in
\eqref{eq:oscillating-canvas-process} with the same canvas isonormal
family.  The multiple-integral isometry gives
\begin{equation}
 \E\left|I_q^{\mathbb W_u}(f_{k,q}-f_{0,q})\right|^2
 =q!\norm{f_{k,q}-f_{0,q}}^2,
 \qquad q\in\{2,4\}.
 \label{eq:oscillating-Wiener-isometry}
\end{equation}
Applying \cref{lem:continuous-universal-canvas} to the difference
packet $(0,0,f_{k,2}-f_{0,2},0,f_{k,4}-f_{0,4})$ proves
\eqref{eq:oscillating-continuous-process-convergence}, and therefore
\eqref{eq:oscillating-law-convergence}.

The constants in
\eqref{eq:oscillating-relative-bound}--
\eqref{eq:oscillating-pointwise-tensor-convergence} are uniform for
$t\in[\tau,T]$.  The quantitative bound
\eqref{eq:canvas-quantitative-continuity}, applied on each $[-j,j]$,
therefore gives
$\sup_{t\in[\tau,T]}\E\rho_{\mathrm{loc}}
(\mathcal Y_{k,t},\mathcal Y_{0,t})\to0$ by dominated convergence of
the series in \eqref{eq:canvas-local-uniform-metric}.  The coupling inequality
for the bounded-Lipschitz metric therefore implies
\eqref{eq:oscillating-locally-uniform-time-convergence}.  Finally,
\eqref{eq:oscillating-curvature-size} proves the claimed geometric
nonconvergence.
\end{proof}

\begin{corollary}[No zeroth-order inverse modulus]
\label{cor:no-fixed-time-curvature-modulus}
Let \(d_{\mathrm{law}}\) be any metric inducing weak convergence on
probability measures on $C(\mathbb R)$.  There is no
function \(\omega(s)\downarrow0\) as \(s\downarrow0\) such that, on the
class of connections \eqref{eq:oscillating-connection},
\begin{equation}
 \norm{F_\nabla}_{L^\infty}
 \le
 \omega\!\left(
 d_{\mathrm{law}}
 \bigl(\Law(\mathcal Y_{\nabla,t}),
       \Law(\mathcal Y_{d,t})\bigr)
 \right).
 \label{eq:impossible-curvature-modulus}
\end{equation}
This remains false if the datum contains the whole curve of positive
heat times with the compact-open topology induced by
$d_{\mathrm{BL}}$ on every interval $[\tau,T]\Subset(0,\infty)$.
\end{corollary}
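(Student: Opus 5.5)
The plan is to argue by contradiction, feeding the oscillating family \eqref{eq:oscillating-connection} into a hypothetical modulus. Suppose such an $\omega$ exists, with $\omega(s)\downarrow0$ as $s\downarrow0$. Fix $t>0$ and apply \eqref{eq:impossible-curvature-modulus} to $\nabla=\nabla^{(k)}$. By \cref{prop:fixed-time-instability}, \eqref{eq:oscillating-law-convergence} gives weak convergence $\Law(\mathcal Y_{k,t})\to\Law(\mathcal Y_{0,t})$. Since $d_{\mathrm{law}}$ metrizes weak convergence, $s_k:=d_{\mathrm{law}}(\Law(\mathcal Y_{k,t}),\Law(\mathcal Y_{0,t}))\to0$.

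The right-hand side $\omega(s_k)$ therefore tends to $0$. By \eqref{eq:oscillating-curvature-size}, however, the left-hand side equals $c_J>0$ for every $k$, which is the contradiction. The only point needing care is interpretation. If $\omega$ is merely assumed to satisfy $\omega(s)\to0$ as $s\downarrow0$, we are done directly. If some $s_k=0$, the inequality would force $c_J\le\omega(0)$. I would read the statement with $\omega(0)=0$, or simply note that $s_k>0$ can be avoided by passing to the limit along a sequence.

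For the strengthened assertion, I would put a metric on the whole heat-time curve $t\mapsto\Law(\mathcal Y_{\nabla,t})$. On each interval $[\tau,T]$, use $\sup_{t\in[\tau,T]}d_{\mathrm{BL}}$. Then combine exhaustion intervals $[\tau_j,T_j]$ with $\tau_j\downarrow0$ and $T_j\uparrow\infty$ into one metric $\sum_j2^{-j}\min(1,\cdot)$, which induces the compact-open topology on curves.

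By \eqref{eq:oscillating-locally-uniform-time-convergence}, each summand tends to zero as $k\to\infty$. Dominated convergence of the series then shows that the curve for $\nabla^{(k)}$ converges to the curve for $d$ in this metric. The same contradiction with $\norm{F_{\nabla^{(k)}}}_{L^\infty}=c_J$ follows. If instead an arbitrary metric inducing that topology is allowed, convergence of the curves in the topology suffices, since every such metric then tends to zero along the sequence.

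There is no real obstacle here, since all analytic input is already in \cref{prop:fixed-time-instability}. The one step to state carefully is that the locally uniform convergence in $t$ holds on every compact subinterval of $(0,\infty)$, uniformly, so that it transfers to the compact-open topology on curves. Positive times bounded away from $0$ never see the $1/k$ scale of oscillation, which is why the convergence holds there.
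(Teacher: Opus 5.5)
Your proposal is correct and is essentially the paper's own proof: you apply the hypothetical modulus to $\nabla^{(k)}$, whose left side is constantly $c_J$, while the right side tends to zero by \cref{prop:fixed-time-instability}; your exhaustion metric on $[\tau_j,T_j]$ handles the time-curve version. Your side worry about $s_k=0$ does not arise: $\nabla^{(k)}$ has nonzero curvature, so by \cref{cor:universal-canvas-hears-gauge-field} its canvas law differs from that of $d$, and hence $s_k>0$ for every $k$.
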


\begin{proof}
Apply the proposed estimate to \(\nabla^{(k)}\).  Its left-hand side is
$c_J$ by \eqref{eq:oscillating-curvature-size}, while its right-hand side
tends to zero by \cref{prop:fixed-time-instability}.
\end{proof}

\subsection{Inverse continuity on compact geometric classes}

We first make the ambient topology explicit.

\begin{definition}[Smooth Cheeger--Gromov--gauge convergence]
\label{def:smooth-CG-gauge-topology}
Let
\(
 \mathfrak X_j=(M_j,g_j,E_j,\nabla^j,V_j)
\)
and
\(
 \mathfrak X=(M,g,E,\nabla,V)
\)
be smooth heat geometries modulo isometry and orthogonal gauge.  We say
that \([\mathfrak X_j]\to[\mathfrak X]\) smoothly if, for all sufficiently
large \(j\), the base dimensions and bundle ranks agree and there exist
diffeomorphisms \(\phi_j:M\to M_j\) and smooth orthogonal bundle
isomorphisms
\(
 J_j:E\to\phi_j^*E_j
\)
such that
\begin{equation}
 \phi_j^*g_j\longrightarrow g,\qquad
 J_j^{-1}\phi_j^*\nabla^jJ_j\longrightarrow\nabla,\qquad
 J_j^{-1}\phi_j^*V_jJ_j\longrightarrow V
 \label{eq:smooth-CG-gauge-convergence}
\end{equation}
in every \(C^m\) norm on the fixed compact base \(M\).  The resulting
quotient topology on isometry--gauge classes is called the smooth
Cheeger--Gromov--gauge topology.  In the statement below
\(\mathscr K\) is assumed compact and metrizable in this topology.
\end{definition}

Although low-order quantitative stability fails, exact reconstruction
immediately yields a sharp qualitative statement on genuinely compact
families.  Let \(\mathscr K\) be a compact metrizable subset of this
smooth moduli space.  Fix \(t>0\), and let
\begin{equation}
 \mathscr S_t:\mathscr K\longrightarrow
 \mathcal P(C(\mathbb R))
 \label{eq:compact-signature-map}
\end{equation}
send a gauge geometry to the law of its universal-canvas process.

\begin{proposition}[Compactness--uniqueness stability]
\label{prop:compactness-uniqueness-stability}
The map \(\mathscr S_t\) is a homeomorphism from \(\mathscr K\) onto
its compact image.  In particular, after choosing any compatible
metrics \(d_{\mathscr K}\) and \(d_{\mathrm{law}}\), there is a
nondecreasing modulus \(\omega_{\mathscr K,t}\), with
\(\omega_{\mathscr K,t}(s)\to0\) as \(s\downarrow0\), such that
\begin{equation}
 d_{\mathscr K}(X,Y)
 \le \omega_{\mathscr K,t}
       \bigl(d_{\mathrm{law}}(\mathscr S_tX,\mathscr S_tY)\bigr)
 \qquad(X,Y\in\mathscr K).
 \label{eq:compactness-uniqueness-modulus}
\end{equation}
\end{proposition}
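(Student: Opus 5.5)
The plan is the classical compactness--uniqueness argument: a continuous injection from a compact space into a Hausdorff space is a homeomorphism onto its image. Only two inputs need proof.

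\emph{Injectivity.} This is exactly \cref{cor:universal-canvas-hears-gauge-field}, equivalently \cref{cor:network-universal-field-equivalence}. Equal canvas laws produce an isometry--orthogonal-gauge arrow. Hence two distinct points of $\mathscr K$, which are distinct isometry--gauge classes, have distinct laws. The map $\mathscr S_t$ is well defined on classes by the converse direction of the same corollary.

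\emph{Continuity.} Take $[\mathfrak X_j]\to[\mathfrak X]$ in the smooth Cheeger--Gromov--gauge topology, and fix representatives pulled back to the fixed $(M,E)$ through $(\phi_j,J_j)$. The pulled-back data have the same laws, since the pullback is an isometry--gauge arrow onto its image; they are no longer isometries for $g$. To handle this, pass to the fixed Hilbert space $H=L^2(M,E;dv_g)$ by the unitary $s\mapsto(dv_{g_j}/dv_g)^{1/2}s$ after using the $g_j$ fiber identification. The transported operators $\mathcal L_j$ are then elliptic operators on a fixed bundle whose coefficients converge in every $C^m$ to those of $\mathcal L_{\mathfrak X}$.

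Mimic the Duhamel argument of \cref{prop:fixed-time-instability}. Uniform ellipticity and uniform $C^m$ bounds give uniform $L^2\to L^\infty$ (indeed $L^2\to C^0$) smoothing for $e^{-s\mathcal L_j}$ with a $s^{-n/4}$ rate. Here one should use instead $\|(\mathcal L_j-\mathcal L)e^{-s\mathcal L}\|$, which is bounded by $o(1)\cdot s^{-1}$ for a second-order difference. Combined with analytic semigroup bounds, split $[0,T]$ to keep the singularities integrable, or more simply bound the difference in $H^{-k}\to H^{k}$ norms through resolvent convergence. The result is $\sup_x\|K_{j,t,x}-K_{t,x}\|_{\mathrm{HS}}\to0$. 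As before, this yields $q_{j,t,x}\to q_{t,x}$ uniformly in $x$. It also gives $T^{(2)}_j\to T^{(2)}$ and $T^{(4)}_j\to T^{(4)}$ in Hilbert tensor norm, after integrating against $dv_g$, since the density factors converge uniformly. \cref{lem:continuous-universal-canvas} then gives convergence in $L^p$ of the coupled processes in $C([-R,R])$, hence weak convergence of laws. The resulting law convergence is independent of the representatives chosen, by orthogonal naturality (\cref{prop:canvas-scalarization-naturality}(a)).

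\emph{Conclusion.} $\mathcal P(C(\mathbb R))$ with the weak topology is Hausdorff and metrizable, because $C(\mathbb R)$ is Polish. $\mathscr S_t$ is therefore a continuous bijection from the compact space $\mathscr K$ onto its image, which is compact, so it is a closed map and hence a homeomorphism. The inverse $\mathscr S_t^{-1}$ is continuous on a compact metric space and therefore uniformly continuous. Its modulus of continuity
\[
\omega_{\mathscr K,t}(s)=\sup\{d_{\mathscr K}(X,Y): d_{\mathrm{law}}(\mathscr S_tX,\mathscr S_tY)\le s\}
\]
is nondecreasing and tends to zero, which is exactly the inequality claimed.

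The main obstacle is the continuity step. Converting smooth coefficient convergence on a varying measure into norm convergence of the heat-evaluation maps needs a clean uniform smoothing estimate. I would first try resolvent-based convergence in Sobolev operator norms, $H^{-k}\to H^{k}$ via elliptic estimates, then Sobolev embedding with $k>n/2$. This avoids the delicate endpoint integrability that appeared in Duhamel for first-order perturbations.
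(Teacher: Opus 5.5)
Your proposal is correct and follows the paper's route. Continuity comes from $C^m$ convergence of the transported coefficients: this gives convergence of the heat kernels at the marked time, hence of $Q_t$ and $C_t$ in Hilbert tensor norm, and then \cref{lem:continuous-universal-canvas}. Injectivity comes from the gauge reconstruction with potential combined with the universal-canvas theorem. The modulus then follows from the compact-to-Hausdorff homeomorphism and uniform continuity of the inverse. Your density unitary for the varying volume and your Duhamel/resolvent sketch just make explicit what the paper summarizes as ``standard parabolic regularity.''
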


\begin{proof}
Convergence in the sense of
\cref{def:smooth-CG-gauge-topology}, after the defining diffeomorphism
and gauge identifications, gives convergence of the Laplace-type
coefficients in every \(C^m\) norm.  Standard parabolic
regularity then gives smooth convergence of the heat kernels at the
marked positive time.  The quadratic and radial quartic tensors
converge in Hilbert tensor norm, so
\cref{eq:canvas-quantitative-continuity} implies weak convergence of the
continuous canvas laws.  Thus \(\mathscr S_t\) is
continuous.  It is injective by the gauge reconstruction theorem with
potential and the universal-canvas theorem.  A continuous injection
from a compact space into the Hausdorff space
\(\mathcal P(C(\mathbb R))\) is a homeomorphism onto its
image.  Uniform continuity of its inverse gives
\eqref{eq:compactness-uniqueness-modulus}.
\end{proof}

\begin{corollary}[Minimum-distance consistency on compact geometric classes]
\label{cor:compact-minimum-distance-consistency}
Let $\mathscr K$, $\mathscr S_t$, $d_{\mathscr K}$, and
$d_{\mathrm{law}}$ be as in
\cref{prop:compactness-uniqueness-stability}.  Fix
$X_0\in\mathscr K$, let $Z_1,Z_2,\ldots$ be independent
$C(\mathbb R)$-valued observations with common law
$\mathscr S_tX_0$, and put
\begin{equation}
 \widehat\nu_n=\frac1n\sum_{j=1}^n\delta_{Z_j}.
 \label{eq:compact-empirical-canvas-law}
\end{equation}
Suppose $\widehat X_n\in\mathscr K$ satisfies
\begin{equation}
 d_{\mathrm{law}}(\widehat\nu_n,\mathscr S_t\widehat X_n)
 \le
 \inf_{X\in\mathscr K}
 d_{\mathrm{law}}(\widehat\nu_n,\mathscr S_tX)+\frac1n.
 \label{eq:compact-approximate-minimum-distance-estimator}
\end{equation}
Then
\begin{equation}
 d_{\mathscr K}(\widehat X_n,X_0)\longrightarrow0
 \qquad\text{almost surely}.
 \label{eq:compact-estimator-consistency}
\end{equation}
\end{corollary}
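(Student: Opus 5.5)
The plan is the standard argmin-consistency argument: almost-sure convergence of the empirical law, the approximate minimum-distance property of $\widehat X_n$, and the inverse modulus of \cref{prop:compactness-uniqueness-stability}.

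\emph{Step 1: the empirical law converges.} Since $C(\mathbb R)$ with the metric $\rho_{\mathrm{loc}}$ of \eqref{eq:canvas-local-uniform-metric} is Polish, Varadarajan's theorem gives, almost surely, $\widehat\nu_n\to\mathscr S_tX_0$ weakly. I will use it in the form of the almost-sure strong law on a countable convergence-determining class of bounded continuous functions, together with separability. Because $d_{\mathrm{law}}$ metrizes weak convergence on $\mathcal P(C(\mathbb R))$, this means $d_{\mathrm{law}}(\widehat\nu_n,\mathscr S_tX_0)\to0$ almost surely. If $d_{\mathrm{law}}$ is only assumed compatible on the compact image $\mathscr S_t(\mathscr K)$, I would instead take it to be a metric for weak convergence on all of $\mathcal P(C(\mathbb R))$, e.g.\ $d_{\mathrm{BL}}$. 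Presumably this is the intended reading, since $\widehat\nu_n$ does not lie in the image.

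\emph{Step 2: the minimum-distance property.} Fix a sample path in the probability-one event of Step 1. Taking $X=X_0$ in the infimum of \eqref{eq:compact-approximate-minimum-distance-estimator} gives
\[
 d_{\mathrm{law}}(\widehat\nu_n,\mathscr S_t\widehat X_n)
 \le d_{\mathrm{law}}(\widehat\nu_n,\mathscr S_tX_0)+\tfrac1n .
\]
The triangle inequality then yields
\[
 d_{\mathrm{law}}(\mathscr S_t\widehat X_n,\mathscr S_tX_0)
 \le 2\,d_{\mathrm{law}}(\widehat\nu_n,\mathscr S_tX_0)+\tfrac1n
 \longrightarrow0 .
\]

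\emph{Step 3: the inverse modulus.} Apply \eqref{eq:compactness-uniqueness-modulus} with $X=\widehat X_n$ and $Y=X_0$. Since $\omega_{\mathscr K,t}$ is nondecreasing and tends to $0$ at $0^+$,
\[
 d_{\mathscr K}(\widehat X_n,X_0)
 \le\omega_{\mathscr K,t}\bigl(d_{\mathrm{law}}(\mathscr S_t\widehat X_n,\mathscr S_tX_0)\bigr)\longrightarrow0 .
\]
The degenerate case where the argument equals $0$ is harmless: then $\widehat X_n=X_0$ by injectivity. Alternatively, I can argue by subsequences. If the claim failed along some subsequence, compactness of $\mathscr K$ gives a further limit $X^*\neq X_0$. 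Continuity of $\mathscr S_t$ forces $\mathscr S_tX^*=\mathscr S_tX_0$, contradicting injectivity, which follows from the gauge reconstruction theorem together with the universal-canvas theorem.

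\emph{Main obstacle.} Step 1 is the only delicate point. It requires Varadarajan's theorem on a non-locally-compact Polish space, and it needs $d_{\mathrm{law}}$ to be meaningful off the image of $\mathscr S_t$. A secondary point is measurability of $\widehat X_n$, but the statement is pathwise, so no selection issue arises: any choice satisfying \eqref{eq:compact-approximate-minimum-distance-estimator} works, and such a choice exists because the infimum is finite and the slack $1/n$ is positive.
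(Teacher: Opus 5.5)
Your proposal is correct and follows the paper's proof essentially step for step. The shared steps are: almost-sure weak convergence of the empirical measures on the Polish space $C(\mathbb R)$; the triangle inequality with $X=X_0$, giving $d_{\mathrm{law}}(\mathscr S_t\widehat X_n,\mathscr S_tX_0)\le 2\,d_{\mathrm{law}}(\widehat\nu_n,\mathscr S_tX_0)+1/n$; and continuity of the inverse of $\mathscr S_t$ on its compact image via the modulus of \cref{prop:compactness-uniqueness-stability}. Your remark that $d_{\mathrm{law}}$ must metrize weak convergence on all of $\mathcal P(C(\mathbb R))$, not only on the image, is the reading the paper's proof also uses implicitly when it says ``$d_{\mathrm{law}}$ metrizes weak convergence.''
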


\begin{proof}
The path space $C(\mathbb R)$ with the compact-open topology is Polish,
so its empirical measures converge weakly almost surely to their common
law.  Since $d_{\mathrm{law}}$ metrizes weak convergence,
\[
 d_{\mathrm{law}}(\widehat\nu_n,\mathscr S_tX_0)\longrightarrow0
 \qquad\text{almost surely}.
\]
By \eqref{eq:compact-approximate-minimum-distance-estimator} and the
triangle inequality,
\[
 d_{\mathrm{law}}(\mathscr S_t\widehat X_n,\mathscr S_tX_0)
 \le
 2d_{\mathrm{law}}(\widehat\nu_n,\mathscr S_tX_0)+\frac1n.
\]
The right-hand side tends to zero.  Continuity of the inverse of
$\mathscr S_t$ on its compact image, equivalently
\eqref{eq:compactness-uniqueness-modulus}, yields
\eqref{eq:compact-estimator-consistency}.
\end{proof}

\begin{remark}[Scale-sensitive quantitative reconstruction]
The oscillatory family pinpoints the analytic mechanism behind inverse
conditioning: high-frequency curvature is encoded at progressively
shorter heat scales.  A quantitative Replica--Tannaka theory is therefore
naturally multiscale.  It combines short-time renormalization with an
approximate spatialization theorem selecting one nearly common gauge
near singular orthogonal orbits.  On compact smooth moduli classes,
\cref{prop:compactness-uniqueness-stability} already gives the exact
topological form of this inverse control.
\end{remark}

\section{Finite statistical identification}
\label{subsec:finite-statistical-identification}

The preceding corollary is an exact identifiability statement.  It also
gives a finite statistical experiment for every fixed pair of models.
For $B>0$, write
\begin{equation}
 c_B(y)=\max\{-B,\min\{B,y\}\}.
 \label{eq:statistical-clipping-map}
\end{equation}

\begin{theorem}[Finite-coordinate exponentially consistent test]
\label{thm:finite-coordinate-statistical-test}
Let $\mathbf f$ and $\mathbf f'$ be two finite tensor packets which are
not orthogonally equivalent on their minimal active supports.  Then
there exist an integer $r\ge1$, a rational configuration
$x=(x_1,\ldots,x_r)\in E_r\cap\mathbb Q^r$, and $B>0$ such that
\begin{align}
 m&=\E\,c_B\!\left(\prod_{a=1}^r
             \mathcal Z_{\mathbf f}(x_a)\right),
 &
 m'&=\E\,c_B\!\left(\prod_{a=1}^r
             \mathcal Z_{\mathbf f'}(x_a)\right)
 \label{eq:statistical-separated-clipped-means}
\end{align}
are different.  Put $\delta=|m-m'|>0$.

Suppose that $Z^{(1)},\ldots,Z^{(n)}$ are independent observed paths,
all drawn either from
$\Law(\mathcal Z_{\mathbf f})$ or from
$\Law(\mathcal Z_{\mathbf f'})$, and set
\begin{equation}
 \widehat m_n
 =\frac1n\sum_{j=1}^n
 c_B\!\left(\prod_{a=1}^r Z^{(j)}(x_a)\right).
 \label{eq:statistical-empirical-clipped-moment}
\end{equation}
Choose the first model when
$|\widehat m_n-m|\le|\widehat m_n-m'|$ and the second otherwise.  Then
\begin{equation}
 \max\left\{
 \mathbb P_{\mathbf f}(\textnormal{error}),
 \mathbb P_{\mathbf f'}(\textnormal{error})
 \right\}
 \le 2\exp\!\left(-\frac{n\delta^2}{8B^2}\right).
 \label{eq:statistical-pairwise-exponential-bound}
\end{equation}
In particular, the test is strongly consistent and uses only finitely
many rational evaluations of each path.
\end{theorem}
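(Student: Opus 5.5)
The plan has two independent halves. The first produces a finite rational configuration and a clipping level at which the clipped means separate the two laws. The second is a standard Hoeffding bound for bounded i.i.d. averages.

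\emph{Separation.} By \cref{thm:universal-canvas-replica-tannaka}, inequivalence (including the case $f_0\ne f'_0$, which \cref{cor:finite-canvas-witness} also covers) gives an $r$ and an open dense set of configurations in $E_r$ on which the unclipped mixed moments $M_{\mathbf f,r}(x)$ and $M_{\mathbf f',r}(x)$ differ; this set contains a rational point $x$.

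Fix such an $x$ and put $Y=\prod_a\mathcal Z_{\mathbf f}(x_a)$ and $Y'=\prod_a\mathcal Z_{\mathbf f'}(x_a)$. Both lie in a finite sum of Wiener chaoses, so by hypercontractivity (\cref{lem:laws-determine-moments}) they are in $L^2$. Then $|c_B(Y)-Y|\le|Y|\mathbf 1_{\{|Y|>B\}}$, and dominated convergence gives $\E c_B(Y)\to\E Y$ as $B\to\infty$, and likewise for $Y'$. Since $\E Y\ne\E Y'$, some finite $B$ gives $m\ne m'$. This step is the only one needing care, and it reduces to the integrability already established.

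\emph{The test.} Under either hypothesis the summands $c_B(\prod_a Z^{(j)}(x_a))$ are i.i.d. with values in $[-B,B]$. Their mean is $m$ under $\mathbf f$ and $m'$ under $\mathbf f'$. The evaluations at the finitely many points $x_a$ are measurable functions of the $C(\mathbb R)$-valued path, so the observable depends only on finitely many rational evaluations.

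Under $\mathbf f$, an error requires $|\widehat m_n-m|>|\widehat m_n-m'|$, which forces $|\widehat m_n-m|\ge\delta/2$. The symmetric statement holds under $\mathbf f'$. Hoeffding's inequality for variables of range $2B$ gives
\[
 \mathbb P(|\widehat m_n-m|\ge\delta/2)\le 2\exp\bigl(-2n(\delta/2)^2/(2B)^2\bigr)=2\exp\bigl(-n\delta^2/(8B^2)\bigr),
\]
which is \eqref{eq:statistical-pairwise-exponential-bound}.

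Strong consistency follows because these error bounds are summable in $n$, so Borel--Cantelli shows that only finitely many errors occur almost surely.
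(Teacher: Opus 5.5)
Your proposal is correct and follows the paper's proof essentially step for step. You take the separating rational configuration from the witness corollary, use dominated convergence to pass from the unclipped to the clipped moments at some finite $B$, and apply Hoeffding with range $2B$ and deviation $\delta/2$. Two small differences: the moment-level separation really comes from the register argument in the proof of \cref{cor:finite-canvas-witness}, not from the law-level statement of \cref{thm:universal-canvas-replica-tannaka}, which by itself does not force a mixed moment to differ. You also get strong consistency from Borel--Cantelli where the paper cites the strong law; either works.
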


\begin{proof}
By the proof of \cref{cor:finite-canvas-witness}, there are $r$ and a
rational $x\in E_r$ for which the untruncated mixed moments
\[
 \E\prod_{a=1}^r\mathcal Z_{\mathbf f}(x_a)
 \quad\text{and}\quad
 \E\prod_{a=1}^r\mathcal Z_{\mathbf f'}(x_a)
\]
differ.  Products of finite-chaos variables are integrable, because
every finite-chaos variable belongs to every finite $L^p$.  Hence the
expectations of the clipped products converge to the two displayed
moments as $B\to\infty$.  They therefore remain different for some
finite $B$.

Under either hypothesis, the summands in
\eqref{eq:statistical-empirical-clipped-moment} are independent and lie
in $[-B,B]$.  A wrong nearest-mean decision requires a deviation of at
least $\delta/2$ from the correct mean.  Hoeffding's inequality
\cite{Hoeffding1963} gives
\[
 \mathbb P\left(|\widehat m_n-\E\widehat m_n|
                  \ge\frac\delta2\right)
 \le 2\exp\!\left(-\frac{n\delta^2}{8B^2}\right),
\]
which proves \eqref{eq:statistical-pairwise-exponential-bound}.  The
almost-sure consistency also follows directly from the strong law.
\end{proof}

\begin{corollary}[Finite candidate classes]
\label{cor:finite-candidate-statistical-identification}
Let $L\ge2$ and let
$\mathbf f^{(1)},\ldots,\mathbf f^{(L)}$ be pairwise inequivalent finite
tensor packets.  There exist
$P\le\binom L2$ bounded continuous statistics
\begin{equation}
 \psi_1,\ldots,\psi_P:C(\mathbb R)\longrightarrow[-1,1],
 \label{eq:statistical-finite-feature-family}
\end{equation}
each depending only on finitely many rational evaluations, such that
the mean vectors
\begin{equation}
 v_i=\bigl(\E\psi_1(\mathcal Z_{\mathbf f^{(i)}}),\ldots,
             \E\psi_P(\mathcal Z_{\mathbf f^{(i)}})\bigr)
 \in[-1,1]^P
 \label{eq:statistical-model-mean-vectors}
\end{equation}
are pairwise distinct.  Let
$d_*=\min_{i\ne j}\norm{v_i-v_j}_{\infty}>0$.  From $n$ independent
paths define the empirical feature vector $\widehat v_n$ and choose a
nearest vector among $v_1,\ldots,v_L$ in the $\ell^\infty$ norm.  Under
model $i$,
\begin{equation}
 \mathbb P_i(\textnormal{misclassification})
 \le 2P\exp\!\left(-\frac{n d_*^2}{8}\right).
 \label{eq:statistical-finite-class-bound}
\end{equation}
\end{corollary}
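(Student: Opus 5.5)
The plan is to build the statistics pairwise and then take a union bound.

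\emph{Construction of the statistics.} For each unordered pair $\{i,j\}$ with $i<j$, the packets $\mathbf f^{(i)}$ and $\mathbf f^{(j)}$ are inequivalent. \Cref{thm:finite-coordinate-statistical-test} therefore gives an integer $r_{ij}$, a rational configuration $x^{(ij)}\in E_{r_{ij}}\cap\mathbb Q^{r_{ij}}$ and a level $B_{ij}>0$ for which the clipped means of the product $\prod_a\mathcal Z(x^{(ij)}_a)$ differ between the two models. Define
\[
 \psi_{ij}(z)=B_{ij}^{-1}c_{B_{ij}}\Bigl(\prod_{a=1}^{r_{ij}}z(x^{(ij)}_a)\Bigr).
\]
This map goes from $C(\mathbb R)$ to $[-1,1]$. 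It is continuous for the compact-open topology, because evaluation at a point is continuous and $c_B$ is Lipschitz, and it depends only on finitely many rational evaluations. Listing all such $\psi_{ij}$ gives $P\le\binom L2$ statistics. The vectors $v_i$ and $v_j$ differ in the coordinate indexed by $\{i,j\}$, so the $v_i$ are pairwise distinct and $d_*>0$.

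\emph{Error bound.} Fix model $i$. For each coordinate $p$, the summands $\psi_p(Z^{(k)})$, $k=1,\dots,n$, are independent, take values in $[-1,1]$, and have mean $(v_i)_p$. Hoeffding's inequality, with range length $2$, gives
\[
 \mathbb P\bigl(|(\widehat v_n)_p-(v_i)_p|\ge d_*/2\bigr)\le 2\exp\bigl(-n d_*^2/8\bigr).
\]
A union bound over the $P$ coordinates shows that
\[
 \norm{\widehat v_n-v_i}_\infty<d_*/2
\]
except on an event of probability at most $2P\exp(-nd_*^2/8)$. On the complementary event, for every $j\ne i$ we have
\[
 \norm{\widehat v_n-v_j}_\infty\ge d_*-\norm{\widehat v_n-v_i}_\infty>d_*/2>\norm{\widehat v_n-v_i}_\infty .
\]
Hence $v_i$ is the unique nearest vector and the classification is correct, which proves the stated bound.

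The only step needing care is the normalization. Dividing by $B_{ij}$ rescales the separation but keeps it positive, so distinctness survives. The Hoeffding constant is the one that matches $8$, since the range length is $2$. I see no other obstacle.
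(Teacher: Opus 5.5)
Your proposal is correct and follows the paper's proof essentially step for step. Both apply the finite-coordinate test to each pair and normalize by $B$, then use Hoeffding for $[-1,1]$-valued summands at deviation $d_*/2$ and a union bound over the $P$ coordinates. You also write out the triangle-inequality check that $v_i$ is then the unique nearest vector, which the paper states in one line.
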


\begin{proof}
For each unordered pair $(i,j)$, apply
\cref{thm:finite-coordinate-statistical-test} and divide its clipped
product statistic by the corresponding $B$.  Collecting these
statistics gives \eqref{eq:statistical-finite-feature-family}; the
coordinate attached to $(i,j)$ separates $v_i$ from $v_j$.  If every
coordinate of $\widehat v_n-v_i$ has absolute value less than $d_*/2$,
then $v_i$ is the unique nearest model vector.  Hoeffding's inequality
for variables in $[-1,1]$, followed by a union bound over the $P$
coordinates, gives \eqref{eq:statistical-finite-class-bound}.
\end{proof}

\begin{remark}[Pairwise and finite-class identification]
The separating replica order, rational configuration, clipping level,
and exponential gap are determined by the candidate laws.  The theorem
therefore gives a finite experiment for every fixed pair and a single
finite feature family for every finite candidate class.  On compact
geometric model classes, \cref{prop:compactness-uniqueness-stability}
upgrades exact injectivity to inverse continuity and yields consistent
minimum-distance reconstruction.
\end{remark}

\section*{AI-assisted document audit}

A large language model was used solely as a document-auditing tool.
The audit identified minor errors in the proof of
Proposition~\ref{prop:RCD-reconstruction-hypotheses}
(\emph{RCD heat flows satisfy the reconstruction hypotheses}) and
directed the author's attention to existing references for the
heat-kernel bounds, $L^p$-analyticity, and intrinsic-metric rigidity
used there.

The audit also identified an intermediate local computation that had
been left implicit in the proof of
Theorem~\ref{thm:scalar-observable-hears-gauge-field}
(\emph{A scalar replica observable hears an orthogonal gauge field}).
More precisely, after writing
$\nabla^F=\widetilde\nabla+B$, the local expansion of the difference
of the two Bochner Laplacians was checked in normal coordinates and
a $\widetilde\nabla$-normal frame.  This yields the first-order term
displayed in
\eqref{eq:gauge-connection-first-order-difference},
namely
\[
-2\sum_{j=1}^{\dim N}B(X_j)\partial_{X_j},
\]
from which equality of the two differential operators forces $B=0$.

Finally, the audit detected theorem-type cross-reference errors,
including references to Lemma~\ref{lem:vector-Lamperti}
(\emph{Simultaneous $L^2$--$L^4$ rigidity}) and
Proposition~\ref{prop:protannaka-limit-stabilizer}
(\emph{The inverse system reconstructs the full stabilizer}).
These typographical errors were corrected without any change to the
mathematical content.

These corrections concern an auxiliary RCD verification, one local
Bochner-Laplacian computation, and typographical cross-references.
They do not alter the constructions, statements of the principal
theorems, or logical architecture of the paper.  All mathematical
statements and citations were independently verified by the author.

\begingroup
\small
\hbadness=2000

\endgroup

\end{document}